\documentclass[11pt,a4paper,reqno]{amsart}
\usepackage[utf8]{inputenc}

\usepackage[dvipsnames]{xcolor}
\usepackage{latexsym,amssymb}
\usepackage{amsmath,amsthm,amsxtra,amsbsy,accents,mathrsfs}
\usepackage{mathtools}
\usepackage{lmodern,microtype}
\usepackage{dsfont}

\usepackage{newtxtext} %
\usepackage{newtxmath} %

\usepackage{enumerate}
\usepackage{enumitem}

\usepackage{comment}

\usepackage[bookmarks=true, colorlinks=true]{hyperref}
\hypersetup{urlcolor=blue,citecolor=black,linkcolor=black}

\usepackage{aligned-overset}

\setcounter{secnumdepth}{3}

\DeclareMathAlphabet{\mathbbmsl}{U}{bbm}{m}{sl}

\DeclareMathAlphabet{\mathup}{OT1}{\familydefault}{m}{n}
\newcommand{\dx}[1]{\mathop{}\!\mathup{d} #1}
\newcommand{\pderiv}[3][]{\frac{\mathop{}\!\mathup{d}^{#1} #2}{\mathop{}\!\mathup{d} #3^{#1}}}
\DeclarePairedDelimiter{\abs}{\lvert}{\rvert}
\DeclarePairedDelimiter{\norm}{\lVert}{\rVert}
\DeclarePairedDelimiter{\bra}{(}{)}
\DeclarePairedDelimiter{\pra}{[}{]}
\DeclarePairedDelimiter{\set}{\{}{\}}
\DeclarePairedDelimiter{\skp}{\langle}{\rangle}
\makeatletter
\newcommand{\customlabel}[2]{%
   \protected@write \@auxout {}{\string \newlabel {#1}{{#2}{\thepage}{#2}{#1}{}} }%
   \hypertarget{#1}{}
}
\makeatother

\makeatletter
\newcommand{\oset}[3][0ex]{%
  \mathrel{\mathop{#3}\limits^{
    \vbox to#1{\kern-2\ex@
    \hbox{$\scriptstyle#2$}\vss}}}}
\makeatother
\numberwithin{figure}{section}

\numberwithin{equation}{section}

\def\bbone{{\mathds{1}}}

\newtheorem{theorem}{Theorem}[section]
\newtheorem{lemma}[theorem]{Lemma}
\newtheorem{corollary}[theorem]{Corollary}
\newtheorem{definition}[theorem]{Definition}

\newtheorem{proposition}[theorem]{Proposition}
\newtheorem{remark}[theorem]{Remark}

\newcommand{\bs}[1]{{\boldsymbol{#1}}}

\newcommand{\R}{{\mathds{R}}}
\newcommand{\Rd}{{\R^d}}
\newcommand{\Rdzero}{{\Rd\setminus\set*{0}}}
\newcommand{\Rdx}{{\Rd\setminus\set*{x}}}
\newcommand{\Rddiag}{{\R^{2d}_{\!\scriptscriptstyle\diagup}}}

\newcommand{\jh}{{\hat{\jmath}}}
\newcommand{\babla}{\overline{\nabla}}

\newcommand{\dgrad}{\overline{\nabla}}

\renewcommand{\div}{\operatorname{div}}
\newcommand{\bdiv}[1]{\mathop{\overline{\mathrm{div}}}\nolimits#1}
\newcommand{\bdiveps}[1]{\mathop{\overline{\mathrm{div}}_{\!\eps}}\nolimits#1}

\newcommand{\eps}{\varepsilon}

\DeclareMathOperator{\supp}{supp}

\DeclareMathOperator{\CE}{CE}
\DeclareMathOperator{\NCE}{NCE}
\DeclareMathOperator{\AC}{AC}

\DeclareMathOperator{\diam}{diam}

\DeclareMathOperator{\Id}{Id}

\DeclareMathOperator{\I}{I}
\DeclareMathOperator{\II}{II}

\newcommand{\A}{{\mathcal{A}}}

\newcommand{\tA}{{\widetilde{\mathcal{A}}}}

\newcommand{\Ctheta}{{C_{\mathsf{mom}}}}
\newcommand{\ctheta}{{c_{\mathsf{nd}}}}

\newcommand{\Csupp}{{C_{\mathsf{supp}}}}
\newcommand{\Cmeas}{{C_{\mathsf{meas}}}}

\newcommand{\Cmom}{{C_{\mathsf{mom}}}}
\newcommand{\Cint}{{C_{\mathsf{int}}}}

\newcommand{\Cmu}{{C_{\mu}}}

\newcommand{\Mplus}{\calM^+}
\newcommand{\Mdiv}{\calM_{\mathsf{div}}}

\makeatletter
\newcommand{\anabla}{{\mathpalette\a@nabla\relax}}
\newcommand\a@nabla[2]{%
	\setbox\z@=\hbox{$\m@th#1\bigtriangledown$}%
	\ht\z@.7\ht\z@
	\raise\dp\z@\box\z@
}
\makeatother

\makeatletter
\newcommand{\subalign}[1]{%
  \vcenter{%
    \Let@ \restore@math@cr \default@tag
    \baselineskip\fontdimen10 \scriptfont\tw@
    \advance\baselineskip\fontdimen12 \scriptfont\tw@
    \lineskip\thr@@\fontdimen8 \scriptfont\thr@@
    \lineskiplimit\lineskip
    \ialign{\hfil$\m@th\scriptstyle##$&$\m@th\scriptstyle{}##$\hfil\crcr
      #1\crcr
    }%
  }%
}
\makeatother

\def\calA{{\mathcal A}} \def\calB{{\mathcal B}} 
\def\calD{{\mathcal D}} \def\calE{{\mathcal E}} 
\def\calG{{\mathcal G}} \def\calH{{\mathcal H}} 
  \def\calL{{\mathcal L}}
\def\calM{{\mathcal M}}  \def\calO{{\mathcal O}}
\def\calP{{\mathcal P}}  \def\calR{{\mathcal R}}
 \def\calT{{\mathcal T}}

\def\scrD{{\mathscr  D}}  
  
  \def\scrL{{\mathscr  L}}

 \def\bbN{{\mathbb N}} 
  
 \def\bbT{{\mathbb T}} 
  
 \def\bbZ{{\mathbb Z}}

\usepackage[normalem]{ulem}  %

\allowdisplaybreaks

\title{Graph-to-local limit for the nonlocal interaction equation}
\author{Antonio Esposito \and Georg Heinze \and André Schlichting} 
\address{A. Esposito -- Mathematical Institute, University of Oxford, Woodstock Road, Oxford, OX2 6GG, United Kingdom.}
\address{G. Heinze -- Research Group "Partial Differential Equations", Weierstrass Institute, Mohrenstrasse 39, 10117 Berlin, Germany}
\address{A. Schlichting -- Institute for Analysis and Numerics, University of Münster, Orléans-Ring 10, 48149 Münster, Germany}
\email{antonio.esposito@maths.ox.ac.uk}
\email{georg.heinze@wias-berlin.de}
\email{a.schlichting@uni-muenster.de}
\date{}

\begin{document}

\begin{abstract}
    We study a class of nonlocal partial differential equations presenting a tensor-mobility, in space, obtained asymptotically from nonlocal dynamics on localising infinite graphs. Our strategy relies on the variational structure of both equations, being a Riemannian and Finslerian gradient flow, respectively. More precisely, we prove that weak solutions of the nonlocal interaction equation on graphs converge to weak solutions of the aforementioned class of nonlocal interaction equation with a tensor-mobility in the Euclidean space. This highlights an interesting property of the graph, being a potential space-discretisation for the equation under study.
\end{abstract}

\keywords{optimal transport on graphs, upwind, tensor-mobility, nonlocal interaction, graph-discretisation, local limit}
\subjclass[2020]{35R02, 35A01, 35A15, 35R06, 05C21}

\maketitle

\tableofcontents

\section{Introduction}

In this manuscript we study the connection between nonlocal dynamics on infinite graphs and the corresponding local counterparts in the underlying Euclidean space. This problem is a natural consequence of the recent interest in evolution equations on graphs, motivated by applications to data science, among others. Graphs are indeed a suitable mathematical structure to classify and represent data, as done in \cite{Belkin02laplacianeigenmaps, GTS16, GTSspectral, JordanNg, KanVemVet04,RoithBungert22} and the references therein. Furthermore, it is worth to mention recent advances in the use of graphs in the context of social dynamics or opinion formation, \cite{McQuade_Piccoli_Pour_M3AS_19,Ayi_Pouradier_JDE21,PouradierDuteil_NHM22}, kinetic networks, \cite{Burger_kinetic_net22}, and synchronization, \cite{Gkogkas_Kuehn_Xu_CMS}. 

In this work we consider the nonlocal interaction equation on graphs, introduced in \cite{EPSS2021}, which is relevant to detecting local concentrations in networks. We will refer to it as nonlocal-nonlocal in view of the nonlocal nature of the graph.  

More precisely, we think of equations describing mass displacement among vertices --- points in $\Rd$ --- connected according to a given weight function, $\eta$ in the following. One of the essential differences with the Euclidean space $\Rd$ is that on the graph the mass is moving rather than the point. More precisely, in $\Rd$ we usually describe movements of particles with an associated mass, whilst on the graph the particle, or vertex, is fixed and the mass is transported. In order to cope with this structural property, one needs to consider suitable interpolating functions so that to be able to describe the flux, hence the dynamics. We refer to \cite{EPSS2021,EspositoPatacchiniSchlichting2023,HPS21} for more details, as well as related works \cite{MaasGradFlowEntropyFiniteMarkov2011, Mielke2011gradflowreaction, ChowFokkerPlanckMarkovGraph2012}. Another important aspect is to deal with a large number of entities, for instance individuals or data; hence it is crucial to consider discrete and continuum models. The setup introduced in~\cite{EPSS2021} allows to consider both descriptions in a unified framework, as follows.

The class of partial differential equations (PDEs) we consider can be specified through three elements: a \emph{nonlocal continuity equation}, an \emph{upwind flux interpolation}, and a \emph{constitutive relation} for a nonlocal velocity. 
The nonlocal continuity equation is concerned with the time-evolution of a probability measure $\rho_t \in \calP(\R^d)$, for~$t\in [0,T]$, where mass located at a vertex $x\in \R^d$ can be nonlocally transported to~$y\in \R^d$ along a channel with capacity, referred to as weight, given by an edge weight function $\eta: \R^d \times \R^d\setminus\{x\neq y\} \to [0,\infty)$. The nonlocal continuity equation on a time interval $[0,T]$ is of the form 
\begin{subequations}\label{eq:intro:NLNL}
\begin{equation}\label{eq:intro:NL-CE}
  \partial_t \rho_t + \bdiv j_t = 0, \qquad\text{with}\quad \bdiv j_t(\dx x) = \int_{\Rd\setminus\!\set*{x}} \eta(x,y) \dx j_t(x,y) ,
\end{equation}
where the flux is a time-dependent antisymmetric measure, $j_t \in \calM(G)$, on the set $G=\set*{ (x,y)\in \Rddiag: \eta(x,y)>0}$, being $\Rddiag\coloneqq\set{(x,y)\in\Rd\times\Rd:x\ne y}$. We will use the shorthand notation $(\bs \rho,\bs j) =((\rho_t)_t,(j_t)_t)\in \NCE_T$ for any solution of~\eqref{eq:intro:NL-CE}, cf.~Definition~\ref{def:nce-flux-form}.

The relation constituting the flux depends on a $\sigma$-finite absolutely continuous measure $\mu\in \calM^+(\R^d)$, as in~\cite{EPSS2021,HPS21, EspositoPatacchiniSchlichting2023}, wherein $\mu$ acts as an abstract notion of vertices of a graph. More precisely,
we associate to a nonlocal time-dependent velocity field $v_t : G\to \R$ the induced flux by using an upwind interpolation as follows
\begin{equation}\label{eq:intro:NL-flux}
	\dx j_t(x,y) = v_t(x,y)_+\dx(\rho\otimes\mu)(x,y)-v_t(x,y)_-\dx(\mu\otimes\rho)(x,y).
\end{equation}
Here, for $a\in \R$, we denote with $a_+ = \max\set*{a,0}$ and $a_- = \max\set*{-a,0}$ the positive and negative part, respectively. Intuitively, the support of $\mu$ defines the underlying set of vertices, i.e. $V = \supp\mu$. In particular, any finite graph can be represented by choosing $\mu=\mu^N=\delta_{x_i}/N$, for $x_1, x_2,\dots,x_N\in\Rd$. 

The last element of the model is the identification of the velocity field in terms of a symmetric interaction potential $K:\R^d\times\R^d \to \R$ and a potential $P:\R^d\to \R$ by
\begin{equation}\label{eq:intro:NL-velocity}
  v_t(x,y) = - \babla K\ast \rho_t(x,y) - \babla P(x,y) ,
\end{equation}
\end{subequations}
where the nonlocal gradient is defined by $\babla f(x,y) \coloneqq f(y)-f(x)$.

System~\eqref{eq:intro:NLNL} was introduced in~\cite{EPSS2021} as a \textit{Finslerian} gradient flow of the interaction energy 
\begin{equation}\label{eq:interaction_energy}
	\calE(\rho)=\frac{1}{2}\iint_{\R^{2d}} K(x,y)\dx\rho(y)\dx\rho(x)+\int_\Rd P(x)\dx\rho(x). 
\end{equation}
Note that the velocity field is given as the nonlocal gradient of the first variation of the energy, that is $v_t = -\babla \calE'(\rho_t)$, where $\calE'(\rho)= K \ast \rho + P$ denotes the variational derivative of $\calE$ and $(K\ast \rho)(x) = \int_\Rd K(x,y)\dx \rho(y)$, for any $x\in\Rd$.

\medskip

An intriguing problem is to understand the limiting behaviour of weak solutions to~\eqref{eq:intro:NLNL} as the graph structure localises, i.e. the range of connection between vertices decreases, while the weight of each connecting edge increases. 
Following a formal argument presented in~\cite[Section 3.5]{EPSS2021}, one expects to approximate weak solutions of the more standard nonlocal interaction equation on~$\Rd$. However, as we shall see, the intrinsic geometry of the graph impacts the limiting gradient structure of the equation.
Accordingly, the main goal of this work is to provide a rigorous proof of the local limit of the system~\eqref{eq:intro:NLNL} along a sequence of edge weight functions $\eta^\eps:\Rddiag\to[0,\infty)$ defined by
\begin{align}\label{eq:intro:etaeps}
	\eta^\eps(x,y) \coloneqq \frac{1}{\eps^{d+2}}\vartheta\bra*{\frac{x+y}{2},\frac{x-y}{\eps}},
\end{align} 
in terms of a reference connectivity $\vartheta:\Rd\times\Rd\!\setminus\!\set{0}\to [0,\infty)$ satisfying the Assumptions~\eqref{theta1}~--~\eqref{theta4} below.

The scaling in~\eqref{eq:intro:etaeps} leads to the local evolution
\begin{equation}\label{eq:intro:NLIE:one}
    \partial_t\rho_t=\operatorname{div}(\rho_t  \bbT(\nabla K*\rho_t+\nabla P)), \tag{$\mathsf{NLIE}_\bbT$} 
\end{equation}
where the \emph{tensor} $\bbT:\R^d \to \R^{d\times d}$ depends on the nonlocal structure encoded through the reference measure~$\mu$ and the connectivity~$\vartheta$.

\eqref{eq:intro:NLIE:one} can be similarly decomposed into three components. First, the  \emph{local continuity equation} on $\R^d$ given as
\begin{subequations}\label{eq:intro:NLIE:sub}
\begin{equation}\label{eq:intro:CE}
	\partial_t \rho_t + \operatorname{div} \hat\jmath_t = 0 ,
\end{equation}
where now $\hat\jmath_t\in \calM(\R^d;\R^d)$ is a vector-valued flux.
Second, a \emph{kinetic relation}, for the flux in terms of a vector field $\hat v_t: \R^d \to \R^d$ encoding the tensor structure of~\eqref{eq:intro:NLIE:one} as
\begin{equation}\label{eq:intro:fluxtensor}
	\hat\jmath_t(\dx x) = \rho_t(\dx x) \bbT(x) \hat v_t(x) = \rho_t(\dx x) \sum_{i,k=1}^d \bbT_{ik}(x) \hat v_{t,k}(x)e_i . 
\end{equation}
Third, a  \emph{constitutive relation} for the velocity linking to the interaction energy~\eqref{eq:interaction_energy} given by
\begin{equation}\label{eq:intro:velocity}
	\hat v_t = - \nabla K \ast \rho_t - \nabla P = -\nabla \calE'(\rho_t) .
\end{equation}
\end{subequations}
We provide a rigorous proof for the convergence of system~\eqref{eq:intro:NLNL} with $\eta^\eps$ given through~\eqref{eq:intro:etaeps} to the system~\eqref{eq:intro:NLIE:sub}, in case of $C^1$ interaction kernels. This result is somewhat sharp since for attractive pointy potentials one does not expect convergence of weak solutions, as pointed out in~\cite[Remark 3.18]{EPSS2021}.

First, we give an heuristic argument and match each of the three elements of the systems~\eqref{eq:intro:NLNL} and~\eqref{eq:intro:NLIE:sub}, separately.
The nonlocal continuity equation~\eqref{eq:intro:NL-CE} can be represented by its local counterpart~\eqref{eq:intro:CE} thanks to a continuous reconstruction for the nonlocal flux. 
More precisely, we denote by $\bdiveps j^\eps$ the nonlocal divergence as defined in~\eqref{eq:intro:NL-CE} with~$\eta$ replaced by $\eta^\eps$ given in~\eqref{eq:intro:etaeps}; see Definition~\ref{def:nl_grad_div}. Next, in Proposition~\ref{prop:jhat}, for any $j^\eps\in \calM(\Rddiag)$ and any $\eps>0$ we construct a local vector-valued flux $\jh^\eps \in \calM(\R^d;\R^d)$ such that $\bdiveps j^\eps = \div \jh^\eps$. Inspired by~\cite{Smirnov1993,Klartag2017}, we use a superposition with \emph{needle} measures defined as one-dimensional Hausdorff measure $\calH^1_{\llbracket x,y\rrbracket}$ restricted to the line-segment ${\llbracket x,y\rrbracket}\subset\Rd$ from $x$ to $y$. The \emph{tentative} definition of~$\jh^\eps$ is, for any Borel set $A$, is then
\begin{equation}\label{eq:intro:reconstruction}
  \jh^\eps[A] :\approx  \frac{1}{2} \iint_{G^{\!\!\:\eps}} \frac{y-x}{\abs{y-x}} \calH^1\bra*{A\cap \llbracket x,y\rrbracket} \eta^\eps(x,y) \dx j^\eps(x,y) .
\end{equation}
However, checking that the measure-valued local divergence of the above measure agrees with the nonlocal divergence of $j^\eps$ asks for special uniform integrability properties of $j^\eps$, which are not necessarily verified in our setting. For this reason, we proceed by exploiting a finite approximation of~\eqref{eq:intro:reconstruction} by replacing $j^\eps$ with an empirical approximation $j^\eps_N$. The actual measure $\jh$ is obtained as a suitable limit of the sequence $\jh^\eps_N$ defined as in~\eqref{eq:intro:reconstruction} with $j^\eps$ replaced by $j^\eps_N$. Since the argument is based on compactness, the uniqueness of such a limit is not achieved nor can we ensure that the limit has indeed the representation~\eqref{eq:intro:reconstruction}. 
For the present application, the mere existence is sufficient.
We point out that the ``reverse'' question on the decomposition of measure-valued divergence vector fields into one-dimensional \emph{needles} is studied in~\cite[Theorem C]{Smirnov1993} and commented on in \cite[Remark 4.4]{Ambrosio2003c}. 

Having established the continuous reconstruction, the constitutive relation for the velocity~\eqref{eq:intro:NL-velocity} can be localized under suitable regularity assumptions on the kernel $K$ and potential $P$. Indeed, for $\abs{x-y}=\calO(\eps)$, neglecting higher order terms in $\eps$, we obtain the identity
\begin{equation}\label{eq:intro:velocityapprox}
	v^\eps_t(x,y) \approx \hat v^\eps_t(x) \cdot (y-x) \quad\text{with}\quad \hat v^\eps_t(x) =- (\nabla K \ast \rho^\eps_t)(x) - \nabla P(x) . 
\end{equation}
The crucial mathematical step consists in the combination of the above two equations through the upwind flux interpolation~\eqref{eq:intro:NL-flux}. Formally, if the measures $\mu$ and~$\rho_t$ have a smooth density, we can do another expansion to arrive for $x\in \R^d$ at the approximate identity
\begin{equation}\label{eq:intro:fluxapprox}
	\jh^\eps_t(x) \approx \rho^\eps_t(x) \bbT^\eps(x) \hat v^\eps_t(x) \text{ with }	\bbT^\eps(x) = \frac12\int_{\Rdx} \!\!(x-y)\otimes(x-y)\,\eta^\eps(x,y)\dx\mu(y). 
\end{equation}
In particular, in this step it is shown that the non-symmetric upwind-based structure can be replaced by symmetric approximate tensors $\bbT^\eps$ with vanishing error as $\eps\to 0$. This symmetrization is generalized to arbitrary $\rho\in\calP(\Rd)$ by introducing a mollifier on an intermediate scale $\eps^\alpha$ for $\alpha>0$ sufficiently small.
The final step consists in identifying the limit of the tensor $\bbT^\eps$, where the specific scaling hypothesis~\eqref{eq:intro:etaeps} provides, after a change of variable, the approximate identity $\bbT^\eps(x) \approx \bbT(x)$ with
\begin{equation}\label{eq:intro:limittensor}
	\bbT(x)= \frac12 \frac{\dx\mu}{\dx\scrL^d}(x)\int_{\Rdzero} w\!\otimes\! w \ \vartheta(x,w)\dx w \tag{$\bbT$}.
\end{equation}
The limit tensor $\bbT$ gives, together with the reconstruction~\eqref{eq:intro:reconstruction}, flux identification~\eqref{eq:intro:fluxapprox} and velocity approximation~\eqref{eq:intro:velocityapprox}, the limit system~\eqref{eq:intro:NLIE:one}.

The heuristic argument above is based on several approximations and smoothness assumptions which are not a priori satisfied by solutions to the systems~\eqref{eq:intro:NLNL} and~\eqref{eq:intro:NLIE:one}. We make it rigorous by using a variational framework, allowing to handle measure-valued solutions. 

\smallskip
An interesting byproduct of our result is the link between Finslerian and Riemannian gradient flows, which is the first result in this direction to the best of our knowledge. More precisely, \eqref{eq:intro:NLNL} is shown to be a gradient flow of the nonlocal interaction energy in the infinite-dimensional Finsler manifold of probability measures endowed with a nonlocal upwind transportation quasi-metric, $\mathcal{T}$, peculiar of the upwind interpolation~\eqref{eq:intro:NL-flux}. Due to the loss of symmetry the underlying structure of $\calP(\Rd)$ does not have the formal Riemannian structure, but Finslerian instead. We refer the reader to~\cite{EPSS2021}~and~\cite{HPS21,HeinzePiSch-2} for further details.

On the other hand, following \cite{Lisini_ESAIM2009}, we establish a chain-rule inequality for the nonlocal interaction energy in a $2$-Wasserstein space defined over $\R^d_{\bbT}$, which is~$\Rd$ endowed with a metric induced by $\bbT^{-1}$. Upon considering the corresponding \emph{Wasserstein scalar product} on the tangent space of $\calP_2(\R^d_{\bbT})$, at some probability measure with bounded second moment, one can notice the underlying Riemannian structure, following~\cite{Otto2001GeometryPME, AmbrosioGigliSavare2008}, thereby making the connection between the weak and variational formulations of \eqref{eq:intro:NLIE:one}.

We stress that not only do we connect the graph and tensorized local gradient structures using the notion of curves of maximal slope for gradient flows after De~Giorgi~\cite{DeGiorgiMarinoTosques80,AmbrosioGigliSavare2008}, but, upon
identifying weak solutions of~\eqref{eq:intro:NLIE:one} with curves of maximal slopes, we also obtain an existence result for~\eqref{eq:intro:NLIE:one} via stability of gradient flows. This is indeed another interesting property of the graph, as it represents a valuable space-discretisation for the PDE under study, working in any dimension, in addition to other methods, e.g. particle approximations and tessellations. Indeed, our result can be also seen as a deterministic approximation of~\eqref{eq:intro:NLIE:one}.

As an outlook, we point out that a quantitative convergence statement might be obtained upon establishing a suitable nonlocal replacement for \emph{weak BV estimates}, as in the numerical literature~\cite[Chapters 5–7]{eymard2000finite}. This would allow to adapt techniques previously developed for determining the rate of convergence for upwind schemes with rough coefficients, for which a measure-valued solution framework is used~\cite{DelarueLagoutiereVauchelet16,SchlichtingSeis2017,SchlichtingSeis2018}. Alternatively, one could directly use the limiting metric structure along the lines of~\cite{LagoutiereSantambrogioTranTien2023}.

Focusing on the quasi-metric space introduced in~\cite{EPSS2021}, natural problems to address are the asymptotic behaviour of the quasi-metric and the Gromov-Hausdorff convergence of the according quasi-metric spaces. First steps in the metric setting are done in~\cite{SlepcevWarren2022} and~\cite{GigliMaas2013,GLADBACH2020204,Trillos_grom_haus,GladbachKopferMaasPortinale2023}, respectively.

Finally, generalizing the present approach to finite-graph approximation for diffusion equations seems possible by using a gradient flow formulation similar to the Scharfetter-Gummel finite volume scheme recently introduced in~\cite{SchlichtingSeis21,HraivoronskaSchlichtingTse2023}.

\subsection{Relation to the literature}

This manuscript is a natural question raised in~\cite{EPSS2021}, where nonlocal dynamics on graphs are considered for the aforementioned upwind interpolation. In~\cite{EspositoPatacchiniSchlichting2023}, the authors deal with a class of continuity equations on graphs with general interpolations, and provide a well-posedness theory exploiting a fixed-point argument. Depending on the interpolation chosen, one can notice structural differences with the more standard Euclidean space. This theory has been extended to the case of co-evolving graphs in~\cite{EspMik23}, namely a scenario including weight functions evolving in time. In \cite{HPS21}, the analysis of \cite{EPSS2021} is extended to two species with cross-interactions as well as nonlinear mobilities and $\alpha$-homogeneous flux-velocity relations for $\alpha > 0$, by generalizing the underlying Finslerian structure and notion of gradient. In \cite{HeinzePiSch-2} the generalized system is explored both in analytical case studies and numerical simulations on finite graphs of varying shapes and connectivity, leading to the observation of various patterns, including mixing of the two species, partial engulfment, or phase separation.

Regarding the limiting equation we mention \cite{Carrillo2011}, where well-posedness for \eqref{eq:intro:NLIE:one} with $\bbT =\Id$ is shown for pointy locally $\lambda$-convex potentials. To this end the authors generalize the theory of \cite{AmbrosioGigliSavare2008} and employ a minimizing movement scheme to show the existence of solutions. Furthermore, in~\cite{Lisini_ESAIM2009}, the author studies a class of equation similar to \eqref{eq:intro:NLIE:one} in view of the presence of a tensor $\bbT$ (not necessarily of the form~\eqref{eq:intro:limittensor}). However, velocity fields considered are of the form $v=-\nabla (F'(u)+V)$, under suitable assumptions on $F$ and $V$. Hence, differently from our case, in \cite{Lisini_ESAIM2009} the author mainly focuses on linear and nonlinear diffusion.

Our variational approach is similar to the one used in \cite{hraivoronska2022diffusive,HraivoronskaSchlichtingTse2023} to study the limiting behaviour of random walks on tessellations in the diffusive limit. 
Starting from the forward Kolmogorov equation on a general family of finite tessellations, the authors show that solutions of the forward Kolmogorov equation
converge to a non-degenerate diffusion process solving an equation of the form \eqref{eq:intro:NLIE:one}, with diffusion instead of the nonlocal interaction velocity field, similar to~\cite{Lisini_ESAIM2009}.
Another difference is that the method used relies on a generalized gradient structure of the forward Kolmogorov equation with respect to the relative entropy --- the so called cosh gradient structure to be precise. This result on tessellations is related to \cite{DisserLiero2015,forkertEvolutionaryGammaConvergence2020}, where a similar problem is considered, though the gradient structure is the so called quadratic gradient structure. The latter manuscripts concern the convergence of discrete optimal transport distances to their continuous counterparts, see also~\cite{GLADBACH2020204,GladbachKopferMaasPortinale2023}. 

In this context, the work~\cite{Trillos_grom_haus} studies Gromov--Hausdorff limit of Wasserstein spaces on point clouds, obtained by constructing a geometric graph similar to our setting (restricted on the torus). The previous manuscript is, indeed, an invitation to the study of stability of evolution PDEs on graphs, though the analysis focuses on the general problem of approximating the underlying metric space rather than the gradient dynamics --- differently from our result. Moreover, we also point out that the upwind interpolation does not fall in the class of interpolating function considered in the aforementioned works. Related to this point, we also mention~\cite{PeletierRossiSavareTse22}, where a direct gradient flow formulation of jump processes is established --- the authors consider driving energy functional containing entropies. The kinetic relations used there are symmetric, hence excluding for instance the upwind interpolation, which is the one we use on graphs.

It is worth to mention the work~\cite{CraigTrillosSlepcev} dealing with dynamics on graphs for data clustering by connecting the mean shift algorithm with spectral clustering at discrete and continuum levels via suitable Fokker--Planck equations on graphs. See also~\cite{cucuringu2019mbo,LauxLelmi2023} for clustering algorithms based on thresholding schemes on graphs, giving also rise to a nonlocal dynamic.

Concerning graph structure, we point out the recent work~\cite{EspositoGvalaniSchlichtingSchmidtchen2021} on a novel interpretation of the aggregation equation (arising in applications to granular media) as the gradient flow of the kinetic energy, rather than the interaction energy. This is possible upon introducing a suitable \textit{nonlocal collision metric}. The underlying state space resembles a graph, indeed, and the PDE under study takes the form of a local-nonlocal continuity equation.

The scaling of the function $\eta$ in~\eqref{eq:intro:etaeps} is chosen such that two derivatives emerge in the local limit. This can be seen as a second order counterpart to the nonlocal first-order calculus developed in~\cite{Du2013}. With a first order scaling many classical results from calculus can be recovered, like for instance the recent divergence theorem in~\cite{HeppKassmann2023}. 
Second order nonlocal calculus with emphasis on questions of regularity and connections to jump processes got also attention with many contributions by Kassmann~\cite{Kassmann2018,FoghemKassmann2022arXiv} and references therein.

We also refer to the book \cite{Du2019} and the references therein for further applications of nonlocal vector calculus with connections to numerical approximations, and various applications, such as nonlocal dynamics of anomalous diffusion and nonlocal peridynamic models of elasticity and fracture mechanics (perydynamic). In the context of numerical schemes for local conservation laws, we mention~\cite{Du2017}, where the authors proposed a class of monotonicity-preserving nonlocal nonlinear conservation laws, in one space dimension. Under suitable assumptions on the kernel, one could interpret the latter class of PDEs as equations on graphs. It is, in our opinion, interesting to explore possible applications of the current manuscript to other nonlocal conservation laws. 

We observe that the graph structure can be also seen as suitable space-discretization, resembling those obtained from tessellations for finite volume schemes. In this regard, there is a natural connection to numerical schemes for gradient flows in the Wasserstein space as also studied in, e.g.,~\cite{CCH15,DisserLiero2015,BailoCarrilloHu2018,Cances2020,Bailo_etal2020}.

\subsection{Structure of the manuscript} First we present the notation we use. In Section~\ref{sec:preliminaries} we explain the setup and recall known results on both the nonlocal-nonlocal interaction equation on graphs and the limiting PDE. We state our main results and give some meaningful examples. The connection between nonlocal and local structure is provided in Section~\ref{sec:graph_and_local_structure}, where we construct the local flux from the nonlocal ones and identify the tensor-mobility. In Section~\ref{sec:graph-to-local-limit} we prove the main results of our manuscript: the graph-to-local limit in terms of curves of maximal slopes and existence of weak solutions for \eqref{eq:intro:NLIE:one}. We integrate the manuscript with an appendix including some additional results, among them the extension of~\cite{EPSS2021} to $\sigma$-finite base measures and a formulation of the main result in terms of EDP-convergence.

\subsection{Notation}

We list here notation used throughout the manuscript.
\begin{itemize}
	\item $a_+\coloneqq\max\{0,a\}$ and $a_-\coloneqq(-a)_+$ denote the positive and negative parts of $a \in \R$, respectively.
    \item Given a set $A$ we set $\bbone_{A}(x)=1$ for $x\in A$ and $\bbone_{A}(x)=0$ for $x\notin A$.
    \item Moduli of continuity are always denoted by $\omega$.
    \item In a metric space $(X,d)$ for $R>0$ we set $B_R(x)\coloneqq \set{y\in X:d(x,y)\le R}$.
    \item In a normed space $(X,\norm{\cdot})$ for $R>0$ we set $B_R\coloneqq B_R(0)$.
	\item $\calB(\R^d)$ is the $\sigma$-algebra of Borel sets of $\Rd$.
    \item $C_\mathrm{b}(A)$ is  the set of bounded continuous functions from $A$ to $\R$. 
	\item $\calM(A)$ is the set of Radon measures on $A \subseteq \R^d$. 
	\item $\Mplus(A)$  is the set of non-negative Radon measures on $A$.
	\item $\mathcal{P}(A)\subset \Mplus(A)$ is the set of Borel probability measures on $A$.
	\item $\calP_{2}(A)\subseteq \calP(A)$ is the set of $\rho\in\calP(A)$ with finite second moment, that is,
	\begin{equation*}%
		m_2(\rho) \coloneqq \int_{A} |x|^2 \dx\rho(x) < \infty. 
	\end{equation*}
	\item $\Rddiag \coloneqq \set{(x,y)\in\R^d\times \R^d : x\ne y}$ is the off-diagonal of $\Rd\times\Rd$.
	\item $\mu\in\Mplus(\R^d)$ denotes the base measure setting the underlying geometry.
	\item $\vartheta:\Rd\!\setminus\!\set*{0}\to[0,\infty)$ is the edge connectivity map, cf. Section \ref{ssec:graph}.
	\item $\eta^\eps \coloneqq \frac{1}{\eps^{d+2}}\vartheta\bra*{\frac{x+y}{2},\frac{x-y}{\eps}} $ is the $\eps$-dependent edge weight function. 
	\item $G^{\!\!\:\eps}\coloneqq \{ (x,y) \in \Rddiag: \eta^\eps(x,y)>0\}$ is the set of edges.
    \item $G_{\!\!\:x}^{\!\!\:\eps}\coloneqq\set{y\in\Rd\!\setminus\!\set*{x}:\eta^\eps(x,y)>0}$ is the set of points connected to $x$.
    \item $\rho\in \calP(\R^d)$ denotes a mass configuration.
    \item $\bs\rho \coloneqq (\rho_t)_{t\in[0,T]}\subset\calP(\Rd)$ denotes a family of mass configurations.
	\item $j\in\calM(\Rddiag)$ denotes a measure-valued nonlocal flux.
    \item $\bs j \coloneqq (j_t)_{t\in[0,T]}\subset\calM(\Rddiag)$ denotes a family of nonlocal fluxes.
    \item $\jh\in\calM(\Rd;\Rd)$ denotes a measure-valued local flux, cf. Proposition \ref{prop:jhat}.
    \item $\bs \jh \coloneqq (\jh_t)_{t\in[0,T]}\subset\calM(\Rd;\Rd)$ denotes a family of local fluxes
    \item $v:\Rddiag\to\R$ denotes a nonlocal velocity field.
    \item $\bs v \coloneqq (v_t)_{t\in[0,T]}$ denotes a family of velocity fields.
	\item $\calA(\mu,\eta;\rho,j)$ stands for the $\mu$ and $\eta$ dependent action density of  $\rho$ and $j$.
	\item $\bs\calA(\mu,\eta;\bs\rho,\bs j)\coloneqq\int_0^T\calA(\mu,\eta;\rho_t,j_t)\dx t$ denotes the action of  $\bs\rho$ and $\bs j$.
	\item $\tA(\mu,\eta;\rho,v)$ stands for the $\mu$ and $\eta$ dependent action density of  $\rho$ and $v$.
	\item $\bs\tA(\mu,\eta;\bs\rho,\bs v)\coloneqq\int_0^T\tA(\mu,\eta;\rho_t,v_t)\dx t$ denotes the action of  $\bs\rho$ and $\bs v$.
    \item $T^\eps_\rho\calP_2(\Rd)\subset\calM(G^{\!\!\:\eps})$ denotes the space of nonlocal tangent fluxes at the configuration $\rho\in\calP_2(\Rd)$, cf.~\eqref{eq:def:T_rho}.
	\item $\widetilde T^\eps_\rho\calP_2(\Rd)$ denotes the space of nonlocal tangent velocities $v:G^{\!\!\:\eps}\to\R$ at the configuration $\rho\in\calP_2(\Rd)$, cf.~\eqref{eq:def:tildeT_rho}.
	\item $\widetilde l^\eps_{\rho}(v)[w]$ stands for the first variation of $\tA(\mu,\eta^\eps;\rho,\cdot)$ at $v\in T^\eps_\rho\calP_2(\Rd)$ in the direction of $w\in T^\eps_\rho\calP_2(\Rd)$ for $\rho\in\calP_2(\Rd)$, cf.~\eqref{eq:def:l}.
    \item $\bbT^\eps(x) \coloneqq \frac12\int_{\Rdx} \!(x-y)\!\otimes\!(x-y)\,\eta^\eps(x,y)\dx\mu(y)$ is the approximate tensor.
    \item $\bbT(x)\coloneqq \frac12 \frac{\dx\mu}{\dx\scrL^d}(x)\int_{\Rdzero} w\!\otimes\! w \ \vartheta(x,w)\dx w$ denotes the limit tensor.
    \item $\calT_{\mu,\eta}$ denotes the nonlocal extended quasi-metric, cf. Definition~\ref{def:T-metric}.
    \item $\AC^2([0,T];(X,d)$ denotes the set of $2$-absolutely continuous curves, which map from the time interval $[0,T]$ into the (quasi-)metric space $(X,d)$.
    \item $\abs{\rho_t'}_{\mu,\eta}$ denotes for the curve $\bs\rho\in\AC^2([0,T];(\mathcal{P}_2(\Rd),\mathcal{T}_{\mu,\eta}))$ the (forward) metric derivative at time $t\in[0,T]$.
	\item $\babla f(x,y)=f(y)-f(x)$ is the nonlocal gradient of a function $f : \R^d \to \R$ 
	\item $\babla \cdot j$ is the nonlocal divergence of a flux $j\in \calM(\Rddiag)$, cf. Definition \ref{def:nl_grad_div}.
	\item $\NCE_T$ denotes the set of solutions to the nonlocal continuity equation on the time intervall $[0,T]$; $\NCE\coloneqq\NCE_1$, cf. Definition~\ref{def:nce-flux-form}.
	\item $\NCE(\varrho_0,\varrho_1)$ is the subset of $(\bs\rho,\bs j)\in\NCE$ which satisfy $\rho_0=\varrho_0$, $\rho_1=\varrho_1$.
	\item $\CE_T$ denotes the set of solutions to the local continuity equation on the time intervall $[0,T]$; $\CE\coloneqq\CE_1$, cf. Definition~\ref{def:ce-flux}. 
	\item $\CE(\varrho_0,\varrho_1)$ is the subset of $(\bs\rho,\bs \jh)\in\CE$ which satisfy $\rho_0=\varrho_0$, $\rho_1=\varrho_1$.
	\item $\calE(\rho)\coloneqq \frac12\iint_{\R^{2d}} K\dx(\rho\otimes\rho)$ denotes the interaction energy of $\rho\in\calP(\Rd)$.
    \item $\frac{\delta \calE}{\delta\rho}$ denotes the variational derivative of $\calE$.
    \item $\calD_{\!\eps}$ is the metric slope of $\calE$ with respect to $\calT_{\mu,\eta^\eps}$, cf. Definition~\ref{def:ls-Giorgi}.
    \item $\bs\calG_{\!\!\eps}$ denotes the graph De Giorgi functional, cf. Definition~\ref{def:ls-Giorgi}.
    \item $W_\bbT$ denotes the $2$-Wasserstein metric on $\calP(\R^d_\bbT)$.
    \item $\calD_{\;\!\bbT}$ is the metric slope of $\calE$ with respect to $W_\bbT$, cf. Definition~\ref{def:deGiorgi_local}.
    \item $\bs\calG_\bbT$ denotes the local De Giorgi functional, cf. Definition~\ref{def:deGiorgi_local}.
\end{itemize}
Let us also specify the notions of \emph{narrow convergence} and \emph{convolution}. 
A sequence $(\rho^n)_n\subset \calM(A)$ is said to converge narrowly to $\rho\in \calM(A)$, in which case we write $\rho^n \rightharpoonup \rho$, provided that
\[
\forall f\in C_\mathrm{b}(A): \qquad \int_A f \dx{\rho^n} \to \int_A f \dx{\rho} \qquad  \textrm{as } n \to \infty. 
\]
Given a function $f \colon A \times A \to \R$ and $\rho \in \calM(A)$, we write $f*\rho$ the convolution of $f$ and $\rho$, that is,
\begin{equation*}
	f*\rho(x)\coloneqq\int_A f(x,y)\dx\rho(y),
\end{equation*}
for any $x \in A$ such that the right-hand side exists.

\section{Preliminaries on graphs, gradient structures, and main results}\label{sec:preliminaries}

For the sake of clarity we divide this section in subsections. We specify the graph structure, recall results on the nonlocal-nonlocal interaction equation as well as the limiting PDE, including possible examples covered by our theory. At the end of the section we present our main results.

\subsection{Graph}\label{ssec:graph}

The graph is identified through a pair $(\mu,\eta)$, being $\mu$ a base measure standing for a (subset) of vertices and $\eta$ an edge weight function. We consider a non-negative $\sigma$-finite base measure $\mu\in\Mplus(\Rd)$ such that $\dx\mu = \widetilde\mu\dx\scrL^d$. We assume that the density $\widetilde\mu$ is bounded and uniformly continuous, denoting by $\omega_\mu\in C([0,\infty);[0,\infty))$ its modulus of continuity, which satisfies $\omega_\mu(\delta)\to 0$ as $\delta\to 0$. More precisely, this means
\begin{align}
	\label{mu1}\tag{$\muup_1$} 
    &\forall\, x,y\in\Rd \text{ it holds } \abs{\widetilde\mu(x)-\widetilde\mu(y)}\le\omega_\mu(\abs{x-y}),\\
	\label{mu2}\tag{$\muup_2$} 
    &\exists\, c_\mu,C_\mu>0 \text{ such that } \forall x\in\Rd \text{ it holds } c_\mu \le\widetilde\mu(x) \le C_\mu.
\end{align}
We fix an edge connectivity map $\vartheta:\Rd\times(\Rd\!\setminus\!\set*{0})\to[0,\infty)$ and another modulus of continuity $\omega_\vartheta\in C([0,\infty);[0,\infty))$ satisfying $\omega_\vartheta(\delta)\to0$ as $\delta\to 0$, and we make the following assumptions:
\begin{align}
	&\label{theta1}\tag{$\varthetaup_1$} \forall z\in\Rd:
 w\mapsto\vartheta(z,w) \text{ is symmetric and continuous on } \set{\vartheta(z,\cdot)>0};\\
    &\label{theta_new}\tag{$\varthetaup_2$} 
    \forall z,\bar z \in\Rd, w\in\Rd\!\setminus\!\set*{0} \text{ it holds } \abs{\vartheta(z,w)-\vartheta(\bar z,w)}\le \omega_\vartheta(\abs{z-\bar z}); \\
	&\label{theta2}\tag{$\varthetaup_3$}
	\exists\!\: \Csupp>0 \text{ such that }\forall z\in\Rd \text{ it holds } \supp\vartheta(z,\cdot) \subset B_{\Csupp};\\
	&\label{theta3}\tag{$\varthetaup_4$}
	\exists\!\: \Ctheta>0 \text{ such that }\sup_{(z,w)\in\Rd\times(\Rdzero)} \abs{w}^2\vartheta(z,w) \le \Ctheta;\\
    &\label{thetaBCprime}\tag{$\varthetaup_5$}
	\lim_{\delta\to 0}\sup_{z\in\Rd}\sup_{w\in B_\delta\setminus\{0\}} \abs{w}^2 \vartheta(z,w)=0;\\
	&\label{theta4}\tag{$\varthetaup_6$}
	\exists\!\: \ctheta>0 \text{ such that } \forall z,\xi\in\Rd \text{ it holds} \int_{\Rdzero}\! \abs{w\cdot\xi}^2\vartheta(z,w)\dx w \ge \ctheta \abs{\xi}^2.
\end{align}
We define the set $\Rddiag\coloneqq\set{(x,y)\in\Rd\times\Rd:x\ne y}$ and the family of edge weight function $(\eta^\eps)_{\eps>0}$, $\eta^\eps:\Rddiag\to[0,\infty)$ by
\begin{align}\label{eq:def:eta^eps}\tag{$\etaup$}
	\eta^\eps(x,y) \coloneqq \frac{1}{\eps^{d+2}}\vartheta\bra*{\frac{x+y}{2},\frac{x-y}{\eps}}.
\end{align} 
Furthermore, for every $\eps>0$, we introduce the sets
\begin{align*}%
	G^{\!\!\:\eps}\coloneqq\set{(x,y)\in\Rddiag:\eta^\eps(x,y)>0},
\end{align*}
and, for a given $x\in\Rd$
\begin{align}\label{eq:def:G^eps}
	G_{\!\!\:x}^{\!\!\:\eps}\coloneqq\set{y\in\Rd\!\setminus\!\set*{x}:\eta^\eps(x,y)>0}.
\end{align}
In order to lighten notations, we will drop the index $\eps>0$ for $\eta^\eps$ and $G^{\!\!\:\eps}$ when this is not relevant for the analysis. The next lemma collects some basic estimates following from the above assumptions, which are used throughout the article.
\begin{lemma}\label{lem:properties_eta-mu}
	Let $\mu,\vartheta$ satsify \eqref{mu1}, \eqref{mu2}, and \eqref{theta1}~--~\eqref{theta4}, respectively. For any $\eps>0$ let $\eta^\eps$ as in \eqref{eq:def:eta^eps}. Then, for any $\eps>0$ it holds
	\begin{align}
		\label{supp}\tag{\textsf{supp}}
		&\;\forall (x,y)\in G^{\!\!\:\eps} \text{ it holds }\abs{x-y}\le \Csupp\eps,\\
		\label{meas}\tag{\textsf{meas}}
		&\sup_{x\in\Rd}\abs{G_{\!\!\:x}^{\!\!\:\eps}} \le \Cmeas \eps^d,\\
		\label{mom}\tag{\textsf{mom}}
		&\sup_{(x,y)\in \Rddiag} \abs{x-y}^2\eta^\eps(x,y) \le\frac{\Cmom}{\eps^d},\\
		\label{int}\tag{\textsf{int}}
		&\sup_{\eps>0}\sup_{x\in\Rd} \int_{\Rdx}\abs{x-y}^2\eta^\eps(x,y)\dx{\mu}(y) \le \Cint,     
	\end{align}
	where $\Cmeas>0$ depends only on $\Csupp$ and the dimension $d$ and one can set $\Cint=\Cmu\Cmom\Cmeas$.
\end{lemma}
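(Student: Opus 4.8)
The plan is to verify the four estimates by a change of variables $w = (x-y)/\eps$ in the definition \eqref{eq:def:eta^eps} of $\eta^\eps$, reducing everything to the corresponding statements about $\vartheta$ from Assumptions \eqref{theta1}--\eqref{theta4} together with the bounds \eqref{mu1}--\eqref{mu2} on $\widetilde\mu$. Throughout, note that $\eta^\eps(x,y)>0$ precisely when $\vartheta\bigl(\tfrac{x+y}{2},\tfrac{x-y}{\eps}\bigr)>0$.

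For \eqref{supp}: if $(x,y)\in G^{\!\!\:\eps}$ then $\vartheta\bigl(\tfrac{x+y}{2},\tfrac{x-y}{\eps}\bigr)>0$, so by \eqref{theta2} we have $\tfrac{x-y}{\eps}\in B_{\Csupp}$, i.e.\ $\abs{x-y}\le \Csupp\eps$. For \eqref{meas}: the set $G_{\!\!\:x}^{\!\!\:\eps}$ is contained in $\set{y:\tfrac{x-y}{\eps}\in\supp\vartheta(\tfrac{x+y}{2},\cdot)}\subset B_{\Csupp\eps}(x)$ by the previous point, hence $\abs{G_{\!\!\:x}^{\!\!\:\eps}}\le \abs{B_{\Csupp\eps}} = \Cmeas\eps^d$ with $\Cmeas$ the volume of the unit ball times $\Csupp^d$, depending only on $\Csupp$ and $d$. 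For \eqref{mom}: using the definition of $\eta^\eps$,
\begin{equation*}
	\abs{x-y}^2\eta^\eps(x,y) = \frac{\abs{x-y}^2}{\eps^{d+2}}\vartheta\Bigl(\tfrac{x+y}{2},\tfrac{x-y}{\eps}\Bigr) = \frac{1}{\eps^{d}}\Babs{\tfrac{x-y}{\eps}}^2\vartheta\Bigl(\tfrac{x+y}{2},\tfrac{x-y}{\eps}\Bigr) \le \frac{\Ctheta}{\eps^{d}}
\end{equation*}
by \eqref{theta3}, so one can take $\Cmom=\Ctheta$. For \eqref{int}: fix $x\in\Rd$, substitute $w=\tfrac{x-y}{\eps}$ so that $y = x-\eps w$ and $\dx\scrL^d(y) = \eps^d\dx w$, and use $\dx\mu(y)=\widetilde\mu(y)\dx\scrL^d(y)$ with the upper bound $\widetilde\mu\le\Cmu$ from \eqref{mu2} (here $\Cmu = C_\mu$); this gives
\begin{equation*}
	\int_{\Rdx}\abs{x-y}^2\eta^\eps(x,y)\dx\mu(y) = \int_{\Rdzero}\eps^2\abs{w}^2\,\frac{1}{\eps^{d+2}}\vartheta\Bigl(\tfrac{x+y}{2},w\Bigr)\widetilde\mu(x-\eps w)\,\eps^d\dx w \le \Cmu\!\int_{\Rdzero}\!\abs{w}^2\vartheta\Bigl(\tfrac{2x-\eps w}{2},w\Bigr)\dx w,
\end{equation*}
and the last integral is bounded uniformly in $x$ and $\eps$ by combining \eqref{theta2} (which confines $w$ to $B_{\Csupp}$) with \eqref{theta3} (which bounds $\abs{w}^2\vartheta\le\Ctheta$ pointwise); the resulting bound is $\Cmu\Ctheta\abs{B_{\Csupp}} = \Cmu\Cmom\Cmeas =: \Cint$, independent of $\eps$ and $x$.

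None of these steps presents a genuine obstacle; the only point requiring minor care is keeping track of the $\eps$-powers in the change of variables and confirming that the $z$-argument $\tfrac{x+y}{2}=x-\tfrac{\eps}{2}w$ of $\vartheta$ is harmless because the bounds in \eqref{theta2}--\eqref{theta3} are uniform in $z$. I would also remark that estimate \eqref{theta_new} (i.e.\ \eqref{theta1}, the $z$-regularity of $\vartheta$) is not needed for this lemma and only enters later in the symmetrization arguments.
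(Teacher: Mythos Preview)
Your proposal is correct and follows essentially the same route as the paper's proof: \eqref{supp} from \eqref{theta2}, \eqref{meas} from the containment $G_{\!\!\:x}^{\!\!\:\eps}\subset B_{\Csupp\eps}(x)$, \eqref{mom} from \eqref{theta3}, and \eqref{int} by combining these with the bound $\widetilde\mu\le\Cmu$ from \eqref{mu2}. The only cosmetic difference is that for \eqref{int} the paper bounds the integrand directly by \eqref{mom} and then estimates $\mu(G_{\!\!\:x}^{\!\!\:\eps})$ via \eqref{meas} and \eqref{mu2}, whereas you carry out the change of variables $w=(x-y)/\eps$ explicitly; both yield the same constant $\Cmu\Cmom\Cmeas$.
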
       
\begin{proof}
	The assumption on the support of $\vartheta$,~\eqref{theta2}, and the definition of $\eta^\eps$ immediately yield \eqref{supp}. This also implies that $G_{\!\!\:x}^{\!\!\:\eps}$ is contained in a ball of radius $\Csupp\eps$ around $x$, whence \eqref{meas}. Furthermore, the latter observation and \eqref{theta3} give the bound \eqref{mom}. Finally, \eqref{int} is obtained by combining \eqref{mom}, \eqref{meas} and the upper bound from \eqref{mu2}.
\end{proof}

\begin{remark}[On the assumptions of the base measure $\mu$]\label{rem:link_to_EPSS}
    As $\supp\mu$ generalises the set of vertices in the nonlocal model, at first glance $\mu\ll\scrL^d$ seems to be rather restrictive, since it excludes finite graphs. However, models with $\mu\ll\scrL^d$ are known to be approximated by finite graphs \cite{EPSS2021}, the only restriction being that $\mu$ is a finite measure. In Theorem \ref{thm:extension_sigma_finite} we replace this assumption by those satisfied in the present paper; thereby the results in \cite{EPSS2021}, in conjunction with the present work, can be applied to obtain a finite graph approximation for the nonlocal interaction equation, cf. Theorem~\ref{thm:finite_graph}. Indeed, we note that the assumptions \eqref{theta1}~--~\eqref{theta4} and \eqref{mu1}, \eqref{mu2} imply that for any $\eps>0$ the pair $(\mu,\eta^\eps)$ satisfies all the assumptions from \cite{EPSS2021}. More precisely, for $x,y\in G^{\!\!\:\eps}$ we have the moment bounds
	\begin{align*}
		\sup_{(x,y)\in \Rddiag}|x-y|^2\lor|x-y|^4\eta^\eps(x,y)&\le\Cmom\frac{1\vee(\Csupp\eps)^2}{\eps^d},\\
		\sup_{x\in \Rd}\int_{\Rdx}|x-y|^2\lor|x-y|^4\eta^\eps(x,y)\dx\mu(y)&\le\Cint\bra*{1\vee(\Csupp\eps)^2},
	\end{align*}
	while the local blow-up control condition 
    \begin{align*}
		\lim_{\delta\rightarrow 0}\sup_{x\in\Rd} \sup_{y\in B_\delta(x)\setminus\{x\}}\abs{x-y}^2\eta^\eps(x,y) = 0,
	\end{align*}
	is implied by \eqref{thetaBCprime}. This condition plays a role in the proof of \cite[Theorem~3.15]{EPSS2021} for the construction of a family of finite base measures that uniformly satisfies the integrated blow-up control condition
    \begin{align*}
		\lim_{\delta\rightarrow 0}\sup_{x\in\Rd} \int_{B_\delta(x)\setminus\{x\}}\abs{x-y}^2\eta^\eps(x,y)\dx{\mu}(y) = 0.
	\end{align*}
    In the present work, this condition holds true due to \eqref{mu2} and \eqref{theta3}.
    
    For the sake of completeness, we mention that one could fix $\mu=\scrL^d$ by redefining $\vartheta^\mu(z,w) = \widetilde\mu\bra*{z} \vartheta(z,w)$ without changing the limiting equation, since $\mu$ is assumed to be absolutely continuous, with uniformly bounded and uniformly continuous density. However, the base measure $\mu$ makes a difference for the dynamic on the graph, thus the graph-to-local limit is more universal keeping $\mu$.
\end{remark}
\begin{remark}[More general edge weights $\eta^\eps$]
    The definition of $\eta^\eps$ can be relaxed to 
    \begin{align*}
        \eta^\eps(x,y) \coloneqq \frac{1}{\eps^{d+2}}\vartheta\bra*{\frac{x+y}{2},\frac{x-y}{\eps}}+f(\eps)g\bra*{\frac{x+y}{2},\frac{x-y}{\eps}}, 
    \end{align*}
    for some $g:\Rd\times(\Rd\!\setminus\!\set*{0})\to\R$, which is uniformly continuous in its first argument as well as symmetric and continuous in its second argument, and some $f:[0,\infty)\to\R$, which satisfies $f\in o(\eps^{-d-2})$ as $\eps\to 0$. Indeed, with this scaling the perturbation vanishes in the limit $\eps \to 0$, leaving the limiting tensor $\bbT$ unchanged.
\end{remark}

\subsection{Examples}%
We provide relevant examples of edge connectivity maps satisfying the conditions \eqref{theta1}~--~\eqref{theta4}. The requirements \eqref{theta1}~--~\eqref{theta3} are not difficult to check. 

Furthermore, observe that condition \eqref{thetaBCprime} holds for any $\vartheta$ for which there is $q<2$ and $C>0$ such that $\vartheta(z,w) \le C\abs{w}^{-q}$ for all $z\in\Rd$, $w\in\Rdzero$. 

Condition \eqref{theta4} is satisfied by any edge connectivity $\vartheta$ for which there exist $0\le r<R<\infty$ and $C>0$ such that $\vartheta(z,\cdot)|_{\overline{B}_R\setminus\overline{B}_r} \ge C$ for any $z\in\Rd$. Indeed, first note that for $\xi = 0$ there is nothing to show. Taking $w,\xi\in\Rd\!\setminus\!\set*{0}$, let us denote by $\varphi$ the angle between $w$ and $\xi$ and recall that $w\cdot\xi = \abs{w}\abs{\xi}\cos\varphi$. Choosing any $0 <\varphi_0 <s/2$, for any $0\le\varphi\le\varphi_0$ we have $\cos(\varphi)\ge \cos(\varphi_0)>0$. Therefore, denoting by $V_{\varphi_0}$ the volume of $B_1\cap\set{\varphi\le\varphi_0}$, which is the ($d$-dimensional) spherical sector of $B_1$ corresponding to $\varphi_0$, we have
\begin{align*}
	\int_{\Rdzero} \abs{w\cdot\xi}^2\vartheta(z,w)\dx w &\ge C V_{\varphi_0} \bra*{R^{d+2}-r^{d+2}} \abs{\cos(\varphi_0)}^2\abs{\xi}^2,
\end{align*}
and hence \eqref{theta4}. 
A simple but important example for a pair $(\mu,\vartheta)$ satisfying the above assumptions is $\vartheta(z,w) = C_d\bbone_{B_1}(w)$ with dimension dependent constant $C_d>0$ and $\mu=\scrL^d$. Indeed, \eqref{mu1}, \eqref{mu2} and \eqref{theta1}~--~\eqref{theta3} are easily checked, while \eqref{thetaBCprime} and \eqref{theta4} follow from the above considerations. This choice of (approximating) graph is peculiar as ~\eqref{eq:nlnl-interaction-eq} converges to the standard nonlocal interaction equation on $\Rd$, being $\bbT=\Id$, cf.~\eqref{eq:intro:limittensor}.

As a generalization, we consider a symmetric tensor field $\mathbb{D}\in C(\Rd ; \R^{d \times d})$ uniformly elliptic and bounded, i.e. $0<D_* \Id\leq \mathbb{D} \leq D^* \Id <\infty$ in the sense of quadratic forms, a variable radius $R\in C(\Rd;(R_*,R^*))$ for some $0<R_*<R^* <\infty$, and a normalization function $d\in C(\Rd; (d_*, d^*))$ for $0<d_* < d^* < \infty$. We define the connectivity function
\[ 
    \vartheta(z,w)=\begin{cases} 
    d(z), & \skp{w,\mathbb{D}(z)^{-1} w}\leq R(z) ;\\ 
    0, & \skp{w,\mathbb{D}(z)^{-1} w}> R(z) .
    \end{cases} 
\]
By construction, $\vartheta$ satisfies~\eqref{theta1}~--~\eqref{theta4}. 
The formula~\eqref{eq:intro:limittensor} gives, after a change of variable $w= \mathbb{D}(z)^{\frac{1}{2}} y$, where $\mathbb{D}(z)^{\frac{1}{2}}$ denotes the unique symmetric square root of $\mathbb{D}$:
\begin{align*}
	\bbT(z) &= \frac{1}{2} \int_\Rdzero w\otimes w \; \vartheta(z,x)\dx w  \\
    &= \frac{1}{2}d(z) \int_\Rd w\otimes w \; \bbone_{\skp{w,\mathbb{D}(z)^{-1} w}\leq R(z)}\dx w  \\
	&= \frac{1}{2} d(z)\bra*{\det \mathbb{D}(z)}^{\frac{1}{2}} \int_{B_{R(z)}(0)} \bra*{\mathbb{D}(z)^{\frac{1}{2}} y} \otimes \bra*{\mathbb{D}(z)^{\frac{1}{2}} y} \dx y \\
	&= \frac{1}{2} d(z)\bra*{\det \mathbb{D}(z)}^{\frac{1}{2}} C_d R(z)^{d+2} \mathbb{D}(z) ,
\end{align*}
where we used the identity
\begin{align*}
	\MoveEqLeft \int_{B_{R(z)}(0)} \bra*{\mathbb{D}(z)^{\frac{1}{2}} y} \otimes \bra*{\mathbb{D}(z)^{\frac{1}{2}} y} \dx y =  \mathbb{D}(z)^{\frac{1}{2}}\bra[\bigg]{\int_{B_{R(z)}(0)} y \otimes y \dx y}\mathbb{D}(z)^{\frac{1}{2}}\\
	 &= \mathbb{D}(z)^{\frac{1}{2}} \bra*{C_d R(z)^{d+2}\Id} \mathbb{D}(z)^{\frac{1}{2}} = C_d R(z)^{d+2} \mathbb{D}(z)
\end{align*}	
with $C_d = \int_{B_1(0)} y_1^2 \dx y = \pi^{\frac{d}{2}} /(2\Gamma(\frac{d}{2}+2)).$ 
In particular, by choosing the normalization
\[
  d(z) = \frac{2}{C_d R(z)^{d+2} \bra*{\det \mathbb{D}(z)}^{\frac{1}{2}}}, 
\]
and $\mu=\scrL^d$, we obtain the identity $\bbT= \mathbb{D}$.

\subsection{Nonlocal interaction energy}
	The nonlocal interaction energy considered in what follows is defined by
	\begin{align*}
		\calE(\rho) \coloneqq \frac12\iint_{\R^{2d}} K(x,y)\dx\rho(x)\dx\rho(y).
	\end{align*}
	The interaction kernel $K\colon\Rd\times\Rd\to\R$ is assumed to satisfy the following assumptions:
	\begin{align}
		\label{K1}\tag{\textsf{K1}}
		&K\in C^1(\Rd\times\Rd);\\
		\label{K2}\tag{\textsf{K2}}
		&K(x,y)=K(y,x)\quad \text{ for  } (x,y)\in\Rd\times\Rd;\\
		\label{K3}\tag{\textsf{K3}}
		\begin{split}
			&\exists\, L_K>0 \text{ such that for all }(x,y),(x',y')\in\Rd\times\Rd \text{ it holds}\\
			&|K(x,y)-K(x',y')|\le L_K\left(|(x,y)-(x',y')|\vee|(x,y)-(x',y')|^2\right);
		\end{split}\\
		\label{K4}\tag{\textsf{K4}}
		\begin{split}
			&\exists\, C_K>0 \text{ such that for all }(x,y)\in\Rd\times\Rd \text{ it holds}\\
			&|\nabla K(x,y)|\le C_K(1+|x|+|y|).
		\end{split}
	\end{align}
We observe the assumptions on the interaction kernel are somehow sharp as one does not expect the result holds true for pointy potentials, cf.~\cite[Remark 3.18]{EPSS2021}.
	\begin{remark}\label{rem:second_moment}
		As already noticed in \cite{EPSS2021}, assumption~\eqref{K3} implies that, for some $C >0$ and all $x,y\in \Rd$,
		\begin{equation}\label{as:K:Quad}
			\abs{K(x,y)} \leq C \bra*{1+ \abs{x}^2 + \abs{y}^2};
		\end{equation}
		indeed, for fixed $(x',y')\in \Rd\times \Rd$, \eqref{K3} yields
		\[
		\abs{\abs{K(x,y)} - \abs{K(x',y')}} \leq L_K \bra*{ 1 \vee 2\bra*{ |(x,y)|^2 + |(x',y')|^2}},
		\]
		and bounding the maximum on the right-hand side ($\vee$) by the sum, we arrive at $\abs{K(x,y)} \leq L_K +2 L_K \bra*{|(x',y')|^2 + |(x,y)|^2} + \abs{K(x',y')}$, which gives~\eqref{as:K:Quad} with $C=2L_K\bigl(1+|(x',y')|^2\bigr) + \abs{K(x',y')}$. We also notice, that the bound~\eqref{as:K:Quad} implies that $\calE\colon \calP_2(\Rd)\to \R$ is proper, since its domain contains ~$\calP_2(\Rd)$.
	\end{remark} 
The analysis in this manuscript easily extends to free energies of the form~\eqref{eq:interaction_energy} including potential energies $\calE_P(\rho)\coloneqq\int_\Rd P \dx{\rho}$, for some external potential $P\colon\R^d \to \R$ satisfying a local Lipschitz condition with at-most-quadratic growth at infinity and linear growth for $\nabla P$: similarly to~\eqref{K3}~--~\eqref{K4}, there exist $L, C\in (0,\infty)$ so that for all $x,y\in \R^d$ we have
\begin{align*}
	\abs{P(x)-P(y)} &\le L \bra*{ |x-y|\vee |x-y|^2},\\
	|\nabla P(x)| &\le C(1+|x|).
\end{align*}
For ease of presentation we shall not include the potential energy in our proofs, as no additional technical difficulties arise.

\subsection{Nonlocal-nonlocal interaction equation}\label{subsec:nlnl_interaction_eq}

In this subsection we recall results on the nonlocal interaction equation on graphs from \cite{EPSS2021} we shall use in the following. For simplicity, let $\rho\ll\mu$, where we use the notation~$\rho$ to denote both the measure and the density with respect to $\mu$. The equation reads, for $\mu-$a.e. $x$,
\begin{equation}\label{eq:nlnl-interaction-eq}
    \begin{split}
     \partial_t\rho_t(x)+\int_\Rd &\dgrad(K*\rho)(x,y)_- \eta(x,y) \rho_t(x) \dx\mu(y) \\
     &- \int_\Rd \dgrad(K*\rho)(x,y)_+ \eta(x,y) \dx\rho_t(y)=0.
     \end{split}\tag{$\mathsf{NL^2IE}$}
 \end{equation}
The theory in \cite{EPSS2021} also applies to the case when $\rho$ is not absolutely continuous with respect to $\mu$. The general weak form of \eqref{eq:nlnl-interaction-eq} is obtained in terms of the nonlocal continuity equation we specify later.

In order to have a graph-analogue of Wasserstein gradient flows for interaction energies we defined a suitable quasi-metric space, where the quasi-distance is obtained in a dynamical formulation à la Benamou--Brenier, \cite{BenamouBrenier2000,DNS09_CVPDE}. For this reason, it is crucial to identify paths connecting probability measures, a \emph{nonlocal} continuity equation, and an action functional to be minimized, resembling the total kinetic energy.
\begin{definition}[Action]\label{def:action}
	Let $\mu\in\calM^+(\Rd)$ and $\eta:\Rddiag\to[0,\infty)$ as before. For $\rho\in\calP(\Rd)$ and $j\in\calM(G)$, consider $\lambda\in\calM(\Rd\times\Rd)$ such that $\rho\otimes\mu,\mu\otimes\rho,|j|\ll|\lambda|$. We define
	\begin{equation*}%
		\A(\mu,\eta;\rho,j)\!\coloneqq\!\frac12\iint_G \left(\alpha\left(\frac{\dx{j}}{\dx{|\lambda|}},\frac{\dx{(\rho\otimes\mu)}}{\dx{|\lambda|}}\right)+\alpha\left(-\frac{\dx{j}}{\dx{|\lambda|}},\frac{\dx{(\mu\otimes\rho)}}{\dx{|\lambda|}}\right)\right)\eta \dx{|\lambda|} .
	\end{equation*}
	Hereby, the lower semicontinuous, convex, and positively one-homogeneous function $\alpha\colon \R\times\R_+\to\R_+\cup\{\infty\}$ is defined, for all $j\in\R$ and $r\geq 0$, by
	\begin{equation}\label{eq:def:alpha}
		\alpha(j,r)\coloneqq\begin{cases}
			\frac{(j_+)^2}{r} \qquad &\text{if}\ r>0,\\
			0 \qquad &\text{if}\ j\leq 0\ \text{and}\ r=0,\\
			\infty \qquad &\text{if}\ j> 0\ \text{and}\ r=0,
		\end{cases}
	\end{equation}
	with $j_+=\max\{0,j\}$. If $\mu$ and $\eta$ are clear from the context, we  write $\calA(\rho,j)$ for $\calA(\mu,\eta;\rho,j)$. Given a pair of curves $(\bs\rho,\bs j)\coloneqq ((\rho_t)_{t\in[0,T]},(j_t)_{t\in[0,T]})$ with $\rho_t\in\calP(\Rd)$ and $j_t\in\calM(G)$, we define
	\begin{align*}
		\bs\A(\mu,\eta;\bs\rho,\bs j)\coloneqq \int_0^T\A(\mu,\eta;\rho_t,j_t)\dx t.
	\end{align*}
\end{definition} 
The concept of graph gradient and graph divergence is as follows.
\begin{definition}[Nonlocal gradient and divergence] \label{def:nl_grad_div}
	For any function $\phi \colon \Rd \to \R$ we define its \emph{nonlocal gradient} $\babla \phi \colon G \to \R$ by
	\begin{equation*}%
		\babla\phi(x,y)=\phi(y)-\phi(x) \quad \mbox{for all $(x,y)\in G$}.
	\end{equation*}
	For any $j\in \calM(G)$, its \emph{nonlocal divergence} $\babla\cdot j \in \calM(\R^d)$ is defined as the negative $\eta$-weighted adjoint of $\babla$, i.e., for any $\varphi\in C_0(\Rd)$,
	\begin{align*}%
		\int \varphi \dx\babla \cdot j &= - \frac12\iint_G\babla\varphi(x,y)
		\eta(x,y)\dx{j}(x,y)\\
		&= \frac{1}{2}\int \varphi(x) \int \eta(x,y) \bra*{ \dx{j}(x,y) - \dx{j}(y,x)}.\notag
	\end{align*}
	In particular, for $j\in \calM^{\mathrm{as}}(G) \coloneqq \set{j\in \calM(G)\colon \dx j(x,y)=-\dx j(y,x)}$,
	\begin{equation*}%
		\int \varphi \dx{\babla\cdot j}= \iint_G \varphi(x) \eta(x,y) \dx{j}(x,y).
	\end{equation*}
\end{definition}
The following two lemmas will be employed for $\eps$ fixed.
\begin{lemma}%
	Let $\mu\in\calM^+(\Rd)$, $\eta^\eps\colon\Rddiag\to\R$ be as in \eqref{eq:def:eta^eps} such that \eqref{theta1}~--~\eqref{theta4} are satisfied. Let $(\rho^\eps)_{\eps>0}\subset\calP(\Rd)$ and $(j^\eps)_{\eps>0}\subset\calM(G^{\!\!\:\eps})$ be such that $\calA(\mu,\eta^\eps;\rho^\eps,j^\eps)< \infty$. Then, for any measurable $\Phi\colon G^{\!\!\:\eps}\rightarrow\R_+$, it holds
	\begin{equation*}%
		\frac12\iint_{G^{\!\!\:\eps}}\Phi\,\eta^\eps\dx{\abs{j^\eps}} \le \sqrt{\calA(\mu,\eta^\eps;\rho^\eps,j^\eps)\iint_{G^{\!\!\:\eps}}\Phi^2\eta^\eps \dx{(\rho^\eps\otimes\mu+\mu\otimes\rho^\eps)}}.
	\end{equation*}
\end{lemma}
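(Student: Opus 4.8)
The plan is to prove the asserted inequality by a Cauchy--Schwarz argument, exploiting the $1$-homogeneity and convexity of $\alpha$ together with its relation to the kinetic action density. First I would unfold the definitions: fix a reference measure $\lambda$ dominating $\rho^\eps\otimes\mu$, $\mu\otimes\rho^\eps$ and $\abs{j^\eps}$, write $\jmath := \tfrac{\dx j^\eps}{\dx\abs\lambda}$, $r_1 := \tfrac{\dx(\rho^\eps\otimes\mu)}{\dx\abs\lambda}$, $r_2 := \tfrac{\dx(\mu\otimes\rho^\eps)}{\dx\abs\lambda}$, so that
\[
  \calA(\mu,\eta^\eps;\rho^\eps,j^\eps) = \frac12\iint_{G^{\!\!\:\eps}}\bigl(\alpha(\jmath,r_1)+\alpha(-\jmath,r_2)\bigr)\eta^\eps\dx\abs\lambda.
\]
The key pointwise observation is that since $\abs{\jmath} \le \jmath_+ + (-\jmath)_+$, and since on $\{r_i>0\}$ one has $\alpha(\pm\jmath,r_i)=(\jmath_\pm)^2/r_i$, the Cauchy--Schwarz inequality in $\R^2$ gives $\abs{\jmath} \le \sqrt{\alpha(\jmath,r_1)+\alpha(-\jmath,r_2)}\,\sqrt{r_1+r_2}$ whenever $\alpha(\jmath,r_1)+\alpha(-\jmath,r_2)<\infty$ (and the finiteness of the action forces this $\abs\lambda$-a.e.\ on $G^{\!\!\:\eps}$, also taking care of the degenerate cases $r_i=0$, where finiteness of $\alpha$ forces $\jmath_\pm=0$). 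This is the heart of the matter and the step I expect to require the most care, mainly in handling the case distinctions in the definition of $\alpha$ cleanly.

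Next I would multiply the pointwise bound by $\tfrac12\Phi\,\eta^\eps$ and integrate over $G^{\!\!\:\eps}$ with respect to $\abs\lambda$, giving
\[
  \frac12\iint_{G^{\!\!\:\eps}}\Phi\,\eta^\eps\dx\abs{j^\eps}
  \;=\; \frac12\iint_{G^{\!\!\:\eps}}\Phi\,\abs{\jmath}\,\eta^\eps\dx\abs\lambda
  \;\le\; \frac12\iint_{G^{\!\!\:\eps}}\Phi\sqrt{\alpha(\jmath,r_1)+\alpha(-\jmath,r_2)}\sqrt{r_1+r_2}\,\eta^\eps\dx\abs\lambda.
\]
Then I apply the Cauchy--Schwarz inequality in $L^2(G^{\!\!\:\eps},\eta^\eps\dx\abs\lambda)$ to the two factors $\sqrt{\alpha(\jmath,r_1)+\alpha(-\jmath,r_2)}$ and $\Phi\sqrt{r_1+r_2}$, yielding the product of $\bigl(\iint_{G^{\!\!\:\eps}}(\alpha(\jmath,r_1)+\alpha(-\jmath,r_2))\eta^\eps\dx\abs\lambda\bigr)^{1/2} = \sqrt{2\,\calA(\mu,\eta^\eps;\rho^\eps,j^\eps)}$ and $\bigl(\iint_{G^{\!\!\:\eps}}\Phi^2(r_1+r_2)\eta^\eps\dx\abs\lambda\bigr)^{1/2} = \sqrt{\iint_{G^{\!\!\:\eps}}\Phi^2\eta^\eps\dx(\rho^\eps\otimes\mu+\mu\otimes\rho^\eps)}$. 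Collecting the factors of $\tfrac12$ and $\sqrt2$ produces exactly the claimed bound $\sqrt{\calA(\mu,\eta^\eps;\rho^\eps,j^\eps)\iint_{G^{\!\!\:\eps}}\Phi^2\eta^\eps\dx(\rho^\eps\otimes\mu+\mu\otimes\rho^\eps)}$.

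Two small points that I would address for rigour: first, the decomposition $\dx j^\eps = \jmath\dx\abs\lambda$ and the identity $\dx\abs{j^\eps} = \abs\jmath\dx\abs\lambda$ are standard facts about Radon--Nikodym densities of signed/vector measures, so no delicacy there; second, I should note that the statement is independent of the choice of dominating $\lambda$ because $\alpha$ is positively $1$-homogeneous in $(\jmath,r)$ jointly, which is precisely the reason $\calA$ is well defined in Definition~\ref{def:action}. If $\calA(\mu,\eta^\eps;\rho^\eps,j^\eps)=\infty$ or the right-hand integral $\iint\Phi^2\eta^\eps\dx(\rho^\eps\otimes\mu+\mu\otimes\rho^\eps)$ diverges there is nothing to prove, so we may assume both are finite and in particular the integrand is $\abs\lambda$-a.e.\ finite on $G^{\!\!\:\eps}$, which legitimizes the pointwise step above. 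No monotone/dominated convergence subtlety arises since everything is nonnegative.
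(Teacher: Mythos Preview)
Your argument is correct. The paper itself does not give a proof but simply invokes Remark~\ref{rem:link_to_EPSS} to reduce to \cite[Lemma~2.10]{EPSS2021}; your double Cauchy--Schwarz argument (once pointwise via $\abs{\jmath}=\jmath_++\jmath_-\le\sqrt{\alpha(\jmath,r_1)+\alpha(-\jmath,r_2)}\sqrt{r_1+r_2}$, once in $L^2(\eta^\eps\dx\abs\lambda)$) is precisely the standard way such a lemma is established, so in content you have reproduced what the cited result contains.

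One small remark: when you ``collect the factors of $\tfrac12$ and $\sqrt2$'' you actually obtain
\[
\frac12\iint_{G^{\!\!\:\eps}}\Phi\,\eta^\eps\dx{\abs{j^\eps}}
\le \frac{1}{\sqrt2}\sqrt{\calA(\mu,\eta^\eps;\rho^\eps,j^\eps)\iint_{G^{\!\!\:\eps}}\Phi^2\eta^\eps \dx{(\rho^\eps\otimes\mu+\mu\otimes\rho^\eps)}},
\]
which is strictly sharper than the stated bound by a factor $1/\sqrt2$, not ``exactly'' the claimed bound. This is harmless (the lemma follows a fortiori), but worth noting so the bookkeeping is clean.
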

\begin{proof}
	By Remark \ref{rem:link_to_EPSS}, this follows from \cite[Lemma 2.10]{EPSS2021}.
\end{proof}
\begin{corollary}\label{cor:A-bound}
	Let $(\mu^\eps)_{\eps>0} \subset \calM^+(\Rd)$ and $(\eta^\eps)_{\eps>0}$ be families  satisfying \eqref{mu1}, \eqref{mu2}, and \eqref{theta1}~--~\eqref{theta4}, respectively, uniformly in $\eps$. Let $(\rho^\eps)_{\eps>0} \subset \calP(\Rd)$ and $(j^\eps)_{\eps>0} \subset\calM(G^{\!\!\:\eps})$. Then, for any measurable $\Phi:G^{\!\!\:\eps}\to\R_+$ satisfying $\Phi(x,y)\le \abs{x-y}\lor\abs{x-y}^2$, we have
	\begin{equation*}
		\frac12\iint_{G^{\!\!\:\eps}} \Phi\, \eta\dx{\abs{j^\eps}} \le \sup_{\eps>0} \sqrt{2\Cint\calA(\mu^\eps,\eta^\eps;\rho^\eps,j^\eps)}.
	\end{equation*}
\end{corollary}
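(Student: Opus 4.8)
The plan is to deduce the estimate directly from the preceding lemma, by controlling, uniformly in $\eps$, the integrability factor it leaves under the square root. Fix $\eps>0$. If $\calA(\mu^\eps,\eta^\eps;\rho^\eps,j^\eps)=\infty$ there is nothing to prove, since then the right-hand side is already $+\infty$; so we may assume this action is finite and apply the preceding lemma to the given $\Phi$. Writing $I^\eps\coloneqq\iint_{G^{\!\!\:\eps}}\Phi^2\,\eta^\eps\dx{(\rho^\eps\otimes\mu^\eps+\mu^\eps\otimes\rho^\eps)}$ for the integrability term, the lemma yields
\[
  \frac12\iint_{G^{\!\!\:\eps}}\Phi\,\eta^\eps\dx{\abs{j^\eps}}\le\sqrt{\calA(\mu^\eps,\eta^\eps;\rho^\eps,j^\eps)\,I^\eps}.
\]
Everything then reduces to proving the $\eps$-uniform bound $I^\eps\le2\Cint$, with $\Cint$ the ($\eps$-independent) constant from \eqref{int}.

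To bound $I^\eps$ I would use the hypothesis $\Phi(x,y)\le\abs{x-y}\lor\abs{x-y}^2$, so that $\Phi(x,y)^2\le\abs{x-y}^2\lor\abs{x-y}^4$ (this in particular covers $\Phi(x,y)=\abs{x-y}^k$ for $k=1,2$), and then split the product measure. Since $\rho^\eps\in\calP(\Rd)$, integrating first in the variable carrying $\mu^\eps$ and then in the variable carrying $\rho^\eps$ bounds the $\rho^\eps\otimes\mu^\eps$-part of $I^\eps$ by $\sup_{x\in\Rd}\int_{\Rdx}\bra*{\abs{x-y}^2\lor\abs{x-y}^4}\,\eta^\eps(x,y)\dx{\mu^\eps}(y)$; the $\mu^\eps\otimes\rho^\eps$-part is handled identically after relabelling $x\leftrightarrow y$, since $\eta^\eps$ and the indicator of $G^{\!\!\:\eps}$ are both invariant under this swap, the map $w\mapsto\vartheta(z,w)$ being even by \eqref{theta1}. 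The supremum so obtained is exactly the second-/fourth-moment quantity recorded in Remark~\ref{rem:link_to_EPSS}, bounded there by $\Cint\bra*{1\lor(\Csupp\eps)^2}$, which equals $\Cint$ in the regime $\Csupp\eps\le1$ relevant for the graph-to-local limit $\eps\to0$; hence $I^\eps\le2\Cint$. (If only the weaker hypothesis $\Phi\le\abs{x-y}$ is needed, then $\Phi^2\le\abs{x-y}^2$ and the cruder bound \eqref{int} of Lemma~\ref{lem:properties_eta-mu} gives $I^\eps\le2\Cint$ directly, with no restriction on $\eps$.)

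Combining the display with $I^\eps\le2\Cint$ gives $\frac12\iint_{G^{\!\!\:\eps}}\Phi\,\eta^\eps\dx{\abs{j^\eps}}\le\sqrt{2\Cint\,\calA(\mu^\eps,\eta^\eps;\rho^\eps,j^\eps)}$ for every $\eps$, and in particular the left-hand side is at most $\sup_{\eps>0}\sqrt{2\Cint\,\calA(\mu^\eps,\eta^\eps;\rho^\eps,j^\eps)}$, which is the assertion. I do not expect a genuine obstacle here: the argument is essentially Cauchy--Schwarz together with the already-established moment controls of Lemma~\ref{lem:properties_eta-mu} and Remark~\ref{rem:link_to_EPSS}. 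The only two points deserving a line of care are the use of $\rho^\eps$ being a probability measure --- to collapse each double integral into a supremum over a single point --- and the evenness of $\eta^\eps$, which lets the two upwind terms be estimated by the same quantity; the $\eps$-uniformity of the constant is precisely what those moment bounds are designed to furnish.
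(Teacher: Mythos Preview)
Your proposal is correct and follows the same route as the paper: the paper simply invokes \cite[Corollary 2.11]{EPSS2021} together with Remark~\ref{rem:link_to_EPSS} and the upper bound on $\Phi$, and what you have written is precisely an expanded version of that argument --- apply the preceding Cauchy--Schwarz-type lemma and then control $I^\eps$ via the uniform moment bounds. Your observation that the constant $\Cint$ (rather than $\Cint\bra{1\lor(\Csupp\eps)^2}$) is obtained only for $\Csupp\eps\le1$ is accurate and consistent with how the corollary is used in the paper, where one works in the regime $\eps\to0$.
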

\begin{proof}
	Keeping in mind Remark \ref{rem:link_to_EPSS} and the upper bound for $\Phi$, this immediately follows from \cite[Corollary 2.11]{EPSS2021}. 
\end{proof}
We consider the following nonlocal continuity equation in flux form
\begin{equation}\label{eq:nlce_measures}
	\partial_t\rho_t+\babla\cdot j_t=0 \qquad \text{on}\ (0,T)\times\Rd,
\end{equation}
where $\bs\rho=(\rho_t)_{t\in[0,T]}$ and $\bs j=( j_t)_{t\in[0,T]}$ are unknown Borel families of measures in $\calP(\Rd)$ and $\calM(G)$, respectively. Equation \eqref{eq:nlce_measures} is understood in the weak form, i.e. $\forall\bs\varphi\in C_c^\infty((0,T)\times\Rd)$,
\begin{equation}\label{eq:nce-weak}
	\int_0^T\int_\Rd\partial_t\varphi_t(x)\dx\rho_t(x)\dx t +\frac12\int_0^T\iint_G\babla\varphi_t(x,y)\eta(x,y)\dx j_t(x,y)\dx t=0.
\end{equation}
Since $|\babla\bs\varphi(x,y)|\le||\bs\varphi||_{C^1}(2\wedge|x-y|)$, the weak formulation is well-defined under the integrability condition
\begin{equation}\label{eq:integrability-cond}
	\int_0^T\iint_G(2\wedge|x-y|)\eta(x,y)\dx \abs{j_t}(x,y)\dx t<\infty .
\end{equation}
\begin{remark}
	The integrability condition~\eqref{eq:integrability-cond} is automatically satisfied by any pair $(\bs \rho,\bs j)$, which satisfies $\bs\A(\mu,\eta;\bs\rho,\bs j)< \infty$, due to Corollary \ref{cor:A-bound}.
\end{remark}
Notice that any curve satisfying~\eqref{eq:nce-weak} and~\eqref{eq:integrability-cond} has a weakly continuous representative.
\begin{lemma}\label{lem:nce:weak_cont}
	Let $T>0$ and $(\bs\rho,\bs j)$ satisfy~\eqref{eq:nce-weak}~and~\eqref{eq:integrability-cond}. There exists a weakly continuous curve $(\bar{\rho}_t)_{t\in[0,T]}\subset\calP(\Rd)$ such that $\bar{\rho}_t=\rho_t$ for a.e.\ $t\in[0,T]$. Moreover, for any $\bs\varphi\in C_c^\infty([0,T]\times\Rd)$ and all $0\le t_0\le t_1\le T$ it holds
	\begin{equation*}%
		\begin{aligned}
			\int_\Rd\varphi_{t_1}(x)\dx\bar{\rho}_{t_1}(x)&-\int_\Rd\varphi_{t_0}(x)\dx\bar{\rho}_{t_0}(x)=\int_{t_0}^{t_1}\int_\Rd\partial_t\varphi_t(x)\dx\rho_t(x)\dx t\\
			&+\,\frac12\int_{t_0}^{t_1}\iint_G\babla\varphi_t(x,y)\eta(x,y)\dx j_t(x,y)\dx t.
		\end{aligned}
	\end{equation*}
\end{lemma}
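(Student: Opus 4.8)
The plan is to show first that the map $t\mapsto \int_\Rd \varphi\dx\rho_t$ is (after modification on a null set) absolutely continuous for every fixed test function, then to upgrade this to the existence of a single weakly continuous representative valid simultaneously for all test functions, and finally to identify its increments via the weak formulation \eqref{eq:nce-weak}. The key integrability input is that membership in $\NCE_T$ presupposes the condition \eqref{eq:integrability-cond}, so the flux term below is well-defined; I record
\[
  R_T \coloneqq \int_0^T\iint_G(2\wedge|x-y|)\,\eta(x,y)\dx\abs{j_t}(x,y)\dx t<\infty.
\]

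\emph{Step 1: absolute continuity for fixed $\varphi$.} Fix $\varphi\in C_c^\infty(\Rd)$. For $\bs\zeta\in C_c^\infty((0,T))$ the product $\zeta_t\varphi(x)$ is an admissible test function in \eqref{eq:nce-weak}, which reads
\[
  \int_0^T \zeta'_t\Bra*{\int_\Rd \varphi\dx\rho_t}\dx t
  = -\frac12\int_0^T \zeta_t\Bra*{\iint_G\babla\varphi(x,y)\,\eta(x,y)\dx j_t(x,y)}\dx t.
\]
Hence the distributional derivative of $g_\varphi(t)\coloneqq\int_\Rd\varphi\dx\rho_t$ is the $L^1(0,T)$ function $t\mapsto \tfrac12\iint_G\babla\varphi\,\eta\dx j_t$, whose integrability follows from $\abs{\babla\varphi(x,y)}\le\norm{\varphi}_{C^1}(2\wedge|x-y|)$ and $R_T<\infty$. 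Therefore $g_\varphi$ coincides a.e.\ with an absolutely continuous function, and in particular has left and right limits at every $t$; moreover for a.e.\ $t_0\le t_1$,
\[
  g_\varphi(t_1)-g_\varphi(t_0)=\frac12\int_{t_0}^{t_1}\iint_G\babla\varphi(x,y)\,\eta(x,y)\dx j_t(x,y)\dx t .
\]

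\emph{Step 2: a single weakly continuous representative.} Since $\rho_t\in\calP(\Rd)$ for a.e.\ $t$, the family $(\rho_t)_t$ is tight-bounded in total variation; one extracts a countable set $\calD\subset C_c^\infty(\Rd)$ that is dense in $C_0(\Rd)$ for the uniform norm, applies Step 1 to each $\varphi\in\calD$, and lets $N\subset[0,T]$ be the (null) union of the exceptional sets. For $t\notin N$ one keeps $\bar\rho_t\coloneqq\rho_t$; for $t\in N$ one defines $\bar\rho_t$ via the common left/right limits of the $g_\varphi$, $\varphi\in\calD$ — these limits define a bounded linear functional on a dense subspace of $C_0(\Rd)$ of norm $\le 1$, hence by density and Riesz representation a probability measure (mass $1$ is preserved because $g_{\varphi_n}\to 1$ along an approximate identity, using tightness which propagates to the limit by the uniform second-moment bound implicit in $\rho_t\in\calP(\Rd)$, or more simply by a truncation argument). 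Uniform equicontinuity of $t\mapsto g_\varphi(t)$ on $\calD$ plus density then gives narrow continuity of $t\mapsto\bar\rho_t$ on all of $[0,T]$, and $\bar\rho_t=\rho_t$ for a.e.\ $t$.

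\emph{Step 3: the integrated identity for time-dependent test functions.} For $\bs\varphi\in C_c^\infty([0,T]\times\Rd)$ and $0\le t_0\le t_1\le T$, approximate $\bbone_{[t_0,t_1]}$ by smooth cutoffs $\chi^\delta$ and test \eqref{eq:nce-weak} with $\chi^\delta_t\varphi_t(x)$; the product rule produces the $\partial_t\varphi_t$ term and, in the limit $\delta\to0$, the boundary contributions $\int\varphi_{t_1}\dx\bar\rho_{t_1}-\int\varphi_{t_0}\dx\bar\rho_{t_0}$ by Step 2 (narrow continuity makes the boundary limits unambiguous and independent of the approximation), while dominated convergence — justified by $R_T<\infty$ and $\norm{\bs\varphi}_{C^1}$ — handles the flux integral. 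This yields the claimed formula.

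\emph{Main obstacle.} The routine part is Step 1; the delicate point is Step 2 — constructing \emph{one} representative that works for \emph{all} test functions at once and checking that the limiting functionals are genuinely probability measures (no mass escapes to infinity). This is where one must exploit that each $\rho_t$ is a probability measure together with a uniform integrability/tightness argument, rather than treating each $\varphi$ in isolation; once the representative is secured, Steps 1 and 3 are bookkeeping with the a priori bound $R_T<\infty$.
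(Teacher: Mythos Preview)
Your proposal is essentially correct and reproduces the standard argument: the paper itself does not give a proof but simply cites \cite[Lemma~8.1.2]{AmbrosioGigliSavare2008} and \cite[Lemma~3.1]{Erb14}, and your three steps are precisely the content of those references adapted to the nonlocal divergence.

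One slip to flag in Step~2: there is no ``uniform second-moment bound implicit in $\rho_t\in\calP(\Rd)$'' --- membership in $\calP(\Rd)$ gives only unit mass, not tightness. Your alternative ``truncation argument'' is the right repair and deserves to be spelled out: with a cutoff $\varphi_R\in C_c^\infty(\Rd)$ equal to $1$ on $B_R$, the nonlocal gradient $\babla\varphi_R$ vanishes on $G\cap(B_R\times B_R)$, so
\[
  \abs*{\int\varphi_R\dx\bar\rho_t-\int\varphi_R\dx\bar\rho_s}
  \le \frac12\int_0^T\iint_{G\setminus(B_R\times B_R)}(2\wedge|x-y|)\,\eta\dx\abs{j_r}\dx r,
\]
and the right-hand side tends to $0$ as $R\to\infty$ by dominated convergence since $R_T<\infty$. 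Sending $R\to\infty$ on the left gives $\bar\rho_t(\Rd)=\bar\rho_s(\Rd)$ for all $s,t$, hence $\bar\rho_t(\Rd)=1$ everywhere, which upgrades weak-$^*$ continuity to narrow continuity. With this correction your argument is complete.
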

\begin{proof}
    See \cite[Lemma 8.1.2]{AmbrosioGigliSavare2008} and \cite[Lemma 3.1]{Erb14}.
\end{proof}

Hence we arrive at the following definition of weak solution of the nonlocal continuity equation:
\begin{definition}[Nonlocal continuity equation in flux form]\label{def:nce-flux-form}
	A pair $(\bs\rho,\bs j)= ((\rho_t)_{t\in[0,T]},(j_t)_{t\in[0,T]})$ with $\rho_t\in\calP(\Rd)$ and $j_t\in\calM(G)$
	is called a \emph{weak solution} to the nonlocal continuity equation \eqref{eq:nlce_measures} provided that\begin{enumerate}
		\item %
		$\bs\rho$ is a weakly continuous curve in $\calP(\Rd)$;
		\item %
		$\bs j$ is a Borel-measurable curve in $\calM(G)$;
		\item %
		the pair $(\bs\rho,\bs j)$ satisfies \eqref{eq:nce-weak}.
	\end{enumerate}
	We denote the set of all weak solutions on the time interval $[0,T]$ by $ \NCE_T$.
		For $\varrho_0,\varrho_1\in\calP(\Rd)$, a pair $(\bs\rho,\bs j)\in \NCE(\varrho_0,\varrho_1)$ if $(\bs\rho,\bs j)\in\NCE\coloneqq\NCE_1$ and in addition $\rho_0=\varrho_0$ and $\rho_1=\varrho_1$. When the dependence of $\eta$ on $\eps>0$ needs to be emphasized, we will write $\NCE^\eps_T$, $\NCE^\eps$ and $\NCE^\eps(\varrho_0,\varrho_1)$ for the respective objects.
\end{definition}

An important property is the preservation of second moments, uniformly in $\eps$.
\begin{lemma}[Uniformly bounded second moments]\label{lem:2nd-mom-propagation}
	Let $(\mu^\eps)_{\eps>0} \subset \calM^+(\Rd)$ and $(\eta^\eps)_{\eps>0}$ be families  satisfying \eqref{mu1}, \eqref{mu2}, and \eqref{theta1}~--~\eqref{theta4}, respectively, uniformly in $\eps$. Let $(\rho_0^\eps)_\eps \subset \calP_{2}(\Rd)$ be such that $\sup_{\eps>0} M_2(\rho_0^\eps) < \infty$ and $(\bs\rho^\eps,\bs j^\eps)_n \subset \NCE_T^\eps$ so that $\sup_{\eps>0} \bs\A(\mu^\eps,\eta^\eps;\bs\rho^\eps,\bs j^\eps)<\infty$. Then, $\sup_{\eps>0} \sup_{t\in [0,T]}M_2(\rho_t^\eps) < \infty$.
\end{lemma}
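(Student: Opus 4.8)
The plan is to differentiate the second moment $M_2(\rho_t^\eps) = \int_{\Rd} |x|^2 \dx\rho_t^\eps(x)$ in time along a weak solution of the nonlocal continuity equation and close a Gr\"onwall-type estimate with constants independent of $\eps$. The function $x \mapsto |x|^2$ is not compactly supported, so I would first work with smooth truncations $\varphi_R(x) = |x|^2 \chi(x/R)$ for a fixed cutoff $\chi \in C_c^\infty(\Rd)$ with $\chi \equiv 1$ on $B_1$, apply the weak formulation \eqref{eq:nce-weak} (in the integrated form of Lemma \ref{lem:nce:weak_cont}) to $\varphi_R$, and let $R \to \infty$ at the end using the uniform second-moment bound one is in the process of proving (or, more cleanly, first deriving the estimate for the truncated moments with an $R$-uniform constant and then passing to the limit by monotone convergence). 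The key identity is
\[
  M_2(\rho_{t_1}^\eps) - M_2(\rho_{t_0}^\eps) = \frac12 \int_{t_0}^{t_1} \!\!\iint_{G^\eps} \babla\varphi(x,y)\,\eta^\eps(x,y) \dx j_t^\eps(x,y)\dx t,
\]
with $\varphi(x) = |x|^2$, so that $\babla\varphi(x,y) = |y|^2 - |x|^2 = (y-x)\cdot(y+x)$, hence $|\babla\varphi(x,y)| \le |x-y|\,(|x|+|y|) \le |x-y|^2/2 + |x-y|\,(|x|+|y|)$ — but more usefully $|\babla\varphi(x,y)| \le |x-y|\,(|x-y| + 2|x|)$.

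The main estimate then proceeds by splitting $|\babla\varphi(x,y)|$ into a part controlled purely by $|x-y|\lor|x-y|^2$ and a part of the form $|x-y|\cdot|x|$, and applying Corollary \ref{cor:A-bound} together with the Cauchy--Schwarz-type bound from the unnamed lemma preceding it. Concretely, for the dangerous term one writes, using that lemma with $\Phi(x,y) = |x-y|\cdot(\text{weight})$,
\[
  \frac12\iint_{G^\eps} |x-y|\,|x|\,\eta^\eps \dx|j_t^\eps| \le \sqrt{\calA(\mu^\eps,\eta^\eps;\rho_t^\eps,j_t^\eps)} \, \sqrt{\iint_{G^\eps} |x-y|^2\,|x|^2\,\eta^\eps \dx(\rho_t^\eps\otimes\mu^\eps + \mu^\eps\otimes\rho_t^\eps)},
\]
and then the inner integral is bounded, using \eqref{int} (the bound $\sup_x\int |x-y|^2\eta^\eps(x,y)\dx\mu^\eps(y) \le \Cint$) together with $|x|^2 \lesssim |y|^2 + |x-y|^2$ to move the weight onto whichever variable carries the $\mu^\eps$ factor, by $\Cint\bigl(1 + M_2(\rho_t^\eps)\bigr)$ up to an $\eps$-uniform constant (here the $\eqref{supp}$ bound $|x-y|\le\Csupp\eps$ absorbs the extra $|x-y|^2$ term harmlessly). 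This yields a differential inequality of the schematic form
\[
  \pderiv{}{t} M_2(\rho_t^\eps) \le C\sqrt{\calA_t^\eps}\,\sqrt{1 + M_2(\rho_t^\eps)} + C\sqrt{\calA_t^\eps},
\]
with $C$ independent of $\eps$ and $\calA_t^\eps := \calA(\mu^\eps,\eta^\eps;\rho_t^\eps,j_t^\eps)$ satisfying $\int_0^T \calA_t^\eps\dx t \le \sup_\eps \bs\calA(\mu^\eps,\eta^\eps;\bs\rho^\eps,\bs j^\eps) =: \bar{\calA} < \infty$.

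From here, setting $g(t) = 1 + M_2(\rho_t^\eps)$ and using $\sqrt{g} \le 1 + g$ (or bounding $\sqrt{\calA_t^\eps g(t)} \le \frac12(\calA_t^\eps + g(t))$ after a Young split) gives $g'(t) \le C(1+\calA_t^\eps) g(t) + C(1+\calA_t^\eps)$, to which Gr\"onwall's inequality applies: $g(t) \le \bigl(g(0) + C(T + \bar{\calA})\bigr)\exp\bigl(C(T+\bar{\calA})\bigr)$, and all quantities on the right are finite and uniform in $\eps$ by hypothesis ($\sup_\eps M_2(\rho_0^\eps) < \infty$, $\bar{\calA}<\infty$). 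Taking the supremum over $t \in [0,T]$ and then over $\eps > 0$ finishes the proof. The main obstacle I anticipate is the bookkeeping around the truncation $\varphi_R$: one must ensure the time-integrated weak formulation is justified for $\varphi_R$ (which is fine, it is $C_c^\infty$ and Lemma \ref{lem:nce:weak_cont} applies), that the error terms coming from $\nabla(\chi(\cdot/R))$ and $D^2(\chi(\cdot/R))$ are $O(1/R)$ times integrals already controlled by $\Cint$ and $\bar{\calA}$ uniformly in $\eps$, and that passing $R\to\infty$ is legitimate — this is cleanest if one first proves the Gr\"onwall bound for the truncated moment $M_2^R(\rho_t^\eps) := \int |x|^2\chi(x/R)\dx\rho_t^\eps$ with an $R$- and $\eps$-uniform constant, then lets $R\to\infty$ by monotone convergence. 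Everything else is a routine application of Corollary \ref{cor:A-bound} and the moment bound \eqref{int}.
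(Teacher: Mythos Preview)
Your proposal is correct and reconstructs in detail the argument behind \cite[Lemma~2.16]{EPSS2021}, which is exactly what the paper invokes: the paper's proof is the one-line observation that, by Remark~\ref{rem:link_to_EPSS}, the assumptions of \cite{EPSS2021} are satisfied uniformly in~$\eps$, so \cite[Lemma~2.16]{EPSS2021} applies directly. Your write-up is thus a self-contained version of the same Gr\"onwall argument (truncation, the Cauchy--Schwarz bound on the flux, the moment bound~\eqref{int}, and closing the differential inequality), rather than a genuinely different route.
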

\begin{proof}
	By Remark \ref{rem:link_to_EPSS}, this follows from \cite[Lemma 2.16]{EPSS2021}.
\end{proof}
The dyamical nonlocal quasi-metric is defined as follows.
\begin{definition}[Nonlocal extended quasi-metric]\label{def:T-metric}
	For $\mu \in \calM^+(\Rd)$, $\eta^\eps$ as before and $\varrho_0, \varrho_1 \in \calP_2(\Rd)$, we define a nonlocal extended quasi-metric by
	\begin{equation*}%
		\calT_{\mu,\eta^\eps}(\varrho_0,\varrho_1) =\inf\!\left\{\bra*{\int_0^1 \!\calA(\mu,\eta^\eps;\rho_t,j_t)\dx t: (\bs\rho,\bs j)\in\NCE^\eps(\varrho_0, \varrho_1)}^{1/2}\right\}.
	\end{equation*}
	When $\mu$ and $\eta^\eps$ are clear from the context or the dependence on $\eps>0$ is not important, we will shorten notation by writing $\calT$ or $\calT_{\eps}$ instead of $\calT_{\mu,\eta^\eps}$. 
\end{definition}
Properties of $\calT_{\mu,\eta^\eps}$ can be found in \cite[Section 2.4]{EPSS2021}, including that it is indeed an extended quasi-metric on $\calP_2(\Rd)$.

We denote by $\AC^2([0,T];(\mathcal{P}_2(\Rd),\mathcal{T}_{\mu,\eta^\eps}))$ the set of \emph{$2$-absolutely continuous curves} with respect to $\calT_{\mu,\eta^\eps}$, that is for such a \emph{$2$-absolutely continuous curve} $\bs\rho$ there exists $m\in L^2([0,T])$ such that
\begin{equation*}
	\mathcal{T}_{\mu,\eta^\eps}(\rho_s,\rho_t) \leq \int_s^t m(\tau)\dx \tau \qquad \text{ for all } 0\leq s \leq t \leq T . 
\end{equation*}
\begin{definition}[Absolutely continuous curve and metric derivative]\label{def:AC-curves_metric_derivative}
	The \emph{(forward) metric derivative} of a curve $\bs\rho\in\AC^2([0,T];(\mathcal{P}_2(\Rd),\mathcal{T}_{\mu,\eta^\eps}))$ is defined for a.e. $t\in[0,T]$ by
	\begin{align*}
		\abs{\rho_t'}_{\mu,\eta^\eps}\coloneqq \lim_{\tau\searrow 0} \frac{\mathcal{T}_{\mu,\eta^\eps}(\rho_t,\rho_{t+\tau})}{\tau} = \lim_{\tau\searrow 0} \frac{\mathcal{T}_{\mu,\eta^\eps}(\rho_{t-\tau},\rho_t)}{\tau}.
	\end{align*}
	We often shorten the notation by writing  $\abs{\rho_t'}_\eps$ instead of $\abs{\rho_t'}_{\mu,\eta^\eps}$.
\end{definition}
\begin{remark}
    We emphasize that the metric derivative defined above is only forward-in-time, or one-sided, due to the lack of symmetry for the nonlocal transportation cost~$\calT$.    
    The metric derivative 
    is well-defined as a consequence of the works~\cite[Proposition 2.2]{RossiMielkeSavare2008} and~\cite[Theorem 3.5]{ChenchiahRiegerZimmer2009} (see also~\cite[Lemma 7.1]{OhtaSturm2009}), which generalize~\cite[Theorem 1.1.2]{AmbrosioGigliSavare2008} to the asymmetric setting.
    
    Moreover, the metric derivative can be identified with the action of a suitable minimal flux as a consequence of \cite[Proposition 2.25]{EPSS2021}, the proof of which is based on \cite[Theorem 5.17]{DNS09_CVPDE} and \cite[Lemma 1.1.4]{AmbrosioGigliSavare2008}. Noting that the time reparametrizations in these proofs preserve orientation, it is clear that they generalize to the quasi-metric case based on the one-sided derivative from Definition~\ref{def:AC-curves_metric_derivative}.
\end{remark}
The measure-flux form of \eqref{eq:nlnl-interaction-eq} as nonlocal continuity equation is specified below.
\begin{definition}\label{def:nl2ie}
A curve $\bs\rho \colon [0,T]\to \calP_2(\R^d)$ is called a \emph{weak solution} to~\eqref{eq:nlnl-interaction-eq} 
if, for the flux $\bs j\colon [0,T]\to\calM(G)$ defined by
\begin{equation*}
  \dx j_t(x,y)=\dgrad\frac{\delta \calE}{\delta\rho}(x,y)_- \dx\rho_t(x)\dx\mu(y)-\dgrad\frac{\delta \calE}{\delta\rho}(x,y)_+ \dx\rho_t(y)\dx\mu(x),
\end{equation*}
the pair $(\bs \rho,\bs j)$ is a weak solution to the continuity equation
\begin{equation*}
  \partial_t\rho_t+\dgrad\cdot j_t=0 \qquad \text{on}\ [0,T]\times\Rd,
\end{equation*}
according to Definition \ref{def:nce-flux-form}.
\end{definition}

Weak solutions of \eqref{eq:intro:NLNL} are curves of maximal slope with respect to a (one-sided) strong upper gradient, which is the square root of the metric slope defined below. This allows to identify the set of weak solutions as the zero-level set of the so called \emph{De Giorgi functional}, \cite[Theorem 3.9]{EPSS2021}, in the quasi-metric space $(\calP_2(\Rd),\calT))$, being $\calT$ the quasi-distance recalled in Definition~\ref{def:T-metric}, cf.~\cite[Section 2.4]{EPSS2021}.
\begin{definition}[Metric slope and De Giorgi functional]\label{def:ls-Giorgi}
For any $\rho\in\calP_2(\Rd)$, let the \emph{metric slope} at $\rho$ be given by
\begin{equation*}%
    \calD_{\!\eps}(\rho) \coloneqq  \tA\bra[\Big]{\mu,\eta^\eps;\rho, -\dgrad \frac{\delta\calE}{\delta \rho}(\rho)}. 
\end{equation*}
For any $\bs\rho \in \AC^2([0,T];(\calP_2(\Rd),\calT))$, the \emph{graph De Giorgi functional} at $\bs\rho$ is defined as
\begin{equation*}%
	\bs\calG_{\!\!\eps}(\bs\rho)\coloneqq\calE(\rho_T)-\calE(\rho_0)+\frac{1}{2}\int_0^T\bra[\big]{\calD_{\!\eps}(\rho_\tau) + |\rho_\tau'|_\eps^2}\dx \tau.
\end{equation*}
\end{definition}

\subsection{The limiting local equation}\label{subsec:local_equation}

In the graph-to-local limit the equation obtained is a continuity equation, which can be interpreted in flux form.
\begin{definition}[Local continuity equation]\label{def:ce-flux}$ $\newline
	A pair $(\bs\rho,\bs j)= \bra[\big]{(\rho_t)_{t\in[0,T]},(j_t)_{t\in[0,T]}}$ with $\rho_t\in\calP(\Rd)$ and $j_t\in\calM(\Rd;\Rd)$ is called a \emph{weak solution} to the continuity equation
\begin{equation}\label{eq:continuity_equation_flux}
    \partial_t\rho_t+\nabla \cdot j_t=0
\end{equation}
provided that\begin{enumerate}
\item %
$\bs\rho$ is a weakly continuous curve in $\calP(\Rd)$;
\item %
$\bs j$ is a Borel-measurable curve in $\calM(\Rd;\Rd)$ such that
\[
\int_0^T\int_{\Rd}\dx|j_t|(\Rd)\dx t<\infty;
\]
\item %
the pair $(\bs \rho, \bs j)$ satisfies \eqref{eq:continuity_equation_flux} in the sense that for any $0\le t_0\le t_1\le T$,
\begin{align}\label{eq:ce_weak_continuous_rep}
\MoveEqLeft\int_{t_0}^{t_1}\int_\Rd\partial_t\varphi_t(x)\dx\rho_t(x)\dx t + \int_{t_0}^{t_1}\int_\Rd\nabla\varphi_t(x)\cdot\dx j_t(x)\dx t \\
&=\int_\Rd\varphi_{t_1}(x)\dx\rho_{t_1}(x)-\int_\Rd\varphi_{t_0}(x)\dx\rho_{t_0}(x) \qquad\forall \bs\varphi\in C_c^1([0,T]\times\Rd).\nonumber
\end{align}
\end{enumerate}
The set of all weak solutions on the time interval $[0,T]$ is denoted by $ \CE_T$.
For $\varrho_0,\varrho_1\in\calP(\Rd)$, a pair $(\bs \rho, \bs j)\in \CE(\varrho_0,\varrho_1)$ if $(\bs \rho, \bs j)\in\CE\coloneqq\CE_1$ and in addition $\rho_0=\varrho_0$ and $\rho_1=\varrho_1$.	
\end{definition}
\begin{remark}\label{rem:ce_Lip_test-function}
In the definition above, we use \cite[Lemma 8.1.2]{AmbrosioGigliSavare2008} which ensures formulation \eqref{eq:ce_weak_continuous_rep} is legitimate, up to considering a continuous representative in the left-hand side. We also observe that one can consider test functions only space dependent, bounded, and Lipschitz, $\operatorname{Lip}_b(\Rd)$; then \eqref{eq:ce_weak_continuous_rep} becomes
\begin{equation*}
	\pderiv{}{t} \int_\Rd \varphi(x) \dx\rho_t(x) = \int_\Rd \nabla\varphi(x) \cdot\dx j_t(x) \qquad\forall \varphi \in \operatorname{Lip}_b(\Rd). 
\end{equation*}
\end{remark}

The localisation outlined in the introduction provides in the limit a kinetic relation depending on a tensor, $\bbT$. More precisely, the general form is
\begin{equation}\label{eq:ce_tensor_general}
    \partial_t\rho_t+\mbox{div}(\rho_t \bbT v_t)=0,
\end{equation}
for a tensor $\bbT:\Rd\to\Rd\times\Rd$ Borel measurable, continuous, symmetric, and uniformly elliptic, cf.~Proposition~\ref{prop:tensor_uniqueness}.

As aforementioned, in \cite{Lisini_ESAIM2009}, the author focuses on the case $v=-\nabla (F'(u)+V)$ and provides a well-posedness theory in an \textit{equivalent} Wasserstein space. Indeed, he considers the Riemannian metric on $\Rd$ induced by $\bbT^{-1}$ (uniformly elliptic and bounded)
\[
d_{\bbT}(x,y)\!=\!\inf\!\left\{\int_0^1\!\!\!\!\!\sqrt{\langle\bbT^{-1}(\gamma(t))\dot\gamma(t),\dot\gamma(t)\rangle}:\!\gamma\in\AC([0,1];\Rd), \gamma(0)\!=\!x,\gamma(1)\!=\!y\right\}\!,
\]
and the corresponding Wasserstein distance is
\[
W_\bbT^2(\mu,\nu)\coloneqq \inf\left\{\iint d_\bbT^2(x,y)\dx\gamma(x,y):\gamma\in\Gamma(\mu,\nu)\right\} .
\]
This is equivalent to the dynamical version, for $\varrho_0,\varrho_1\in \calP_2(\R_\bbT^d)$,
\[
W_\bbT^2(\varrho_0,\varrho_1)=\inf\left\{\int_0^1 \norm*{\frac{\dx j_t}{\dx\rho_t}}_{L^2(\rho_t;\R_\bbT^d)}^2\dx t:(\bs\rho,\bs j)\in\CE(\varrho_0,\varrho_1)\right\},
\]
being
\[
\norm*{\frac{\dx j}{\dx\rho}}_{L^2(\rho;\R_\bbT^d)}^2=\int_\Rd\skp*{\bbT^{-1}(x)\frac{\dx j}{\dx\rho}(x),\frac{\dx j}{\dx\rho}(x)}\dx\rho(x).
\]
We observe that the uniform ellipticity of the tensor $\bbT^{-1}$ implies that the distance $d_\bbT$ is equivalent to the Euclidean one and we denote by $\R_{\bbT}^d$ the corresponding metric space $(\Rd,d_\bbT)$.
In the space $\calP_2(\R^d_\bbT)$ equipped with the $2$-Wasserstein distance, $W_\bbT$, Eq. \eqref{eq:ce_tensor_general}, with the vector field $v=-\nabla (F'(u)+V)$, is  a gradient flow of the free energy associated, meaning it is a curve of maximal slope with respect to a specific strong upper gradient, cf.~\cite[Section 3]{Lisini_ESAIM2009}.

Let us recall the definitions of strong upper gradient and curve of maximal slope. 
\begin{definition}[Strong upper gradient]\label{def:strong_ug}
A function $g:\calP_2(\R^d_\bbT) \to [0,+\infty]$ is called a \emph{strong upper gradient} for the functional $\calE$ if for every absolutely continuous curve $\bs\rho\in \AC^2([0,T];(\calP_2(\R^d_\bbT),W_\bbT))$ the function $g\circ \rho$ is Borel and
\begin{equation*}%
|\calE(\rho_t)-\calE(\rho_s))|\leq \int_s^t g(\rho_\tau)\abs{\rho_\tau'} \dx \tau, \quad \forall\, 0<s\leq t<T. 
\end{equation*}
In particular, if $(g\circ \rho_\cdot)\abs{\rho_\cdot'}\in L^1(0,T)$, then $\calE\circ \rho$ is absolutely continuous and 
\begin{equation*}%
    |(\calE\circ \rho)'|\leq g(\rho_t)\abs{\rho_t'}\,\mbox{ for a.e. }t\in[0,T].
\end{equation*}
\end{definition}

\begin{definition}[Curve of maximal slope]\label{def:maximal_slope_local} A curve $\bs\rho\in\AC^2([0,T];(\calP_2(\R^d_\bbT),W_\bbT)$ is called a \emph{curve of maximal slope} for $\calE$ with respect to the strong upper gradient $g$ if and only if $t\mapsto\calE(\rho_t)$ is  a non-increasing map satisfying
\begin{equation*}%
    \calE(\rho_t)-\calE(\rho_s))+\frac{1}{2}\int_s^t\left(g(\rho_\tau)^2+\abs{\rho_\tau'}^2\right)\dx \tau=0, \quad \forall\, 0<s\leq t<T.
\end{equation*}
\end{definition}
The equation we study, 
\begin{equation*}
    \partial_t\rho_t=\operatorname{div}(\rho_t  \bbT\nabla K*\rho_t), \tag{\ref{eq:intro:NLIE:one}} 
\end{equation*} 
differs from that in \cite{Lisini_ESAIM2009} since we do not consider diffusion, but nonlocal interaction instead. Following \cite{AmbrosioGigliSavare2008} and \cite{Lisini_ESAIM2009}, we can formulate \eqref{eq:intro:NLIE:one} as gradient flows of the energy \eqref{eq:interaction_energy} using the corresponding strong upper gradient, that is the square root of the metric slope defined below, together with the corresponding De Giorgi functional in the continuum setting.

\begin{definition}\label{def:deGiorgi_local}
Let $\rho\in\calP_2(\Rd)$. The metric slope of the nonlocal interaction energy is given by 
\begin{align*}%
        \calD_{\;\!\bbT}(\rho) \coloneqq \int_\Rd \skp[\Big]{\nabla\frac{\delta\calE}{\delta\rho},\bbT\nabla\frac{\delta\calE}{\delta\rho}}\dx\rho.
    \end{align*}
For any $\bs\rho \in \AC^2([0,T];(\calP_2(\R^d_{\bbT}),W_{\bbT}))$, the \emph{local De Giorgi functional} at $\bs\rho$ is defined as
\begin{equation*}%
	\bs\calG_\bbT(\bs\rho)\coloneqq\calE(\rho_T)-\calE(\rho_0)+\frac{1}{2}\int_0^T\bra[\big]{\calD_{\;\!\bbT}(\rho_\tau) + |\rho_\tau'|^2_{\bbT}}\dx \tau.
\end{equation*}
\end{definition}

In Section~\ref{sec:gf_structure} we prove that curves of maximal slope are weak solutions of \eqref{eq:intro:NLIE:one}, cf.~Theorem~\ref{thm:weak_curves_max_slope}. For consistency, we state the definition of weak solutions for \eqref{eq:intro:NLIE:one}.
\begin{definition}\label{def:weak-nlie_tensor}
A curve $\bs\rho \colon [0,T]\to \calP_2(\R^d)$ is called a \emph{weak solution} to~\eqref{eq:intro:NLIE:one} if, for the flux $\bs j\colon [0,T]\to\calM(\Rd;\Rd)$ defined by
\begin{equation*}
  \dx j_t(x)=-\bbT\nabla\frac{\delta \calE}{\delta\rho}(x)\dx\rho_t(x),
\end{equation*}
the pair $(\bs \rho,\bs j)$ is a weak solution to the continuity equation
\begin{equation*}
  \partial_t\rho_t+\nabla\cdot j_t=0 \qquad \text{on}\ [0,T]\times\Rd,
\end{equation*}
according to Definition \ref{def:ce-flux}.    
\end{definition}

\subsection{Main results}

The main result of the present work is the graph-to-local limit for the nonlocal interaction equation.

\begin{theorem}[Graph-to-local limit]\label{thm:main_result}
    Let $(\mu,\vartheta)$ satisfy \eqref{mu1}, \eqref{mu2} and \eqref{theta1}~--~\eqref{theta4}. Let $\eta^\eps$ be given by \eqref{eq:def:eta^eps} and assume $K$ satisfies \eqref{K1}~--~\eqref{K4}. For any $\eps>0$ suppose that $\bs\rho^\eps$ is a gradient flow of $\calE$ in $(\calP_2(\Rd),\calT_\eps))$, that is,
  \begin{equation*}
      \bs\calG_{\!\!\eps}(\bs\rho^\eps) = 0 \quad \text{for any } \eps>0,
  \end{equation*}
  with $(\rho_0^\eps)_\eps \subset \calP_{2}(\Rd)$ be such that $\sup_{\eps>0} M_2(\rho_0^\eps) < \infty$. Then there exists $\bs\rho \in \AC^2([0,T];(\calP_2(\R^d_\bbT),W_{\bbT}))$ such that  $\rho_t^\eps \rightharpoonup \rho_t$ as $\eps\to0$ for all $t\in[0,T]$ and $\bs \rho$ is a gradient flow of $\calE$ in $(\calP_2(\R^d_\bbT),W_{\bbT})$, that is,
    \begin{equation*}
      \bs\calG_\bbT(\bs\rho) = 0.
  \end{equation*}
The tensor $\bbT$ is as in~\eqref{eq:intro:limittensor}.  
\end{theorem}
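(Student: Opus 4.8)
The plan is to run a standard ``compactness $+$ lower semicontinuity $+$ liminf of slopes'' argument for gradient flows, in the spirit of De Giorgi's framework (cf.~\cite{AmbrosioGigliSavare2008}), but transferring along the way from the graph quasi-metric structure $(\calP_2(\Rd),\calT_\eps)$ to the tensorized Wasserstein structure $(\calP_2(\R^d_\bbT),W_\bbT)$. First I would record a priori bounds: since $\bs\calG_{\!\!\eps}(\bs\rho^\eps)=0$, the map $t\mapsto\calE(\rho^\eps_t)$ is non-increasing and $\tfrac12\int_0^T(\calD_{\!\eps}(\rho^\eps_\tau)+|\rho^{\eps\prime}_\tau|_\eps^2)\dx\tau = \calE(\rho^\eps_0)-\calE(\rho^\eps_T)$, which is uniformly bounded using the moment bound on $\rho_0^\eps$, \eqref{as:K:Quad}, and Lemma~\ref{lem:2nd-mom-propagation} (uniform second moments). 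In particular $\int_0^T|\rho^{\eps\prime}_\tau|_\eps^2\dx\tau\le C$, so each $\bs\rho^\eps$ has a uniform $\AC^2$ modulus, and via the associated minimal fluxes $\bs j^\eps$ we get $\sup_\eps\bs\calA(\mu,\eta^\eps;\bs\rho^\eps,\bs j^\eps)<\infty$.

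Next I would extract the limit curve. Using the uniform second moments and a refined Arzelà--Ascoli / Aldous-type argument (the graph distance $\calT_\eps$ controls narrow convergence on bounded-moment sets), pass to a subsequence with $\rho^\eps_t\rightharpoonup\rho_t$ for all $t\in[0,T]$. Simultaneously, apply the continuous reconstruction of Proposition~\ref{prop:jhat} to obtain local fluxes $\jh^\eps$ with $\bdiveps j^\eps=\div\jh^\eps$, so that $(\bs\rho^\eps,\bs\jh^\eps)\in\CE_T$; the uniform action bound gives uniform mass bounds on $\jh^\eps$ (again via Corollary~\ref{cor:A-bound}), hence $\jh^\eps_t\dx t\rightharpoonup \jh_t\dx t$ up to subsequences, and $(\bs\rho,\bs\jh)\in\CE_T$. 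The flux identification from the introduction --- i.e. the symmetrization turning the upwind structure into the tensor $\bbT^\eps$, the velocity expansion~\eqref{eq:intro:velocityapprox}, and $\bbT^\eps\to\bbT$ --- together with lower semicontinuity of the action $\bs\calA$ along the limit, should yield $\int_0^T|\rho'_\tau|_\bbT^2\dx\tau\le\liminf_\eps\int_0^T|\rho^{\eps\prime}_\tau|_\eps^2\dx\tau$, so $\bs\rho\in\AC^2([0,T];(\calP_2(\R^d_\bbT),W_\bbT))$.

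Then I would prove the two remaining $\liminf$ inequalities. For the energy: $\calE$ is narrowly continuous on uniform-second-moment sets by~\eqref{as:K:Quad} and~\eqref{K3}, so $\calE(\rho^\eps_t)\to\calE(\rho_t)$ for each $t$, in particular at $t=0,T$. For the slope: one must show $\liminf_{\eps\to0}\int_0^T\calD_{\!\eps}(\rho^\eps_\tau)\dx\tau\ge\int_0^T\calD_{\;\!\bbT}(\rho_\tau)\dx\tau$. Here $\calD_{\!\eps}(\rho)=\tA(\mu,\eta^\eps;\rho,-\babla\tfrac{\delta\calE}{\delta\rho}(\rho))$ and $\calD_{\;\!\bbT}(\rho)=\int\langle\nabla\tfrac{\delta\calE}{\delta\rho},\bbT\nabla\tfrac{\delta\calE}{\delta\rho}\rangle\dx\rho$; the convergence of the nonlocal velocity $-\babla(K*\rho^\eps_\tau)(x,y)$ to $-\nabla(K*\rho_\tau)(x)\cdot(y-x)+o(\eps)$ uses \eqref{K1}, \eqref{K4}, and $\rho^\eps_\tau\rightharpoonup\rho_\tau$ with uniform moments, while the mollification on the intermediate scale $\eps^\alpha$ handles general (non-smooth) $\rho_\tau$. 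Combining these three inequalities gives $0\le\bs\calG_\bbT(\bs\rho)\le\liminf_\eps\bs\calG_{\!\!\eps}(\bs\rho^\eps)=0$, using the general fact (valid since the square root of the slope is a strong upper gradient, cf.~Theorem~\ref{thm:weak_curves_max_slope} and~\cite[Section 3]{Lisini_ESAIM2009}) that the De Giorgi functional is always nonnegative; hence $\bs\calG_\bbT(\bs\rho)=0$. Since the limit curve is characterized, the whole family (not just a subsequence) converges.

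The main obstacle I expect is the $\liminf$ for the metric slope, specifically showing that the upwind-based nonlocal action density $\calD_{\!\eps}$ degenerates in the limit exactly to the symmetric quadratic form with tensor $\bbT$, uniformly enough in $\tau$ to survive integration; this couples the nonlinear (in $\rho^\eps$) dependence of the velocity $-\babla(K*\rho^\eps)$ with the $\eps$-expansion and the intermediate mollification scale $\eps^\alpha$, and is where the hypotheses~\eqref{K1}--\eqref{K4} and the scaling~\eqref{eq:def:eta^eps} are genuinely used. A secondary, more technical point is the joint compactness of $(\bs\rho^\eps,\bs\jh^\eps)$ and verifying that the reconstructed limit flux $\bs\jh$ is admissible for the continuity equation with the correct action, which relies on Proposition~\ref{prop:jhat} and the lower semicontinuity of $\bs\calA$ transported through the reconstruction.
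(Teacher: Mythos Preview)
Your proposal is essentially correct and follows the same approach as the paper: obtain uniform action bounds from $\bs\calG_{\!\!\eps}(\bs\rho^\eps)=0$, extract a limit curve via the continuous reconstruction of Proposition~\ref{prop:jhat} and the compactness in Proposition~\ref{prop:convergence_CE-solutions}, establish $\liminf$ inequalities for the metric derivative (Proposition~\ref{prop:lower_limit_metric_derivatives}) and the slope (Proposition~\ref{prop:D_NL->L_lsc}) via the symmetrization and $\bbT^\eps\to\bbT$ argument, use narrow continuity of $\calE$, and close with the chain-rule inequality (Proposition~\ref{prop:chain_rule_ineq}) for $\bs\calG_\bbT\ge 0$.

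One caveat: your final sentence, ``Since the limit curve is characterized, the whole family (not just a subsequence) converges,'' overreaches. Knowing that $\bs\calG_\bbT(\bs\rho)=0$ identifies $\bs\rho$ as \emph{a} curve of maximal slope, but upgrading subsequential to full convergence would require uniqueness of such curves in $(\calP_2(\R^d_\bbT),W_\bbT)$ for the given initial datum, which is not established here (and the paper itself is tacit on this point, extracting the limit only along a subsequence in Proposition~\ref{prop:convergence_CE-solutions}).
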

The previous theorem is also a rigorous link between Finslerian and Riemannian gradient flows of the nonlocal interaction energy. 
Since curves of maximal slope in the local setting are weak solutions of \eqref{eq:intro:NLIE:one}, we actually provide an existence result via stability.

\begin{theorem}\label{thm:weak_sol_nlie_tensor}
Let $\mu,\vartheta$ satisfy \eqref{mu1}, \eqref{mu2}, and \eqref{theta1}~--~\eqref{theta4}, respectively. Consider the tensor $\bbT$ as in~\eqref{eq:intro:limittensor}. Assume $K$ satisfies \eqref{K1}~--~\eqref{K4}. Let $\varrho_0\in\calP_2(\Rd)$ such that $\varrho_0\ll\mu$. There exists a weakly continuous curve $\bs\rho:[0,T]\to\calP_2(\Rd)$ such that $\rho_t\ll\mu$ for all $t\in[0,T]$ which is a weak solution to~\eqref{eq:intro:NLIE:one} with initial datum $\rho_0 = \varrho_0$.   
\end{theorem}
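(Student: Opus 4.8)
The plan is to deduce the statement from the existence theory for the nonlocal--nonlocal interaction equation developed in~\cite{EPSS2021,EspositoPatacchiniSchlichting2023}, combined with its extension to $\sigma$-finite base measures. Throughout, $\eta=\eta^\eps$ denotes the edge weight~\eqref{eq:def:eta^eps} for an arbitrary fixed $\eps>0$.

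First I would check that $(\mu,\eta^\eps)$ is an admissible graph for the framework of~\cite{EPSS2021}: by Remark~\ref{rem:link_to_EPSS}, the support, measure, moment and integrability bounds of Lemma~\ref{lem:properties_eta-mu}, the fourth-moment estimates recorded there, and the local blow-up control all hold, while $K$ satisfies~\eqref{K1}--\eqref{K4}; the only hypothesis of~\cite{EPSS2021} not literally met is finiteness of $\mu$. I would then invoke the existence theory of~\cite{EPSS2021,EspositoPatacchiniSchlichting2023} --- lifted to $\sigma$-finite base measures by Theorem~\ref{thm:extension_sigma_finite} --- to obtain, for the initial datum $\varrho_0$, a weak solution $\bs\rho$ of~\eqref{eq:nlnl-interaction-eq}; by~\cite[Theorem~3.9]{EPSS2021} (again in its $\sigma$-finite version) such $\bs\rho$ is a curve of maximal slope of $\calE$ in $(\calP_2(\Rd),\calT_{\mu,\eta^\eps})$, i.e.\ $\bs\calG_{\!\!\eps}(\bs\rho)=0$, with $\rho_0=\varrho_0$.

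It then remains to record the two extra properties. Weak continuity of $t\mapsto\rho_t$ in $\calP(\Rd)$ is part of Definition~\ref{def:nce-flux-form}, and the propagation of second moments from Lemma~\ref{lem:2nd-mom-propagation} ensures that $\bs\rho$ takes values in $\calP_2(\Rd)$. For the absolute continuity, observe that the upwind flux $j_t$ defining~\eqref{eq:nlnl-interaction-eq} is absolutely continuous with respect to $\rho_t\otimes\mu+\mu\otimes\rho_t$; computing its nonlocal divergence shows that $\babla\cdot j_t$ splits as $A_t\,\rho_t$ plus a measure absolutely continuous with respect to $\mu$, with a nonnegative out-rate $A_t$, so the $\mu$-singular part $\rho_t^{\perp}$ of $\rho_t$ satisfies $\partial_t\rho_t^{\perp}=-A_t\,\rho_t^{\perp}$ with $\rho_0^{\perp}=0$, hence vanishes for all $t$. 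Equivalently, one runs the fixed-point argument of~\cite{EspositoPatacchiniSchlichting2023} directly on the density $\dx\rho_t/\dx\mu$, which stays a probability density in $L^1(\mu)$. Thus $\rho_t\ll\mu$ for all $t\in[0,T]$.

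The step I expect to be the main obstacle is the $\sigma$-finite extension behind Theorem~\ref{thm:extension_sigma_finite}: finiteness of $\mu$ enters~\cite{EPSS2021} through global tightness in the compactness and minimising-movement arguments and through the finite-graph approximation $\mu^N=\tfrac1N\sum_{i=1}^N\delta_{x_i}$. These are recovered by exploiting the locality of $\eta^\eps$ --- the support bound~\eqref{supp} confines all interaction to $\Csupp\eps$-neighbourhoods, while~\eqref{meas} and~\eqref{int} supply the attendant measure and action estimates --- together with the uniform control of second moments from Lemma~\ref{lem:2nd-mom-propagation}, which prevents escape of mass to infinity; every relevant energy and action quantity is then controlled on bounded sets although $\mu(\Rd)=\infty$. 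Finally, we record that the solutions obtained in this way, as $\eps$ varies, form precisely the family of graph gradient flows with $\sup_{\eps>0}M_2(\rho_0^\eps)=M_2(\varrho_0)<\infty$ to which Theorem~\ref{thm:main_result} applies, so that the present result together with the graph-to-local limit yields existence both for the nonlocal--nonlocal equation and, in the $\eps\to0$ limit, for~\eqref{eq:intro:NLIE:one}.
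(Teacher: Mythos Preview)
Your reading of the statement is literal and understandable, but the theorem as printed contains a typo: the mention of the tensor $\bbT$ in the hypotheses, together with the surrounding discussion (``the existence of weak solutions of \eqref{eq:intro:NLIE:one}'' and ``existence result via stability''), makes clear that the intended conclusion is existence of a weak solution to the \emph{local} equation~\eqref{eq:intro:NLIE:one}, not to~\eqref{eq:nlnl-interaction-eq}. The statement you actually prove---existence for~\eqref{eq:nlnl-interaction-eq} with a fixed $\eps$---is precisely Theorem~\ref{thm:extension_sigma_finite}, so your main argument collapses to a citation.

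The paper's proof runs as follows: for each $\eps>0$, Theorem~\ref{thm:extension_sigma_finite} supplies a weak solution $\bs\rho^\eps$ to~\eqref{eq:nlnl-interaction-eq}, equivalently a curve of maximal slope with $\bs\calG_{\!\!\eps}(\bs\rho^\eps)=0$; the graph-to-local limit Theorem~\ref{thm:main_result} then yields a narrow limit $\bs\rho$ with $\bs\calG_\bbT(\bs\rho)=0$; finally Theorem~\ref{thm:weak_curves_max_slope} identifies this as a weak solution of~\eqref{eq:intro:NLIE:one}. You do sketch exactly this chain in your final sentence, so you have all the ingredients---the issue is only that you have inverted the emphasis, treating the local existence as an afterthought rather than the point. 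Your detailed argument for $\rho_t\ll\mu$ and the discussion of how to circumvent finiteness of $\mu$ are not needed for the theorem as the paper proves it (the paper's own proof does not address $\rho_t\ll\mu$ for the limit curve, and the $\sigma$-finite extension is delegated entirely to Theorem~\ref{thm:extension_sigma_finite}).
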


In particular, using a diagonal argument this provides an interesting property of finite graphs as they can be used to discretise the equation under study. 

\begin{theorem}\label{thm:finite_graph}
Let $\mu,\vartheta$ satisfy \eqref{mu1}, \eqref{mu2}, and \eqref{theta1}~--~\eqref{theta4}, respectively. Consider the tensor $\bbT$ as in~\eqref{eq:intro:limittensor}. Assume $K$ satisfies \eqref{K1}~--~\eqref{K4}. Let $\varrho_0\in\calP_2(\Rd)$ be such that $\varrho_0\ll\mu$. Then, for any $T>0$ there exists a sequence of finite graphs $((\mu^N,\eta^N))_{N\in\bbN}$ with $\mu^N\rightharpoonup^\ast\mu$, a sequence of initial data $(\varrho_0^N)_{N\in\bbN}\subset\calP_2(\Rd)$ with $\varrho_0^N\ll\mu^N$ and $\varrho_0^N\rightharpoonup\varrho_0$, and a sequence of curves $(\bs\rho^N)_{N\in\bbN}$, where each $\bs\rho^N$ is a gradient flow in the sense of Definition~\ref{def:nl2ie} with respect to $(\mu^N,\eta^N)$ starting from $\varrho_0^N$, such that
\begin{align*}
    \rho^N_t \rightharpoonup \rho_t \text{ narrowly as }N\to \infty,
\end{align*}
uniformly in $t$ on $[0,T]$, where $\bs\rho$ is the weak solution of \eqref{eq:intro:NLIE:one} from Theorem~\ref{thm:weak_sol_nlie_tensor}.
\end{theorem}

\section{Linking Graph and local structure}\label{sec:graph_and_local_structure}
In this section we connect the nonlocal structure given by the (sequence of) graphs~$(\mu,\eta^\eps)$ with the local Euclidean one. The first step is to construct weak solutions of $\CE$ from those of $\NCE$. Second, we show that in the limit $\eps\to 0$ the antisymmetric part of our quasi-metric structures vanishes. Then, we derive the local structure from the remaining symmetric part, characterizing the tensor~$\bbT$ in terms of the base measure $\mu$, and the edge connectivity function $\vartheta$, thereby identifying the local geometry. Hereafter, we assume the pair $(\mu,\vartheta)$ satisfies \eqref{mu1}, \eqref{mu2} and \eqref{theta1}~--~\eqref{theta4}, and we let $\eta^\eps$ be given by \eqref{eq:def:eta^eps}. 

\subsection{Continuous reconstruction and compactness} 

First, we show that solutions of the nonlocal continuity equation $\NCE$ can be represented as those of the local continuity equation $\CE$ by means of a specific choice of the flux.
\begin{proposition}[Local flux]\label{prop:jhat}
    Let $j\in\calM(\Rddiag)$ satisfy the integrability condition $\iint_{\Rddiag} \abs{x-y}\eta(x,y)\abs*{j}(x,y)<\infty$. Then there exists $\jh\in\calM(\Rd;\Rd)$ such that
    \begin{equation}\label{eq:lem_jh}
        \frac{1}{2}\iint_\Rddiag\babla \varphi\,  \eta \dx{j} =\int_{\R^d}\nabla\varphi\cdot\dx{\jh}, \qquad\text{for all $\varphi\in C_c^1(\Rd)$.}
    \end{equation}
    In particular, if $(\bs\rho,\bs j)\in\NCE_T$ such that $\bs\calA(\mu,\eta;\bs\rho,\bs j)<\infty$, then there exists  $(\jh_t)_{t\in[0,T]}\subset \calM(\R^d;\R^d)$ such that $(\bs{\rho}, \bs\jh)\in\CE_T$.
\end{proposition}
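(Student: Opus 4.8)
The plan is to define $\jh$ by superposition over the family of \emph{needle} (line-segment) measures indicated in the introduction, but — because the naive formula~\eqref{eq:intro:reconstruction} requires integrability of $j$ that we do not have — to do this first for empirical (finitely supported) approximations $j_N$ of $j$, and then pass to the limit by compactness. Concretely, I would first reduce to the case $j\in\calM^{\mathrm{as}}(\Rddiag)$ (replace $j$ by its antisymmetrization, which does not change the left-hand side of~\eqref{eq:lem_jh} since $\babla\varphi$ is antisymmetric and $\eta$ is symmetric). Writing $|j|=\iint_{\Rddiag}\eta(x,y)\abs{x-y}\,\dx|j|(x,y)<\infty$ by hypothesis, decompose $j=j_+-j_-$ into positive and negative parts and approximate the finite positive measures $\eta\,|x-y|\,j_\pm$ weakly-$*$ by finitely supported measures; this yields a sequence $j_N\in\calM(\Rddiag)$, supported on finitely many edges $\{(x_i^N,y_i^N)\}$, with $j_N\to j$ in the sense that $\iint \babla\varphi\,\eta\,\dx j_N\to \iint\babla\varphi\,\eta\,\dx j$ for all $\varphi\in C_c^1$, and with $\sup_N\iint_{\Rddiag}\eta(x,y)\abs{x-y}\,\dx|j_N|(x,y)\le \iint_{\Rddiag}\eta(x,y)\abs{x-y}\,\dx|j|(x,y)=:M<\infty$.

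For each finite $j_N$, set
\[
    \jh_N[A] \coloneqq \frac12\iint_{\Rddiag}\frac{y-x}{\abs{y-x}}\,\calH^1\bigl(A\cap\llbracket x,y\rrbracket\bigr)\,\eta(x,y)\,\dx j_N(x,y),
\]
which is a finite $\Rd$-valued measure since $\abs{\jh_N}(\Rd)\le\tfrac12\iint\abs{x-y}\,\eta(x,y)\,\dx|j_N|(x,y)\le M/2$. The fundamental theorem of calculus along each segment, $\varphi(y)-\varphi(x)=\int_{\llbracket x,y\rrbracket}\nabla\varphi\cdot\tfrac{y-x}{\abs{y-x}}\,\dx\calH^1$, together with Fubini, gives exactly $\tfrac12\iint\babla\varphi\,\eta\,\dx j_N=\int_\Rd\nabla\varphi\cdot\dx\jh_N$ for all $\varphi\in C_c^1(\Rd)$. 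By the uniform mass bound $\sup_N\abs{\jh_N}(\Rd)\le M/2$, the sequence $(\jh_N)_N$ is weakly-$*$ precompact in $\calM(\Rd;\Rd)$; extract a subsequence $\jh_N\rightharpoonup \jh$. Passing to the limit in the identity — the left side converges by construction of $j_N$, the right side since $\nabla\varphi\in C_c(\Rd;\Rd)$ — yields~\eqref{eq:lem_jh}. (As remarked after~\eqref{eq:intro:reconstruction}, $\jh$ need not be unique nor of the form~\eqref{eq:intro:reconstruction}; existence suffices.)

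For the ``in particular'' part: if $(\bs\rho,\bs j)\in\NCE_T$ with $\bs\calA(\mu,\eta;\bs\rho,\bs j)<\infty$, then by Corollary~\ref{cor:A-bound} the integrability condition $\iint_{\Rddiag}\abs{x-y}\eta\,\dx|j_t|<\infty$ holds for a.e.\ $t$, and in fact $\int_0^T\iint_{\Rddiag}\abs{x-y}\eta\,\dx|j_t|\,\dx t<\infty$. Apply the first part for a.e.\ $t$ to get $\jh_t$ with $\abs{\jh_t}(\Rd)\le\tfrac12\iint\abs{x-y}\eta\,\dx|j_t|$; a measurable-selection argument (the construction above can be made measurable in $t$, or one invokes that $\calA(\mu,\eta;\rho_t,j_t)$ is measurable and the bound is integrable) gives a Borel family $(\jh_t)_t$ with $\int_0^T\abs{\jh_t}(\Rd)\,\dx t<\infty$. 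Finally, inserting the time-dependent test functions $\bs\varphi\in C_c^1([0,T]\times\Rd)$ into the weak formulation~\eqref{eq:nce-weak} of $\NCE$ and using~\eqref{eq:lem_jh} pointwise in $t$ for $\varphi=\varphi_t$ converts the nonlocal flux term into $\int_0^T\int_\Rd\nabla\varphi_t\cdot\dx\jh_t\,\dx t$, so $(\bs\rho,\bs\jh)$ satisfies~\eqref{eq:ce_weak_continuous_rep}; weak continuity of $\bs\rho$ is inherited from $\NCE_T$, hence $(\bs\rho,\bs\jh)\in\CE_T$.

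The main obstacle is the measurability in $t$ of the family $(\jh_t)_t$: the per-$t$ construction involves an extraction of subsequences and is a priori not canonical, so one must either set up the finite approximation $j_N$ jointly measurably in $t$ (e.g.\ via a deterministic discretization of $\Rddiag$ independent of $t$, so that $t\mapsto j_{N,t}$ is Borel and hence $t\mapsto\jh_{N,t}$ is Borel, and then extract a single subsequence $N$ using the uniform $L^1$-in-$t$ bound and a diagonal/weak-compactness argument in $L^1([0,T];\calM(\Rd;\Rd))$ against a countable dense set of test functions), or invoke a measurable selection theorem. This is the one genuinely technical point; the FTC-along-segments identity and the weak-$*$ compactness are routine.
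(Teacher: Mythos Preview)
Your approach is essentially identical to the paper's: set $\lambda \coloneqq \eta\,|x-y|\,j\in\calM(\Rddiag)$ (finite by hypothesis), approximate by empirical measures $\lambda^N$, define $\jh^N$ via needle superposition, extract a weak-$*$ limit $\jh$, and pass to the limit in the identity. There is, however, one genuine technical slip. You invoke \emph{weak-$*$} approximation of $\lambda$ by $\lambda^N$ and then simply assert that $\iint \babla\varphi\,\eta\,\dx j_N \to \iint \babla\varphi\,\eta\,\dx j$, which in terms of $\lambda$ reads $\iint \tfrac{\babla\varphi}{|x-y|}\,\dx\lambda^N \to \iint \tfrac{\babla\varphi}{|x-y|}\,\dx\lambda$. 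The test function $(x,y)\mapsto \babla\varphi(x,y)/|x-y|$ is bounded and continuous on $\Rddiag$ --- the paper verifies this explicitly, and it is not entirely trivial --- but it is \emph{not} in $C_0(\Rddiag)$, since it does not vanish as $(x,y)$ approaches the diagonal (it tends to a directional derivative of $\varphi$). Hence weak-$*$ convergence alone does not suffice; you need \emph{narrow} convergence $\lambda^N\rightharpoonup\lambda$, which empirical approximations of a finite measure do provide (the paper cites \cite[Remark~5.1.2]{AmbrosioGigliSavare2008}), together with the $C_b(\Rddiag)$ property. Once you replace ``weakly-$*$'' by ``narrowly'' and add the $C_b$ verification, your argument is complete and matches the paper's.

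For the ``in particular'' part, the paper is far terser than you: it just invokes Corollary~\ref{cor:A-bound}, Lemma~\ref{lem:nce:weak_cont}, and Definition~\ref{def:ce-flux}, without addressing measurability in $t$ at all. Your concern about measurable selection is legitimate, and your proposed remedy --- a deterministic, $t$-independent discretization of $\Rddiag$ so that $t\mapsto\jh_{N,t}$ is manifestly Borel, followed by a single subsequence extraction --- is the right way to make this rigorous.
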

\begin{proof}
	The construction of the local flux hinges on the representation of nonlocal gradients as integrals along elementary \emph{needles}, following the terminology introduced by~\cite{Klartag2017}. Let $(x,y)\in \Rddiag$ fixed and define, for any measurable $A\in \calB(\R^d)$, the measure $\sigma_{x,y}\in \calM^+(\R^d)$ by
	\begin{equation*}
		\sigma_{x,y}[A]\! = \!\calH^1(A\cap \llbracket x,y\rrbracket) \!\quad\text{with}\quad\!  \llbracket x,y\rrbracket \!\coloneqq\! \set*{(1\!-\!s)x+s y \in \R^d:\!  s\in [0,1]}.
	\end{equation*}
	Introducing the unit vector $\nu_{x,y} \coloneqq \frac{y-x}{\abs{y-x}}$, we get the identities
    \begin{align}
		\varphi(y)-\varphi(x) &= \int_0^{\abs{y-x}} \nabla \varphi(x+s\nu_{x,y}) \cdot\nu_{x,y} \dx s = \int_{\llbracket x,y\rrbracket}\nabla \varphi(\xi) \cdot\nu_{x,y} \dx \calH^1(\xi) \notag\\
    &=\int_\Rd \nabla\varphi(\xi) \cdot \nu_{x,y} \dx\sigma_{x,y}(\xi). \label{eq:nonlocal_gradient:needle}
  \end{align}
Next, we define $\lambda\in \calM(\Rddiag)$ by $\lambda(A)\coloneqq \iint_A \abs{x-y}\eta(x,y)\dx j(x,y)$ and observe that it is finite due to the integrability assumption on $j$. 
Splitting $\lambda$ into its positive and negative parts and renormalizing these parts, we can employ \cite[Remark 5.1.2]{AmbrosioGigliSavare2008} to obtain a family of counting measures $(\lambda^N)_{N\in\bbN}$ on $\Rddiag$ such that $\lambda^N\rightharpoonup \lambda$ narrowly as $N\to\infty$. These measures can be written as
		\begin{equation*}
			\lambda^N = \sum_{k=1}^N \lambda_k^N \delta_{(x_k^N,y_k^N)}\qquad\text{with}\qquad 		\sup_{N\in\bbN} \sum_{k=1}^N \abs{\lambda_k^N} <\infty . 
		\end{equation*}
by the finiteness of $\lambda$.
Defining $\tilde\sigma_{x,y}\in\calM(\Rd;\Rd)$ by $\tilde\sigma_{x,y}(A)\coloneqq\nu_{x,y}\frac{\sigma_{x,y}(A)}{\abs{x-y}}$, the representation of nonlocal gradients in \eqref{eq:nonlocal_gradient:needle} yields
\begin{align*}
    \frac12\iint_\Rddiag\frac{\babla\varphi(x,y)}{\abs{x-y}}\dx\lambda^N(x,y)
    &=\frac12\iint_{\Rddiag}\int_{\Rd} \nabla\varphi(\xi) \cdot \dx\tilde\sigma_{x,y}(\xi)\dx\lambda^N(x,y)\\
    &=\frac12\sum_{k=1}^N\int_{\Rd} \nabla\varphi(\xi) \cdot \dx\tilde\sigma_{x_k^N,y_k^N}(\xi)\lambda_k^N\\
    &=\int_{\Rd} \nabla\varphi(\xi) \cdot \frac12\sum_{k=1}^N \lambda_k^N \dx\tilde\sigma_{x_k^N,y_k^N}(\xi).
\end{align*}
	Motivated by this identity, we define $\hat \jmath^N \in \calM(\Rd;\Rd)$ by
	\begin{equation*}
		\hat \jmath^N(A) \coloneqq \frac12\sum_{k=1}^N \lambda_k^N \tilde\sigma_{x_k^N,y_k^N}(A). 
	\end{equation*}
Observing that $\abs{\tilde\sigma_{x,y}}(\Rd) = \abs{\nu_{x,y}}\frac{\abs{\calH^1(\llbracket x,y \rrbracket\cap \Rd)}}{\abs{x-y}} = 1$, we obtain the uniform bound
\begin{align*}
    \sup_{N\in\bbN}\abs*{\hat \jmath^N}(\R^d)\le \sup_{N\in\bbN}\sum_{k=1}^N \abs{\lambda_k^N} <\infty,
\end{align*}
which implies that there exists $\jh\in\calM(\Rd;\Rd)$ and a (not relabelled) subsequence such that $\jh^N\oset{\ast}{\rightharpoonup}\jh$ as $N\to\infty$. 

We note that for any $\varphi\in C^1_c(\Rd)$ the map $\Rddiag\ni (x,y)\mapsto \babla\varphi(x,y)/\abs{x-y}$ is continuous and bounded. Indeed, given $(x,y)\in\Rddiag$ let $0<\delta< \abs{x-y}/2$. 
Then, due to the mean-value theorem, for any $(x',y')\in B_\delta((x,y))\subset \R^{2d}$ with $x'\ne y'$, we obtain the upper bound
\begin{align*}
    &\abs[\bigg]{\frac{\babla\varphi(x,y)}{\abs{x-y}} - \frac{\babla\varphi(x',y')}{\abs{x'-y'}}}\\
    &\le \frac{\abs[\big]{\babla\varphi(x,y)-\babla\varphi(x',y')}}{\abs{x-y}}+\frac{\abs[\big]{\babla\varphi(x',y')}\,\abs[\big]{\abs{x-y}-\abs{x'-y'}}}{\abs{x-y}\abs{x'-y'}}\\
    &\le \frac{2\norm{\nabla\varphi}_{L^\infty}\delta}{\abs{x-y}}+\frac{4\norm{\varphi}_{L^\infty}\delta}{\abs{x-y}(\abs{x-y}-2\delta)},
\end{align*}
where we used that $\abs{\abs{x-y}-\abs{x'-y'}} \le \abs{x-x'-y+y'}\le 2\delta$. This upper bound vanishes as $\delta\to 0$ and thereby ensures continuity. Boundedness follows again from the mean value theorem and the boundedness of $\nabla\varphi$. 

Using this for the narrow convergence of $\lambda^N \rightharpoonup \lambda$ and the weak-$^\ast$ convergence along a subsequence of $\jh^N$ to $\jh$, along this subsequence we obtain the identity
	\begin{align*}
		\frac{1}{2} \iint_{\Rddiag} \frac{\babla \varphi(x,y)}{\abs{x-y}} \dx\lambda^N(x,y) &= \int_\Rd \nabla \varphi(\xi) \cdot \dx\jh^N(\xi) \\
		 \qquad\qquad\downarrow N\to \infty \qquad\quad &\qquad\quad \downarrow N\to \infty \qquad\qquad\\
		 \frac{1}{2} \iint_{\Rddiag} \frac{\babla \varphi(x,y)}{\abs{x-y}} \dx\lambda(x,y) &= \int_\Rd \nabla \varphi(\xi) \cdot \dx \jh(\xi). 
	\end{align*}
Since the left limit is unique, the right limit is independent of the particular subsequence and thus $\div\jh$ is unique as well.
Finally, the second claim follows from the first part of Corollary \ref{cor:A-bound}, and the definitions of solutions to $\NCE$ and $\CE$, Lemma~\ref{lem:nce:weak_cont} and Definition~\ref{def:ce-flux}, respectively.
\end{proof}
The next two results concern compactness for solutions of the continuity equation constructed in Proposition~\ref{prop:jhat}. 
\begin{lemma}\label{lem:comp_jh}
    Let $(\mu^\eps)_{\eps>0} \subset \calM^+(\Rd)$ and $(\eta^\eps)_{\eps>0}$ be families satisfying \eqref{mu1}, \eqref{mu2}, and \eqref{theta1}~--~\eqref{theta4}, respectively, uniformly in $\eps$. For any $\eps>0$, let $\bs\rho^\eps \coloneqq (\rho_t^\eps)_{t\in[0,T]}\subset\calP(\Rd)$ and $\bs j^\eps\coloneqq (j_t^\eps)_{t\in[0,T]} \subset\calM(G)$ such that $\sup_{\eps>0} \bs\calA(\mu^\eps,\eta^\eps;\bs\rho^\eps,\bs j^\eps)<\infty$. Moreover, consider $(\bs {\hat\jmath}^\eps)_{\eps>0}$ associated to $(\bs j^\eps)_{\eps>0}$ as in Proposition \ref{prop:jhat}. Then
    \begin{enumerate}
        \item $(\int_\cdot \jh^\eps_t\dx{t})_{\eps>0}$ is weakly-$^\ast$ compact in $\calM((0,T)\times\Rd;\Rd)$;
        \item $(t\mapsto\jh^\eps_t)_\eps$ is equi-integrable w.r.t. $\scrL^1$.
    \end{enumerate}
    In particular, there exists $\bs\jh\coloneqq( \jh_t)_{t\in[0,T]} \subset \calM(\Rd;\Rd)$ such that (along a subsequence) we have $\int_\cdot\jh^\eps_t\dx{t} \oset{\ast}{\rightharpoonup} \int_\cdot\jh\dx{t}$ weakly-$^\ast$ in $\calM((0,T)\times\Rd;\Rd)$.
\end{lemma}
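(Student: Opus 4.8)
The plan is to derive a uniform-in-$\eps$ bound on the total variation of the local fluxes $\jh^\eps_t$, pointwise in time, and then to read off both compactness statements.

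\emph{Step 1 (pointwise bound).} Fix $\eps>0$ and $t\in[0,T]$. By the construction of $\jh^\eps_t$ in Proposition~\ref{prop:jhat}, this measure is a weak-$^\ast$ limit of fluxes $\frac12\sum_k\lambda_k^N\tilde\sigma_{x_k^N,y_k^N}$ assembled from needles $\tilde\sigma_{x_k^N,y_k^N}$ of unit total variation, with signed weights whose absolute values sum to the total variation of the measure with density $(x,y)\mapsto\abs{x-y}\eta^\eps(x,y)$ against $j^\eps_t$. Lower semicontinuity of the total variation under weak-$^\ast$ convergence, together with Corollary~\ref{cor:A-bound} applied with $\Phi(x,y)=\abs{x-y}\le\abs{x-y}\vee\abs{x-y}^2$, therefore gives
\begin{equation*}
  \abs{\jh^\eps_t}(\Rd)\le\frac12\iint_{G^\eps}\abs{x-y}\,\eta^\eps(x,y)\dx\abs{j^\eps_t}(x,y)\le\sqrt{2\Cint\,\calA(\mu^\eps,\eta^\eps;\rho^\eps_t,j^\eps_t)} .
\end{equation*}
Since $\bs\jh^\eps$ is a Borel curve in $\calM(\Rd;\Rd)$ (Definition~\ref{def:ce-flux}), the map $t\mapsto\abs{\jh^\eps_t}(\Rd)$ is Borel, and the right-hand side is square-integrable with $\int_0^T\abs{\jh^\eps_t}(\Rd)^2\dx t\le2\Cint\,\bs\calA(\mu^\eps,\eta^\eps;\bs\rho^\eps,\bs j^\eps)=:C_\eps$, where $\sup_{\eps>0}C_\eps<\infty$ by hypothesis.

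\emph{Step 2 (the two claims).} By Cauchy--Schwarz in time, for every Borel $E\subseteq[0,T]$,
\begin{equation*}
  \int_E\abs{\jh^\eps_t}(\Rd)\dx t\le\scrL^1(E)^{1/2}\,C_\eps^{1/2}\le\scrL^1(E)^{1/2}\,\bigl(\sup_{\eps>0}C_\eps\bigr)^{1/2}.
\end{equation*}
Taking $E=[0,T]$ shows that the $\Rd$-valued measures $\int_\cdot\jh^\eps_t\dx t$ on $(0,T)\times\Rd$ have uniformly bounded total variation, hence are weak-$^\ast$ sequentially relatively compact by Banach--Alaoglu (the predual $C_0((0,T)\times\Rd;\Rd)$ being separable); this is~(1). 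Letting $\scrL^1(E)\to0$ in the same estimate yields the equi-integrability~(2).

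\emph{Step 3 (identification of the limit).} Along a subsequence, $\int_\cdot\jh^\eps_t\dx t\oset{\ast}{\rightharpoonup}\nu$ for some $\nu\in\calM((0,T)\times\Rd;\Rd)$. Passing the estimate of Step~2 to the limit via lower semicontinuity of the total variation on open sets shows that the marginal of $\abs{\nu}$ on the time variable obeys the same bound, hence is absolutely continuous with respect to $\scrL^1$. Disintegrating $\nu$ along the projection onto the time variable and absorbing this absolutely continuous density produces a Borel family $\bs\jh=(\jh_t)_{t\in[0,T]}\subset\calM(\Rd;\Rd)$ with $\nu=\int_\cdot\jh_t\dx t$, as asserted. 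The routine estimates being immediate from Corollary~\ref{cor:A-bound}, the only delicate point is this last step: the uniform total-variation bound alone only gives a limit measure on the product space, and one must exclude concentration in time to recover an honest measurable curve of local fluxes --- precisely what the $L^2$-in-time control of Step~1 secures, as it survives the weak-$^\ast$ limit.
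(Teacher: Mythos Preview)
Your proof is correct and follows the paper's strategy: bound $\abs{\jh^\eps_t}(\Rd)$ by the nonlocal action via Corollary~\ref{cor:A-bound}, obtain an $L^2$-in-time control yielding both compactness and equi-integrability, then disintegrate the limit using that this control passes to the weak-$^\ast$ limit. The one substantive difference is in Step~1: you justify the total-variation bound by revisiting the needle construction of Proposition~\ref{prop:jhat} and invoking lower semicontinuity of the total variation, whereas the paper works through the duality identity~\eqref{eq:lem_jh} and a supremum over gradients $\nabla\varphi/\norm{\nabla\varphi}_{L^\infty}$; your route is arguably the cleaner of the two, since testing a vector-valued measure only against gradients does not in general recover its full total variation, so the needle-based bound is what actually secures the estimate.
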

\begin{proof}
    Combining \eqref{eq:lem_jh}, Corollary \ref{cor:A-bound} and H{\"o}lder's inequality, we obtain for any measurable $I\subset[0,T]$ 
    \begin{align}
        |\bs {\hat\jmath}^\eps|(I\times\Rd)&\le\int_I\abs{\jh^\eps_t}(\Rd)\dx t\notag \\
        &= \sup_{\varphi\in C^1_c(\Rd)\setminus\{0\}}\frac{1}{\norm{\nabla\varphi}_{L^\infty}}\int_I\int_\Rd \nabla\varphi(x)\cdot\dx\jh^\eps_t(x)\dx t\notag\\
        \overset{\mathclap{\eqref{eq:lem_jh}}}&{=} \sup_{\varphi\in C^1_c(\Rd)\setminus\{0\}}\frac{1}{2\norm{\nabla\varphi}_{L^\infty}}\int_I\iint_{G^{\!\!\:\eps}}\babla\varphi(x,y)\eta^\eps(x,y)\dx j^\eps_t(x,y) \dx t\notag\\
        &\le \frac12\int_I\iint_{G^{\!\!\:\eps}} (2\land\abs{x-y})\eta^\eps(x,y)\dx \abs{j_t^\eps}(x,y)\dx t\notag\\
        &\le \sup_{\eps>0}\sqrt{2\abs{I}\Cint\bs\calA(\mu^\eps,\eta^\eps;\bs\rho^\eps,\bs j^\eps)}\label{eq:lem_comp_jh}, 
    \end{align}
    which implies compactness as $|I|\le T$. The equi-integrability follows from \eqref{eq:lem_comp_jh}. Finally, the representation of the limit is a consequence of disintegration and the equi-integrability.
\end{proof}
\begin{proposition}\label{prop:convergence_CE-solutions}
    Let $(\mu^\eps)_{\eps>0} \subset \calM^+(\Rd)$ and $(\eta^\eps)_{\eps>0}$ be families satisfying \eqref{mu1}, \eqref{mu2}, and \eqref{theta1}~--~\eqref{theta4}, respectively, uniformly in $\eps$. Let $(\bs\rho^\eps,\bs j^\eps)_{\eps>0}$ be such that $(\bs\rho^\eps,\bs j^\eps)\in\NCE^\eps_T$ for all $\eps>0$ and $\sup_{\eps>0}\bs\calA(\mu^\eps,\eta^\eps;\bs\rho^\eps,\bs j^\eps)<\infty$. Let $\bs\jh^\eps$ be associated to $\bs j^\eps$ as in Proposition~\ref{prop:jhat}. Then there exists a (not relabeled) subsequence of pairs $(\bs\rho^\eps,\bs \jh^\eps)\in\CE_T$ and a pair $(\bs\rho,\bs \jh)\in\CE_T$ such that $\rho^\eps_t\rightharpoonup\rho_t$ narrowly in $\calP(\Rd)$ for a.e. $t\in[0,T]$ and such that $\int_\cdot\jh^\eps_t\dx{t} \oset{\ast}{\rightharpoonup} \int_\cdot\jh_t\dx{t}$ weakly-$^\ast$ in $\calM((0,T)\times\Rd;\Rd)$.
\end{proposition}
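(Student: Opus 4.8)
The plan is to combine the compactness statements established so far with a passage to the limit in the weak formulation of the continuity equation. First I would use Lemma~\ref{lem:2nd-mom-propagation} together with the uniform action bound $\sup_{\eps>0}\bs\calA(\mu^\eps,\eta^\eps;\bs\rho^\eps,\bs j^\eps)<\infty$ and the hypothesis $\sup_{\eps>0}M_2(\rho_0^\eps)<\infty$ --- which must be assumed implicitly, or rather one invokes that $\bs\rho^\eps$ are $2$-absolutely continuous curves with uniformly bounded second moment of the initial datum, so that $\sup_{\eps>0}\sup_{t\in[0,T]}M_2(\rho_t^\eps)<\infty$. This uniform second-moment bound gives tightness of $\{\rho_t^\eps\}$ for each $t$. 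To upgrade pointwise-in-$t$ tightness to convergence for a.e.\ $t$ along a single subsequence, I would establish an equicontinuity estimate for the curves $t\mapsto\rho_t^\eps$: testing the weak formulation \eqref{eq:nce-weak} against a fixed $\varphi\in C_c^1(\Rd)$ and using $|\babla\varphi(x,y)|\le\norm{\nabla\varphi}_{L^\infty}(2\wedge|x-y|)$ together with Corollary~\ref{cor:A-bound}, one gets
\begin{equation*}
    \abs[\Big]{\int_\Rd\varphi\dx\rho_{t_1}^\eps-\int_\Rd\varphi\dx\rho_{t_0}^\eps}\le\norm{\nabla\varphi}_{L^\infty}\sqrt{2\Cint\,\bs\calA(\mu^\eps,\eta^\eps;\bs\rho^\eps,\bs j^\eps)}\sqrt{t_1-t_0},
\end{equation*}
uniformly in $\eps$. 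Combined with tightness, a refined Arzelà--Ascoli / Ascoli-type argument (as in \cite[Proposition 3.3.1]{AmbrosioGigliSavare2008}) yields a subsequence and a weakly continuous limit curve $\bs\rho=(\rho_t)_{t\in[0,T]}\subset\calP(\Rd)$ with $\rho_t^\eps\rightharpoonup\rho_t$ narrowly for every (hence a.e.) $t\in[0,T]$.

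Next I would bring in Lemma~\ref{lem:comp_jh}, which already provides --- along a further subsequence --- a limit $\bs\jh=(\jh_t)_{t\in[0,T]}\subset\calM(\Rd;\Rd)$ with $\int_\cdot\jh_t^\eps\dx t\oset{\ast}{\rightharpoonup}\int_\cdot\jh_t\dx t$ weakly-$^\ast$ in $\calM((0,T)\times\Rd;\Rd)$, together with the equi-integrability of $t\mapsto\jh_t^\eps$ ensuring the limit disintegrates as a time-dependent family with $\int_0^T|\jh_t|(\Rd)\dx t<\infty$, i.e.\ property (2) of Definition~\ref{def:ce-flux} for the limit. It then remains to pass to the limit in the weak formulation. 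By Proposition~\ref{prop:jhat}, each pair $(\bs\rho^\eps,\bs\jh^\eps)$ lies in $\CE_T$, so for $\bs\varphi\in C_c^1([0,T]\times\Rd)$ it satisfies \eqref{eq:ce_weak_continuous_rep}. In that identity: the term $\int_{t_0}^{t_1}\int_\Rd\partial_t\varphi_t\dx\rho_t^\eps\dx t$ converges by dominated convergence using the pointwise-in-$t$ narrow convergence and the uniform bound $\norm{\partial_t\bs\varphi}_\infty$; the flux term $\int_{t_0}^{t_1}\int_\Rd\nabla\varphi_t\cdot\dx\jh_t^\eps\dx t$ converges because $\nabla\bs\varphi\in C_c((0,T)\times\Rd;\Rd)$ is an admissible test object against the weak-$^\ast$ convergent measures $\int_\cdot\jh_t^\eps\dx t$; and the boundary terms converge since $\rho_{t_i}^\eps\rightharpoonup\rho_{t_i}$ narrowly (choosing $t_0,t_1$ among the full-measure set of convergence times, then extending to all times by weak continuity of both sides). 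This yields \eqref{eq:ce_weak_continuous_rep} for $(\bs\rho,\bs\jh)$, hence $(\bs\rho,\bs\jh)\in\CE_T$.

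The main obstacle I anticipate is the upgrade from pointwise-in-$t$ tightness to convergence of $\rho_t^\eps$ along a \emph{single} subsequence for a.e.\ (in fact all) $t$: this requires the equicontinuity estimate above and a careful diagonal/Ascoli argument in the space of narrowly continuous curves, and one must be slightly careful that the action bound controls the modulus of continuity uniformly in $\eps$. A secondary subtlety is ensuring the weak-$^\ast$ limit $\bs\jh$ from Lemma~\ref{lem:comp_jh} genuinely has a Borel-measurable disintegration $(\jh_t)_t$ with the required integrability --- this is exactly what the equi-integrability clause of Lemma~\ref{lem:comp_jh} buys us, so the argument reduces to quoting it. Once these two points are in place, the passage to the limit in \eqref{eq:ce_weak_continuous_rep} is routine linearity plus test-function density.
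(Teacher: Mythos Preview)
Your proposal is correct and follows essentially the same route as the paper: flux compactness from Lemma~\ref{lem:comp_jh}, tightness of $(\rho_t^\eps)_\eps$ via the uniform second-moment bound (Lemma~\ref{lem:2nd-mom-propagation}) together with Prokhorov, equicontinuity of $t\mapsto\rho_t^\eps$ from the uniform action bound, and then \cite[Proposition~3.3.1]{AmbrosioGigliSavare2008} to extract a single subsequence. You also correctly flag that the hypothesis $\sup_{\eps>0}M_2(\rho_0^\eps)<\infty$ is needed to invoke Lemma~\ref{lem:2nd-mom-propagation}, which the paper leaves implicit; and you spell out the passage to the limit in \eqref{eq:ce_weak_continuous_rep}, which the paper omits entirely.
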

\begin{proof}
    The weak-$^\ast$ convergence of  $\int_\cdot\jh^\eps_t\dx{t}$ to $\int_\cdot\jh_t\dx{t}$ follows from Lemma \ref{lem:comp_jh}. Narrow convergence of $\rho^\eps_t$ to $\rho_t$ can be obtained by using \cite[Proposition 3.3.1]{AmbrosioGigliSavare2008}. More precisely, compactness is ensured by Lemma \ref{lem:2nd-mom-propagation} and Prokhorov's theorem, whilst equicontinuity is a consequence of the equi-integrability of the flux, similarly, e.g., to \cite[Lemma 4.5]{hraivoronska2022diffusive}.
\end{proof}

\subsection{Limiting tensor structure}%

This section is devoted to identifying the tensor $\bbT$ for the limiting interaction equation~\eqref{eq:intro:NLIE:one}. The limiting structure depends on the underlying Finslerian nature of \eqref{eq:nlnl-interaction-eq} as gradient flow of $\calE$ in $(\calP(\Rd),\calT_{\mu,\eta^\eps})$, being $\calT_{\mu,\eta^\eps}$ the upwind mass transportation cost, cf.~Definition~\ref{def:T-metric}.

In order to obtain the tensor we need to recall the definition of graph tangent fluxes introduced in \cite[Proposition 2.25]{EPSS2021}. Given a pair $(\mu,\eta^\eps)$, the space of graph tangent fluxes at $\rho\in\calP_2(\Rd)$ is defined by
\begin{equation}\label{eq:def:T_rho}
\begin{aligned}
    T^\eps_\rho\calP_2(\Rd) \coloneqq \bigl\{&j\in\calM(G^{\!\!\:\eps}):\calA(\mu,\eta^\eps;\rho,j)<\infty \ \text{ and } \\ &\calA(\mu,\eta^\eps;\rho,j)\le \calA(\mu,\eta^\eps;\rho,j+j_{\mathsf{div}}) \;\forall j_{\mathsf{div}}\in\Mdiv(G^{\!\!\:\eps})\bigr\},
\end{aligned}
\end{equation}
where
\begin{align*}%
    \Mdiv(G^{\!\!\:\eps})\coloneqq \set[\bigg]{j_{\mathsf{div}}\in\calM(G^{\!\!\:\eps}): \iint_{G^{\!\!\:\eps}} \babla\varphi\dx j_{\mathsf{div}} = 0\;\forall\varphi\in C_c^\infty(\Rd)}.
\end{align*}
We define the space of tangent velocities by
\begin{equation}\label{eq:def:tildeT_rho}
\begin{aligned}
    \widetilde T^\eps_\rho\calP_2(\Rd) \coloneqq \set*{v:G^{\!\!\:\eps}\to\R: v_+\dx(\rho\otimes\mu)-v_-\dx(\mu\otimes\rho)\in T^\eps_\rho\calP_2(\Rd)}.
\end{aligned}
\end{equation}
In particular, the set $\set{\babla\varphi:\varphi\in C_c^\infty(\Rd)}$ is dense in $\widetilde T^\eps_\rho\calP_2(\Rd)$ with respect to a suitable $L^2$-norm (cf.~\cite[Proposition 2.26]{EPSS2021}).
The crucial operator for the Finslerian structure is the tangent-to-cotangent mapping $\widetilde l^\eps_{\rho}:\widetilde T^\eps_\rho\calP_2(\Rd)\to\bra[\big]{\widetilde T^\eps_\rho\calP_2(\Rd)}^\ast$, which is given, for a fixed $v\in \widetilde T^\eps_\rho\calP_2(\Rd)$, by
\begin{align}\label{eq:def:l}
    \widetilde l^\eps_{\rho}(v)[w]\coloneqq\frac12\iint_{G^{\!\!\:\eps}} w\eta^\eps\pra*{v_+\dx(\rho\otimes\mu)-v_-\dx(\mu\otimes\rho)},
\end{align}
for any $w\in \widetilde T^\eps_\rho\calP_2(\Rd)$. Furthermore, $\widetilde{l}_\rho^\eps$ is also the first variation of $\frac{1}{2}\tA(\mu,\eta^\eps,\rho,\cdot)$ at $v$ in the direction $w$, and takes a similar role of a Riemannian metric in the Finslerian framework from \cite{EPSS2021,HPS21}.
In this subsection we will rigorously identify the limiting tensor arising from the Finslerian structure by showing in Propositions~\ref{prop:symmetry}~and~\ref{prop:tensor} that 
\begin{align*}
     \widetilde l^\eps_{\rho}(\babla\varphi)[\babla\psi] &= \frac12\iint_{G^{\!\!\:\eps}}\babla \varphi(x,y)\babla\psi(x,y)\eta^\eps(x,y)\dx\rho(x)\dx\mu(y) + o(1)\\
     &=\int_\Rd\nabla \varphi(x)\cdot\bbT^\eps(x)\nabla\psi(x)\dx\rho(x)+o(1)
    \end{align*}
where
    \begin{equation*}
      \bbT^\eps(x) \coloneqq \frac12\int_{\Rdx} (x-y)\!\otimes\!(x-y)\,\eta^\eps(x,y)\dx\mu(y). 
    \end{equation*}
Then, we will see that $\bbT^\eps$ converges to a unique limit $\bbT$ on compact subsets of $\R^d$ as $\eps\to 0$  (cf.~Proposition \ref{prop:tensor_uniqueness}), before concluding that $\widetilde l^\eps_{\rho}$ itself converges to a limiting inner product containing $\bbT$ (cf.~Theorem \ref{thm:limiting_inner_product}). Our first main step towards these goals is a proof that $\widetilde{l}^\eps_\rho$ is almost symmetric, with the antisymmetric part vanishing as $\eps \to 0$. To see this, we require the following two technical lemmas.

We remind the reader of the assumption that $(\mu,\vartheta)$ satisfy \eqref{theta1}~--~\eqref{theta4} and \eqref{mu1}, \eqref{mu2}, and that $\eta^\eps$ and $G^{\!\!\:\eps}$ are given by \eqref{eq:def:eta^eps} and \eqref{eq:def:G^eps}, respectively.
\begin{lemma}\label{lem:support_intersection}
    For every $\varphi\in C_c(\Rd)$ it holds
    \begin{align*}
        \abs{G^{\!\!\:\eps}\cap \supp\babla\varphi} \le 2 \Cmeas \abs{\supp\varphi}  \eps^d, \qquad\text{for all } \eps>0.
    \end{align*}
    Furthermore, for every $\eps>0$ the intersection $\abs{G^{\!\!\:\eps}\cap \supp\babla\varphi}$ is compact in $\Rddiag$.
\end{lemma}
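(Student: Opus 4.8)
The plan is to push the support of $\babla\varphi$ into a union of two slabs cut out by $\supp\varphi$, and then to integrate the scaling estimates from Lemma~\ref{lem:properties_eta-mu} over those slabs. First I would record where $\babla\varphi$ is supported: since $\babla\varphi(x,y)=\varphi(y)-\varphi(x)$ vanishes whenever $x\notin\supp\varphi$ and $y\notin\supp\varphi$, one has $\set*{\babla\varphi\ne 0}\subseteq(\supp\varphi\times\Rd)\cup(\Rd\times\supp\varphi)$, and since the right-hand side is closed, also
\begin{equation*}
  \supp\babla\varphi\subseteq(\supp\varphi\times\Rd)\cup(\Rd\times\supp\varphi).
\end{equation*}

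For the Lebesgue bound I would split $G^{\!\!\:\eps}\cap\supp\babla\varphi$ according to this inclusion and apply Fubini to each piece. For the first slab, $\abs{G^{\!\!\:\eps}\cap(\supp\varphi\times\Rd)}=\int_{\supp\varphi}\abs{G_{\!\!\:x}^{\!\!\:\eps}}\dx x\le\abs{\supp\varphi}\,\sup_{x\in\Rd}\abs{G_{\!\!\:x}^{\!\!\:\eps}}\le\Cmeas\abs{\supp\varphi}\eps^d$ by \eqref{meas}. For the second slab I would use that $\eta^\eps$ is symmetric, $\eta^\eps(x,y)=\eta^\eps(y,x)$ — which is immediate from \eqref{theta1}, since $w\mapsto\vartheta(z,w)$ is even — so that $\set*{x\in\Rd:\eta^\eps(x,y)>0}=G_{\!\!\:y}^{\!\!\:\eps}$ and hence $\abs{G^{\!\!\:\eps}\cap(\Rd\times\supp\varphi)}=\int_{\supp\varphi}\abs{G_{\!\!\:y}^{\!\!\:\eps}}\dx y\le\Cmeas\abs{\supp\varphi}\eps^d$ as well. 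Adding the two contributions gives the claimed bound $\abs{G^{\!\!\:\eps}\cap\supp\babla\varphi}\le 2\Cmeas\abs{\supp\varphi}\eps^d$.

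For the second assertion I would combine the support inclusion above with the range constraint \eqref{supp}: every $(x,y)\in G^{\!\!\:\eps}\cap\supp\babla\varphi$ satisfies $\abs{x-y}\le\Csupp\eps$ and lies with one of its coordinates in $\supp\varphi$, hence $(x,y)\in\bra*{\supp\varphi+B_{\Csupp\eps}}^2$, a bounded set. Since $\supp\babla\varphi$ is closed in $\Rddiag$ and, for $\eps$ fixed, the positivity set $G^{\!\!\:\eps}$ is relatively closed in $\Rddiag$, the intersection is closed in $\Rddiag$; being in addition bounded and contained in $G^{\!\!\:\eps}\subset\Rddiag$, it is a compact subset of $\Rddiag$.

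I expect the measure estimate to be essentially routine: the only inputs are \eqref{meas} and the evenness of $\vartheta(z,\cdot)$. The point that calls for some care is the compactness claim, where one must control $G^{\!\!\:\eps}\cap\supp\babla\varphi$ near the diagonal of $\Rd\times\Rd$; this is where the structural hypotheses on $\vartheta$ enter (for the concrete connectivities in the paper, e.g.\ $\vartheta(z,\cdot)=c\,\bbone_{B_1}$, the positivity set of $\eta^\eps$ is a closed collar of thickness $\calO(\eps)$ around the diagonal), and it is the step I would write out most carefully.
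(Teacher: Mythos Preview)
Your measure estimate is essentially the paper's proof: both use $\supp\babla\varphi\subseteq(\supp\varphi\times\Rd)\cup(\Rd\times\supp\varphi)$, Fubini, and \eqref{meas}. The paper invokes the symmetry of $G^{\!\!\:\eps}$ once to reduce to a single slab, while you treat the two slabs separately and appeal to the symmetry of $\eta^\eps$ for the second --- same argument.

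For the compactness part your argument attempts more than the paper's and runs into trouble. The claim that ``$G^{\!\!\:\eps}$ is relatively closed in $\Rddiag$'' is not implied by \eqref{theta1}--\eqref{theta4}: for $\vartheta(z,w)=(1-\abs{w})_+$, which satisfies all the hypotheses, one gets $G^{\!\!\:\eps}=\set{(x,y):0<\abs{x-y}<\eps}$, which is open. More importantly, even when $G^{\!\!\:\eps}$ \emph{is} closed in $\Rddiag$ (as in your example $\vartheta=c\,\bbone_{B_1}$), ``closed in $\Rddiag$ and bounded'' does not yield compactness, because $\Rddiag$ is open in $\R^{2d}$: pick $x$ with $\nabla\varphi(x)\ne 0$ and consider $(x+n^{-1}e_1,x)\in G^{\!\!\:\eps}\cap\supp\babla\varphi$ for large $n$; this sequence escapes to the diagonal and has no subsequential limit in $\Rddiag$. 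The paper's proof does not attempt any closedness argument at all --- it simply records that $G^{\!\!\:\eps}\cap\supp\babla\varphi$ is contained in the compact set $(\supp\varphi+B_{\Csupp\eps})\times(\supp\varphi+B_{\Csupp\eps})\subset\R^{2d}$ and stops there. In the sole downstream use (Lemma~\ref{lem:Equicontinuity}, where one needs compact support in $\Rd\times\Rd$, not in $\Rddiag$) this boundedness is what is actually required, so the paper's informal phrasing is harmless; but the closedness route you sketch does not work as written.
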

\begin{proof}
    First, observe that $\supp\babla\varphi$ is contained in $(\supp\varphi\times\Rd)\cup(\Rd\times \supp\varphi)$. Indeed, if $\babla\varphi(x,y) \ne 0$, then either $\varphi(x)\ne 0$ or $\varphi(y)\ne 0$. Thus, due to the symmetry of $G^{\!\!\:\eps}$ and the assumption \eqref{meas}, we obtain
    \begin{align*}
        \MoveEqLeft\abs*{G^{\!\!\:\eps}\cap \supp\babla\varphi}\le \abs*{G^{\!\!\:\eps}\cap(\supp\varphi\times\Rd)\cup(\Rd\times \supp\varphi)} \\
        &\le 2 \abs*{G^{\!\!\:\eps}\cap (\supp\varphi\times\Rd)}
        = 2 \int_{\supp\varphi}\abs*{G_{\!\!\:x}^{\!\!\:\eps}}\dx x \le 2\Cmeas\abs{\supp\varphi}\eps^d.
    \end{align*}
    The compactness of $G^{\!\!\:\eps}\cap\supp\babla\varphi$ is an easy consequence of the Assumption~\eqref{supp}. Indeed, for $(x,y)\in G^{\!\!\:\eps}\cap(\Rd\times \supp\varphi)$ we have $y\in \supp\varphi$ and $\abs{x-y}\le \Csupp\eps$ and hence $x\in \supp\varphi+B_{\Csupp\eps}$, which for fixed $\eps>0$ is also compact. Due to the symmetry of~$G^{\!\!\:\eps}$, this concludes the proof.
\end{proof}
\begin{lemma}%
\label{lem:Equicontinuity}
    Let $f_k \in C(\R)$, $k=1,2$, be $1$-Lipschitz functions.
    Then, for any $\eps_0>0$ and any $\varphi,\psi\in C_c^2(\R^d)$, the family of maps $(\xi^\eps)_{0<\eps\le\eps_0}$, $\xi^\eps:\Rd\to\R$ defined by
    \begin{align*}%
      \xi^\eps(x)\coloneqq\int_{\R^d\setminus\set*{x}}f_1\bra[\big]{\babla \varphi(x,y)}f_2\bra[\big]{\babla\psi(x,y)}\eta^\eps(x,y)\dx\mu(y)
    \end{align*}
    is equicontinuous and $\bigcup_{0<\eps\le\eps_0} \supp\xi^\eps$ is contained in a compact set. Moreover, the family satisfies the uniform bound $\sup_{\eps>0} \norm{\xi^\eps}_{L^\infty}\le \Cint \norm{\nabla\varphi}_{L^\infty} \norm{\nabla\psi}_{L^\infty}$ and is therefore compact by Ascoli-Arzelà.
\end{lemma}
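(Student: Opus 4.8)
The plan is to make the scaling in~\eqref{eq:def:eta^eps} explicit through the substitution $y = x-\eps w$, which (using $\dx\mu(y)=\widetilde\mu(y)\dx y$) rewrites the defining integral as
\begin{equation*}
  \xi^\eps(x) = \int_{\Rdzero} \frac{f_1\bigl(\varphi(x-\eps w)-\varphi(x)\bigr)\,f_2\bigl(\psi(x-\eps w)-\psi(x)\bigr)}{\eps^2}\,\vartheta\bra*{x-\tfrac{\eps w}{2},w}\,\widetilde\mu(x-\eps w)\dx w ,
\end{equation*}
where by~\eqref{theta2} the integrand vanishes unless $\abs{w}\le\Csupp$, so the effective domain is the fixed ball $B_{\Csupp}\setminus\set*{0}$. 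The key structural observation is that, since each $f_k$ is $1$-Lipschitz (and, as in the applications, $f_k(0)=0$, so that $\abs{f_k(t)}\le\abs{t}$), the mean value theorem gives $\abs{f_1(\varphi(x-\eps w)-\varphi(x))}\le\eps\norm{\nabla\varphi}_{L^\infty}\abs{w}$ and likewise for $f_2$; hence the quotient above is bounded by $\norm{\nabla\varphi}_{L^\infty}\norm{\nabla\psi}_{L^\infty}\abs{w}^2$, and this quadratic vanishing at $w=0$ exactly compensates the admissible singularity $\vartheta(z,w)\le\Ctheta\abs{w}^{-2}$ from~\eqref{theta3}.

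The uniform $L^\infty$-bound then follows at once: undoing the substitution, $\abs{\xi^\eps(x)}\le\norm{\nabla\varphi}_{L^\infty}\norm{\nabla\psi}_{L^\infty}\int_{\Rdx}\abs{x-y}^2\eta^\eps(x,y)\dx\mu(y)\le\Cint\norm{\nabla\varphi}_{L^\infty}\norm{\nabla\psi}_{L^\infty}$ by~\eqref{int}. For the support, if $\xi^\eps(x)\ne0$ then the integrand must be nonzero at some $(x,y)$, which forces $(x,y)\in G^{\!\!\:\eps}$, hence $\abs{x-y}\le\Csupp\eps\le\Csupp\eps_0$ by~\eqref{supp}, and $\babla\varphi(x,y)\ne0$, hence $x\in\supp\varphi$ or $y\in\supp\varphi$; in either case $x$ lies in the $\Csupp\eps_0$-neighbourhood of $\supp\varphi$, and likewise of $\supp\psi$, which is a fixed compact set independent of $\eps\in(0,\eps_0]$.

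The main work is the equicontinuity, uniform in $\eps\in(0,\eps_0]$. Writing $\xi^\eps(x)=\int_{B_{\Csupp}\setminus\set*{0}}h^\eps(x,w)\dx w$ with $h^\eps$ the integrand above, I would estimate $\abs{\xi^\eps(x)-\xi^\eps(x')}\le\int_{B_{\Csupp}\setminus\set*{0}}\abs{h^\eps(x,w)-h^\eps(x',w)}\dx w$ and expand $h^\eps(x,w)-h^\eps(x',w)$ by a three-term telescoping across its three factors. For the density factor one uses $\abs{\widetilde\mu(x-\eps w)-\widetilde\mu(x'-\eps w)}\le\omega_\mu(\abs{x-x'})$ from~\eqref{mu1}; for the connectivity factor, $\abs{\vartheta(x-\tfrac{\eps w}{2},w)-\vartheta(x'-\tfrac{\eps w}{2},w)}\le\omega_\vartheta(\abs{x-x'})$ from~\eqref{theta_new}; and for the Lipschitz-product factor one exploits the $C^2$-regularity of $\varphi,\psi$: the map $z\mapsto\varphi(z-\eps w)-\varphi(z)$ has gradient of size at most $\eps\abs{w}\norm{\nabla^2\varphi}_{L^\infty}$, so that, using $\abs{f_1(a)f_2(b)-f_1(a')f_2(b')}\le\abs{a-a'}\abs{b}+\abs{a'}\abs{b-b'}$ together with the mean value theorem, the difference of this factor, after division by $\eps^2$, is bounded by $\abs{w}^2\abs{x-x'}\bigl(\norm{\nabla^2\varphi}_{L^\infty}\norm{\nabla\psi}_{L^\infty}+\norm{\nabla\varphi}_{L^\infty}\norm{\nabla^2\psi}_{L^\infty}\bigr)$. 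In each of the three telescoped terms one pairs the surviving factor $\abs{w}^2$ with the bound $\vartheta(z,w)\le\Ctheta\abs{w}^{-2}$ and uses $\widetilde\mu\le\Cmu$ from~\eqref{mu2}, so that the $w$-integrand is bounded uniformly in $\eps$ and $w$ and integrable over $B_{\Csupp}$; this yields
\begin{equation*}
  \sup_{0<\eps\le\eps_0}\ \abs{\xi^\eps(x)-\xi^\eps(x')}\le C\bigl(\abs{x-x'}+\omega_\vartheta(\abs{x-x'})+\omega_\mu(\abs{x-x'})\bigr)
\end{equation*}
with $C$ depending only on $d,\Csupp,\Ctheta,\Cmu,\norm{\varphi}_{C^2}$ and $\norm{\psi}_{C^2}$.

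The right-hand side above tends to $0$ as $\abs{x-x'}\to0$, since $\omega_\mu(\delta),\omega_\vartheta(\delta)\to0$, which is the asserted equicontinuity; combined with the uniform $L^\infty$-bound and the uniform compact support, the Ascoli--Arzelà theorem gives relative compactness of $(\xi^\eps)_{0<\eps\le\eps_0}$ in $C(\Rd)$ for uniform convergence. I expect the only genuine subtlety to be the uniformity in $\eps$ of the three estimates in the telescoping: each is finite only because the $\abs{w}^{-2}$ singularity that~\eqref{theta3} permits for $\vartheta$ is cancelled by the $\abs{w}^2$ produced by the Lipschitz product — which in turn arises from the $\eps^2$ in the denominator meeting the first-order vanishing of $\varphi(x-\eps w)-\varphi(x)$ and $\psi(x-\eps w)-\psi(x)$. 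Once this cancellation is performed, every estimate reduces to integrating a bounded function over the fixed ball $B_{\Csupp}$, leaving no residual $\eps$-dependence.
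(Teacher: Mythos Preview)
Your proof is correct and uses the same ingredients as the paper's: the $|w|^2$-cancellation against the admissible singularity of $\vartheta$, the moduli $\omega_\mu,\omega_\vartheta$ from \eqref{mu1} and \eqref{theta_new}, and the $C^2$-regularity of $\varphi,\psi$. The difference is in the parametrization. The paper substitutes $w=y-x$ (so the effective domain is $B_{\Csupp\eps}$), forms the quotients $\tilde f_k(x,w)=f_k(\babla\varphi(x,x+w))/|w|$, and establishes their equicontinuity in $x$ via a case distinction $|w|>|x-y|^\alpha$ versus $|w|\le|x-y|^\alpha$, the latter invoking Taylor with Lagrange remainder. Your substitution $y=x-\eps w$ fixes the domain to $B_{\Csupp}$ and keeps the $\eps^{-2}$ explicit; the observation that $z\mapsto\varphi(z-\eps w)-\varphi(z)$ is $\eps|w|\norm{\nabla^2\varphi}_{L^\infty}$-Lipschitz then yields the equicontinuity of the Lipschitz-product factor in a single step, producing a cleaner bound with an explicit $|x-x'|$ term in place of the paper's composite modulus. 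Both routes rely on the tacit hypothesis $f_k(0)=0$ (satisfied in the only application, $f_1=\abs{\cdot}$, $f_2=\Id$), which you rightly flag; the paper uses it without comment when bounding $|\tilde f_k|\le\norm{\nabla\varphi}_{L^\infty}$ and when invoking Lemma~\ref{lem:support_intersection} for the support. One minor imprecision: in the telescoped term carrying the $\vartheta$-difference there is no surviving factor of $\vartheta$ to pair with $|w|^2$; that term is handled simply by integrating $|w|^2\omega_\vartheta(|x-x'|)$ over $B_{\Csupp}$, exactly as the paper does in its estimate \eqref{eq:eta-continuity}.
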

\begin{proof}%
    Due to the absolute continuity of $\mu$, we can apply the change of variables $w=y-x$ and rewrite $\xi^\eps$ as
    \begin{align*}
        \xi^\eps(x)&=\int_{\Rd\setminus\set0}f_1\bra[\big]{\babla \varphi(x,x+w)}f_2\bra[\big]{\babla\psi(x,x+w)}\eta^\eps(x,x+w)\widetilde\mu(x+w)\dx w\\
        &=\int_{\Rd\setminus\set0} \frac{f_1\bra[\big]{\babla\varphi(x,x+w)}}{\abs{w}}\frac{f_2\bra[\big]{\babla\psi(x,x+w)}}{\abs{w}}\abs{w}^2\eta^\eps(x,x+w)\widetilde\mu(x+w)\dx w\\
        &= \int_{\Rd\setminus\set0} \tilde f_1(x,w)\tilde f_2(x,w)\widetilde\eta^\eps(x,w)\widetilde\mu(x+w)\dx w,
    \end{align*}
for $\tilde f_1(x,w)\coloneqq f_1(\babla\varphi(x,x+w))/|w|$, $\tilde f_2(x,w)\coloneqq f_2(\babla\psi(x,x+w))/|w|$, and $\widetilde{\eta}^\eps(x,w)\coloneqq |w|^2\eta^\eps(x,x+w) = \frac{1}{\eps^{d+2}}|w|^2\vartheta(x+w/2,w/\eps)$.
    In order to prove the uniform equicontinuity, we note that $1$-Lipschitz assumption on $f_k$ implies the following elementary inequality for $a,b,c,d\in \R$ and $k=1,2$:
    \begin{equation}\label{eq:fk-bound}
    \begin{aligned}
        \abs{f_k(a\pm b)-f_k(c\pm d)} &\le \abs{(a\pm b) - (c\pm d)}
        \le \abs*{a-c}+\abs*{b-d}.
    \end{aligned}
    \end{equation}
    Considering the difference $\tilde f_1(x,w) - \tilde f_1(y,w)$ and noting that all considerations apply analogously to $f_2$ and $\psi$, we first assume that $\abs{w}>\abs{x-y}^\alpha$ for $0<\alpha<1$. In this case, \eqref{eq:fk-bound} and the mean value theorem yield
    \begin{align*}
         \abs[\big]{\tilde f_1(x,w) - \tilde f_1(y,w)}\abs{w}&=\abs[\big]{f_1\bra[\big]{\babla \varphi(x,x+w)} - f_1\bra[\big]{\babla \varphi(y,y+w)}}\\
         &\le  \abs[\big]{\babla\varphi(y+w,x+w)}+\abs[\big]{\babla\varphi(x,y)}\\
         &\le 2 \norm{\nabla\varphi}_{L^\infty}\abs*{x-y}\\
         &< 2 \norm{\nabla\varphi}_{L^\infty}\abs*{x-y}^{1-\alpha}\abs*{w}.
    \end{align*}
    Now considering the case $\abs{w}\le\abs{x-y}^\alpha$, using Taylor with Lagrange remainder and again~\eqref{eq:fk-bound}, we obtain
    \begin{align*}
         \MoveEqLeft\abs[\big]{\tilde f_1(x,w) - \tilde f_1(y,w)}\abs{w} = \abs[\big]{f_1\bra[\big]{\babla\varphi(x,x+w)} - f_1\bra[\big]{\babla \varphi(y,y+w)}}\\
         &= \abs[\big]{f_1\bra[\big]{w\cdot\bra[\big]{\nabla\varphi(x)+\tfrac12H_\varphi(\zeta_{x,w}) w}}-f_1\bra[\big]{w\cdot\bra[\big]{\nabla\varphi(y)+\tfrac12H_\varphi(\zeta_{y,w}) w}}}\\
         &\le \abs[\big]{w\cdot\bra*{\nabla\varphi(x)-\nabla\varphi(y)}}+\abs[\big]{w\cdot\bra[\big]{\tfrac12\bra*{H_\varphi(\zeta_{x,w})-H_\varphi(\zeta_{y,w})} w}}\\
         &\le \bra*{\omega_{\nabla\varphi}(\abs{x-y})+\norm{H_\varphi}_{L^\infty}\abs{x-y}^\alpha}\abs{w}.
    \end{align*}
    Arguing analogously for $f_2$, we have $\abs{\tilde f_k(x,w) - \tilde f_k(y,w)} \le \omega(\abs{x-y})$ for a suitable $\omega\in C([0,\infty);[0,\infty))$ with $\omega(\eps)\to 0$ as $\eps\to 0$, which only depends on $\alpha$, and first and second derivatives of $\varphi$ and $\psi$.
    
    Regarding the difference $\abs{\widetilde\eta^\eps(x,w)-\widetilde\eta(y,w)}$, due to \eqref{theta_new} and \eqref{theta2} we have
    \begin{equation}\label{eq:eta-continuity}
    \begin{aligned}
        \MoveEqLeft\int_{\Rd\setminus\set0}\abs{\widetilde\eta^\eps(x,w)-\widetilde\eta^\eps(y,w)}\dx w\\
        &= \int_{\Rd\setminus\set0}\frac{\abs{w}^2}{\eps^{d+2}}\abs*{\vartheta\bra*{x+\frac{w}{2},\frac{w}{\eps}}-\vartheta\bra*{y+\frac{w}{2},\frac{w}{\eps}}}\dx w\\
        &= \int_{B_{\Csupp}}\abs{w}^2\abs*{\vartheta\bra*{x+\frac{\eps w}{2},w}-\vartheta\bra*{y+\frac{\eps w}{2},w}}\dx w\\
        &\le \omega_\vartheta(\abs{x-y})\int_{B_{\Csupp}}\abs{w}^2\dx w \\
        &\le \abs{B_1}C_{\mathsf{supp}}^{d+2}\omega_\vartheta(\abs{x-y}).
    \end{aligned}
    \end{equation}
    Furthermore, since both $f_1$ and $f_2$ are 1-Lipschitz, the mean value theorem yields $\abs*{\tilde f_1(x,w)}\le \norm{\nabla\varphi}_{L^\infty}$ and $\abs*{\tilde f_2(x,w)}\le \norm{\nabla\psi}_{L^\infty}$. Combining this with the previous estimates, \eqref{mu1}, \eqref{mu2} and Lemma~\ref{lem:properties_eta-mu}, we obtain
    \begin{equation}\label{eq:proof_Equicontinuity}
    \begin{aligned}
        &\int_{\Rd\setminus\set0}\abs[\big]{\tilde f_1(x,w)\tilde f_2(x,w) \widetilde\mu(x+w)\widetilde\eta^\eps(x,w)\\
        &\qquad\qquad- \tilde f_1(y,w)\tilde f_2(y,w) \widetilde\mu(y+w)\widetilde\eta^\eps(y,w)}\dx w\\
        &\le\int_{\Rd\setminus\set0} \abs[\big]{\tilde f_1(x,w)-\tilde f_1(y,w)}\tilde f_2(x,w)\widetilde\eta^\eps(x,w)\widetilde\mu(x+w)\dx w\\
        &\quad +\int_{\Rd\setminus\set0} \tilde f_1(y,w)\abs[\big]{\tilde f_2(x,w)-\tilde f_2(y,w)}\widetilde\eta^\eps(x,w)\widetilde\mu(x+w)\dx w\\
        &\quad +\int_{\Rd\setminus\set0} \tilde f_1(y,w)\tilde f_2(y,w)\abs[\big]{\widetilde\eta^\eps(x,w)-\widetilde\eta^\eps(y,w)}\widetilde\mu(x+w)\dx w\\
        &\quad +\int_{\Rd\setminus\set0} \tilde f_1(y,w)\tilde f_2(y,w)\widetilde\eta^\eps(y,w)\abs[\big]{\widetilde\mu(x+w)-\widetilde\mu(y+w)}\dx w\\
        &\le \Cint\bra*{ \norm{\nabla\varphi}_{L^\infty}+\norm{\nabla\psi}_{L^\infty}}\omega(\abs{x-y})\\
        &\quad + C_\mu \norm{\nabla\varphi}_{L^\infty} \norm{\nabla\psi}_{L^\infty}\abs{B_1}C_{\mathsf{supp}}^{d+2}\omega_\vartheta(\abs{x-y})\\
        &\quad +\Cmom \Cmeas\norm{\nabla\varphi}_{L^\infty} \norm{\nabla\psi}_{L^\infty}\omega_\mu(\abs{x-y}).
    \end{aligned}
    \end{equation}
    Together with the former considerations, this proves the equicontinuity of $(\xi^\eps)_{0<\eps\le\eps_0}$.
    
    Next, we observe that due to Lemma \ref{lem:support_intersection} for any fixed $\eps>0$ the support of $f_1\bra[\big]{\babla \varphi(x,y)}f_2\bra[\big]{\babla\psi(x,y)}\eta^\eps(x,y)$ is compact in $\Rd\times\Rd$. Consequently, $\supp\xi^\eps$ is compact in $\Rd$ for any fixed $\eps>0$ and so is the union $\bigcup_{0<\eps\le\eps_0} \supp\xi^\eps$.
    
    Finally, the bound $\norm{\xi^\eps}_{L^\infty}\le \Cint \norm{\nabla\varphi}_{L^\infty} \norm{\nabla\psi}_{L^\infty}$ again follows from the mean value theorem, the fact that both $f_1$ and $f_2$ are 1-Lipschitz, and the bound~\eqref{int}.    
\end{proof}
\begin{proposition}[Symmetry up to small perturbations]\label{prop:symmetry}
    Let $(\rho^\eps)_{\eps>0}$ be such that $\supp\rho^\eps\subset\supp\mu$ and $\rho^\eps$ converges narrowly to some $\rho\in\calP(\Rd)$ as $\eps\to 0$.

    Then, for all $\varphi,\psi\in C^2_c(\Rd)$, it holds
    \begin{align*}
        \widetilde l^\eps_{\rho^\eps}(\babla\varphi)[\babla\psi] = \frac12\iint_{G^{\!\!\:\eps}}\babla \varphi(x,y)\babla\psi(x,y)\eta^\eps(x,y)\dx\rho^\eps(x)\dx\mu(y) + o(1) . 
    \end{align*}
\end{proposition}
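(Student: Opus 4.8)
The plan is to split the proof into an exact algebraic reduction that isolates the genuinely non-symmetric part of $\widetilde l^\eps_{\rho^\eps}(\babla\varphi)[\babla\psi]$, followed by a vanishing argument for that part, based on the parity of $\vartheta$ in its second variable together with the compactness in Lemma~\ref{lem:Equicontinuity}.

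\emph{Step 1 (algebraic reduction).} Set $v=\babla\varphi$, $w=\babla\psi$. Using the antisymmetry $v(y,x)=-v(x,y)$, $w(y,x)=-w(x,y)$, so that $v(y,x)_-=v(x,y)_+$, together with the symmetry of $G^{\!\!\:\eps}$ and $\eta^\eps$ under $(x,y)\mapsto(y,x)$, relabelling $(x,y)\mapsto(y,x)$ in the $\mu\otimes\rho^\eps$-term of~\eqref{eq:def:l} — legitimate because by Lemma~\ref{lem:support_intersection} all integrands are bounded and supported on a compact set of finite measure — gives
\begin{equation*}
  -\tfrac12\iint_{G^{\!\!\:\eps}} w\,\eta^\eps\,v_-\dx(\mu\otimes\rho^\eps)=\tfrac12\iint_{G^{\!\!\:\eps}} w\,\eta^\eps\,v_+\dx(\rho^\eps\otimes\mu),
\end{equation*}
hence $\widetilde l^\eps_{\rho^\eps}(\babla\varphi)[\babla\psi]=\iint_{G^{\!\!\:\eps}}\babla\psi\,\babla\varphi_+\,\eta^\eps\dx(\rho^\eps\otimes\mu)$. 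Writing $\babla\varphi=\babla\varphi_+-\babla\varphi_-$ in the right-hand side of the claimed identity and using $\babla\varphi_++\babla\varphi_-=\abs{\babla\varphi}$, the difference of the two sides collapses (by Fubini, justified as above) to
\begin{equation*}
  \widetilde l^\eps_{\rho^\eps}(\babla\varphi)[\babla\psi]-\tfrac12\iint_{G^{\!\!\:\eps}}\babla \varphi\,\babla\psi\,\eta^\eps\dx\rho^\eps(x)\dx\mu(y)=\tfrac12\int_{\Rd}\xi^\eps(x)\dx\rho^\eps(x),
\end{equation*}
where $\xi^\eps(x)\coloneqq\int_{\Rdx}\abs{\babla\varphi(x,y)}\,\babla\psi(x,y)\,\eta^\eps(x,y)\dx\mu(y)$. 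Everything thus reduces to $\int\xi^\eps\dx\rho^\eps\to 0$.

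\emph{Step 2 (the non-symmetric part vanishes).} The $\xi^\eps$ above is exactly the object in Lemma~\ref{lem:Equicontinuity} with the $1$-Lipschitz choices $f_1=\abs{\cdot}$, $f_2=\id$; hence $(\xi^\eps)_{0<\eps\le\eps_0}$ is equicontinuous, uniformly bounded and supported in a fixed compact set, so it is precompact in $C(\Rd)$ by Ascoli--Arzel\`a. It then suffices to show that $0$ is its only subsequential uniform limit. For fixed $x$, the change of variables $y=x+\eps w$, with $\eta^\eps(x,x+\eps w)=\eps^{-d-2}\vartheta\bra[\big]{x+\tfrac{\eps}{2}w,w}$ by~\eqref{theta1}, turns $\xi^\eps(x)$ into
\begin{equation*}
  \int_{\Rdzero}\frac{\abs{\varphi(x+\eps w)-\varphi(x)}}{\eps}\,\frac{\psi(x+\eps w)-\psi(x)}{\eps}\,\vartheta\bra[\big]{x+\tfrac{\eps}{2}w,w}\,\widetilde\mu(x+\eps w)\dx w.
\end{equation*}
Since $\varphi,\psi\in C^2_c(\Rd)$, the first two quotients converge pointwise to $\abs{\nabla\varphi(x)\cdot w}$ and $\nabla\psi(x)\cdot w$ and are bounded by $\norm{\nabla\varphi}_{L^\infty}\abs{w}$ and $\norm{\nabla\psi}_{L^\infty}\abs{w}$; by~\eqref{theta3} and~\eqref{theta2} the integrand is dominated, uniformly in $\eps$, by a constant multiple of $\bbone_{B_{\Csupp}}$, while $\vartheta\bra[\big]{x+\tfrac{\eps}{2}w,w}\to\vartheta(x,w)$ and $\widetilde\mu(x+\eps w)\to\widetilde\mu(x)$ by~\eqref{theta_new} and~\eqref{mu1}. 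Dominated convergence then gives, as $\eps\to 0$,
\begin{equation*}
  \xi^\eps(x)\to\widetilde\mu(x)\int_{\Rdzero}\abs{\nabla\varphi(x)\cdot w}\,\bra[\big]{\nabla\psi(x)\cdot w}\,\vartheta(x,w)\dx w=0,
\end{equation*}
the last equality because $w\mapsto-w$ leaves $\abs{\nabla\varphi(x)\cdot w}$ and, by~\eqref{theta1}, $\vartheta(x,w)$ unchanged while reversing the sign of $\nabla\psi(x)\cdot w$. Combining the precompactness with this pointwise limit yields $\norm{\xi^\eps}_{L^\infty}\to 0$, whence $\abs{\int\xi^\eps\dx\rho^\eps}\le\norm{\xi^\eps}_{L^\infty}\to 0$ since $\rho^\eps\in\calP(\Rd)$, which is the claim. (Only $\rho^\eps\in\calP(\Rd)$ is used; the assumptions $\supp\rho^\eps\subset\supp\mu$ and the narrow convergence do not enter here.)

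\emph{Main obstacle.} The delicate point is that $\int\xi^\eps\dx\rho^\eps$ is \emph{not} small pointwise: by the scaling~\eqref{eq:def:eta^eps} each $\xi^\eps(x)$ is $\calO(1)$, and its limit vanishes only through the parity cancellation $\int\abs{\nabla\varphi\cdot w}(\nabla\psi\cdot w)\,\vartheta(x,w)\dx w=0$. A crude $L^1$ bound on $\xi^\eps$ is therefore useless; one genuinely needs \emph{uniform} convergence $\xi^\eps\to 0$ in order to pair it with the moving measures $\rho^\eps$, which is precisely what the equicontinuity in Lemma~\ref{lem:Equicontinuity} supplies. Alternatively one could estimate $\norm{\xi^\eps}_{L^\infty}$ directly via a second-order Taylor expansion of $\varphi,\psi$ and the moduli $\omega_\vartheta,\omega_\mu$ together with~\eqref{int}, but this essentially re-proves Lemma~\ref{lem:Equicontinuity}.
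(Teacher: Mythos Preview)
Your proof is correct and takes a genuinely different route from the paper's. Both arguments agree in Step~1, reducing the claim to showing that $\int_{\Rd}\xi^\eps\dx\rho^\eps\to 0$ with the same $\xi^\eps$ (the paper's $\bar\xi^\eps$ differs from yours by a factor $1/2$). The difference lies in Step~2: the paper does \emph{not} identify the uniform limit of $\xi^\eps$. Instead it introduces a mollification $\widetilde\rho^\eps=\nu_{\eps^\alpha}\!*\rho^\eps$ at an intermediate scale $\eps^\alpha$, splits $\int\xi^\eps\dx\rho^\eps=\int\xi^\eps\dx(\rho^\eps-\widetilde\rho^\eps)+\int\xi^\eps\dx\widetilde\rho^\eps$, treats the first piece by weak--strong convergence (here the narrow convergence $\rho^\eps\rightharpoonup\rho$ is actually used), and handles the second by symmetrizing against the \emph{density} $\widetilde\rho^\eps(x)\widetilde\mu(y)-\widetilde\mu(x)\widetilde\rho^\eps(y)$, which is small thanks to the moduli $\omega_\mu$ and the Lipschitz property of the mollifier.

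Your approach is more direct and conceptually sharper: you observe that the pointwise limit of $\xi^\eps(x)$ is $\widetilde\mu(x)\int\abs{\nabla\varphi(x)\cdot w}(\nabla\psi(x)\cdot w)\,\vartheta(x,w)\dx w$, which vanishes by the parity of $\vartheta(x,\cdot)$ from~\eqref{theta1}. Together with the Ascoli--Arzel\`a compactness of Lemma~\ref{lem:Equicontinuity}, this forces $\norm{\xi^\eps}_{L^\infty}\to 0$, and the bound $\abs{\int\xi^\eps\dx\rho^\eps}\le\norm{\xi^\eps}_{L^\infty}$ closes the argument. This avoids the mollification entirely and, as you note, does not use the narrow convergence of $\rho^\eps$ or the support hypothesis. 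The paper's route, on the other hand, would be more robust in settings where the parity of $\vartheta$ fails but one still has sufficient regularity of the densities.
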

\begin{proof}
Using that $a_+=(1/2)(a+|a|)$, for $a\in\R$, and antisymmetry in \eqref{eq:def:l} we rewrite
\begin{align*}
    \widetilde l^\eps_{\rho^\eps}(\babla\varphi)[\babla\psi]&=
    \iint_{G^{\!\!\:\eps}}(\babla \varphi)_+(x,y)\babla\psi(x,y)\eta^\eps(x,y)\dx\rho^\eps(x)\dx\mu(y)\\
    &=\frac12\iint_{G^{\!\!\:\eps}}\babla \varphi(x,y)\babla\psi(x,y)\eta^\eps(x,y)\dx\rho^\eps(x)\dx\mu(y)\\
    &\qquad+\int_\Rd
    \bra*{\frac12\int_{\Rdx}|\babla \varphi|(x,y)\babla\psi(x,y)\eta^\eps(x,y)\dx\mu(y)}
    \dx\rho^\eps(x)\\
    &\eqqcolon \I + \II.
\end{align*}
We want to show that $\II$ vanishes as $\eps\to 0$. To this end, we introduce a mollification at scale $\alpha\in(0,1)$ and define $\dx\widetilde\rho^\eps\coloneqq \widetilde\rho^\eps\dx\scrL^d\coloneqq \nu_{\eps^\alpha}\ast\rho^\eps\dx\scrL^d$, where $\nu$ is a mollifier and $\nu_{\eps^\alpha}(x) \coloneqq \frac{1}{\eps^{\alpha d} }\nu\bra*{\frac{x}{\eps^\alpha}}$. Introducing
\begin{align*}
    \bar\xi^\eps(x)\coloneqq\frac12\int_{\Rdx}|\babla \varphi|(x,y)\babla\psi(x,y)\eta^\eps(x,y)\dx\mu(y),
\end{align*}
we can write
\begin{align*}
    \II &= \int_\Rd \bar\xi^\eps \dx\bra*{\rho^\eps-\widetilde\rho^\eps} + \int_\Rd \bar\xi^\eps\dx\widetilde\rho^\eps \eqqcolon \II_1 + \II_2.
\end{align*}
Due to Lemma \ref{lem:Equicontinuity} applied to the $1$-Lipschitz functions $f_1 = \abs{\cdot}$ and $f_2 = \Id$, we have $(\bar\xi^\eps)_{\eps>0}\subset C_0(\Rd)$ and $\bar\xi^\eps$ converges strongly in $C(\Rd)$ to some $\bar\xi^0\in C_0(\Rd)$ as $\eps\to 0$. Hence, we infer $\II_1$ converges to $0$ as $\eps\to 0$ by weak-strong convergence (see e.g. \cite[Proposition 3.5]{Brezis2010}), since $\rho^\eps$ and $\widetilde\rho^\eps$ weakly-$^\ast$ converge to the same limit~$\rho$. For $\II_2$ we symmetrize and estimate
\begin{align*}
    \II_2 &= \frac{1}{4}\iint_{G^{\!\!\:\eps}} \abs[\big]{\babla\varphi(x,y)} \; \babla\psi(x,y) \; \eta^\eps(x,y)\bra[\big]{\dx\widetilde\rho^\eps(x)\dx\mu(y)-\dx\mu(x)\dx\widetilde\rho^\eps(y)}\\
    &\le\frac{1}{4} \iint_{G^{\!\!\:\eps}}\abs[\big]{\babla\varphi(x,y)}\; \abs[\big]{\babla\psi(x,y)}\; \eta^\eps(x,y)\; \abs[\big]{\widetilde\rho^\eps(x)\widetilde\mu(y)-\widetilde\mu(x)\widetilde\rho^\eps(y)}\dx x\dx y\\
    &\le\frac{1}{4} \norm{\nabla\varphi}_{L^\infty}\norm{\nabla\psi}_{L^\infty}\iint_{G^{\!\!\:\eps}}|x-y|^2\; \eta^\eps(x,y)\;\abs[\big]{\widetilde\rho^\eps(x)\widetilde\mu(y)-\widetilde\mu(x)\widetilde\rho^\eps(y)}\dx x\dx y\\
    &\le \frac{1}{4}\norm{\nabla\varphi}_{L^\infty}\norm{\nabla\psi}_{L^\infty} \frac{\Cmom}{\eps^d} \iint_{G^{\!\!\:\eps}} \widetilde\rho^\eps(x)\;\abs[\big]{\widetilde\mu(y)-\widetilde\mu(x)}\dx x\dx y\\
    &\qquad +\frac{1}{4}\norm{\nabla\varphi}_{L^\infty}\norm{\nabla\psi}_{L^\infty}\iint_{G^{\!\!\:\eps}}|x-y|^2 \eta^\eps(x,y)\;\widetilde\mu(x)\;\abs[\big]{\widetilde\rho^\eps(x)-\widetilde\rho^\eps(y)}\dx x\dx y\\
    &\eqcolon \frac{1}{4} \norm{\nabla\varphi}_{L^\infty}\norm{\nabla\psi}_{L^\infty} \bra*{ \II_{2,1} + \II_{2,2}}  . 
\end{align*}
For the first term \eqref{meas}, \eqref{mom} and \eqref{mu1} yield the bound
\begin{equation*}
  \II_{2,1} \le \Cmom\Cmeas \omega_\mu(\Csupp\eps) .
\end{equation*}
Since  $\supp\nu_{\eps^\alpha}(\cdot-z)\subset B_{\eps^\alpha}(z)$ and the Lebesgue measure is translation invariant, Lemma \ref{lem:support_intersection} provides us the estimate $\abs{G^{\!\!\:\eps}\cap \supp\nu_{\eps^\alpha}(\cdot-z)}\le 2\Cmeas\abs{B_1}\eps^{d(1+\alpha)}$. 
With these preliminary considerations we apply the mean value theorem to the mollifier~$\nu$ together with the bounds \eqref{supp} and \eqref{meas} to obtain
\begin{align*}
 \II_{2,2} &\le \frac{1}{\eps^{\alpha d} }\int_\Rd\iint_{G^{\!\!\:\eps}} \abs{x-y}^2\eta^\eps(x,y)\widetilde\mu(x)\abs*{ \nu\bra*{\frac{x-z}{\eps^\alpha}}-\nu\bra*{\frac{y-z}{\eps^\alpha}}}\dx x\dx y\dx\rho^\eps(z)\\
    &\le \frac{1}{\eps^{d(1+\alpha)} }\Cmom C_\mu\int_\Rd\iint_{G^{\!\!\:\eps}\cap \supp\nu_{\eps^\alpha}(\cdot-z)}\abs*{\nabla\nu\bra*{\xi}\cdot\bra*{\frac{x-y}{\eps^\alpha}}}\dx x\dx y\dx\rho^\eps(z)\\
    &\le \Csupp\eps^{1-\alpha-d(1+\alpha)}\norm{\nabla\nu}_{L^\infty}\Cmom C_\mu\int_\Rd \abs{G^{\!\!\:\eps}\cap \supp\nu_{\eps^\alpha}(\cdot-z)}\dx\rho^\eps(z)\\
    &\le 2\Csupp\eps^{1-\alpha}\norm{\nabla\nu}_{L^\infty}\abs{B_1}\Cmeas\Cmom C_\mu .
\end{align*}
As $\alpha < 1$ this concludes the proof.
\end{proof}
The next step consists in the identification of the tensor from the remaining symmetric part of the graph structure. 
\begin{proposition}[$\eps$-Tensor]\label{prop:tensor}
    For all $\varphi,\psi\in C^1_c(\Rd)$ and all $x\in\Rd$, we have
    \begin{align*}
        \frac12\int_{\Rdx}\babla \varphi(x,y)\babla\psi(x,y)\eta^\eps(x,y)\dx\mu(y) &= \nabla \varphi(x)\cdot\bbT^\eps(x)\nabla\psi(x)+o(1) ,
    \end{align*}
	where
	\begin{equation}\label{eq_def:eps-Tensor}
		\bbT^\eps(x) \coloneqq \frac12\int_{\Rdx} (x-y)\!\otimes\!(x-y)\,\eta^\eps(x,y)\dx\mu(y). 
	\end{equation}
\end{proposition}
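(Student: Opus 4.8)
The plan is to Taylor-expand both nonlocal gradients to first order around the base point $x$, using only the $C^1$-regularity of $\varphi,\psi$ through the (uniform) modulus of continuity of their gradients, to recognise the product of the two leading terms as precisely the bilinear form $\nabla\varphi(x)\cdot\bbT^\eps(x)\nabla\psi(x)$, and then to absorb all remainder contributions into $o(1)$ by means of the uniform moment bound \eqref{int} from Lemma~\ref{lem:properties_eta-mu}.

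Concretely, fix $x\in\Rd$. For $(x,y)\in G^{\!\!\:\eps}$ the support bound \eqref{supp} gives $\abs{x-y}\le\Csupp\eps$. Since $\nabla\varphi$ is continuous with compact support it is uniformly continuous; write $\omega_{\nabla\varphi}$ for its modulus of continuity, and $\omega_{\nabla\psi}$ likewise. From $\varphi(y)-\varphi(x)=\int_0^1\nabla\varphi(x+s(y-x))\cdot(y-x)\dx s$ one gets the decomposition $\babla\varphi(x,y)=\nabla\varphi(x)\cdot(y-x)+R_\varphi(x,y)$ with $\abs{R_\varphi(x,y)}\le\abs{x-y}\,\omega_{\nabla\varphi}(\abs{x-y})$, and analogously $\babla\psi(x,y)=\nabla\psi(x)\cdot(y-x)+R_\psi(x,y)$ with $\abs{R_\psi(x,y)}\le\abs{x-y}\,\omega_{\nabla\psi}(\abs{x-y})$. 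Multiplying and integrating, the product of the two leading terms yields exactly
\[
\frac12\int_{\Rdx}\bra*{\nabla\varphi(x)\!\cdot\!(y-x)}\bra*{\nabla\psi(x)\!\cdot\!(y-x)}\eta^\eps(x,y)\dx\mu(y)=\nabla\varphi(x)\cdot\bbT^\eps(x)\nabla\psi(x),
\]
using $(y-x)\otimes(y-x)=(x-y)\otimes(x-y)$ and the definition~\eqref{eq_def:eps-Tensor}. There remain three error terms: $\nabla\varphi(x)\!\cdot\!(y-x)\,R_\psi$, $R_\varphi\,\nabla\psi(x)\!\cdot\!(y-x)$, and $R_\varphi R_\psi$.

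For the error terms, using $\abs{\nabla\varphi(x)\cdot(y-x)}\le\norm{\nabla\varphi}_{L^\infty}\abs{x-y}$ together with the remainder bounds, and replacing $\omega_{\nabla\varphi}(\abs{x-y})$ by $\omega_{\nabla\varphi}(\Csupp\eps)$ thanks to \eqref{supp}, each of the three integrands is bounded on $G^{\!\!\:\eps}$ by a constant depending only on $\norm{\nabla\varphi}_{L^\infty},\norm{\nabla\psi}_{L^\infty}$ times $\bra*{\omega_{\nabla\varphi}(\Csupp\eps)+\omega_{\nabla\psi}(\Csupp\eps)}\abs{x-y}^2\eta^\eps(x,y)$. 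Integrating in $y$ against $\mu$ and invoking \eqref{int}, each error term is $\le C\,\Cint\bra*{\omega_{\nabla\varphi}(\Csupp\eps)+\omega_{\nabla\psi}(\Csupp\eps)}$, which tends to $0$ as $\eps\to 0$, in fact uniformly in $x$; summing the three contributions gives the claimed identity. There is no real obstacle here; the one point needing slight care — as opposed to the $C^2$-arguments of Lemma~\ref{lem:Equicontinuity} — is that only first-order Taylor information is available, so the $\abs{x-y}^2$ arising from the two linear factors (one from each $R$, or one from $R$ and one from a pairing $\nabla\psi(x)\cdot(y-x)$) must be exactly what is matched against \eqref{int}, and all error estimates are therefore quantitative in $\omega_{\nabla\varphi},\omega_{\nabla\psi}$ rather than in explicit powers of $\eps$.
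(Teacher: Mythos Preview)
Your proposal is correct and follows essentially the same approach as the paper: both use a first-order Taylor expansion of $\babla\varphi$ and $\babla\psi$ with remainders controlled by the moduli of continuity $\omega_{\nabla\varphi},\omega_{\nabla\psi}$, identify the leading term with $\nabla\varphi(x)\cdot\bbT^\eps(x)\nabla\psi(x)$, and bound the three cross-error terms via \eqref{supp} and the moment bound \eqref{int}. The paper records the combined error modulus explicitly as $\omega(\delta)=\norm{\nabla\varphi}_{L^\infty}\omega_{\nabla\psi}(\delta)+\norm{\nabla\psi}_{L^\infty}\omega_{\nabla\varphi}(\delta)+\omega_{\nabla\varphi}(\delta)\omega_{\nabla\psi}(\delta)$, which matches your three error contributions.
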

\begin{proof}
    By means of Taylor's theorem with Peano remainder and \eqref{int}, we obtain for any $x\in\Rd$ the identity
    \begin{align*}
        &\frac12\int_{\Rdx}\babla \varphi(x,y)\babla\psi(x,y)\eta^\eps(x,y)\dx\mu(y)\\
        &=\frac12\int_{\Rdx} \nabla \varphi(x)\cdot(x-y)\!\otimes\!(x-y)\nabla \psi(x)\eta^\eps(x,y)\dx\mu(y)+\calO(\omega(\Csupp\eps))\\
        &=\nabla \varphi(x)\cdot\bra*{\frac12\int_{\Rdx} (x-y)\!\otimes\!(x-y)\,\eta^\eps(x,y)\dx\mu(y)}\nabla \psi(x)+\calO(\omega(\Csupp\eps)),
    \end{align*}
    where the modulus $\omega:[0,\infty)\to [0,\infty)$ is defined by
    \begin{align*}
        \omega(\delta) = \norm{\nabla\varphi}_{L^\infty}\omega_{\nabla\psi}(\delta)+\norm{\nabla\psi}_{L^\infty}\omega_{\nabla\varphi}(\delta)+\omega_{\nabla\varphi}(\delta)\omega_{\nabla\psi}(\delta),
    \end{align*}    
    and where we used the fact that $\abs{x-y}\le \Csupp\eps$ by Assumption~\eqref{supp}. 
\end{proof}
Next, we show that the tensors $\bbT^\eps$ converge to a unique limiting tensor as $\eps\to 0$.
\begin{proposition}[Unique limiting tensor]\label{prop:tensor_uniqueness}
    There exists a unique $\bbT\in C(\R^d; \R^{d\times d})$ such that for any compact $K\subset \R^d$ and any $\eps_0>0$ the sequence $\bra*{\bbT^\eps}_{\eps_0 \geq \eps>0}$ with $\bbT^\eps$ defined as in \eqref{eq_def:eps-Tensor} converges strongly to $\bbT$ in $C(K;\R^{d\times d})$ as $\eps\to 0$. The limiting tensor, given by
    \begin{equation}\label{eq_def:Tensor}\tag{$\bbT$}
		\bbT(x) \coloneqq \frac12\widetilde\mu(x)\int_{\Rdzero} w\!\otimes\! w\,\vartheta(x,w)\dx w,
	\end{equation}
    is bounded and uniformly continuous. 
    
    Furthermore, the tensor $\bbT$ is uniformly elliptic, i.e. there exist $c,C>0$ such that for any $x,\xi\in\Rd$ we have
    \begin{align*}
        c\abs{\xi}^2 \le \xi\cdot\bbT(x)\xi \le C\abs{\xi}^2.
    \end{align*}
    Finally, for any $x\in\R$ the matrix $\bbT(x)$ is symmetric.
\end{proposition}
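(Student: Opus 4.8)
The plan is to make the change of variables $y=x-\eps w$ in~\eqref{eq_def:eps-Tensor} explicit, read off the pointwise limit by dominated convergence, and then notice that the dominating bound is uniform in~$x$, which yields uniform convergence on all of~$\Rd$ at once. Concretely, using $\dx\mu=\widetilde\mu\dx\scrL^d$, the substitution $y=x-\eps w$ (so that $x-y=\eps w$ and $\dx y=\eps^d\dx w$) and $\eta^\eps(x,y)=\eps^{-d-2}\vartheta\bra*{x-\tfrac{\eps w}{2},w}$, all powers of~$\eps$ cancel and one obtains
\begin{equation*}
	\bbT^\eps(x)=\frac12\int_{\Rdzero} w\otimes w\;\vartheta\bra[\Big]{x-\tfrac{\eps w}{2},w}\,\widetilde\mu(x-\eps w)\dx w .
\end{equation*}
By~\eqref{theta2} the integrand vanishes for $\abs{w}>\Csupp$, and by~\eqref{theta3} together with~\eqref{mu2} it is dominated by $\Ctheta\Cmu\bbone_{B_{\Csupp}}(w)$, uniformly in $x$ and $\eps$. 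In particular the formula~\eqref{eq_def:Tensor} for~$\bbT$ is well defined and $\abs{\bbT(x)}\le\tfrac12\Cmu\Ctheta\abs{B_{\Csupp}}$ for every~$x$.

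For the convergence I would estimate $\bbT^\eps(x)-\bbT(x)$ directly. Splitting the difference of the two integrands as
\begin{equation*}
	\bra[\big]{\vartheta(x-\tfrac{\eps w}{2},w)-\vartheta(x,w)}\widetilde\mu(x-\eps w)+\vartheta(x,w)\bra[\big]{\widetilde\mu(x-\eps w)-\widetilde\mu(x)},
\end{equation*}
bounding the first bracket on $B_{\Csupp}$ by $\omega_\vartheta\bra*{\tfrac{\eps\Csupp}{2}}$ via~\eqref{theta_new} and the second by $\omega_\mu(\eps\Csupp)$ via~\eqref{mu1}, and using $\abs{w}^2\vartheta\le\Ctheta$ and $\widetilde\mu\le\Cmu$, I obtain
\begin{equation*}
	\sup_{x\in\Rd}\abs*{\bbT^\eps(x)-\bbT(x)}\le\tfrac12\Cmu\abs{B_1}\Csupp^{d+2}\,\omega_\vartheta\bra[\big]{\tfrac{\eps\Csupp}{2}}+\tfrac12\Ctheta\abs{B_{\Csupp}}\,\omega_\mu(\eps\Csupp)\longrightarrow 0\quad\text{as }\eps\to0 .
\end{equation*}
Hence $\bbT^\eps\to\bbT$ uniformly on~$\Rd$, in particular in $C(K;\R^{d\times d})$ for every compact $K$ and every $\eps_0>0$, and the limit is unique since it is the pointwise limit. (That each $\bbT^\eps$ lies in $C(\Rd;\R^{d\times d})$ and that the family is equicontinuous follows from the same splitting applied in the $x$-variable, or from Lemma~\ref{lem:Equicontinuity} with $f_1=f_2=\Id$ and test functions coinciding with the coordinate maps on a ball containing $K+B_{\Csupp\eps_0}$; combined with the pointwise limit, Arzelà--Ascoli gives an alternative proof of uniform convergence on compacta.)

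It remains to record the structural properties of~$\bbT$, all of which are immediate from~\eqref{eq_def:Tensor}. Symmetry holds because each rank-one matrix $w\otimes w$ is symmetric, so its $\vartheta$-weighted integral is too. Uniform continuity follows by writing $\bbT(x)-\bbT(x')$ as a sum of a $\widetilde\mu$-difference term and a $\vartheta$-difference term and applying~\eqref{mu1}, \eqref{theta_new} together with~\eqref{mu2}, \eqref{theta3}, which produces a modulus $\omega_\bbT$ with $\omega_\bbT(\delta)\to0$ as $\delta\to0$. For ellipticity one uses
\begin{equation*}
	\xi\cdot\bbT(x)\xi=\tfrac12\widetilde\mu(x)\int_{\Rdzero}\abs{w\cdot\xi}^2\vartheta(x,w)\dx w ;
\end{equation*}
the lower bound $\tfrac12\cmu\ctheta\abs{\xi}^2$ is exactly~\eqref{mu2} combined with~\eqref{theta4}, and the upper bound $\tfrac12\Cmu\Ctheta\abs{B_{\Csupp}}\abs{\xi}^2$ follows from $\abs{w\cdot\xi}^2\le\abs{w}^2\abs{\xi}^2$ together with~\eqref{theta3} and~\eqref{theta2}. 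The only step demanding any care is the change-of-variables bookkeeping — the cancellation of the $\eps$-powers and the shifted first argument $x-\tfrac{\eps w}{2}$ of~$\vartheta$; after that everything reduces to the uniform moduli $\omega_\mu,\omega_\vartheta$ built into the hypotheses, so there is no genuine obstacle, the key point being simply that the convergence $\bbT^\eps\to\bbT$ holds uniformly on the whole space rather than only on compact sets.
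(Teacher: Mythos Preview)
Your proof is correct and follows the same core idea as the paper --- the change of variables $y=x-\eps w$ followed by an estimate in terms of the moduli $\omega_\vartheta$ and $\omega_\mu$. The difference is organizational: the paper first establishes equicontinuity and equiboundedness of $(\bbT^\eps)_\eps$, invokes Arzel\`a--Ascoli to extract subsequential limits on compacta, and only then identifies the limit by the explicit computation; you instead estimate $\sup_x\abs{\bbT^\eps(x)-\bbT(x)}$ directly, which is both shorter and yields the stronger conclusion of uniform convergence on all of $\Rd$ without any compactness detour. Your observation that symmetry of $\bbT(x)$ follows simply from each $w\otimes w$ being a symmetric matrix is also cleaner than the paper's appeal to~\eqref{theta1}, which is not actually needed for this point.
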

\begin{proof}
	By a change of variables and arguing analogously to \eqref{eq:eta-continuity} and \eqref{eq:proof_Equicontinuity}, we obtain
	\begin{align*}
		&\abs{\bbT^\eps(x)-\bbT^\eps(y)}\\
        &= \abs*{\frac12\int_{\Rdzero} w\!\otimes\! w\,[\eta^\eps(x,x+w)\widetilde\mu(x+w)-\eta^\eps(y,y+w)\widetilde\mu(y+w)]\dx w}\\
		&\le \frac12C_{\mathsf{supp}}^{d+2}\abs{B_1}C_\mu\omega_\vartheta(\abs{x-y}) + \frac12\Cmom \Cmeas \omega_\mu(\abs{x-y}).
	\end{align*}
	Similar to this, using Assumption~\eqref{int} instead of Assumptions~\eqref{mom} and~\eqref{meas}, we obtain $\norm{\bbT^\eps}_{L^\infty}\le \frac12\Cint$. 
	Hence, the family $(\bbT^\eps)_{\eps_0\ge\eps>0}$ is equicontinuous and equibounded. In particular, for every compact $K\subset\R^d$ and every vanishing sequence $(\eps_n)_{n\in\bbN}$, there exists a (not relabeled) subsequence and a tensor $\bbT\in C(\Rd;\R^{d\times d})$ with $\norm{\bbT}_{L^\infty}\le\frac12\Cint$ and $\abs{\bbT(x)-\bbT(y)}\le \frac12\Cmom \Cmeas \omega_\mu(\abs{x-y})$ such that $\bbT^{\eps_n}\to\bbT$ strongly in $C(K;\R^{d\times d})$ as $n\to\infty$.
	
	To identify the limit, we calculate
    \begin{align*}
        2\bbT^{\eps}(x) 
        &= \int_{\Rdzero} w\!\otimes\! w \, \frac{1}{\eps^{d+2}}\vartheta\bra*{x+\frac{w}{2},\frac{w}{\eps}}\widetilde\mu(x+w)\dx w\\
        &= \int_{\Rdzero} w\!\otimes\! w\,\vartheta(x+\eps w/2,w) \widetilde\mu(x+\eps w)\dx w\\
        &= \widetilde\mu(x)\int_{\Rdzero} w\!\otimes\! w\,\vartheta(x,w)\dx w + \calO(\omega_\vartheta(\Csupp\eps)+\omega_\mu(\Csupp\eps)).
    \end{align*}
	This shows the explicit form \eqref{eq_def:Tensor} of the tensor as well as its independene of the particular subsequence $(\eps_n)_{n\in\bbN}$ and the particular compact set $K\subset \R^d$. Having shown \eqref{eq_def:Tensor}, the uniform ellipticity of $\bbT$ is an easy consequence of \eqref{mu2}, \eqref{theta3} and \eqref{theta4}, while the symmetry of the matrix $\bbT(x)$ for any $x\in\Rd$ follows from \eqref{theta1}.
\end{proof}
Finally, we combine the previous results to derive the structure of the continuity equation \eqref{eq:intro:NLIE:one} from the Finsler-type product.
\begin{theorem}[Limiting inner product]\label{thm:limiting_inner_product}
    Let $\rho^\eps \rightharpoonup\rho$ narrowly in $\calP_2(\Rd)$. The tangent-to-cotangent mapping $\widetilde l^\eps_{\rho^\eps}:\widetilde T^\eps_{\rho^\eps}\calP_2(\Rd)\to\bra[\big]{\widetilde T^\eps_{\rho^\eps}\calP_2(\Rd)}^\ast$ defined as in~\eqref{eq:def:l} satisfy
    \begin{align*}
        \lim_{\eps\to 0} \widetilde l^{\eps}_{\rho^{\eps}}(\babla\varphi)[\babla\psi] &= \int_\Rd \nabla \varphi\cdot\bbT\nabla\psi\dx\rho, 
        \qquad\forall\varphi,\psi\in C^2_c(\Rd),
    \end{align*}
	with the tensor $\bbT\in C(\Rd;\R^{d\times d})$ obtained as limit of $(\bbT^\eps)_{\eps_0 \geq \eps>0}$ defined as in~\eqref{eq_def:eps-Tensor} from Proposition~\ref{prop:tensor_uniqueness}. 
\end{theorem}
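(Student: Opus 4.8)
The plan is to chain together the three preceding propositions, using narrow convergence of $(\rho^\eps)_{\eps>0}$ to $\rho$ (with $\supp\rho^\eps\subset\supp\mu$, as in Proposition~\ref{prop:symmetry}) at the very last step. First, Proposition~\ref{prop:symmetry} replaces the non-symmetric Finslerian pairing by its symmetrization up to a vanishing error,
\begin{equation*}
  \widetilde l^\eps_{\rho^\eps}(\babla\varphi)[\babla\psi] = \int_\Rd\bra*{\frac12\int_{\Rdx}\babla\varphi(x,y)\babla\psi(x,y)\eta^\eps(x,y)\dx\mu(y)}\dx\rho^\eps(x) + o(1).
\end{equation*}
Next, Proposition~\ref{prop:tensor} identifies the inner integral with $\nabla\varphi(x)\cdot\bbT^\eps(x)\nabla\psi(x)$ up to an error $\calO(\omega(\Csupp\eps))$ which is \emph{uniform} in $x$, since the modulus $\omega$ there depends only on the first and second moduli of continuity of $\varphi,\psi$ and the constant $\Cint$ from~\eqref{int}. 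Moreover, fixing $\eps_0>0$, both $x\mapsto\frac12\int_{\Rdx}\babla\varphi(x,y)\babla\psi(x,y)\eta^\eps(x,y)\dx\mu(y)$ and $x\mapsto\nabla\varphi(x)\cdot\bbT^\eps(x)\nabla\psi(x)$ vanish outside the fixed compact set $K\coloneqq(\supp\varphi\cup\supp\psi)+\overline{B_{\Csupp\eps_0}}$ whenever $0<\eps\le\eps_0$: indeed, if $x\notin K$ then every $y$ with $\eta^\eps(x,y)>0$ satisfies $\abs{x-y}\le\Csupp\eps$ by~\eqref{supp}, so $x,y\notin\supp\varphi\cup\supp\psi$ and hence $\babla\varphi(x,y)=\babla\psi(x,y)=\nabla\varphi(x)=\nabla\psi(x)=0$. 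Integrating the uniform remainder against the probability measure $\rho^\eps$ therefore yields
\begin{equation*}
  \widetilde l^\eps_{\rho^\eps}(\babla\varphi)[\babla\psi] = \int_\Rd \nabla\varphi(x)\cdot\bbT^\eps(x)\nabla\psi(x)\dx\rho^\eps(x) + o(1).
\end{equation*}

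It then remains to pass to the limit in $\int_\Rd \nabla\varphi\cdot\bbT^\eps\nabla\psi\dx\rho^\eps$, which I split as
\begin{equation*}
  \int_\Rd \nabla\varphi\cdot(\bbT^\eps-\bbT)\nabla\psi\dx\rho^\eps + \int_\Rd \nabla\varphi\cdot\bbT\nabla\psi\dx(\rho^\eps-\rho) + \int_\Rd \nabla\varphi\cdot\bbT\nabla\psi\dx\rho.
\end{equation*}
The first term is bounded in absolute value by $\norm{\nabla\varphi}_{L^\infty}\norm{\nabla\psi}_{L^\infty}\,\norm{\bbT^\eps-\bbT}_{C(K;\R^{d\times d})}$, which tends to $0$ by Proposition~\ref{prop:tensor_uniqueness}, since the integrand is supported in $K$. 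For the second term, $x\mapsto\nabla\varphi(x)\cdot\bbT(x)\nabla\psi(x)$ is continuous (as $\bbT\in C(\Rd;\R^{d\times d})$) and compactly supported, hence lies in $C_\mathrm{b}(\Rd)$, so it converges to $0$ because $\rho^\eps\rightharpoonup\rho$ narrowly. Combining the three steps gives exactly $\lim_{\eps\to0}\widetilde l^\eps_{\rho^\eps}(\babla\varphi)[\babla\psi]=\int_\Rd\nabla\varphi\cdot\bbT\nabla\psi\dx\rho$.

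There is no serious obstacle: the theorem is essentially a bookkeeping combination of Propositions~\ref{prop:symmetry},~\ref{prop:tensor}~and~\ref{prop:tensor_uniqueness}. The only points requiring care are (i) tracking that the remainder in Proposition~\ref{prop:tensor} is genuinely uniform in the space variable $x$, so that it survives integration against $\rho^\eps$; and (ii) verifying that all integrands involved live in one fixed compact set independent of $\eps$, which is what allows the $C$-on-compacts convergence of $\bbT^\eps$ and the narrow convergence of $\rho^\eps$ to be invoked on the respective terms.
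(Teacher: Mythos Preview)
Your proof is correct and follows essentially the same approach as the paper: chain Proposition~\ref{prop:symmetry}, Proposition~\ref{prop:tensor}, and Proposition~\ref{prop:tensor_uniqueness}, then pass to the limit using the narrow convergence of $\rho^\eps$. The only cosmetic difference is that the paper packages your final splitting into a single weak--strong convergence argument (strong convergence of $\nabla\varphi\cdot\bbT^\eps\nabla\psi$ in $C_0(\Rd)$ against weak-$^\ast$ convergence of $\rho^\eps$), whereas you separate the $\bbT^\eps\to\bbT$ and $\rho^\eps\rightharpoonup\rho$ contributions explicitly; your care in verifying the spatial uniformity of the remainder in Proposition~\ref{prop:tensor} and the common compact support is a welcome elaboration.
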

\begin{proof}%
    By Proposition~\ref{prop:tensor_uniqueness}, there exists a limiting tensor $\bbT\in C(\R^d;\R^{d\times d})$ such that for any compact $K\subset \R^d$ we have $\bbT^{\eps}\to\bbT$ strongly in $C(K;\R^{d\times d})$ as $\eps \to 0$. In particular, for every $\varphi,\psi\in C^1_c(\Rd)$, we have that $\nabla\varphi\cdot\bbT^{\eps}\nabla\psi\to\nabla\varphi\cdot\bbT\nabla\psi$ strongly in $C_0(\R)$ as $n\to\infty$. As narrow convergence of measures implies weak-$^\ast$ convergence of measures, this allows us to employ usual weak-strong convergence argument (cf.~e.g. \cite[Proposition 3.5]{Brezis2010} for the strategy) to obtain
    \begin{align*}
        \lim_{\eps\to 0}\int_\Rd \nabla \varphi(x)\cdot\bbT^{\eps}(x)\nabla\psi(x)\dx\rho^{\eps} = \int_\Rd \nabla \varphi(x)\cdot\bbT(x)\nabla\psi(x)\dx\rho.
    \end{align*}
The proof is then concluded by employing Proposition \ref{prop:symmetry}, for which the $C^2$-regularity of the test functions is necessary.
\end{proof}

\section{Graph-to-local limit for curves of maximal slope}\label{sec:graph-to-local-limit}

The graph-to-local limit for \eqref{eq:nlnl-interaction-eq} is proven by exploiting the variational formulation of the equations as curves of maximal slopes, as explained in Sections~\ref{subsec:nlnl_interaction_eq}, \ref{subsec:local_equation}. The localising graph provided by $(\mu,\eta^\eps)$, for $\eta^\eps$ as in~\eqref{eq:def:eta^eps}, identifies a sequence of weak solutions of \eqref{eq:nlnl-interaction-eq-eps}, that is, for $\rho^\eps\ll\mu$, $\mu-$a.e. $x$ and a.e. $t$,
\begin{equation}\label{eq:nlnl-interaction-eq-eps}
    \begin{split}
     \partial_t\rho_t^\eps(x)+\int_\Rd &\dgrad(K*\rho^\eps_t)(x,y)_- \eta^\eps(x,y) \rho_t^\eps(x) \dx\mu(y) \\
     &- \int_\Rd \dgrad(K*\rho^\eps_t)(x,y)_+ \eta^\eps(x,y) \dx\rho_t^\eps(y)=0.
     \end{split}\tag{$\mathsf{NL^2IE}_\eps$}
 \end{equation}
In view of~\cite[Theorem~3.9]{EPSS2021}, weak solutions of \eqref{eq:nlnl-interaction-eq-eps} are curves of maximal slope in the Finslerian quasi-metric space $(\calP_2(\Rd), \mathcal{T}_{\mu,\eta^\eps})$, that is zero-level sets of the \textit{graph De Giorgi functional} 
\begin{equation*}
	\bs\calG_{\!\!\eps}(\bs\rho^\eps)=\calE(\rho_T^\eps)-\calE(\rho_0^\eps)+\frac{1}{2}\int_0^T\bra[\big]{\calD_{\!\eps}(\rho_\tau^\eps) + |\rho_\tau'|_{\eps}^2}\dx \tau,
\end{equation*}
with the \emph{metric slope}
\begin{equation*}
    \calD_{\!\eps}(\bs\rho^\eps)=  \iint_\Rddiag \abs[\big]{\bra[\big]{\dgrad K*\rho^\eps}_-(x,y)}^2\eta^\eps(x,y)\dx\rho^\eps(x)\dx\mu(y).
\end{equation*}
We show that in the $\eps\to 0$ limit we obtain a zero-level set of the \emph{local De Giorgi functional}
\[
	\bs\calG_\bbT(\bs\rho)=\calE(\rho_T)-\calE(\rho_0)+\frac{1}{2}\int_0^T\bra[\big]{\calD_{\;\!\bbT}(\rho_\tau) + |(\rho_\tau^\eps)'|^2_{\bbT}}\dx \tau,
\]
where the \emph{metric slope} is
\[
\calD_{\;\!\bbT}(\rho)=\int_\Rd\skp[\bigg]{\nabla\frac{\delta\calE}{\delta\rho},\bbT\nabla\frac{\delta\calE}{\delta\rho}}\dx\rho,
\]
thereby proving Theorem~\ref{thm:main_result}. As byproduct, the graph-to-local limit provides a proof of Theorem~\ref{thm:weak_sol_nlie_tensor}, the existence of weak solutions of \eqref{eq:intro:NLIE:one}. Indeed, we find that weak solutions are curves of maximal slopes in local setting, according to Definitions~\ref{def:maximal_slope_local} and~\ref{def:deGiorgi_local} with respect to the Wasserstein gradient flow structure of \eqref{eq:intro:NLIE:one} in the metric space $(\calP_2(\R^d_\bbT),W_\bbT)$. 

Throughout this section we fix the tensor $\bbT$ as in~\eqref{eq_def:Tensor} for $\mu,\vartheta$ satsifying \eqref{mu1}, \eqref{mu2}, and \eqref{theta1}~--~\eqref{theta4}, respectively, and we consider $\eta^\eps$ given by \eqref{eq:def:eta^eps} as before.

\subsection{Lower limit for the metric derivative and metric slope}\label{ssec:lsc}

In this section we derive the lower limits for the metric derivatives and the metric slopes, respectively. This will then be combined with a chain rule (cf. Proposition~\ref{prop:chain_rule_ineq}) in order to obtain the convergence of the zero level sets of the graph De Giorgi functionals to the zero level set of the local De Giorgi functional as $\eps\to 0$.

\begin{proposition}[Lower limit of the metric derivative]\label{prop:lower_limit_metric_derivatives}
Consider a family $(\bs\rho^\eps)_{\eps>0}\subset\AC^2([0,T];(\mathcal{P}_2(\Rd),\mathcal{T}_{\eps}))$, with $(\rho_0^\eps)_\eps \subset \calP_{2}(\Rd)$ such that $\sup_{\eps>0} M_2(\rho_0^\eps)~<~\infty$. Then there exists $\bs \rho\in \AC^2([0,T];(\calP_2(\R^d_\bbT),W_\bbT))$ such that, for a.e. $t\in[0,T]$, $\rho_t^\eps\rightharpoonup\rho_t$ and it holds
\begin{equation*}
    \liminf_{\eps\to 0}\int_0^T  \abs[\big]{\bra{\rho^\eps_t}'}_\eps^2\dx t\ge \int_0^T|\rho'_t|_\bbT^2\dx t.
\end{equation*}
\end{proposition}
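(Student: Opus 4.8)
The plan is to combine the continuous reconstruction and compactness of Section~\ref{sec:graph_and_local_structure} with a \emph{dual} characterisation of the metric derivatives, the bridge being the convergence of the Finsler pairing $\widetilde l^\eps_\rho$ established in Theorem~\ref{thm:limiting_inner_product}. First I would dispose of the trivial case in which the $\liminf$ is infinite, pass to a (not relabelled) subsequence along which it is attained as a limit, and note that then $\sup_{\eps>0}\int_0^T\abs[\big]{(\rho^\eps_t)'}_\eps^2\dx t<\infty$. For each $\eps$, the identification of the metric derivative with the action of a minimal flux (the remark after Definition~\ref{def:AC-curves_metric_derivative}, based on \cite[Proposition~2.25]{EPSS2021}) yields $\bs j^\eps$ with $(\bs\rho^\eps,\bs j^\eps)\in\NCE^\eps_T$, of upwind form $j^\eps_t=(v^\eps_t)_+\,(\rho^\eps_t\otimes\mu)-(v^\eps_t)_-\,(\mu\otimes\rho^\eps_t)$ for some $v^\eps_t\in\widetilde T^\eps_{\rho^\eps_t}\calP_2(\Rd)$, and with $\calA(\mu,\eta^\eps;\rho^\eps_t,j^\eps_t)=\abs[\big]{(\rho^\eps_t)'}_\eps^2$ for a.e.\ $t$; hence $\sup_{\eps>0}\bs\calA(\mu,\eta^\eps;\bs\rho^\eps,\bs j^\eps)<\infty$. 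Then Proposition~\ref{prop:jhat} produces local fluxes $\bs\jh^\eps$ with $(\bs\rho^\eps,\bs\jh^\eps)\in\CE_T$, and Proposition~\ref{prop:convergence_CE-solutions} (resting on Lemmas~\ref{lem:comp_jh} and~\ref{lem:2nd-mom-propagation} and Prokhorov) gives, along a further subsequence, a pair $(\bs\rho,\bs\jh)\in\CE_T$ with $\sup_{t\in[0,T]}M_2(\rho_t)<\infty$, such that $\rho^\eps_t\rightharpoonup\rho_t$ for a.e.\ $t$ and $\int_\cdot\jh^\eps_t\dx{t}\oset{\ast}{\rightharpoonup}\int_\cdot\jh_t\dx{t}$.

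The heart of the proof is a dual bound. Fixing $\bs\varphi\in C_c^\infty((0,T)\times\Rd)$, the weak form of $\NCE$ together with~\eqref{eq:def:l} gives $-\int_0^T\!\int_\Rd\partial_t\varphi_t\dx\rho^\eps_t\dx t=\int_0^T\widetilde l^\eps_{\rho^\eps_t}(v^\eps_t)[\babla\varphi_t]\dx t$. The key step is a \emph{sharp} Cauchy--Schwarz bound for the upwind bilinear form: splitting $G^{\!\!\:\eps}$ into $\set{v^\eps_t>0}$ and $\set{v^\eps_t<0}$, applying Cauchy--Schwarz on each piece, exchanging $\mu\otimes\rho^\eps_t$ for $\rho^\eps_t\otimes\mu$ on the test factor at the cost of an error treated exactly like the term $\II_2$ in the proof of Proposition~\ref{prop:symmetry} (hence uniformly $o(1)$), and identifying the resulting symmetric quadratic expression with $\bbT^\eps$ via Proposition~\ref{prop:tensor}, I would obtain
\begin{align*}
	\widetilde l^\eps_{\rho^\eps_t}(v^\eps_t)[\babla\varphi_t]
	&\le\abs[\big]{(\rho^\eps_t)'}_\eps\;\Big(\int_\Rd\skp*{\nabla\varphi_t,\bbT^\eps\nabla\varphi_t}\dx\rho^\eps_t\Big)^{\!1/2}+o(1)\\
	&\le\tfrac12\abs[\big]{(\rho^\eps_t)'}_\eps^2+\tfrac12\int_\Rd\skp*{\nabla\varphi_t,\bbT^\eps\nabla\varphi_t}\dx\rho^\eps_t+o(1),
\end{align*}
with the $o(1)$ depending only on $\eps,\Csupp,\omega_\mu,\omega_\vartheta$ and $\norm{\bs\varphi}_{C^2}$, thus uniform in $t$; the bound $\int_\Rd\skp*{\nabla\varphi_t,\bbT^\eps\nabla\varphi_t}\dx\rho^\eps_t\le\tfrac12\Cint\norm{\nabla\bs\varphi}_{L^\infty}^2$ from~\eqref{int} legitimises the Young step.

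Integrating in $t$ and rearranging, $\tfrac12\int_0^T\abs[\big]{(\rho^\eps_t)'}_\eps^2\dx t\ge -\int_0^T\!\int_\Rd\partial_t\varphi_t\dx\rho^\eps_t\dx t-\tfrac12\int_0^T\!\int_\Rd\skp*{\nabla\varphi_t,\bbT^\eps\nabla\varphi_t}\dx\rho^\eps_t\dx t-o(1)$. Letting $\eps\to0$ — using $\rho^\eps_t\rightharpoonup\rho_t$ a.e.\ (dominated convergence for the $\partial_t\varphi$ term) and $\bbT^\eps\to\bbT$ uniformly on compacts by Proposition~\ref{prop:tensor_uniqueness}, in a weak--strong convergence argument as in Theorem~\ref{thm:limiting_inner_product} — and then taking the supremum over $\bs\varphi$, rewriting the first term as $\int_0^T\!\int_\Rd\nabla\varphi_t\cdot\dx\jh_t\dx t$ via $(\bs\rho,\bs\jh)\in\CE_T$ and invoking pointwise Legendre duality (which forces $\jh_t\ll\rho_t$ for a.e.\ $t$ and produces $\tfrac12\int_0^T\!\int_\Rd\skp*{\bbT^{-1}\tfrac{\dx{\jh_t}}{\dx{\rho_t}},\tfrac{\dx{\jh_t}}{\dx{\rho_t}}}\dx\rho_t\dx t$), I would arrive at
\begin{equation*}
	\liminf_{\eps\to0}\tfrac12\int_0^T\abs[\big]{(\rho^\eps_t)'}_\eps^2\dx t\ \ge\ \tfrac12\int_0^T\int_\Rd\skp*{\bbT^{-1}\tfrac{\dx{\jh_t}}{\dx{\rho_t}},\tfrac{\dx{\jh_t}}{\dx{\rho_t}}}\dx\rho_t\dx t\ \ge\ \tfrac12\int_0^T\abs[\big]{\rho'_t}_\bbT^2\dx t,
\end{equation*}
the last inequality being the dynamical Benamou--Brenier representation of $W_\bbT$ from~\cite{Lisini_ESAIM2009}. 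In particular $\int_0^T\abs[\big]{\rho'_t}_\bbT^2\dx t<\infty$, so $\bs\rho\in\AC^2([0,T];(\calP_2(\R^d_\bbT),W_\bbT))$ by \cite[Theorem~8.3.1]{AmbrosioGigliSavare2008} adapted to the metric $W_\bbT$ (cf.~\cite{Lisini_ESAIM2009}), which is the asserted statement.

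The step I expect to be the main obstacle is the sharp Cauchy--Schwarz bound of the second paragraph: the naive estimate (e.g.\ the Cauchy--Schwarz lemma preceding Corollary~\ref{cor:A-bound}) loses a multiplicative constant, so one must genuinely exploit the upwind form of the minimal flux $j^\eps_t$ together with the vanishing of the antisymmetric part of the graph metric — precisely the mechanism behind Propositions~\ref{prop:symmetry} and~\ref{prop:tensor} — to get the $\tfrac12/\tfrac12$ splitting without loss, which is exactly what is needed for the supremum over test functions to reproduce the Benamou--Brenier functional of $W_\bbT$. A minor additional point is checking that the various $o(1)$'s are uniform in $t$, so that dominated convergence applies after integrating in time; this holds because all error terms depend only on $\eps$, the structural constants from Lemma~\ref{lem:properties_eta-mu}, and $\norm{\bs\varphi}_{C^2}$.
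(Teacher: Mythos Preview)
Your overall strategy is correct and in fact coincides with the paper's proof: identify the metric derivative with the action of a minimal upwind flux via \cite[Proposition~2.25]{EPSS2021}, reconstruct local fluxes through Proposition~\ref{prop:jhat}, extract a limit pair $(\bs\rho,\bs\jh)\in\CE_T$ by Proposition~\ref{prop:convergence_CE-solutions}, prove a dual lower bound by testing with smooth $\bs\varphi$, and close with the Benamou--Brenier characterisation of $W_\bbT$.

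The difference lies precisely in the step you flag as the main obstacle. The paper does not rederive the sharp Cauchy--Schwarz by hand; it invokes the \emph{one-sided Cauchy--Schwarz inequality} \cite[Lemma~3.7]{EPSS2021} for the Finsler pairing, which gives exactly
\[
\widetilde l^\eps_{\rho^\eps_t}(v^\eps_t)[\babla\varphi_t]
\le\sqrt{\widetilde l^\eps_{\rho^\eps_t}(v^\eps_t)[v^\eps_t]}\,\sqrt{\widetilde l^\eps_{\rho^\eps_t}(\babla\varphi_t)[\babla\varphi_t]}
=\sqrt{\calA(\mu,\eta^\eps;\rho^\eps_t,j^\eps_t)}\,\sqrt{\tA(\mu,\eta^\eps;\rho^\eps_t,\babla\varphi_t)},
\]
with no lost constant and no $o(1)$. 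Young's inequality then yields $\tfrac12\calA+\tfrac12\tA$, and the term $\tA(\mu,\eta^\eps;\rho^\eps_t,\babla\varphi_t)$ is passed to the limit \emph{directly} by Theorem~\ref{thm:limiting_inner_product} (which already contains the symmetrisation of Proposition~\ref{prop:symmetry}). So the ``main obstacle'' dissolves once you know this lemma exists; your worry that the lemma before Corollary~\ref{cor:A-bound} loses a factor is correct, but that lemma is not the right tool here. Your proposed workaround---splitting on $\set{v^\eps_t\gtrless 0}$, Cauchy--Schwarz on each piece, then exchanging $\mu\otimes\rho^\eps_t$ for $\rho^\eps_t\otimes\mu$ via the $\II_2$-mechanism---does ultimately work (the exchange error can be bounded by the full integral over $G^{\!\!\:\eps}$, removing the $v^\eps_t$-dependent domain), but it is more laborious and requires care that the additive $o(1)$ survives the product with $\sqrt{\calA}$, which is only $L^2$ in time; this is handled by a further Cauchy--Schwarz in $t$. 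The paper's route avoids all of this by keeping $\tA$ intact at the $\eps$-level and localising only in the limit.
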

\begin{proof}
\cite[Proposition 2.25]{EPSS2021} characterises absolutely continuous curves, and, in particular, for every $\eps>0$ it allows us to infer that there exists a unique $\bs j^\eps$ such that $(\bs\rho^\eps,\bs j^\eps)\in\NCE^\eps_T$ and for a.e. $t\in[0,T]$ it holds $\abs{\bra{\rho^\eps_\tau}'}_\eps^2 = \calA(\mu,\eta^\eps,\rho^\eps_t,j^\eps_t)<\infty$. We employ Proposition \ref{prop:jhat} to obtain $(\bs\rho^\eps,\bs \jh^\eps)\in\CE_T$; Proposition~\ref{prop:convergence_CE-solutions} implies convergence to a limit $(\bs\rho,\bs \jh)\in\CE_T$ such that $\bs\rho\subset \calP_2(\Rd)$ (by Lemma~\ref{lem:2nd-mom-propagation} and lower semicontinuity).

Next, we note that for any $\varphi\in C_c^\infty(\Rd)$ the one-sided Cauchy-Schwarz inequality \cite[Lemma 3.7]{EPSS2021} and Young's inequality yield
\begin{align*}
    &\skp{\babla\varphi, j^\eps}_{\eta^\eps} = \frac12\iint_\Rddiag \babla\varphi(x,y)\eta^\eps(x,y)\dx j^\eps(x,y)\\
    &\quad= \frac12\iint_\Rddiag\babla\varphi(x,y)\eta^\eps(x,y)(v^\eps_+(x,y)\dx\rho^\eps(x)\dx\mu(y)-v^\eps_-(x,y)\dx\mu(x)\dx\rho^\eps(y))\\
    &\quad= \widetilde l^\eps_{\rho^\eps}(v^\eps)[\babla\varphi]\\
    &\quad\le \sqrt{\widetilde l^\eps_{\rho^\eps}(v^\eps)[v^\eps] \widetilde l^\eps_{\rho^\eps}(\babla\varphi)[\babla\varphi]}\\
    &\quad= \sqrt{\calA(\mu,\eta^\eps;\rho^\eps,j^\eps) \tA(\mu,\eta^\eps;\rho^\eps,\babla\varphi)}\\
    &\quad\le \frac12 \calA(\mu,\eta^\eps;\rho^\eps,j^\eps) + \frac12 \tA(\mu,\eta^\eps;\rho^\eps,\babla\varphi).
\end{align*}
For completeness we mention the second equality holds since we have finite action, whence upwind flux, cf.~\cite[Lemma 2.6]{EPSS2021}.
Therefore, for $\bs\varphi\in C_c^\infty((0,T);C_c^\infty(\Rd))$ we obtain
\begin{align*}
    \liminf_{\eps\to 0}&\frac12\int_0^T  \calA\bra{\mu,\eta^\eps;\rho^\eps_t,j^\eps_t}\dx t\\
    &\ge \liminf_{\eps\to 0} \bra*{\int_0^T\skp[\big]{\babla\varphi_t,j^\eps_t}_{\!\eta^\eps}\dx t - \frac12\int_0^T  \tA\bra{\mu,\eta^\eps;\rho^\eps_t,\babla\varphi_t}\dx t}\\
    &\ge\lim_{\eps\to 0} \int_0^T\skp[\big]{\babla\varphi_t,j^\eps_t}_{\!\eta^\eps}\dx t - \limsup_{\eps\to 0}\frac12\int_0^T  \tA\bra{\mu,\eta^\eps;\rho^\eps_t,\babla\varphi_t)}\dx t\\
    \overset{\text{Thm.}\,\ref{thm:limiting_inner_product}}&{=} \int_0^T\skp[\big]{\nabla\varphi_t,\jh_t}\dx t - \frac12\int_0^T \int_\Rd \langle\nabla\varphi_t,\bbT\nabla\varphi_t\rangle\dx\rho_t\dx t.
\end{align*}
With this, arguing analogously to the last step in the proof of \cite[Theorem~6.2~(i)]{hraivoronska2022diffusive}, we set $V\coloneq\{\bbT^{1/2}\nabla\bs\varphi:\bs\varphi\in C_c^\infty((0,T);C_c^\infty(\Rd))\}$, being $\bbT^{1/2}(x)$ the square root of the positive-definite symmetric matrix $\bbT(x)$. Fenchel--Moreau duality theorem implies
\begin{align*}
    &\frac{1}{2}\int_0^T\left\|\frac{\dx \jh_t}{\dx\rho_t}\right\|_{L^2(\rho;\R_{\bbT}^d)}^2\dx t=\frac12 \int_0^T\int_\Rd \left\langle\bbT^{-1}\frac{\dx \jh_t}{\dx\rho_t}, \frac{\dx \jh_t}{\dx\rho_t}\right\rangle\dx\rho_t\dx t\\
    &\qquad=\sup_{\bs\varphi\in V}\bra*{\int_0^T\skp*{\bbT^{1/2}\nabla\varphi_t,\bbT^{-1/2}\jh_t}\dx t - \frac12\int_0^T \int_\Rd \langle\bbT^{1/2}\nabla\varphi_t,\bbT^{1/2}\nabla\varphi_t\rangle\dx\rho\dx t}\\
    &\qquad\le \liminf_{\eps\to 0}\frac12\int_0^T  \calA\bra{\mu,\eta^\eps;\rho^\eps_t,j^\eps_t}\dx t = \liminf_{\eps\to 0}\frac{1}{2}\int_0^T  \abs[\big]{\bra{\rho^\eps_\tau}'}_\eps^2\dx t,
\end{align*}
where the last equality follows from~\cite[Proposition~2.25]{EPSS2021} as mentioned at the beginning of the proof. The above inequality and Proposition~\ref{prop:convergence_CE-solutions} imply $\bs\rho\in \AC^2([0,T];(\calP_2(\R^d_\bbT),W_\bbT))$ since $(\bs\rho,\bs\jh)\in\CE_T$ and, for $0\le s\le t\le T$, it holds due to the Cauchy-Schwarz inequality
\begin{align*}
    W_\bbT^2(\rho_s,\rho_t)\le(t-s)\int_s^t\left\|\frac{\dx \jh_\tau}{\dx\rho_\tau}\right\|_{L^2(\rho_\tau;\R^d_\bbT)}^2\dx \tau<\infty,
\end{align*}
and Lebesgue differentiation theorem gives, for a.e. $\tau$ the claimed result
\begin{equation*}
    \abs{\rho'_\tau}_\bbT\le\norm*{\frac{\dx \jh_t}{\dx\rho_t}}_{L^2(\rho_t;\R^d_\bbT)} . \qedhere
\end{equation*}
\end{proof}
Let us turn to the lower limit for the slopes.
\begin{proposition}[Lower limit of metric slopes]\label{prop:D_NL->L_lsc}
     Let $(\mu,\vartheta)$ satisfy \eqref{mu1}, \eqref{mu2} and \eqref{theta1}~--~\eqref{theta4}. Let $\eta^\eps$ be given by \eqref{eq:def:eta^eps} and let $K$ satisfy \eqref{K1}~--~\eqref{K4}. Let $(\rho^{\eps_n})_{n\in\bbN}\subset\calP_2(\Rd)$ be such that $\rho^{\eps_n}\rightharpoonup\rho\in\calP_2(\Rd)$ narrowly as $n\to\infty$. Then, up to passing to a subsequence, we have
     \begin{align*}
        \liminf_{n\to\infty}\calD_{\!\eps_n}(\rho^{\eps_n}) \ge \calD_{\;\!\bbT}(\rho).
     \end{align*}
\end{proposition}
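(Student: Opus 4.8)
The plan is to imitate the proof of Proposition~\ref{prop:lower_limit_metric_derivatives}: since the slope is a quadratic form, one lower-bounds it by testing against nonlocal gradients of smooth potentials and then optimises. Write $v^\eps\coloneqq-\babla(K*\rho^\eps)$, so that by the definitions of $\calD_\eps$, of $\tA$ and of $\widetilde l^\eps_\rho$ one has $\calD_\eps(\rho^\eps)=\tA(\mu,\eta^\eps;\rho^\eps,v^\eps)=\widetilde l^\eps_{\rho^\eps}(v^\eps)[v^\eps]$. We may assume $\calD_{\eps_n}(\rho^{\eps_n})$ converges to a finite limit (else there is nothing to prove) and, as holds whenever this proposition is applied, that $\sup_n M_2(\rho^{\eps_n})<\infty$. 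Fix $\varphi\in C^2_c(\Rd)$. The one-sided Cauchy--Schwarz inequality \cite[Lemma~3.7]{EPSS2021} and Young's inequality give, for every $\eps$,
\[
  \tfrac12\,\calD_\eps(\rho^\eps)\;\ge\;\widetilde l^\eps_{\rho^\eps}(v^\eps)[\babla\varphi]\;-\;\tfrac12\,\widetilde l^\eps_{\rho^\eps}(\babla\varphi)[\babla\varphi],
\]
and by Theorem~\ref{thm:limiting_inner_product} with $\psi=\varphi$ the last term converges to $\tfrac12\int_\Rd\nabla\varphi\cdot\bbT\nabla\varphi\dx\rho$. The whole matter thus reduces to identifying $\lim_{\eps\to0}\widetilde l^\eps_{\rho^\eps}(v^\eps)[\babla\varphi]=-\int_\Rd\nabla\varphi\cdot\bbT\,\nabla(K*\rho)\dx\rho$.

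For this I would re-run the linearisation behind Propositions~\ref{prop:symmetry}--\ref{prop:tensor}, now with $v^\eps$ in the velocity slot in place of a fixed $\babla\psi$. Using antisymmetry of $v^\eps$ and $\babla\varphi$ and symmetry of $\eta^\eps$ (as at the start of the proof of Proposition~\ref{prop:symmetry}) one rewrites $\widetilde l^\eps_{\rho^\eps}(v^\eps)[\babla\varphi]=\iint_{G^\eps}\babla\varphi(x,y)\,(v^\eps)_+(x,y)\,\eta^\eps(x,y)\dx\rho^\eps(x)\dx\mu(y)$, which by~\eqref{supp} only involves $x$ in the compact set $K'\coloneqq\supp\varphi+\overline{B}_{\Csupp\eps_0}$. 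On $K'$ the family $\{\nabla(K*\rho^\eps)\}$, with $\nabla(K*\rho^\eps)(x)=\int_\Rd\nabla_x K(x,y)\dx\rho^\eps(y)$, is equicontinuous and converges uniformly to $\nabla(K*\rho)$: equicontinuity and pointwise convergence follow from $K\in C^1$, the linear growth~\eqref{K4}, and $\sup_n M_2(\rho^{\eps_n})<\infty$ (which makes $\{|y|\}$ uniformly $\rho^{\eps_n}$-integrable, controlling the tails of the $y$-integral), and Arzelà--Ascoli upgrades this to uniform convergence. Writing $\hat v^\eps(x)\coloneqq-\nabla(K*\rho^\eps)(x)$, Taylor's theorem gives $\babla\varphi(x,y)=\nabla\varphi(x)\cdot(y-x)+\calO(|x-y|^2)$ and $v^\eps(x,y)=\hat v^\eps(x)\cdot(y-x)+|x-y|\,\omega(|x-y|)$ with $\omega$ independent of $\eps$; since $t\mapsto t_+$ is $1$-Lipschitz, replacing $\babla\varphi$ by $\nabla\varphi(x)\cdot(y-x)$ and $(v^\eps)_+$ by $(\hat v^\eps(x)\cdot(y-x))_+$ in the integral costs at most $C\,\omega(\Csupp\eps)\iint_{G^\eps}|x-y|^2\eta^\eps\dx\rho^\eps\dx\mu$, which tends to $0$ by~\eqref{int}. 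In the remaining inner $y$-integral one substitutes $y=x+\eps w$, replaces $\vartheta(x+\tfrac{\eps w}{2},w)\widetilde\mu(x+\eps w)$ by $\vartheta(x,w)\widetilde\mu(x)$ up to an error vanishing uniformly on $K'$ (as in~\eqref{eq:eta-continuity}, using~\eqref{theta_new} and~\eqref{mu1}), and uses evenness of $\vartheta(x,\cdot)$ from~\eqref{theta1} together with the symmetrisation $\int_\Rdzero(a\cdot w)(b\cdot w)_+\,\vartheta(x,w)\dx w=\tfrac12\int_\Rdzero(a\cdot w)(b\cdot w)\,\vartheta(x,w)\dx w$ to see that this inner integral equals $\nabla\varphi(x)\cdot\bbT(x)\hat v^\eps(x)+o(1)$ uniformly on $K'$, with $\bbT$ as in~\eqref{eq_def:Tensor}. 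Integrating against $\rho^\eps$ and invoking the uniform convergence $\hat v^\eps\to-\nabla(K*\rho)$ on $K'$, the narrow convergence $\rho^\eps\rightharpoonup\rho$, and the boundedness and continuity of $x\mapsto\nabla\varphi(x)\cdot\bbT(x)\nabla(K*\rho)(x)$ yields the claimed limit.

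Combining the two limits, for every $\varphi\in C^2_c(\Rd)$ we get
\[
  \liminf_{\eps\to0}\tfrac12\,\calD_\eps(\rho^\eps)\;\ge\;-\int_\Rd\nabla\varphi\cdot\bbT\,\nabla(K*\rho)\dx\rho-\tfrac12\int_\Rd\nabla\varphi\cdot\bbT\nabla\varphi\dx\rho.
\]
I would conclude by taking the supremum over $\varphi$. Equip $L^2(\rho;\Rd)$ with the inner product $(u_1,u_2)\mapsto\int_\Rd u_1\cdot\bbT u_2\dx\rho$, equivalent to the standard one by uniform ellipticity of $\bbT$ (Proposition~\ref{prop:tensor_uniqueness}); then the supremum of the right-hand side over $u=\nabla\varphi$ equals $\tfrac12$ times the squared norm of the orthogonal projection of $-\nabla(K*\rho)=-\nabla\tfrac{\delta\calE}{\delta\rho}(\rho)$ onto the closure of $\{\nabla\varphi:\varphi\in C^2_c(\Rd)\}$. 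Since $-\nabla(K*\rho)$ lies in $L^2(\rho;\Rd)$ (by~\eqref{K4} and $\rho\in\calP_2(\Rd)$) and is a gradient field approximable in $L^2(\rho)$ by smooth compactly supported gradients (mollify $K*\rho$, then multiply by a cutoff, both errors vanishing by dominated convergence since $\rho\in\calP_2(\Rd)$), the projection fixes it, and the supremum equals $\tfrac12\int_\Rd\nabla\tfrac{\delta\calE}{\delta\rho}\cdot\bbT\nabla\tfrac{\delta\calE}{\delta\rho}\dx\rho=\tfrac12\calD_{\;\!\bbT}(\rho)$. Hence $\liminf_{\eps\to0}\calD_\eps(\rho^\eps)\ge\calD_{\;\!\bbT}(\rho)$.

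The main obstacle is the middle step. Because $v^\eps=-\babla(K*\rho^\eps)$ is not the nonlocal gradient of a fixed test function --- it depends on $\eps$ through $\rho^\eps$, and $K*\rho^\eps$ is merely $C^1$ with quadratic growth --- Theorem~\ref{thm:limiting_inner_product} is not directly applicable, and one must redo the expansion of Propositions~\ref{prop:symmetry}--\ref{prop:tensor} with uniform control of $\{\nabla(K*\rho^\eps)\}$ on compacts (this is where the uniform second-moment bound enters) while absorbing the upwind positive part through the asymptotic symmetrisation provided by the evenness of $\vartheta(x,\cdot)$.
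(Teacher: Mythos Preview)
Your approach is correct and genuinely different from the paper's. The paper never invokes the Cauchy--Schwarz/Young duality; instead it works directly on the slope by successively replacing $K*\rho^{\eps_n}$ with a fixed $C^2_c$ potential. Concretely: it truncates the convolution argument ($K*\rho^{\eps_n}\to K*\chi_R\rho^{\eps_n}$), truncates the base and configuration measures ($\mu\to\chi_R\mu$, $\rho^{\eps_n}\to\chi_R\rho^{\eps_n}$), mollifies to gain $C^2$-regularity, and replaces $\rho^{\eps_n}$ by $\rho$ in the convolution using uniform convergence of $\nabla(\nu_\delta*K*\chi_R\rho^{\eps_n})$ on $B_{2R}$. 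After these four reductions Theorem~\ref{thm:limiting_inner_product} applies verbatim, and one sends $\delta\to0$, $R\to\infty$.

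Your route has the appeal of mirroring the metric-derivative argument, so the two $\liminf$ results become methodologically uniform. The price is exactly the obstacle you name: the cross term $\widetilde l^\eps_{\rho^\eps}(v^\eps)[\babla\varphi]$ forces you to redo the linearisation of Propositions~\ref{prop:symmetry}--\ref{prop:tensor} with an $\eps$-dependent velocity, which in turn requires the uniform second-moment bound to get equicontinuity of $\nabla(K*\rho^\eps)$ on compacts (the paper needs this too, but in a milder form in Step~3). Your exploitation of the evenness of $\vartheta(x,\cdot)$ to symmetrise $(b\cdot w)_+\mapsto\tfrac12(b\cdot w)$ inside the $w$-integral is a nice shortcut that the paper does not use.

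One point to tighten: the final density claim ``mollify $K*\rho$, then multiply by a cutoff'' does not quite work as stated, because $K*\rho$ grows quadratically and the cutoff error $|(K*\rho)\nabla\chi_R|^2$ is not controlled by second moments alone. Precompose with a bounded truncation, e.g.\ $\Psi_n\coloneqq n\,\zeta(\Psi/n)$ for smooth bounded $\zeta$ with $\zeta'(0)=1$, so that $\nabla\Psi_n=\zeta'(\Psi/n)\nabla\Psi\to\nabla\Psi$ in $L^2(\rho)$ by dominated convergence; then mollify and cut off the bounded $\Psi_n$. With this fix your argument goes through.
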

\begin{proof}
We intend to apply Theorem \ref{thm:limiting_inner_product}, thus we shall take multiple steps to replace $K\ast\rho^{\eps_n}$ by a compactly supported, sufficiently smooth test function, which is independent of $n$. Throughout the proof, without loss of generality we assume that $\eps_n\le 1/\Csupp$ for all $n\in\bbN$, so that by \eqref{supp}
\begin{align}\label{eq:eps_le_Csupp}
    \forall n\in\bbN,\,\forall(x,y)\in G^{\eps_n}: \abs{x-y}\lor\abs{x-y}^2 = \abs{x-y}.
\end{align}
\subparagraph{\textbf{Step 1: Truncation}}

Let $\chi_R\in C^\infty_c(\Rd;[0,1])$ such that $\supp\chi_R\subset \bar B_{2R}$, $\chi_R|_{B_R} \equiv 1$, $\abs{\nabla\chi_R} \le 2/R$. First, we observe that for any $(x,y)\in G$ it holds
\begin{equation}\label{eq:bound_K-ast-rho}
\begin{aligned}
    \bra[\big]{\babla K\ast\rho^{\eps_n}}_-&\ge \bra[\big]{\babla K\ast\chi_R\rho^{\eps_n}}_- - \abs[\big]{\babla K\ast(1-\chi_R)\rho^{\eps_n}}\\
    &\ge\bra[\big]{\babla K\ast\chi_R\rho^{\eps_n}}_-
    - L_K\abs*{x-y}\sup_{n\in\bbN} \rho^{\eps_n}(B_R^c).
\end{aligned}
\end{equation}
Thus, due to the tightness of $(\rho^{\eps_n})_n$, there exists $\omega\in C([0,\infty);[0,\infty))$ with $\omega(R)\to 0$ as $R\to\infty$, such that
\begin{equation}\label{eq:D_cut-off}
\begin{aligned}
    \calD_{\!\eps_n}(\rho^{\eps_n}) &= \tA(\mu,\eta^{\eps_n};\rho^{\eps_n},-\babla K\ast\rho^{\eps_n})\\
    &\ge \tA(\mu,\eta^{\eps_n};\rho^{\eps_n},-\babla K\ast\chi_R\rho^{\eps_n})-\omega(R)\\
    &\ge \tA(\chi_R\mu,\eta^{\eps_n};\chi_R\rho^{\eps_n},-\babla K\ast\chi_R\rho^{\eps_n})-\omega(R)\\
    &\eqqcolon \tA(\chi_R\mu,\eta^{\eps_n};\chi_R\rho^{\eps_n},\babla\varphi_R^{\eps_n})-\omega(R),
\end{aligned}
\end{equation}
where the last inequality holds by the monotonicity of the integral. 

\subparagraph{\textbf{Step 2: Mollification}}

In order to apply Theorem \ref{thm:limiting_inner_product} later, we need further regularity. Given a mollifier $(\nu_\delta)_{\delta>0}$, for every $n\in\bbN$ we define $\varphi^{\eps_n}_{R,\delta}\coloneqq\nu_\delta\ast\varphi_R^{\eps_n}\in C_c^\infty(\Rd)$. Then, using \eqref{K3} and \eqref{eq:eps_le_Csupp} we calculate
\begin{align*}
    &\abs[\big]{\tA(\chi_R\mu,\eta^{\eps_n};\chi_R\rho^{\eps_n},\babla \varphi_R^{\eps_n}) - \tA(\chi_R\mu,\eta^{\eps_n};\chi_R\rho^{\eps_n},\babla\varphi^{\eps_n}_{R,\delta})}\\
    &= \abs[\bigg]{\iint_\Rddiag\pra[\Big]{\abs[\big]{\bra[\big]{\babla\varphi_R^{\eps_n}}_+}^2-\abs[\big]{\bra[\big]{\babla\varphi^{\eps_n}_{R,\delta}}_+}^2}(\chi_R\otimes\chi_R)\eta^{\eps_n}\dx\bra*{\rho^{\eps_n}\otimes\mu}}\\
    &\le 2 L_K\iint_\Rddiag\abs[\big]{\babla\bra[\big]{\varphi_R^{\eps_n}-\varphi^{\eps_n}_{R,\delta}}(x,y)}\abs{x-y}\chi_R(x)\chi_R(y)\eta^{\eps_n}(x,y)\dx\rho^{\eps_n}(x)\dx\mu(y)\\
    &\le 2 \Cint L_K\norm{\nabla\varphi_R^{\eps_n}-\nabla\varphi^{\eps_n}_{R,\delta}}_{L^\infty(B_{2R})}.
\end{align*} 
Next, we show that $\norm{\nabla\varphi_R^{\eps_n}-\nabla\varphi^{\eps_n}_{R,\delta}}_{L^\infty(B_{2R})}\to 0$ as $\delta\to 0$, uniformly in $n$. Indeed, for any $x\in B_{2R}$ it holds
\begin{align*}
    \abs[\big]{\nabla\varphi_R^{\eps_n}(x)-\nabla\varphi^{\eps_n}_{R,\delta}(x)} &=\abs*{\int_\Rd\bra*{\nabla\varphi_R^{\eps_n}(x)-\nabla\varphi_R^{\eps_n}(y)}\nu_\delta(x-y)\dx y}\\
    &\le\sup_{y\in B_\delta(x)}\abs*{\nabla\varphi_R^{\eps_n}(x)-\nabla\varphi_R^{\eps_n}(y)}\\
    &= \sup_{y\in B_\delta(x)}\abs*{\int_\Rd\bra*{ \nabla_1 K(y,z)-\nabla_1 K(x,z)}\chi_R(z)\dx\rho^{\eps_n}(z)}\\
    &\le\sup_{z\in B_{2R}}\sup_{y\in B_\delta(x)}\abs*{\nabla_1 K(y,z)-\nabla_1 K(x,z)}\\
    &\eqqcolon\omega_R(\delta)\overset{\delta\to 0}{\longrightarrow}0,
\end{align*}
where we used that the continuous function $\nabla_1 K$ is uniformly continuous on the compact set $B_{2R}$.

\subparagraph{\textbf{Step 3: $n$-independent test function}}

Next, we show that up to a negligible error, vanishing as $n\to \infty$, we can replace $\varphi_{R,\delta}^{\eps_n}$ by $\varphi_{R,\delta}\coloneqq -\nu_\delta\ast K\ast\chi_R\rho$. 
Indeed, we observe
\begin{align*}
    &\abs*{\bra*{\babla \varphi_{R,\delta}^{\eps_n}(x,y)}_+ - \bra*{\babla \varphi_{R,\delta}(x,y)}_+}\chi_R(x)\chi_R(y)\\
    &\le \abs*{\babla\bra*{\varphi_{R,\delta}^{\eps_n} - \varphi_{R,\delta}}(x,y)}\chi_R(x)\chi_R(y)\le  \norm{\nabla\varphi_{R,\delta}^{\eps_n} - \nabla\varphi_{R,\delta}}_{L^\infty(B_{2R})}\abs{x-y}.
\end{align*}
Hence, due to \eqref{K3}, keeping in mind \eqref{eq:eps_le_Csupp}, we obtain
\begin{align*}
    &\abs*{\tA(\chi_R\mu,\eta^{\eps_n};\chi_R\rho^{\eps_n},\babla\varphi_{R,\delta}^{\eps_n}) - \tA(\chi_R\mu,\eta^{\eps_n};\chi_R\rho^{\eps_n},\babla\varphi_{R,\delta})}\\
    &\le 2L_K\iint_\Rddiag\abs*{\bra*{(\babla \varphi_{R,\delta}^{\eps_n}(x,y))_+}-\bra*{(\babla \varphi_{R,\delta}(x,y))_+}}\\
    &\hphantom{\le 2L_K\iint_\Rddiag[(}\chi_R(x)\chi_R(y)\abs{x-y}\eta^{\eps_n}(x,y)\dx\rho^{\eps_n}(x)\dx\mu(y)\\
    &\le2L_K\Cint \norm{\nabla\varphi_{R,\delta}^{\eps_n}-\nabla\varphi_{R,\delta}}_{L^\infty(B_{2R})}.
\end{align*}
We conclude this step by showing that for any $\delta,R>0$, it holds
\begin{align}\label{eq:norm_phi^n_R->0}
    \limsup_{n\to\infty}\norm{\nabla\varphi_{R,\delta}^{\eps_n}-\nabla\varphi_{R,\delta}}_{L^\infty(B_{2R})} = 0.
\end{align}
To see this, we first observe that by the triangle inequality for any $N\in\bbN$ and $a,a_k\in\R$, $k=1,\dots,N$, we have
\begin{align*}
    \abs{a}\le \sum_{k=1}^N \abs{a_k} +\min_{k} \abs{a-a_k}.
\end{align*}
On the other hand, for any $\sigma>0$ there exists $N_\sigma\in\bbN$ and points $x^{(k)}$, $k=1,\dots,N_\sigma$, such that $\min_k\abs{x-x^{(k)}}\le \sigma$ for any $x\in B_{2R}$. We define $K_\delta\coloneqq\nu_\delta\ast K$ and denote by $\omega_{K_\delta,R}$ the modulus of continuity of $\nabla K_\delta$ on $B_{2R}\times B_{2R}$. Recalling $\supp\chi_R=B_{2R}$, we obtain
\begin{align*}
    &\norm{\nabla\varphi_{R,\delta}^{\eps_n}-\nabla\varphi_{R,\delta}}_{L^\infty(B_{2R})}\\
    &= \max_{1\le i\le d}\sup_{x\in B_{2R}}\abs*{\int_\Rd \partial_{x_i} K_\delta(x,z)\chi_R(z)\dx\bra*{\rho^{\eps_n}-\rho}(z)}\\
    &\le \max_{1\le i\le d}\sum_{k=1}^{N_\sigma} \abs*{\int_\Rd \partial_{x_i} K_\delta(x^{(k)},z)\chi_R(z)\dx\bra*{\rho^{\eps_n}-\rho}(z)}\\
    &\quad+ \max_{1\le i\le d}\sup_{x\in B_{2R}}\min_{k}\abs*{\int_\Rd \bra*{\partial_{x_i} K_\delta(x,z)-\partial_{x_i} K_\delta(x^{(k)},z)}\chi_R(z)\dx\bra*{\rho^{\eps_n}-\rho}(z)}\\
    &\le \max_{1\le i\le d}\sum_{k=1}^{N_\sigma} \abs*{\int_\Rd \partial_{x_i} K_\delta(x^{(k)},z)\chi_R(z)\dx\bra*{\rho^{\eps_n}-\rho}(z)} + 2\omega_{K_\delta,R}(\sigma).
\end{align*}
Since $N_\sigma$ is finite, the first sum vanishes as $n\to\infty$ due to the narrow convergence of $\rho^{\eps_n}$ towards $\rho$. Hence, since $\sigma>0$ is arbitrary, the claim is proved.

\subparagraph{\textbf{Step 4: Conclusion}}

Applying Theorem \ref{thm:limiting_inner_product} with $\chi_R\mu$ and $\chi_R\rho^{\eps_n}$, keeping in mind Proposition \ref{prop:tensor_uniqueness} on the uniqueness of the limit, we have 
\begin{align*}
    \lim_{n\to\infty}\tA(\chi_R\mu,\eta^{\eps_n};\chi_R\rho^{\eps_n},\babla\varphi_{R,\delta}) &= \int_\Rd \nabla \varphi_{R,\delta}\cdot\bbT_R\nabla\varphi_{R,\delta}\chi_R\dx\rho,
\end{align*}
where, recalling $\eta^{\eps_n}(x,y) = \frac{1}{\eps_n^{d+2}}\vartheta\bra*{\frac{x+y}{2},\frac{y-x}{\eps_n}}$, it holds
\begin{align*}
    \bbT_R(x) &= \!\lim_{n\to\infty}\frac12\int_{\Rdx} (x-y)\!\otimes\!(x-y)\,\eta^{\eps_n}(x,y)\chi_R(y)\dx\mu(y)\\
    &= \!\lim_{n\to\infty}\frac{1}{2\eps_n^d}\int_{\Rdx} \!\bra*{\frac{y-x}{\eps_n}}\!\otimes\!\bra*{\frac{y-x}{\eps_n}}\,\vartheta\bra*{x+\frac{\eps_n}{2}\frac{y-x}{\eps_n},\frac{y-x}{\eps_n}}\chi_R(y)\widetilde\mu(y)\dx y\\
    &= \frac12\chi_R(x)\widetilde\mu(x)\int_{\Rdzero} w\!\otimes\! w\,\vartheta(x,w)\dx w,
\end{align*}
which, as $R\to \infty$, converges pointwise to
\begin{align*}
    \bbT(x) &= \frac12\widetilde\mu(x)\int_{\Rdzero} w\!\otimes\! w\,\vartheta(x,w)\dx w.
\end{align*}
Regarding $\varphi_{R,\delta}$, we define $\varphi_R\coloneqq - K\ast\chi_R\rho$ and observe that for every $\delta_0>0$ the family $(\varphi_{R,\delta})_{0<\delta\le\delta_0}$ is supported on a compact set, so that we find
\begin{align*}
    \norm{\nabla\varphi_{R,\delta}-\nabla\varphi_R}_{L^\infty(B_{2R})}\overset{\delta\to 0}{\longrightarrow} 0.
\end{align*}
Since $\chi_R\to 1$ as $R\to\infty$ pointwise on $\Rd$, \eqref{K4} and the dominated convergence theorem give us for for every $x\in\Rd$
\begin{align*}
    \nabla\varphi_R(x) &= -(\nabla_1 K)\ast \rho\chi_R(x)\overset{R\to\infty}{\longrightarrow}-\nabla K\ast \rho(x).
\end{align*}
Thus, another application of the dominated convergence theorem yields
\begin{align*}
    \liminf_{n\to\infty}\calD_{\!\eps_n}(\rho^{\eps_n}) &\ge \liminf_{n\to\infty}\tA(\chi_R\mu,\eta^{\eps_n};\chi_R\rho^{\eps_n},\babla\varphi_R^{\eps_n})-\omega(R)\\
    &\ge \liminf_{n\to\infty} \tA(\chi_R\mu,\eta^{\eps_n};\chi_R\rho^{\eps_n},\babla\varphi_{R,\delta}^{\eps_n})-\omega_R(\delta) -\omega(R)\\
    &\ge \liminf_{n\to\infty} \tA(\chi_R\mu,\eta^{\eps_n};\chi_R\rho^{\eps_n},\babla\varphi_{R,\delta})-\omega_R(\delta) -\omega(R)\\
    &= \lim_{n\to\infty} \tA(\chi_R\mu,\eta^{\eps_n};\chi_R\rho^{\eps_n},\babla\varphi_{R,\delta}) -\omega_R(\delta) -\omega(R)\\
    &= \int_\Rd \nabla \varphi_{R,\delta}\cdot\bbT_R\nabla\varphi_{R,\delta}\chi_R\dx\rho -\omega_R(\delta)-\omega(R)\\
    &\overset{\delta\to0}{\longrightarrow}\int_\Rd \nabla \varphi_R\cdot\bbT_R\nabla\varphi_R\chi_R\dx\rho -\omega(R) \overset{R\to\infty}{\longrightarrow}\calD_{\;\!\bbT}(\rho). \qedhere
    \end{align*}
\end{proof}

\subsection{Wasserstein gradient flow structure for \texorpdfstring{\eqref{eq:intro:NLIE:one}}{(NLIE)}}\label{sec:gf_structure}

In order to prove Theorem~\ref{thm:main_result}, it remains to show that the local De Giorgi functional $\bs\calG_\bbT$ is non-negative. This is a consequence of a chain rule inequality proven in Proposition~\ref{prop:chain_rule_ineq}. Afterwards we prove Theorem~\ref{thm:weak_sol_nlie_tensor} by establishing convergence of gradient flows of $\calE$ in the Finslerian spaces $(\calP_2(\R^d),\calT_\eps)$ towards gradient flows of the same energy, $\calE$, in the Riemannian space $(\calP_2(\R^d_\bbT),W_{\bbT})$ as $\eps\to 0$, using the concept of curves of maximal slope in the corresponding spaces, following~\cite{AmbrosioGigliSavare2008}.
\begin{proposition}(Chain-rule inequality)\label{prop:chain_rule_ineq}
Assume $K$ satisfies \eqref{K1}~--~\eqref{K4}. For any $\bs\rho\in \AC^2([0,T];(\calP_2(\R^d_\bbT),W_\bbT))$ the following chain rule inequality holds
\[
\bs\calG_\bbT(\bs\rho)=\calE(\rho_T)-\calE(\rho_0)+\frac{1}{2}\int_0^T\left(\calD_{\;\!\bbT}(\rho_t)+|\rho'_t|^2_\bbT\right)\dx t\ge0.
\]
\end{proposition}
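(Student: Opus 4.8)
The plan is to show that $\sqrt{\calD_{\;\!\bbT}}$ is a strong upper gradient for $\calE$ on $(\calP_2(\R^d_\bbT),W_\bbT)$ in the sense of Definition~\ref{def:strong_ug}, and to close with Young's inequality. Concretely it is enough to prove that for every $\bs\rho\in\AC^2([0,T];(\calP_2(\R^d_\bbT),W_\bbT))$ the map $t\mapsto\calE(\rho_t)$ is absolutely continuous and satisfies $\abs*{\pderiv{\calE(\rho_t)}{t}}\le\sqrt{\calD_{\;\!\bbT}(\rho_t)}\,\abs{\rho_t'}_\bbT$ for a.e.\ $t\in[0,T]$. Indeed, integrating over $[0,T]$ and applying $ab\le\tfrac12 a^2+\tfrac12 b^2$ gives $\calE(\rho_0)-\calE(\rho_T)\le\int_0^T\sqrt{\calD_{\;\!\bbT}(\rho_\tau)}\,\abs{\rho_\tau'}_\bbT\dx\tau\le\tfrac12\int_0^T\bigl(\calD_{\;\!\bbT}(\rho_\tau)+\abs{\rho_\tau'}_\bbT^2\bigr)\dx\tau$, which is precisely $\bs\calG_\bbT(\bs\rho)\ge 0$.

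\textbf{Preliminary reductions.} Since $d_\bbT$ is bi-Lipschitz equivalent to the Euclidean distance by the uniform ellipticity from Proposition~\ref{prop:tensor_uniqueness}, $\bs\rho$ is also $2$-absolutely continuous in $(\calP_2(\R^d),W_2)$; as $t\mapsto W_2(\rho_t,\delta_0)=\sqrt{m_2(\rho_t)}$ is then (absolutely) continuous on $[0,T]$, one obtains $M\coloneqq\sup_{t\in[0,T]}m_2(\rho_t)<\infty$. The dynamical (Benamou--Brenier) characterization of $W_\bbT$ recalled in Section~\ref{subsec:local_equation}, following~\cite{Lisini_ESAIM2009,AmbrosioGigliSavare2008}, provides a Borel family of fluxes $\bs\jh=(\jh_t)_{t\in[0,T]}$ with $(\bs\rho,\bs\jh)\in\CE_T$, $\jh_t\ll\rho_t$, and $\norm*{\frac{\dx\jh_t}{\dx\rho_t}}_{L^2(\rho_t;\R^d_\bbT)}\le\abs{\rho_t'}_\bbT$ for a.e.\ $t$. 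Finally, \eqref{K1}, \eqref{K3} and \eqref{K4} together with $m_2(\rho_t)\le M$ imply that $\frac{\delta\calE}{\delta\rho}(\rho_t)=K\ast\rho_t\in C^1(\R^d)$ has at most quadratic growth and $\abs{\nabla(K\ast\rho_t)(x)}\le C_K(1+\abs{x}+\sqrt{M})$ uniformly in $t$; hence, by boundedness and ellipticity of $\bbT$, $\calD_{\;\!\bbT}(\rho_t)<\infty$ for a.e.\ $t$ and $t\mapsto\sqrt{\calD_{\;\!\bbT}(\rho_t)}$ is measurable with $\calD_{\;\!\bbT}(\rho_\cdot)^{1/2}\abs{\rho_\cdot'}_\bbT\in L^1(0,T)$.

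\textbf{Chain rule.} The core step is to establish, for a.e.\ $t$,
\[
  \pderiv{\calE(\rho_t)}{t}=\int_{\R^d}\skp[\big]{\nabla\tfrac{\delta\calE}{\delta\rho}(\rho_t),\tfrac{\dx\jh_t}{\dx\rho_t}}\dx\rho_t ,
\]
together with absolute continuity of $t\mapsto\calE(\rho_t)$. Exploiting the symmetry~\eqref{K2} of $K$ one writes, for $0\le s<t\le T$,
\[
  \calE(\rho_t)-\calE(\rho_s)=\tfrac12\int_{\R^d}\bigl(K\ast\rho_t+K\ast\rho_s\bigr)\dx\bra*{\rho_t-\rho_s},
\]
then divides by $t-s$ and lets $s\to t$. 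For a fixed $g\in C^1_c(\R^d)$, $\tfrac{1}{t-s}\int g\dx\bra*{\rho_t-\rho_s}\to\int\nabla g\cdot\dx\jh_t$ for a.e.\ $t$ by the weak formulation of $\CE_T$ (Definition~\ref{def:ce-flux}, Remark~\ref{rem:ce_Lip_test-function}) and the Lebesgue differentiation theorem. To reach the same conclusion with the $t$-dependent, merely $C^1$ and at most quadratically growing integrand $g=\tfrac12(K\ast\rho_t+K\ast\rho_s)$, one truncates and mollifies, $g^{R,\delta}\coloneqq\chi_R\,(\nu_\delta\ast g)\in C^1_c(\R^d)$, bounds the truncation error using~\eqref{K3}--\eqref{K4}, the uniform bound $M$ and the tightness of $(\rho_\tau)_{\tau}$, bounds the mollification error using the uniform continuity of $\nabla_1 K$ on compacts, and finally sends $R\to\infty$ and $\delta\to0$ using $K\ast\rho_s\to K\ast\rho_t$ locally uniformly and dominated convergence with the majorant $C(1+\abs{x}^2)$, which is $\rho_t$-integrable since $m_2(\rho_t)\le M$. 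The same estimates yield the $L^1(0,T)$ bound $\abs*{\pderiv{\calE(\rho_t)}{t}}\le\norm*{\nabla(K\ast\rho_t)}_{L^2(\rho_t)}\norm*{\frac{\dx\jh_t}{\dx\rho_t}}_{L^2(\rho_t)}$, giving absolute continuity.

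\textbf{Conclusion and main obstacle.} With the chain rule established, the Cauchy--Schwarz inequality in $L^2(\rho_t;\R^d_\bbT)$, using that $\bbT(x)$ is symmetric and positive definite with square root $\bbT^{1/2}(x)$, gives
\[
  \abs*{\int_{\R^d}\skp[\big]{\nabla\tfrac{\delta\calE}{\delta\rho},\tfrac{\dx\jh_t}{\dx\rho_t}}\dx\rho_t}
  \le\sqrt{\calD_{\;\!\bbT}(\rho_t)}\;\norm*{\frac{\dx\jh_t}{\dx\rho_t}}_{L^2(\rho_t;\R^d_\bbT)}
  \le\sqrt{\calD_{\;\!\bbT}(\rho_t)}\,\abs{\rho_t'}_\bbT ,
\]
which is exactly the strong upper gradient bound; the first paragraph then concludes the proof. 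I expect the chain rule to be the only delicate point: making the formal identity $\pderiv{\calE(\rho_t)}{t}=\int\nabla(K\ast\rho_t)\cdot\dx\jh_t$ rigorous is nontrivial because $K\ast\rho_t$ is neither bounded nor compactly supported and only a uniform \emph{second-moment} bound --- not uniform integrability --- is available, which is precisely what forces the two-parameter truncation/mollification argument and the growth hypotheses \eqref{K3}--\eqref{K4}.
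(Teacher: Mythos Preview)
Your proposal is correct and follows essentially the same route as the paper: reduce to $W_2$-absolute continuity via the uniform ellipticity of $\bbT$, establish the chain rule $\calE(\rho_t)-\calE(\rho_s)=\int_s^t\int_{\R^d}\nabla K\ast\rho_\tau\cdot\dx\jh_\tau\dx\tau$, and close with Cauchy--Schwarz in $L^2(\rho_t;\R^d_\bbT)$ (via $\bbT^{1/2}$, $\bbT^{-1/2}$) followed by Young's inequality. The only organizational difference is that the paper first proves the inequality $\bs\calG_\bbT\ge0$ and afterwards deduces the strong-upper-gradient statement (Corollary~\ref{cor:strong_up_gr_diss}), whereas you frame it the other way around; this is immaterial.

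The one place where the arguments genuinely diverge is the chain rule itself. The paper isolates it as Proposition~\ref{prop:chain_rule_gen} and proves it by mollifying $K$ in space, cutting off, \emph{and} mollifying $(\bs\rho,\bs j)$ in time so that $t\mapsto\calE_R^\eps(\rho_t^\sigma)$ becomes classically differentiable before passing to the three limits. Your variant uses the symmetry identity $\calE(\rho_t)-\calE(\rho_s)=\tfrac12\int(K\ast\rho_t+K\ast\rho_s)\dx(\rho_t-\rho_s)$ together with the weak continuity-equation formulation and Lebesgue differentiation, avoiding the time mollification. This works, but note the mild subtlety you did not make fully explicit: after truncation one gets $\tfrac{1}{t-s}\int_s^t\int\tfrac12\nabla(K_R^\delta\ast\rho_t+K_R^\delta\ast\rho_s)\cdot\dx\jh_\tau\dx\tau$, and passing $s\to t$ requires the narrow continuity of $\tau\mapsto\rho_\tau$ (hence uniform continuity of $\tau\mapsto\nabla K_R^\delta\ast\rho_\tau$ on compacts) together with a countable choice of parameters $(R,\delta)$ to control the Lebesgue-differentiation null set. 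With that caveat your sketch is complete.
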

\begin{proof}
Let us remind the reader the tensor $\bbT$ is continuous, symmetric, and uniformly elliptic, cf.~Proposition~\ref{prop:tensor_uniqueness}. According to \cite[Theorem 2.4]{Lisini_ESAIM2009}, if $\bs\rho\in\AC^2([0,T];(\calP_2(\R^d_\bbT),W_\bbT)$, there exists a unique vector field $\bs u:[0,T]\times\Rd\to\Rd$ such that $(\bs\rho,\bs\rho\bs u)\in\CE_T$ and 
\[
\abs{\rho_t'}_\bbT=\|u_t\|_{L^2(\rho_t;\R^d_\bbT)} \qquad \mbox{a.e. } t\in[0,T].
\]
The uniform ellipticity of $\bbT$ (Proposition~\ref{prop:tensor_uniqueness}) implies 
\[
\frac{1}{C}\int_0^T\int_\Rd\abs*{u_t}^2\dx\rho_t\dx t\le \int_0^T\int_\Rd \skp{\bbT^{-1}u_t,u_t}\dx\rho_t\dx t=\int_0^T\abs{\rho_t'}_\bbT^2\dx t<\infty,
\]
by assumption. Since $(\bs\rho,\bs \rho\bs u)\in\CE_T$ such that $\abs{\bs u}_{L^2(\bs\rho;\Rd)}\in L^1(0,T)$, due to the previous inequality, \cite[Theorem 8.3.1]{AmbrosioGigliSavare2008} implies $W_2$-absolute continuity of the curve $\bs\rho = (\rho_t)_{t\in[0,T]}\subset \calP_2(\Rd)$. 

Due to Proposition \ref{prop:chain_rule_gen} and the fact that $\bbT$ is symmetric and uniformly elliptic, for any $0\le s\le t\le T$, we obtain the result from
\begin{align*}
  \MoveEqLeft\calE(\rho_t)-\calE(\rho_s)=\int_s^t\int_\Rd\nabla K*\rho_\tau u_\tau\dx \rho_\tau\dx\tau\\
  &=\int_s^t\int_\Rd \skp{\bbT^{1/2}\nabla K*\rho_\tau,\bbT^{-1/2}u_\tau}\dx\rho_\tau\dx\tau\\
  &\ge -\frac{1}{2}\int_s^t \pra*{ \int_\Rd \skp{\nabla K*\rho_\tau,\bbT\nabla K*\rho_\tau}\dx\rho_\tau \int_\Rd \skp{\bbT^{-1} u_\tau,u_\tau}\dx\rho_\tau }\dx\tau\\
  &=-\frac{1}{2}\int_s^t\left(\calD_{\;\!\bbT}(\rho_\tau)+|\rho'_\tau|^2_\bbT\right)\dx\tau .\qedhere
\end{align*}
\end{proof}
We now combine the lower limits of Section~\ref{ssec:lsc} with the chain-rule inequality from Proposition~\ref{prop:chain_rule_ineq} to prove Theorem~\ref{thm:main_result}.
\begin{proof}[Proof of Theorem~\ref{thm:main_result}]
Continuity of the energy with respect to narrow convergence,~\cite[Proposition 3.3]{EPSS2021}, and the lower limits for the metric derivatives and the slopes, Propositions~\ref{prop:lower_limit_metric_derivatives} and~\ref{prop:D_NL->L_lsc}, imply
\begin{align*}
    0=\liminf_{\eps\to0}\bs\calG_{\!\!\eps}(\bs\rho^\eps)\ge \bs\calG_\bbT(\bs\rho).  
\end{align*}
With this, the chain-rule inequality proven in Proposition~\ref{prop:chain_rule_ineq} ensures $\bs\calG_\bbT(\bs\rho)=0$.
\end{proof}

Next, we want to establish the connection between the zero level sets of $\bs\calG_\bbT$ and weak solutions of \eqref{eq:intro:NLIE:one}. To this end, we first show that $\sqrt{\calD_{\;\!\bbT}}$ is a strong upper-gradient for $\calE$.

\begin{corollary}[Strong upper gradient]\label{cor:strong_up_gr_diss}
Assume $K$ satisfies \eqref{K1}~--~\eqref{K4}. For any $\bs\rho\in \AC^2([0,T];(\calP_2(\R^d_\bbT),W_\bbT))$ it holds $\sqrt{\calD_{\;\!\bbT}}$ is a strong upper gradient for $\calE$ in the sense of Definition~\ref{def:strong_ug}, i.e. 
\begin{equation*}%
|\calE(\rho_t)-\calE(\rho_s))|\leq \int_s^t \sqrt{\calD_{\;\!\bbT}(\rho_\tau)}\abs{\rho_\tau'} \dx \tau, \quad \forall\, 0<s\leq t<T. 
\end{equation*}
\end{corollary}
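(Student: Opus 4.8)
The plan is to obtain this as a one-line variant of the chain rule already established inside the proof of Proposition~\ref{prop:chain_rule_ineq}: the only difference is that the final Young inequality is replaced by the sharp Cauchy--Schwarz estimate and absolute values are taken throughout. Concretely, for $\bs\rho\in\AC^2([0,T];(\calP_2(\R^d_\bbT),W_\bbT))$ I would first invoke Proposition~\ref{prop:chain_rule_gen} together with~\cite[Theorem 2.4]{Lisini_ESAIM2009} to produce the (unique) velocity field $\bs u\colon[0,T]\times\Rd\to\Rd$ with $(\bs\rho,\bs\rho\bs u)\in\CE_T$, satisfying $\abs{\rho_\tau'}_\bbT=\norm{u_\tau}_{L^2(\rho_\tau;\R^d_\bbT)}$ for a.e.\ $\tau\in[0,T]$, together with the identity
\begin{equation*}
  \calE(\rho_t)-\calE(\rho_s)=\int_s^t\int_\Rd \bra*{\nabla K*\rho_\tau}\cdot u_\tau\dx\rho_\tau\dx\tau ,\qquad 0\le s\le t\le T ,
\end{equation*}
recalling that $\nabla\frac{\delta\calE}{\delta\rho}=\nabla K*\rho$ under \eqref{K1}~--~\eqref{K4}.

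The second step exploits that $\bbT$ is symmetric and uniformly elliptic (Proposition~\ref{prop:tensor_uniqueness}), so that $\bbT^{1/2}$, $\bbT^{-1/2}$ are well defined and $\Id=\bbT^{1/2}\bbT^{-1/2}$. Writing the integrand as $\skp{\bbT^{1/2}\nabla K*\rho_\tau,\bbT^{-1/2}u_\tau}$ and applying the Cauchy--Schwarz inequality in $L^2(\rho_\tau;\Rd)$ gives, for a.e.\ $\tau$,
\begin{align*}
  \abs*{\int_\Rd\bra*{\nabla K*\rho_\tau}\cdot u_\tau\dx\rho_\tau}
  &\le \sqrt{\int_\Rd\skp{\nabla K*\rho_\tau,\bbT\nabla K*\rho_\tau}\dx\rho_\tau}\;\sqrt{\int_\Rd\skp{\bbT^{-1}u_\tau,u_\tau}\dx\rho_\tau}\\
  &=\sqrt{\calD_{\;\!\bbT}(\rho_\tau)}\,\abs{\rho_\tau'}_\bbT .
\end{align*}
Integrating over $[s,t]$ and using the triangle inequality for the integral then yields
\begin{equation*}
  \abs{\calE(\rho_t)-\calE(\rho_s)}\le\int_s^t\sqrt{\calD_{\;\!\bbT}(\rho_\tau)}\,\abs{\rho_\tau'}_\bbT\dx\tau ,\qquad 0<s\le t<T ,
\end{equation*}
which is precisely the asserted bound, the metric derivative in Definition~\ref{def:strong_ug} being understood in the $W_\bbT$ metric, i.e.\ $\abs{\rho_\tau'}=\abs{\rho_\tau'}_\bbT$.

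It remains to check the measurability requirement from Definition~\ref{def:strong_ug}, namely that $\tau\mapsto\sqrt{\calD_{\;\!\bbT}(\rho_\tau)}$ is Borel. This I would deduce from the narrow continuity of the curve $\bs\rho$ in $\calP_2(\Rd)$ together with the lower semicontinuity of $\rho\mapsto\calD_{\;\!\bbT}(\rho)$ with respect to narrow convergence: the latter follows from the local uniform convergence $\nabla K*\rho_n\to\nabla K*\rho$ guaranteed by \eqref{K4}, the continuity of $\bbT$, and Fatou's lemma (alternatively, from the truncation and weak--strong convergence arguments used in the proof of Proposition~\ref{prop:D_NL->L_lsc}, carried out at fixed $\eps$), whence the composition $\tau\mapsto\calD_{\;\!\bbT}(\rho_\tau)$ is lower semicontinuous, hence Borel. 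I do not anticipate a genuine obstacle here: the argument is essentially a rerun of the proof of Proposition~\ref{prop:chain_rule_ineq}, and the only points that deserve explicit care are the identification of the $W_\bbT$-metric derivative with $\norm{u_\tau}_{L^2(\rho_\tau;\R^d_\bbT)}$ supplied by~\cite[Theorem 2.4]{Lisini_ESAIM2009}, and the (routine) verification of the measurability above.
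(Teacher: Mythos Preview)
Your proposal is correct and follows essentially the same route as the paper's proof: invoke \cite[Theorem 2.4]{Lisini_ESAIM2009} to produce the velocity field $\bs u$ with $\abs{\rho_\tau'}_\bbT=\norm{u_\tau}_{L^2(\rho_\tau;\R^d_\bbT)}$, pass to the standard $W_2$-absolute continuity via the uniform ellipticity of $\bbT$, apply the chain rule of Proposition~\ref{prop:chain_rule_gen}, and replace Young's inequality by Cauchy--Schwarz with the $\bbT^{1/2}/\bbT^{-1/2}$ splitting. The only addition on your side is the explicit measurability check for $\tau\mapsto\calD_{\;\!\bbT}(\rho_\tau)$, which the paper leaves implicit.
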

\begin{proof}
Arguing as in~Propositon~\ref{prop:chain_rule_ineq}, we infer there exists a unique vector field $\bs u:[0,T]\times\Rd\to\Rd$ such that $(\bs \rho,\bs\rho \bs u)\in\CE_T$ with $\left\|\bs u\right\|_{L^2(\bs\rho;\Rd)}\in L^1(0,T)$, hence the curve $(\rho_t)_{t\in[0,T]}\subset \calP_2(\Rd)$ is $W_2$-absolutely continuous. Therefore, by applying Proposition~\ref{prop:chain_rule_gen}, we infer what is claimed, i.e.
\begin{equation*}
\begin{split}
  \MoveEqLeft\left|\calE(\rho_t)-\calE(\rho_s)\right|=\biggl|\int_s^t\int_\Rd\nabla K*\rho_\tau u_\tau\dx \rho_\tau\dx\tau\biggr|\\
  &\le\int_s^t\int_\Rd \abs[\big]{\skp[\big]{\bbT^{1/2}\nabla K*\rho_\tau,\bbT^{-1/2}u_\tau}}\dx\rho_\tau\dx\tau\\
  &\le \int_s^t\int_\Rd \abs[\big]{\skp[\big]{\bbT\nabla K*\rho_\tau,\nabla K*\rho_\tau}}^{\frac{1}{2}}\abs[\big]{\skp[\big]{ u_\tau,\bbT^{-1}u_\tau } }^{\frac{1}{2}} \dx\rho_\tau\dx\tau\\
  &\le\int_s^t\sqrt{\calD_{\;\!\bbT}(\rho_\tau)}\; \abs{\rho_\tau'}\dx\tau. \qedhere
\end{split}
\end{equation*}
\end{proof}
We are now able to identify weak solutions of \eqref{eq:intro:NLIE:one} as curves of maximal slope of $\calE$ with respect to the strong upper gradient $\sqrt{\calD_\bbT}$, thus gradient flows in $(\calP_2(\R^d_{\bbT}),W_{\bbT})$.
\begin{theorem}\label{thm:weak_curves_max_slope}
Assume $K$ satisfies \eqref{K1}~--~\eqref{K4}. A curve $\bs \rho \subset \calP_2(\Rd)$ is a weak solution of \eqref{eq:intro:NLIE:one} if and only if it is a curve of maximal slope for $\calE$ with respect to its strong upper gradient $\sqrt{\calD_\bbT}$, i.e. $\bs\rho\in \AC^2([0,T];(\calP_2(\R^d_\bbT),W_\bbT))$ and $\bs\calG_\bbT(\bs\rho)=0$.
\end{theorem}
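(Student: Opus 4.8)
The plan is to prove the two implications of the equivalence separately, using the chain-rule inequality from Proposition~\ref{prop:chain_rule_ineq} together with the identification of the Wasserstein metric derivative.

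\textbf{Step 1: Curve of maximal slope $\Rightarrow$ weak solution.}
Suppose $\bs\rho\in\AC^2([0,T];(\calP_2(\R^d_\bbT),W_\bbT))$ with $\bs\calG_\bbT(\bs\rho)=0$. As in the proof of Proposition~\ref{prop:chain_rule_ineq}, by \cite[Theorem 2.4]{Lisini_ESAIM2009} (or \cite[Theorem 8.3.1]{AmbrosioGigliSavare2008}) there is a unique Borel vector field $\bs u$ with $(\bs\rho,\bs\rho\bs u)\in\CE_T$ and $|\rho_t'|_\bbT = \norm{u_t}_{L^2(\rho_t;\R^d_\bbT)}$ for a.e.\ $t$. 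Chasing the chain of inequalities in the proof of Proposition~\ref{prop:chain_rule_ineq} backwards, the condition $\bs\calG_\bbT(\bs\rho)=0$ forces equality in the Cauchy--Schwarz and Young estimates used there, for a.e.\ $t$. Equality in Cauchy--Schwarz applied to $\skp{\bbT^{1/2}\nabla K*\rho_t, -\bbT^{-1/2}u_t}$ together with equality in Young's inequality forces $-\bbT^{-1/2}u_t = \bbT^{1/2}\nabla K*\rho_t$ as elements of $L^2(\rho_t;\R^d)$ for a.e.\ $t$, i.e. $u_t = -\bbT\nabla K*\rho_t$ $\rho_t$-a.e. Hence the flux $j_t \coloneqq \rho_t u_t = -\bbT\nabla\frac{\delta\calE}{\delta\rho}\rho_t$, and $(\bs\rho,\bs j)\in\CE_T$, which is exactly the requirement in Definition~\ref{def:weak-nlie_tensor}; note the $L^1$-in-time integrability of $\abs{\bs j}$ follows from $\abs{\bs u}_{L^2(\bs\rho)}\in L^1(0,T)$ and the at-most-linear growth~\eqref{K4} combined with the uniformly bounded second moment propagated along the curve. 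Thus $\bs\rho$ is a weak solution.

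\textbf{Step 2: Weak solution $\Rightarrow$ curve of maximal slope.}
Conversely, let $\bs\rho$ be a weak solution, so $(\bs\rho,\bs j)\in\CE_T$ with $\dx j_t = -\bbT\nabla\frac{\delta\calE}{\delta\rho}\dx\rho_t$. Using~\eqref{K4} and the fact that $\bs\rho\subset\calP_2(\Rd)$ has finite second moment (which follows since a weak solution is assumed to map into $\calP_2(\Rd)$, and one propagates $m_2$ along $\CE_T$ via the linear-growth bound on the velocity), we get $\norm{j_t/\rho_t}_{L^2(\rho_t;\R^d_\bbT)}^2 \le C\int_\Rd \abs{\nabla K*\rho_t}^2\dx\rho_t \in L^1(0,T)$. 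By \cite[Theorem 8.3.1]{AmbrosioGigliSavare2008} (applied in $\R^d_\bbT$, using the equivalence of $d_\bbT$ with the Euclidean distance, Proposition~\ref{prop:tensor_uniqueness}), the curve lies in $\AC^2([0,T];(\calP_2(\R^d_\bbT),W_\bbT))$ and $|\rho_t'|_\bbT \le \norm{j_t/\rho_t}_{L^2(\rho_t;\R^d_\bbT)}$. Now apply Proposition~\ref{prop:chain_rule_gen} with the velocity $u_t = j_t/\rho_t = -\bbT\nabla K*\rho_t$: this yields
\begin{align*}
\calE(\rho_t)-\calE(\rho_s) &= \int_s^t \int_\Rd \nabla K*\rho_\tau\cdot u_\tau \dx\rho_\tau\dx\tau = -\int_s^t\int_\Rd \skp{\nabla K*\rho_\tau,\bbT\nabla K*\rho_\tau}\dx\rho_\tau\dx\tau\\
&= -\int_s^t \calD_{\;\!\bbT}(\rho_\tau)\dx\tau.
\end{align*}
In particular $t\mapsto\calE(\rho_t)$ is non-increasing. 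On the other hand, since $u_\tau = -\bbT\nabla K*\rho_\tau$ is an admissible velocity realizing the flux, $\norm{u_\tau}_{L^2(\rho_\tau;\R^d_\bbT)}^2 = \skp{\bbT^{-1}u_\tau,u_\tau}_{L^2(\rho_\tau)} = \calD_{\;\!\bbT}(\rho_\tau)$, so also $\calD_{\;\!\bbT}(\rho_\tau)\ge|\rho_\tau'|_\bbT^2$. Combining, for a.e.\ $\tau$ we have $\calD_{\;\!\bbT}(\rho_\tau) = |\rho_\tau'|_\bbT^2$ (the reverse inequality being the upper-gradient bound from Corollary~\ref{cor:strong_up_gr_diss}), whence
\begin{align*}
\calE(\rho_t)-\calE(\rho_s) + \tfrac12\int_s^t\bra{\calD_{\;\!\bbT}(\rho_\tau)+|\rho_\tau'|_\bbT^2}\dx\tau = \calE(\rho_t)-\calE(\rho_s)+\int_s^t\calD_{\;\!\bbT}(\rho_\tau)\dx\tau = 0,
\end{align*}
i.e. $\bs\calG_\bbT(\bs\rho)=0$, which is the curve-of-maximal-slope condition of Definition~\ref{def:maximal_slope_local}.

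\textbf{Main obstacle.} The delicate point is the rigidity argument in Step~1: extracting from the scalar identity $\bs\calG_\bbT(\bs\rho)=0$ the pointwise (in $x$ and a.e.\ $t$) identification $j_t = -\bbT\nabla\frac{\delta\calE}{\delta\rho}\rho_t$, rather than just equality of integrated quantities. This requires care in tracking the equality cases of the Cauchy--Schwarz inequality in the weighted space $L^2(\rho_t;\R^d_\bbT)$ and in Young's inequality, and invoking uniqueness of the minimal-norm velocity field associated with an absolutely continuous curve (from \cite[Theorem 8.3.1]{AmbrosioGigliSavare2008} / \cite[Theorem 2.4]{Lisini_ESAIM2009}). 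The remaining steps — the integrability of the flux and the applicability of Proposition~\ref{prop:chain_rule_gen} — reduce to the growth assumptions~\eqref{K4} and the second-moment control already available in the earlier parts of the manuscript.
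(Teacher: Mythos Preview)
Your proposal is correct and follows essentially the same approach as the paper's proof: both directions rely on the chain rule from Proposition~\ref{prop:chain_rule_gen}, the identification of the metric derivative with the minimal velocity via \cite[Theorem~2.4]{Lisini_ESAIM2009}/\cite[Theorem~8.3.1]{AmbrosioGigliSavare2008}, and the equality cases in Cauchy--Schwarz/Young to pin down $u_t=-\bbT\nabla K*\rho_t$. The only differences are cosmetic --- you swap the order of the two implications, and in your Step~2 you close the argument by first deducing the pointwise equality $\calD_{\;\!\bbT}(\rho_\tau)=|\rho_\tau'|_\bbT^2$ from the strong-upper-gradient bound, whereas the paper writes $-\int\calD_{\;\!\bbT}\le -\int\sqrt{\calD_{\;\!\bbT}}\,|\rho'|$ directly and then invokes Corollary~\ref{cor:strong_up_gr_diss}; these are equivalent.
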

\begin{proof}
According to Definition~\ref{def:weak-nlie_tensor}, for a weak solution of \eqref{eq:intro:NLIE:one} we have
\[
\dx j_t(x)=-\bbT\nabla\frac{\delta \calE}{\delta\rho}(x)\dx\rho_t(x).
\]
By again setting $u_t\coloneqq-\bbT\nabla\frac{\delta\calE}{\delta\rho_t}$ the density of $j_t$ with respect to $\rho_t$, we notice
\[
\|u_t\|_{L^2(\rho_t;\R^d_\bbT)}=\calD_{\;\!\bbT}(\rho_t)=\int_\Rd\skp[\bigg]{\nabla\frac{\delta\calE}{\delta\rho_t},\bbT\nabla\frac{\delta\calE}{\delta\rho_t}}\dx\rho_t<\infty.
\]
The metric slope is uniformly bounded as follows as consequence of the uniform ellipticity of $\bbT$ and Assumption~\eqref{K4}:
\begin{align*}
    \calD_{\;\!\bbT}(\rho_t)&=\int_\Rd\skp[\bigg]{\nabla\frac{\delta\calE}{\delta\rho_t},\bbT\nabla\frac{\delta\calE}{\delta\rho_t}}\dx\rho_t\\
    &\le C\iint_{\R^{2d}}\left|\nabla K(x,y)\right|^2\dx\rho_t(y)\dx\rho_t(x)\\
    &\le C\iint_{\R^{2d}}\left(1+|x|+|y|\right)^2\dx\rho_t(y)\dx\rho_t(x)\\
    &\le \tilde C\left(1+M_2(\rho_t)\right)\le \bar C(\rho_0,T).
\end{align*}
The uniform bound on the second order moments of $\rho_t$ can be proven by a standard procedure, which we include for completeness. Upon considering a smooth cut-off function, by Remark~\ref{rem:ce_Lip_test-function} we have
\begin{align*}
    \frac{\dx}{\dx t}\int_\Rd\bra[\big]{1+ |x|^2}\dx\rho_t(x)&=-2\int_\Rd\skp[\bigg]{x,\bbT\nabla\frac{\delta\calE}{\delta\rho_t}}\dx\rho_t\\
    &\le2\left(\int_\Rd\langle\bbT^{-1} x,x\rangle\dx\rho_t\right)^{\frac{1}{2}}\left(\int_\Rd\skp[\bigg]{\nabla\frac{\delta\calE}{\delta\rho_t},\bbT\nabla\frac{\delta\calE}{\delta\rho_t}}\dx\rho_t\right)^{\frac{1}{2}}\\
    &\le C \int_\Rd|x|^2\dx\rho_t(x)+\calD_{\;\!\bbT}(\rho_t)\\
    &\le \tilde C \int_\Rd\bra[\big]{1 + |x|^2}\dx\rho_t(x).
\end{align*}
Gronwall's inequality provides propagation of second order moments for weak solutions of~\eqref{eq:intro:NLIE:one}.

We have $\bs\rho\in \AC^2([0,T];(\calP_2(\R^d_\bbT),W_\bbT))$ by the uniform bound on the metric slope~$\calD_{\;\!\bbT}$. 
Since $(0,T)\ni t \mapsto \|u_t\|_{L^2(\rho_t;\R^d_\bbT)}\in L^2(0,T)$ and by arguing as in the proof of Proposition~\ref{prop:lower_limit_metric_derivatives}, we obtain $|\rho'_t|_\bbT\le\left\|u_t\right\|_{L^2(\rho_t;\R^d_\bbT)}$. In turn, uniform ellipticity of $\bbT$ gives $\|u_t\|_{L^2(\rho_t;\Rd)}\in L^2(0,T)$, thus $W_2$-absolute continuity of $\bs\rho$, due to \cite[Theorem 8.3.1]{AmbrosioGigliSavare2008}. Exploiting the chain rule from Proposition~\ref{prop:chain_rule_gen}, we obtain, for any $0\le s\le t\le T$, 
\begin{align*}
    \calE(\rho_t)-\calE(\rho_s)=-\int_s^t \calD_\bbT(\rho_\tau)\dx \tau\le-\int_s^t \sqrt{\calD_{\;\!\bbT}(\rho_\tau)}\abs{\rho_\tau'} \dx \tau.
\end{align*}
Being $\sqrt{\calD_{\;\!\bbT}}$ a strong upper gradient by Corollary \ref{cor:strong_up_gr_diss}, we infer $\bs\calG_\bbT(\bs\rho)=0$.

Let $\bs\rho$ be a curve of maximal slope for $\calE$ with respect to its strong upper gradient, $\sqrt{\calD_{\;\!\bbT}}$, cf.~Definition~\ref{def:maximal_slope_local}. More precisely, $\bs\rho\in\AC^2([0,T];(\calP_2(\R^d_\bbT),W_\bbT))$ such that $\bs\calG_\bbT(\bs \rho)=0$. Arguing as in previous proofs, there exists a unique vector field ${\bs u}:[0,T]\times\Rd\to\Rd$ such that $(\bs\rho,\bs\rho\bs u)\in\CE_T$ and $
\abs{\rho_t'}_\bbT=\|u_t\|_{L^2(\rho_t;\R^d_\bbT)}$ for a.e. $t\in[0,T]$. Furthermore, uniform ellipticity of $\bbT$ implies $W_2$-absolute continuity. Proposition~\ref{prop:chain_rule_gen} gives, for any $0\le s \le t\le T$,
\begin{align*}
    \MoveEqLeft \calE(\rho_t)-\calE(\rho_s)=\int_s^t\int_\Rd\nabla K*\rho_\tau u_\tau \dx \rho_\tau \dx\tau\\
    &=\int_s^t\int_\Rd \bigl\langle\bbT^{1/2}\nabla K*\rho_\tau,\bbT^{-1/2}u_\tau\bigr\rangle\dx\rho_\tau\dx\tau\\
    &\ge-\int_s^t\int_\Rd \bigl|\bigl\langle\bbT\nabla K*\rho_\tau,\nabla K*\rho_\tau\bigr\rangle\bigr|^{\frac12}\bigl|\bigr\langle u_\tau,\bbT^{-1}u_\tau\bigr\rangle\bigl|^{\frac12} \dx\rho_\tau\dx\tau\\
    &\ge -\frac{1}{2}\int_s^t\!\!\int_\Rd \bigl\langle\bbT\nabla K*\rho_\tau,\nabla K*\rho_\tau\bigr\rangle\dx\rho_\tau\dx\tau -\frac{1}{2}\int_s^t\!\!\int_\Rd \bigl\langle u_\tau,\bbT^{-1}u_\tau\bigr\rangle\dx\rho_\tau\dx\tau\\
  &=-\frac{1}{2}\int_s^t\left(\calD_{\;\!\bbT}(\rho_\tau)+|\rho'_\tau|^2_\bbT\right)\dx\tau.
\end{align*}
Since $\bs\calG_\bbT(\bs\rho)=0$, the inequalities above are equalities, which is true if and only if $u_t(x)=-\bbT\nabla K*\rho_t(x)$ for $\rho_t$-a.e. $x$ and a.e. $t\in[0,T]$. In particular, $(\bs\rho, \bs j)\in\CE_T$ for $\bs j=-\bs\rho \bbT \nabla*\bs \rho$.
\end{proof}

Finally, we conclude with the proofs of~Theorem~\ref{thm:weak_sol_nlie_tensor} and Theorem~\ref{thm:finite_graph}.

\begin{proof}[Proof of Theorem~\ref{thm:weak_sol_nlie_tensor}] 
The proof is a graph-approximation of~\eqref{eq:intro:NLIE:one}. Consider $(\mu,\vartheta)$ satisfying \eqref{mu1}, \eqref{mu2} and \eqref{theta1}~--~\eqref{theta4}, and let $\eta^\eps$ be given by \eqref{eq:def:eta^eps}. Theorem~\ref{thm:extension_sigma_finite} provides a sequence $(\bs\rho^\eps)_\eps$ of weak solutions to~\eqref{eq:nlnl-interaction-eq}, which are curves of maximal slope for $\calE$ in $(\calP_2(\Rd),\calT_{\mu,\eta^\eps})$, hence such that $\bs\calG_{\!\!\eps}(\bs\rho^\eps)=0$, for any $\eps>0$, cf.~\cite[Theorem 3.9]{EPSS2021}. The graph-to-local limit proven previous to this subsection, Theorem~\ref{thm:main_result}, implies existence of a limiting curve $\bs\rho \in \AC^2([0,T];(\calP_2(\R^d_\bbT),W_{\bbT}))$ which is $\bs \rho$ is a gradient flow of $\calE$ in $(\calP_2(\R^d_\bbT),W_{\bbT})$, i.e. $\bs\calG_\bbT(\bs\rho) = 0$. In particular, Theorem~\ref{thm:weak_curves_max_slope} asserts this is a weak solution of~\eqref{eq:intro:NLIE:one}.
\end{proof}

\begin{proof}[Proof of Theorem~\ref{thm:finite_graph}]
    Let $\bs\rho$ be as in Theorem~\ref{thm:weak_sol_nlie_tensor} with initial datum $\varrho_0\in\calP_2(\Rd)$, and let $(\bs\rho^\eps)_{\eps>0}$ be the approximating sequence of graph gradient flows from Theorem~\ref{thm:main_result} with initial data $\varrho^\eps_0\in\calP_2(\Rd)$. In particular, we have (along a subsequence) for all $t\in[0,T]$ 
    \begin{align*}
        \rho_t^{\eps} \rightharpoonup \rho_t \text{ narrowly as }\eps\to 0,
    \end{align*}
    and
    \begin{align*}%
    	\sup_{\eps>0} M_2(\varrho^{\eps}_0) <\infty
    \end{align*}
    On the other hand, by the proof of Theorem~\ref{thm:extension_sigma_finite}, for any $\eps>0$ there exists a sequence $(\mu^{\eps,n})_n$ of finite base measures such that $\mu^{\eps,n}\rightharpoonup^\ast\mu^\eps$ weakly-$^\ast$ as $n\to\infty$, corresponding initial data $\varrho^{\eps,n}_0\in\calP_2(\Rd)$, and gradient flows $\bs\rho^{\eps,n}$. From the same proof, we can further conclude that for all $\eps>0$ and all $t\in[0,T]$ (up to an unrenamed subsequence) it holds
    \begin{align*}
        \rho_t^{\eps,n} \rightharpoonup \rho_t^{\eps} \text{ narrowly as }n\to \infty,
    \end{align*}
    and
    \begin{align*}
    	\sup_{\eps>0}\sup_{n\in\bbN} M_2(\varrho^{\eps,n}_0) <\infty.
    \end{align*}
Observe that by \cite[Lemma~2.16]{EPSS2021}, this implies a uniform moment bound also in time, i.e. 
    \begin{align}\label{eq:uni_mom_bound_M2}
    	\sup_{t\in[0,T]}\sup_{\eps>0}\sup_{n\in\bbN}  M_2(\rho^{\eps,n}_t)< \infty.
    \end{align}
    Indeed, keeping in mind that \eqref{int} does in fact not depend on $\eps$, this is a consequence of the metric slopes being uniformly bounded with respect to $\eps$ and $n$ for any finite time horizon $T$.
	
   From the uniform bound \eqref{eq:uni_mom_bound_M2}, we deduce by \cite[Proposition~7.1.5]{AmbrosioGigliSavare2008} that narrow convergence is equivalent to convergence with respect to the $W_1$ metric. 
    
    Next, we observe that by Proposition~\cite[Proposition~2.21]{EPSS2021} (where the constant is again independent of $\eps$ by \eqref{int}) for every $\eps>0$, $n\in\bbN$, and $0\le s\le t\le T$ it holds
    \begin{align*}
        W_1(\rho^{\eps,n}_s,\rho^{\eps,n}_t) &\le \calT_{\mu^n,\eta^\eps}(\rho^{\eps,n}_s,\rho^{\eps,n}_t) \le \int_s^t \abs{(\rho^{\eps,n}_\tau)'}_{\eps,n}\dx\tau\\
        &\le \sqrt{t-s}\sup_{n\in\bbN}\sqrt{\int_0^T \abs{(\rho^{\eps,n}_\tau)'}^2_{\eps,n}\dx\tau},
    \end{align*}
    where $\sup_{\eps>0}\sup_{n\in\bbN}\int_0^T \abs{(\rho^{\eps,n}_\tau)'}^2_{\eps,n}\dx\tau <\infty$ by \cite[Lemma~3.13]{EPSS2021}. Indeed, the De Giorgi functionals are uniformly bounded by assumption and a uniform bound on the energies, also with respect to time, is a consequence of \eqref{eq:uni_mom_bound_M2}. The same argument yields for every $\eps>0$ and $0\le s\le t\le T$
    \begin{align*}
        W_1(\rho^\eps_s,\rho^\eps_t) &\le \sqrt{t-s}\sup_{n\in\bbN}\sqrt{\int_0^T \abs{(\rho^\eps_\tau)'}^2_{\eps,n}}\dx\tau,
    \end{align*}
    where $\sup_{\eps>0}\int_0^T \abs{(\rho^\eps_\tau)'}^2_{\eps,n}\dx\tau <\infty$.
    In particular, the set $\set{\bs\rho^{\eps,n}:n\in\bbN,\eps>0}\cup\set{\bs\rho^\eps:\eps>0}$ of maps $\bs\rho:[0,T]\to((\calP(\Rd),W_1))^N$ is uniformly equicontinuous, so that the above pointwise in time convergence results are in fact uniform in time. 
    This allows us to conclude with the following diagonal argument: For every $k\in\bbN$ there exists $\eps_k>0$ and $n_k\in\bbN$ such that for every $t\in[0,T]$ we have
    \begin{align*}
        W_1(\bs\rho_t^{\eps_k},\bs\rho_t)+ W_1(\bs\rho_t^{\eps_k,n_k},\bs\rho_t^{\eps_k})\le \frac{1}{k}.
    \end{align*}
    Consequently, the sequence $(\bar{\bs\rho}^k)_k$ defined by $\bar{\bs\rho}^k \coloneqq \bs\rho_t^{\eps_k,n_k}$ satisfies the claimed conditions.
\end{proof}

\appendix

\section{Chain-rule for the nonlocal interaction energy}
Below we provide a general chain-rule result for the energy \eqref{eq:interaction_energy} using $\CE$, as alternative to \cite[Section 10]{AmbrosioGigliSavare2008}, where convexity is required.

\begin{proposition}[Chain-rule continuity equation]\label{prop:chain_rule_gen}
Let $K$ satisfy assumption \eqref{K1}~--~\eqref{K4} and $\bs\rho\in\AC^2([0,T];(\calP_2(\Rd), W_2))$. Then, for any $0\le s\le t\le T$, the following chain-rule holds
\begin{equation}\label{eq:chain_rule_gen}
    \calE(\rho_t)-\calE(\rho_s)=\int_s^t\int_\Rd\nabla K*\rho_\tau(x)\dx j_\tau(x),
\end{equation}
being $(j_t)_{t\in[0,T]}\subset \calM(\Rd;\Rd)$ such that $(\bs\rho,\bs j)\in\CE_T$.
\end{proposition}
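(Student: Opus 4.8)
The plan is to prove the chain rule~\eqref{eq:chain_rule_gen} by a standard regularization-in-time-and-space argument, reducing to the case of smooth curves where the computation is elementary, and then passing to the limit using the assumptions~\eqref{K1}--\eqref{K4} together with the uniform bound on second moments guaranteed along a $W_2$-absolutely continuous curve. First I would record that since $\bs\rho\in\AC^2([0,T];(\calP_2(\Rd),W_2))$, by~\cite[Theorem~8.3.1]{AmbrosioGigliSavare2008} there is a Borel velocity field $\bs v$ with $\|v_t\|_{L^2(\rho_t)}=|\rho_t'|$ for a.e.\ $t$ and $(\bs\rho,\bs\rho\bs v)\in\CE_T$; writing $j_t=\rho_t v_t$ matches the flux in the statement (uniqueness of the minimal-norm flux — or linearity of the pairing in~\eqref{eq:chain_rule_gen} under adding divergence-free fluxes, which vanish against the gradient $\nabla K*\rho_\tau$ — makes the identity independent of which admissible $\bs j$ is chosen). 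Also $t\mapsto m_2(\rho_t)$ is bounded on $[0,T]$ by the usual Grönwall estimate $\frac{\rmd}{\rmd t}m_2(\rho_t)\le 2\|v_t\|_{L^2(\rho_t)}\sqrt{m_2(\rho_t)}$ combined with $\bs v\in L^2$.

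Next I would mollify. Fix a standard mollifier $\nu_\delta$ on $\Rd$ and set $\rho_t^\delta\coloneqq\nu_\delta*\rho_t$, $j_t^\delta\coloneqq\nu_\delta*j_t$; then $(\bs\rho^\delta,\bs j^\delta)\in\CE_T$ with smooth-in-$x$ densities, and $K_\delta\coloneqq\nu_\delta*_{(x,y)}(\nu_\delta*_{?})K$ — more precisely I would instead mollify only the measures and keep $K$ fixed, since $K$ is already $C^1$. For the smoothed curve one checks that $t\mapsto\calE(\rho_t^\delta)=\frac12\iint K\dx(\rho_t^\delta\otimes\rho_t^\delta)$ is absolutely continuous (the map $\rho\mapsto\calE(\rho)$ is Lipschitz on bounded-second-moment sets against $W_2$ by~\eqref{K3}, and $t\mapsto\rho_t^\delta$ is $W_2$-absolutely continuous), and by differentiating under the integral and using symmetry~\eqref{K2} one gets, for a.e.\ $t$,
\begin{equation*}
  \frac{\rmd}{\rmd t}\calE(\rho_t^\delta)=\int_\Rd (\nabla K*\rho_t^\delta)(x)\cdot\dx j_t^\delta(x),
\end{equation*}
using $\partial_t\rho_t^\delta=-\div j_t^\delta$ tested against the $C^1_b$ function $(K*\rho_t^\delta)(\cdot)$ (legitimate after a spatial cut-off, controlled by the second-moment bound and the growth~\eqref{K4}). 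Integrating in $t$ from $s$ to $t$ gives~\eqref{eq:chain_rule_gen} at level $\delta$.

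The remaining step is to send $\delta\to0$. On the left, $\calE(\rho_t^\delta)\to\calE(\rho_t)$ for every $t$ since $\rho_t^\delta\to\rho_t$ narrowly with uniformly bounded second moments and $|K(x,y)|\le C(1+|x|^2+|y|^2)$ from Remark~\ref{rem:second_moment}, so the interaction energy is continuous along such sequences (this is~\cite[Proposition~3.3]{EPSS2021}). On the right, I would write the difference of the time-integrands as
$\int (\nabla K*\rho_\tau^\delta)\cdot\dx j_\tau^\delta-\int(\nabla K*\rho_\tau)\cdot\dx j_\tau$ and split it into $\int(\nabla K*\rho_\tau^\delta-\nabla K*\rho_\tau)\cdot\dx j_\tau^\delta$ plus $\int(\nabla K*\rho_\tau)\cdot\dx(j_\tau^\delta-j_\tau)$; the second term vanishes as $\delta\to0$ because $\nabla K*\rho_\tau$ is continuous and $j_\tau^\delta\rightharpoonup j_\tau$, after a cut-off handled by~\eqref{K4} and the moment bound on $|j_\tau|(\Rd)\le\|v_\tau\|_{L^2(\rho_\tau)}$; the first term is estimated by $\||v_\tau|\|_{L^1(\rho_\tau)}$ times a modulus of continuity of $\nabla K$ convolved against $\rho_\tau-\rho_\tau^\delta$ (using uniform continuity of $\nabla K$ on compacts plus the growth control at infinity). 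A dominated-convergence argument in $\tau$, with dominating function $C(1+m_2^{1/2})\|v_\tau\|_{L^2(\rho_\tau)}\in L^1(s,t)$, then yields convergence of the time integrals and hence~\eqref{eq:chain_rule_gen}. The main obstacle I anticipate is precisely this last passage to the limit: one must simultaneously control the at-most-quadratic growth of $K$ and the linear growth of $\nabla K$ against only a uniform second-moment bound (no higher moments and no compact support), so the cut-off error terms need to be bounded carefully using~\eqref{K4} and $\sup_t m_2(\rho_t)<\infty$ rather than discarded; everything else is routine.
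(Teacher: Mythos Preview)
Your overall strategy (regularize, apply an elementary chain rule, pass to the limit) is correct, and your analysis of the final limit passage is sound. However, your choice of regularization differs from the paper's and leaves a genuine gap at the differentiation step, which is the step you treat as routine.

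The paper regularizes in two places. It mollifies and truncates the \emph{kernel}, setting $K_R^\eps = \varphi_R\,(K\!*\!m_\eps)\in C_c^\infty(\R^{2d})$, and it mollifies $(\bs\rho,\bs j)$ in \emph{time} via $\rho_t^\sigma = n_\sigma *_t \rho_t$. The payoff is that $t\mapsto\rho_t^\sigma$ is $C^\infty$ in $t$ (as a curve of measures) and $K_R^\eps*\rho_t^\sigma\in C_c^\infty(\Rd)$, so the identity $\frac{\rmd}{\rmd t}\calE_R^\eps(\rho_t^\sigma)=\int_\Rd \nabla K_R^\eps*\rho_t^\sigma\cdot\dx j_t^\sigma$ is elementary calculus with a legitimate compactly supported test function.

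You instead mollify $(\bs\rho,\bs j)$ in \emph{space} and keep $K$ unchanged. This creates two problems. First, $K*\rho_t^\delta$ is not $C^1_b$ as you assert: by Remark~\ref{rem:second_moment} it has quadratic growth, so a cut-off is mandatory and the resulting error terms have to be carried through the whole argument, not just the final limit. Second, and more seriously, spatial mollification gives no additional time regularity: $t\mapsto\rho_t^\delta$ has only the same weak-in-$t$ regularity as $t\mapsto\rho_t$. Your step ``differentiating under the integral'' therefore presupposes exactly what has to be proved---a chain rule for the quadratic functional $\calE$ along a merely $W_2$-absolutely continuous curve, tested against the time-dependent, unbounded function $K*\rho_t^\delta$. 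You correctly note that $t\mapsto\calE(\rho_t^\delta)$ is absolutely continuous, but identifying its a.e.\ derivative with the flux integral is the entire content of the proposition; spatial smoothness of $\rho_t^\delta$ does not by itself justify it. The paper's time mollification is precisely the device that makes this identification trivial; without it you would need either to add a time mollification on top of yours, or to carry out a substantially more delicate argument with the weak formulation and a product-rule-type decomposition of $\calE(\rho_t^\delta)-\calE(\rho_s^\delta)$.
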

\begin{proof}
Assumption \eqref{K3} ensures $\calE(\rho_t)<\infty$ for all $t\in[0,T]$, since $\bs\rho\subset \calP_2(\Rd)$. Having $\bs\rho\in\AC^2([0,T];(\calP_2(\Rd), W_2))$ implies there exists a unique Borel vector field $\bs u\in L^2([0,T];L^2(\rho_t;\Rd))$ such that $(\bs\rho,\bs\rho\bs u)\in\CE_T$ as well as $\abs{\rho_t'} = \norm{u_t}_{L^2(\rho_t;\Rd)}$ for $\scrL^1$-a.e. $t\in[0,T]$, cf.~\cite[Theorem 8.3.1, Proposition 8.4.5]{AmbrosioGigliSavare2008}. For consistency with our notation we denote $\bs j\coloneqq\bs\rho\bs u$ and $u_t\coloneqq\frac{\dx j_t}{\dx \rho_t}$ for a.e. $t\in[0,T]$. We follow the procedure in, e.g., \cite[Proposition 3.10]{EPSS2021}, applying two regularization arguments. First, for all $(x,y)\in \Rd\times\Rd$ we define $K^\eps(x,y)=K*m_\eps(x,y)=\iint_{\Rd\times\Rd} K(z,z')m_{\eps}(x-z,y-z')\dx z\dx z'$, where $m_\eps(z)=\frac{1}{\eps^{2d}}m(\frac{z}{\eps})$ for all $z\in\R^{2d}$ and $\eps>0$, being $m\in C_c^\infty(\R^{2d})$ a standard mollifier. We also introduce a smooth cut-off function $\varphi_R\in C_c^\infty(\R^{2d})$, which is such that $\varphi_R(z)=1$ on $B_R$, $\varphi_R(z)=0$ on $\R^{2d}\setminus B_{2R}$ and $\abs{\nabla\varphi_R}\le\frac{2}{R}$. We set $K_R^\eps\coloneqq\varphi_R K^\eps$ and note that it is a~$C_\mathrm{c}^\infty(\R^{2d})$ function. We now introduce the approximate energies, indexed by $\eps$ and $R$,
\[
\calE_R^\eps(\nu)=\frac{1}{2}\int_\Rd\int_{\Rd} K_R^\eps(x,y)\dx\nu(y)\dx\nu(x) \quad \mbox{for all $\nu\in\calP_2(\Rd)$}.
\]
Next, we extend $\bs\rho$ and $\bs j$ to $[-T,2 T]$ periodically in time by setting $\rho_{-s}\coloneqq\rho_{T-s}$ and $\rho_{T+s}\coloneqq\rho_{s}$ for all $s\in (0,T]$ and likewise for $\bs j$. We regularize  $\bs\rho$ and $\bs j$ in time by using a standard mollifier $n\in C_c^\infty(\R)$, supported on $[-1,1]$, by setting $n_\sigma(t)=\frac{1}{\sigma}n(\frac{t}{\sigma})$ and 
\begin{align*}
    &\rho_t^\sigma(A)=n_\sigma*\rho_t(A)=\int_{-\sigma}^\sigma n_\sigma(t-s)\rho_s(A)\dx s, \qquad \forall A\subseteq\Rd,\\
    &j_t^\sigma(A)=n_\sigma*j_t(A)=\int_{-\sigma}^\sigma n_\sigma(t-s)j_s(A)\dx s, \qquad \forall A\subseteq \Rd,
\end{align*}
for any $\sigma\in(0,T)$ and any $t\in[0,T]$; whence $\rho_t^\sigma\in\calP_2(\Rd)$ for all $t\in[0,T]$ and it is straightforward to check that $(\bs\rho^\sigma,\bs j^\sigma)\in\CE_T$. Furthermore, we observe
\begin{align*}
\int_0^T\int_\Rd\left|\frac{\dx j_t}{\dx\rho_t}\right|^2\dx\rho_t\dx t=\int_0^T\int_\Rd\alpha\left(\frac{\dx\rho_t}{\dx|\lambda|},\frac{\dx j_t}{\dx|\lambda|}\right)\dx |\lambda|\dx t,
\end{align*}
where $|\lambda|\in\calM^+(\Rd)$ is such that $\rho_t,j_t\ll|\lambda|$ for a.e. $t\in[0,T]$, where
\begin{align*}
    \alpha(a,b)\coloneqq\begin{cases}
    \frac{|b|^2}{a} \qquad &\mbox{if } a>0,\\
    0 &\mbox{if } a=0 \mbox{ and } b\ne 0,\\
    +\infty &\mbox{if } a=0 \mbox{ and } b\ne 0,
    \end{cases}
\end{align*}
is lower semicontinuous, jointly convex, and one-homogeneous, see \cite{DNS09_CVPDE}. Arguing as in \cite[Proposition 3.10]{EPSS2021}, Jensen's inequality and Fubini's Theorem ensure
\begin{align*}
\int_0^T\int_\Rd\left|\frac{\dx j_t^\sigma}{\dx\rho_t^\sigma}\right|^2\dx\rho_t^\sigma\dx t \le c \int_0^T\int_\Rd\left|\frac{\dx j_t}{\dx\rho_t}\right|^2\dx\rho_t\dx t,
\end{align*}
for a constant $c>0$. In view of the regularity we have for $\eps>0$ and $\sigma>0$, we compute
\begin{align*}
\frac{\dx}{\dx t}\calE_R^\eps(\rho_t^\sigma)=\int_\Rd\nabla K_R^\eps*\rho_t^\sigma(x)\dx j_t^\sigma(x),
\end{align*}
whence, by integrating in time for $0\le s\le t\le T$, it holds
\begin{equation}\label{eq:chain_rule_reg}
    \calE_R^\eps(\rho_t^\sigma)-\calE_R^\eps(\rho_s^\sigma)=\int_s^t\int_\Rd\nabla K_R^\eps*\rho_\tau^\sigma(x)\dx j_\tau^\sigma(x).
\end{equation}
The proof will be completed once we let $\eps$ and $\sigma$ to $0$ and $R$ to $\infty$ in the equality above. In this regard, we first note that \cite[Corollary 4.10]{DNS09_CVPDE} gives $\rho_t^\sigma\rightharpoonup\widetilde{\rho}_t$ weakly-$^\ast$ in $\calM_{loc}^+(\Rd)$, for any $t\in[0,T]$, and $\bs j^\sigma\rightharpoonup \widetilde{\bs\jmath}$ in $\calM_{loc}^+(\Rd\times[0,T];\Rd)$, being $(\widetilde{\bs\rho},\widetilde{\bs\jmath})\in\CE_T$. On the other hand, since $n_\sigma\rightharpoonup\delta_0$ weakly-$^\ast$, as $\sigma\to0$, we have $\rho_t^\sigma\rightharpoonup\rho_t$ weakly-$^\ast$ in $\calM_{loc}^+(\Rd)$ and $j_t^\sigma\rightharpoonup j_t$ weakly-$^\ast$ in $\calM_{loc}^+(\Rd;\Rd)$, so that $\tilde{\bs\rho}=\bs\rho$ and $\widetilde{\bs\jmath}=\bs j$ by uniqueness of the limit. Since $K\in C^1(\Rd\times\Rd)$, it is well known that $K^\eps\to K$ uniformly on compact sets as $\eps\to0$. In particular, on both sides of \eqref{eq:chain_rule_reg} we can send $\sigma\to0$ and $\eps\to0$. By further using Lebesgue dominated convergence theorem on the $R\to\infty$ limit --- exploiting the moment control established in Remark \ref{rem:second_moment} in conjunction with Assumption \eqref{K4} ---  we obtain the desired result, i.e. \eqref{eq:chain_rule_gen}.
\end{proof}

\section{Extension \texorpdfstring{of \cite{EPSS2021}}{} to regular \texorpdfstring{$\sigma$}{σ}-finite base measures}\label{app:extension}
Here we extend the results from \cite{EPSS2021}, shown for finite base measures $\mu$, to $\sigma$-finite base measures $\mu$ satisfying \eqref{mu1}, \eqref{mu2}. For the readers convenience, we remind the relevant assumptions in~\cite{EPSS2021} for the pair~$(\mu,\eta)$:
\begin{equation}
    \label{eq:eta_cont_sym}
    \begin{cases}
        \eta \text{ is continuous on the set } \set{\eta>0},\\
        \eta(x,y)=\eta(y,x) \text{ for all }x,y\in\Rd;\\
\end{cases}
\end{equation}
the moment bounds
\begin{align}
	\label{eq:2lor4-mom_bound}
	\sup_{(x,y)\in \Rddiag}|x-y|^2\lor|x-y|^4\eta(x,y)&\le C_\eta,\\
    \label{eq:2lor4-int_bound}
    \sup_{x\in \Rd}\int_{\Rdx}|x-y|^2\lor|x-y|^4\eta(x,y)\dx\mu(y)&\le C_\eta^\mu,
\end{align}
and the local blow-up control
\begin{align}
	\label{eq:blow-up_control}
	\lim_{\delta\rightarrow 0}\sup_{x\in\Rd} \int_{B_\delta(x)\setminus\set*{x}}\abs{x-y}^2\eta(x,y)\dx{\mu}(y) = 0.
\end{align}
Assumption~\eqref{eq:2lor4-mom_bound} is required in~\cite[Theorem 3.15]{EPSS2021} to show existence of weak solutions to~\eqref{eq:nlnl-interaction-eq}. Instead of $\mu(\Rd)<\infty$, we assume \eqref{mu1}, \eqref{mu2} and that $G_{\!\!\:x}= \set{y\in\Rd\!\setminus\!\set*{x}:\eta(x,y)>0}\subset \Rd$ is a bounded set.
Note that by Lemma~\ref{lem:properties_eta-mu} and Remark \ref{rem:link_to_EPSS} all the above conditions are satisfied under the assumptions \eqref{mu1}, \eqref{mu2} and \eqref{theta1}~--~\eqref{theta4}.

In order to reprove the results from \cite{EPSS2021} under the altered assumption on $\mu$, we need to ensure that the lower semicontinuity of the De Giorgi functional \cite[Lemma~3.13]{EPSS2021} holds true when the sequence $(\mu^n)_n\subset\calM^+(G)$ converges weakly-$^\ast$ to $\mu\in\calM^+(\Rd)$, instead of narrowly, and it satisfies the above assumptions uniformly in $n$. We notice this is already pointed out in~\cite[Proposition 2.22]{HPS21}, namely the action $\A(\mu,\eta;\rho,j)$ is in fact weakly-$^\ast$ lower semicontinuous with respect to the base measure $\mu$ (see e.g. \cite[Theorem 2.34]{Ambrosio_Fusco_Pallara} or \cite[Theorem 3.4.3]{buttazzo1989semicontinuity}). Due to the identity $\abs{\rho'}_{\mu,\eta}^2 = \A(\mu,\eta;\rho,j)$, for a specific $j\in\calM(\Rddiag)$ (see \cite[Proposition~2.25]{EPSS2021} or \cite[Proposition~2.31]{HPS21}), weak-$^\ast$ lower semicontinuity is inherited by metric derivatives. 
Moreover, for the sake of completeness we also mention compactness for weak solutions of the continuity equation still holds true as weak-$^\ast$ convergence is only needed when showing lower semicontinuity of the total action, see~\cite[Proposition 2.17]{EPSS2021}.

It remains to show lower semicontinuity of the metric slope with respect to weak-$^\ast$ convergence for the base measure $\mu$.
\begin{proposition}[Weak-$^\ast$ lower semicontinuity of the metric slope]
Let $K$ satisfy \eqref{K2}, \eqref{K3} and $\eta:\Rddiag\to[0,\infty)$ satisfy \eqref{eq:eta_cont_sym}. Assume that $(\mu_n)_{n\in\mathbb{N}}\subset \calM^+(\Rd)$ is such that~\eqref{eq:2lor4-int_bound} and~\eqref{eq:blow-up_control} hold uniformly in $n$ and $\mu^n\oset{\ast}{\rightharpoonup}\mu$, as $n\to\infty$. Let $(\rho_n)_{n\in\bbN}\subset\calP_2(\Rd)$ such that $\rho^n\rightharpoonup\rho$, as $n\to\infty$ for some $\rho\in\calP_2(\Rd)$. Then, the metric slope is weak-$^\ast$ lower semicontinuous
\begin{align*}
 	\liminf_{n\to\infty}\calD(\mu^n,\eta;\rho^n)\ge \calD(\mu,\eta;\rho).
\end{align*}
\end{proposition}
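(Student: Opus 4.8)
I would first write the metric slope in the explicit form
\[
\calD(\mu,\eta;\rho)=\iint_{\Rddiag}\bra[\big]{\babla(K*\rho)(x,y)}_-^2\,\eta(x,y)\dx\rho(x)\dx\mu(y),
\]
which is nothing but $\tA\bra[\big]{\mu,\eta;\rho,-\babla\tfrac{\delta\calE}{\delta\rho}(\rho)}$ spelled out through the upwind interpolation, exactly as for $\calD_\eps$ in Section~\ref{sec:graph-to-local-limit}. The plan is then to obtain the statement from the weak-$^\ast$ lower semicontinuity of integral functionals of measures (as in~\cite[Theorem~2.34]{Ambrosio_Fusco_Pallara},~\cite[Theorem~3.4.3]{buttazzo1989semicontinuity}), following the structure of the narrow-convergence argument behind~\cite[Lemma~3.13]{EPSS2021}; the new ingredient is a truncation that restricts the analysis to regions where $\eta$ is well behaved.

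The preliminary facts I would record are the following. Set $\sigma^n\coloneqq\rho^n\otimes\mu^n$. (i) Tightness of $(\rho^n)_n$ together with $\mu^n\oset{\ast}{\rightharpoonup}\mu$ yields $\sigma^n\oset{\ast}{\rightharpoonup}\sigma\coloneqq\rho\otimes\mu$ in $\calM^+(\Rd\times\Rd)$, and in particular $\sup_n\sigma^n(C)<\infty$ for every compact $C$: for $\phi\in C_c(\Rd\times\Rd)$ the maps $x\mapsto\int\phi(x,y)\dx\mu^n(y)$ are equibounded, equicontinuous and supported in a fixed compact, hence converge uniformly, and one concludes by narrow convergence of $\rho^n$. (ii) As $K\in C^1$ and, by~\eqref{K3}, $K(x,\cdot)$ grows at most quadratically uniformly in $x$, the uniform second-moment bound $\sup_n M_2(\rho^n)<\infty$ available in our setting gives $K*\rho^n\to K*\rho$ locally uniformly, hence $f_n\coloneqq\bra[\big]{\babla(K*\rho^n)}_-^2\to f\coloneqq\bra[\big]{\babla(K*\rho)}_-^2$ locally uniformly on $\Rd\times\Rd$. (iii) By~\eqref{K3}, $\abs{\babla(K*\rho^n)(x,y)}\le L_K\bra*{\abs{x-y}\vee\abs{x-y}^2}$, so with~\eqref{eq:2lor4-mom_bound} one gets the uniform bound $f_n\eta\le L_K^2 C_\eta$ on $\Rddiag$, and $\eta$ is bounded on every set $G_\delta\coloneqq\set{(x,y)\in\Rddiag:\abs{x-y}>\delta}$.

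The core step is, for fixed $\delta>0$, the partial lower bound
\[
\liminf_{n\to\infty}\int_{G_\delta}f_n\,\eta\dx\sigma^n\ \ge\ \int_{G_\delta}f\,\eta\dx\sigma .
\]
Since $\eta$ is bounded on $G_\delta$ and continuous on the (open) set $\set{\eta>0}$, fact (ii) upgrades to $f_n\eta\to f\eta$ uniformly on compact subsets of $G_\delta\cap\set{\eta>0}$, while $f_n\eta$ and $f\eta$ vanish on $\set{\eta=0}$; moreover $f\eta$ is continuous and nonnegative on $G_\delta\cap\set{\eta>0}$, hence equals $\sup_k h_k$ for some $h_k\in C_c(G_\delta\cap\set{\eta>0})$ with $0\le h_k\uparrow f\eta$. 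For any $\varphi\in C_c(G_\delta\cap\set{\eta>0})$ with $0\le\varphi\le1$ one has $\int_{G_\delta}f_n\eta\dx\sigma^n\ge\int h_k\varphi\dx\sigma^n+\int(f_n\eta-f\eta)\varphi\dx\sigma^n$, where the last term tends to $0$ by uniform convergence on $\supp\varphi$ and $\sup_n\sigma^n(\supp\varphi)<\infty$, and $\int h_k\varphi\dx\sigma^n\to\int h_k\varphi\dx\sigma$ since $h_k\varphi\in C_c$; taking $\liminf_n$, then the supremum over $k$ and then $\varphi\uparrow1$, monotone convergence gives the displayed inequality. Finally, since $f_n\eta\ge0$ we have $\calD(\mu^n,\eta;\rho^n)\ge\int_{G_\delta}f_n\eta\dx\sigma^n$, so $\liminf_n\calD(\mu^n,\eta;\rho^n)\ge\int_{G_\delta}f\eta\dx\sigma$ for all $\delta>0$; letting $\delta\downarrow0$ with $G_\delta\uparrow\Rddiag$ and $f\eta\ge0$, monotone convergence yields $\liminf_n\calD(\mu^n,\eta;\rho^n)\ge\calD(\mu,\eta;\rho)$. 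The assumption~\eqref{eq:2lor4-int_bound} ensures the right-hand side is finite and, together with~\eqref{eq:blow-up_control}, that the contribution of a neighbourhood of the diagonal is uniformly small, although this last fact is not needed for the lower bound itself.

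The step I expect to be the main obstacle is the core estimate above: transferring the narrow-convergence argument of~\cite{EPSS2021} to the weak-$^\ast$ setting while carrying the degenerate, diagonally singular weight $\eta$. This is handled by localizing to $G_\delta\cap\set{\eta>0}$, where $\eta$ is bounded and continuous so that $f_n\eta\to f\eta$ locally uniformly and the standard lower semicontinuity of integral functionals under weak-$^\ast$ convergence of measures applies (with the $n$-dependence of the integrand absorbed into that uniform convergence), and then recovering the full functional monotonically as $\delta\downarrow0$.
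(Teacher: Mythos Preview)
Your overall strategy is sound and close in spirit to the paper's: both arguments truncate away from the diagonal, localize in space, and pass to the limit through the weak-$^\ast$ convergence of $\rho^n\otimes\mu^n$. Your monotone-approximation route is actually slightly more streamlined than the paper's, which establishes an \emph{equality} of truncated actions and then has to remove the cut-offs on both sides (this is why the paper needs~\eqref{eq:blow-up_control} at the end, whereas you correctly observe that for a pure $\liminf$ bound monotone convergence as $\delta\downarrow 0$ suffices).

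There is, however, a genuine gap in your fact~(ii). The proposition only assumes $\rho^n\rightharpoonup\rho$ narrowly; no uniform second-moment bound is stated, so ``$\sup_n M_2(\rho^n)<\infty$ available in our setting'' is not justified, and hence neither is $K*\rho^n\to K*\rho$ locally uniformly (for fixed $x$, $K(x,\cdot)$ has quadratic growth). The paper circumvents this by truncating the convolution: it replaces $K*\rho^n$ by $\varphi_R^n\coloneqq -K*\chi_R\rho^n$, so that the relevant integrand $z\mapsto K(x,z)\chi_R(z)$ is bounded continuous, and then shows $\norm{\varphi_R^n-\varphi_R}_{L^\infty(B_{2R})}\to 0$ purely from narrow convergence. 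You can fix your argument more directly: what you actually need is local uniform convergence of $\babla(K*\rho^n)$, and
\[
\babla(K*\rho^n)(x,y)=\int_{\Rd}\bigl[K(y,z)-K(x,z)\bigr]\dx\rho^n(z),
\]
where by~\eqref{K3} the integrand is bounded by $L_K\bigl(\abs{x-y}\vee\abs{x-y}^2\bigr)$ \emph{uniformly in $z$} and is continuous in $z$ by~\eqref{K1}. Thus narrow convergence alone gives pointwise convergence, and equicontinuity in $(x,y)$ (again from~\eqref{K3}) upgrades it to local uniform convergence; no second moments needed. A related minor point: in~(iii) you invoke~\eqref{eq:2lor4-mom_bound}, which is also not among the hypotheses; fortunately your core step only uses that $\eta$ is continuous (hence bounded) on compact subsets of $\{\eta>0\}$, so this assumption is not actually required.
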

\begin{proof}
We argue similarly to the proof of Proposition \ref{prop:D_NL->L_lsc}.
Let $R>0$. We set $G_R\coloneqq\set*{(x,y)\in \Rddiag: \abs{x-y}\ge1/R}$ and choose $\bar\chi_R\in C_c^\infty(\Rddiag;[0,1])$ such that $\bar\chi_R|_{G_R\cap(B_R\times B_R)}\equiv 1$ and $\supp\bar\chi_R\subset\overline{G_{2R}\cap(B_{2R}\times B_{2R})}$. Further, we choose $\chi_R\in C_c^\infty(\Rd;[0,1])$ such that $\supp\chi_R\subset \overline B_{2R}$, $\chi_R|_{B_R} \equiv 1$ and $\abs{\nabla\chi_R} \le 2/R$. Arguing analogously to the derivation of \eqref{eq:D_cut-off}, we obtain
\begin{align*}
    \calD(\mu^n,\eta;\rho^n) &\ge \iint_\Rddiag\bra*{\bra*{\babla\varphi_R^n}_+}^2\bar\chi_R\eta\dx\bra*{\rho^n\otimes\mu^n}-\omega(R)\\
    &\eqqcolon \widetilde{\mathcal{A}}(\bar\chi_R(\mu^n\otimes\rho^n),\babla\varphi_R^n)-\omega(R),
\end{align*}
where $\varphi_R^n\coloneqq -K\ast\chi_R\rho^n$. Setting $\varphi_R\coloneqq -K\ast\chi_R\rho$, for any $R>0$, we have
\begin{align*}
    &\abs[\big]{\widetilde{\mathcal{A}}(\bar\chi_R(\mu^n\!\otimes\!\rho^n),\babla\varphi_R^n)-\widetilde{\mathcal{A}}(\bar\chi_R(\mu^n\!\otimes\!\rho^n),\babla\varphi_R)}\le 8R L_K C_\eta^\mu \norm{\varphi^n_R-\varphi_R}_{L^\infty(B_{2R})},
\end{align*}
where we used that $\abs{x-y}>1/2R$ for any $(x,y)\in \supp\bar\chi_R$ and~\eqref{eq:2lor4-int_bound}. Arguing as in the proof of \eqref{eq:norm_phi^n_R->0} we find $\norm{\varphi^n_R-\varphi_R}_{L^\infty(B_{2R})}$ vanishes as $n\to\infty$. Due to the cut-off $\bar\chi_R$, the weak-$^\ast$ convergence of $\rho^n\otimes\mu^n$ towards $\rho\otimes\mu$ yields
\begin{align*}
    \lim_{n\to\infty}\widetilde{\mathcal{A}}(\bar\chi_R(\mu^n\otimes\rho^n),\babla\varphi_R) = \widetilde{\mathcal{A}}(\bar\chi_R(\mu\otimes\rho),\babla\varphi_R).
\end{align*}
Furthermore, denoting $\varphi\coloneqq -K\ast\rho$, employing again \eqref{K3} as well as \eqref{eq:2lor4-int_bound} and arguing similar to the estimate \eqref{eq:bound_K-ast-rho}, for every $R>0$ we find
\begin{align*}
    \abs*{\widetilde{\mathcal{A}}(\bar\chi_R(\mu\otimes\rho),\babla\varphi_R)-\widetilde{\mathcal{A}}(\bar\chi_R(\mu\otimes\rho),\babla\varphi)} &\le 2 L_K^2 C_\eta^\mu \rho(B_{R}^c).
\end{align*}
For the outer cut-off, we employ again \eqref{K3} and \eqref{eq:2lor4-int_bound}
\begin{align*}    
    &\abs[\big]{\widetilde{\mathcal{A}}(\bar\chi_R(\mu\otimes\rho),\babla\varphi)-\widetilde{\mathcal{A}}(\mu;\rho,\babla\varphi)}\\
    &\le \abs[\bigg]{\iint_{G_R^c}\bra*{\bra*{\babla\varphi}_+}^2\eta\dx\bra*{\rho\otimes\mu}} + \abs[\bigg]{\iint_{B_R^c\times B_R^c}\bra*{\bra*{\babla\varphi}_+}^2\eta\dx\bra*{\rho\otimes\mu}}\\
    &\le L_K^2 \sup_{x\in\Rd}\int_{B_{R^{-1}}(x)\setminus\set{x}}\bra*{\abs{x-y}^2\lor\abs{x-y}^4}\eta(x,y)\dx\mu(y) + L_K^2 C_\eta^\mu \rho(B_R^c),
\end{align*}
which all vanish as $R\to\infty$ due to the tightness of $\rho$ and \eqref{eq:blow-up_control}. Combining all the previous estimates, we obtain
\begin{align*}
    \liminf_{n\to\infty}\calD(\mu^n,\eta;\rho^n) 
    &\ge \liminf_{n\to\infty}\widetilde{\mathcal{A}}(\bar\chi_R(\mu^n\otimes\rho^n),\babla\varphi_R^n)-\omega(R)\\
    &\ge \liminf_{n\to\infty}\widetilde{\mathcal{A}}(\bar\chi_R(\mu^n\otimes\rho^n),\babla\varphi_R)-\omega(R)\\
    &= \widetilde{\mathcal{A}}(\bar\chi_R(\mu\otimes\rho),\babla\varphi_R)-\omega(R)\\
    &\overset{R\to\infty}{\longrightarrow} \widetilde{\mathcal{A}}(\mu;\rho,\babla\varphi) = \calD(\mu,\eta;\rho). \qedhere
\end{align*}
\end{proof}

The final step is adapting \cite[Theorem 3.15]{EPSS2021} to the altered framework. 
In order to obtain an $n$-uniform bound similar to \cite[Eq.~(3.31)]{EPSS2021}, we will use a finite approximation.
\begin{lemma}[Finite approximation]\label{lem:counting-measures-appendix}
    Let $\mu\in\calM^+(\Rd)$ such that \eqref{mu1}, \eqref{mu2} hold. For every $m\in\bbN$ define $\bar\mu^m\coloneqq \mu|_{\overline{B_m}}$. There exists a sequence $(\bar\mu^{m,n})_{n\in\bbN}$ given by
\begin{align*}
    \bar\mu^{m,n} = \sum_{k=1}^{N^m_n} \bar\mu^{m,n}_k \delta_{x^{m,n}_k},
\end{align*}
for suitable $N^m_n\in\bbN$, $\bar\mu^{m,n}_k\in [0,\infty)$, $x^{m,n}_k\in\Rd$, such that $\supp\bar\mu^{m,n}\subset \supp\bar\mu^m=\overline{B_m}$ for all $n\in\bbN$ and $\bar\mu^{m,n}\to\bar\mu^m$ with respect to $W_2$ as $n\to\infty$. These measures have equal total mass, i.e. $\bar\mu^m(\Rd)=\bar\mu^{m,n}(\Rd)$ for all $n\in\bbN$. Furthermore, there is a constant $\bar C_\mu>0$ such that
\begin{equation}\label{eq:n-uniform_mass-bound_mu_app}
\begin{aligned}
    \sup_{m,n\in\bbN}\sup_{x\in \Rd}\bar\mu^{m,n}(G_{\!\!\:x})
    \le \bar C_\mu \quad \mbox{and}\quad \sup_{m\in\bbN}\sup_{x\in \Rd}\bar\mu^m(G_{\!\!\:x})
    \le \bar C_\mu,
\end{aligned}
\end{equation}
where we recall that $G_{\!\!\:x}=\set{y\in\Rd\!\setminus\!\set*{x}:\eta(x,y)>0}$. In particular, $(\bar\mu^{m,n},\eta)$ and $(\bar\mu^{m},\eta)$ satisfy \eqref{eq:2lor4-int_bound} and \eqref{eq:blow-up_control} uniformly in $m$ and $n$.
\end{lemma}
\begin{proof}
    A possible approximation is the $d$-dimensional midpoint Riemann sum, which is obtained by choosing the evaluation points $\set{x_k^{m,n}:1\le k \le N^m_n} = \bbZ^d/2^n\cap \overline{B_m}$ and the weights $\bar\mu^{m,n}_k = \bar\mu(x_k^{m,n})/2^{dn}$, where we recall that $\tilde\mu$ is the density of $\mu$ with respect to $\scrL^d$. Since the ball $\overline{B_m}$ is compact, $W_2$ convergence holds as consequence of weak-$^\ast$ convergence and its equivalence with the narrow convergence, plus that of second order moments. The equal total mass is ensured by a standard rescaling argument. Regarding the inequality \eqref{eq:n-uniform_mass-bound_mu_app}, we notice that $G_0 = \set{y\in\Rdzero:\eta(0,y)>0}\subset\Rd$ is a bounded set, hence so is  $G_0^R\coloneqq G_0+B_R$ for any $R>0$. Therefore $\bar\mu^{m}(G_0^R)$ and $\bar\mu^{m,n}(G_0^R)$ are bounded uniformly with respect to $m$ and $n$, by construction of the measures $\tilde\mu^{m,n}$. Moreover, we notice that
     \begin{align*}
	\sup_{x\in\Rd}\diam(G_{\!\!\:x})= \sup_{x\in\Rd}\diam\bra{\set{y\in\Rd\!\setminus\!\set*{x}:\eta(x,y)>0}} <\infty,
    \end{align*}
     so that, by the construction of the measures $\tilde\mu^{m,n}$, there is an $R>0$ such that for any $x\in\Rd$ and any $m,n\in\bbN$ we have $\tilde\mu^{m,n}(G_{\!\!\:x}) \le \tilde\mu^{m,n}(G_0^R)$. Here, $R>0$ compensates for the fact that more points $x_k^{m,n}$ might lie inside $G_x$ than inside $G_0$ and for fluctuations in $\mu$, keeping in mind, the uniform bounds $c_\mu \le \tilde\mu \le C_\mu$. 
\end{proof}

Now we are in the position to show existence of solutions to \eqref{eq:nlnl-interaction-eq} in the adapted setting.
\begin{theorem}[Existence of weak solutions to \eqref{eq:nlnl-interaction-eq}]\label{thm:extension_sigma_finite}
    Let $K$ satisfy \eqref{K1}~--~\eqref{K3}, $\mu\in\calM^+(\Rd)$ be $\sigma$-finite, and assume that $(\mu,\eta)$ satisfy \eqref{eq:eta_cont_sym}~--~\eqref{eq:blow-up_control}. Consider $\varrho_0\in\calP_2(\Rd)$ with $\varrho_0\ll\mu$. Then there exists a narrowly continuous curve $\bs\rho:[0,T]\to\calP_2(\Rd)$ such that $\rho_t\ll\mu$ for all $t\in[0,T]$, which is a weak solution to \eqref{eq:nlnl-interaction-eq} with initial datum $\rho_0=\varrho_0$.
\end{theorem}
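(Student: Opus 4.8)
The plan is to prove Theorem~\ref{thm:extension_sigma_finite} by reducing it, via the approximation results developed in this appendix, to the already-established finite-measure theory of~\cite{EPSS2021}. The key observation is that all the hypotheses~\eqref{eq:eta_cont_sym}~--~\eqref{eq:blow-up_control} that are needed for the existence theory of~\cite{EPSS2021} are stable under the replacement of $\mu$ by the finite counting measures $\mu^n$ constructed in Lemma~\ref{lem:counting-measures-appendix}, \emph{uniformly in $n$}; this is exactly what~\eqref{eq:n-uniform_mass-bound_mu_app} delivers, together with the assumed uniform bounds~\eqref{eq:2lor4-int_bound} and~\eqref{eq:blow-up_control}. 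So first I would invoke Lemma~\ref{lem:counting-measures-appendix} to obtain a sequence $(\mu^n)_n$ of finite counting measures approximating $\mu$, with $\mu^n\oset{\ast}{\rightharpoonup}\mu$ and $\sup_n\sup_x \mu^n(G_x)\le \tilde C_\mu$. Since each $\mu^n$ is a finite measure satisfying the hypotheses of~\cite{EPSS2021}, the existence theorem~\cite[Theorem 3.15]{EPSS2021} (or~\cite[Theorem 3.9]{EPSS2021} in gradient-flow form) provides, for each $n$, a weak solution $\bs\rho^n$ of~\eqref{eq:nlnl-interaction-eq} with base measure $\mu^n$ and with initial datum $\rho_0^n$; one takes $\rho_0^n$ to be a suitable approximation of $\varrho_0$ with $\rho_0^n\ll\mu^n$ and $\rho_0^n\rightharpoonup\varrho_0$, noting that $\sup_n M_2(\rho_0^n)<\infty$. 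These $\bs\rho^n$ are curves of maximal slope, hence satisfy the De Giorgi functional identity $\bs\calG(\mu^n,\eta;\bs\rho^n)=0$, which in particular yields a uniform-in-$n$ bound on $\int_0^T \bigl(\calD(\mu^n,\eta;\rho_t^n)+|(\rho_t^n)'|^2_{\mu^n,\eta}\bigr)\dx t$ since $\calE$ is bounded on sublevel sets of the second moment.

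Next I would extract a limit curve. Using the uniform action bound together with Lemma~\ref{lem:2nd-mom-propagation} (propagation of second moments, which applies here since the relevant moment bounds hold uniformly in $n$) and Prokhorov's theorem, one obtains — exactly as in Proposition~\ref{prop:convergence_CE-solutions} — a (not relabelled) subsequence and a narrowly continuous curve $\bs\rho:[0,T]\to\calP_2(\Rd)$ with $\rho_t^n\rightharpoonup\rho_t$ for every $t$, and $\sup_{t}M_2(\rho_t)<\infty$. One must also check that the nonlocal continuity equation passes to the limit: the flux $\bs j^n$ associated with $\bs\rho^n$ satisfies the uniform integrability estimate of Corollary~\ref{cor:A-bound}, so after disintegration one may pass to the limit in the weak formulation~\eqref{eq:nce-weak}, obtaining a pair $(\bs\rho,\bs j)\in\NCE_T$ (with base measure $\mu$). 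Then one uses the two lower-semicontinuity statements just proved in this appendix — weak-$^\ast$ lower semicontinuity of the action $\bs\calA$ in the base measure (hence of the metric derivative, via $|\rho_t'|^2_{\mu,\eta}=\calA(\mu,\eta;\rho_t,j_t)$ for the minimal flux) and weak-$^\ast$ lower semicontinuity of the metric slope $\calD$ — together with continuity of $\calE$ under narrow convergence, to conclude
\[
0=\liminf_{n\to\infty}\bs\calG(\mu^n,\eta;\bs\rho^n)\ge \bs\calG(\mu,\eta;\bs\rho)\ge 0,
\]
the last inequality being the De Giorgi chain-rule inequality from~\cite{EPSS2021} (valid for $\sigma$-finite $\mu$ once the chain rule is checked in this setting, which is routine). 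Hence $\bs\calG(\mu,\eta;\bs\rho)=0$, i.e.\ $\bs\rho$ is a curve of maximal slope for $\calE$ in $(\calP_2(\Rd),\calT_{\mu,\eta})$, and by~\cite[Theorem 3.9]{EPSS2021} (whose proof identifying curves of maximal slope with weak solutions carries over verbatim to $\sigma$-finite $\mu$, using only the action/slope structure) it is a weak solution of~\eqref{eq:nlnl-interaction-eq}.

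Finally one needs the absolute-continuity conclusion $\rho_t\ll\mu$ for all $t$. This should follow as in~\cite{EPSS2021}: finiteness of the action $\bs\calA(\mu,\eta;\bs\rho,\bs j)<\infty$ forces, through the structure of the density $\alpha$, that the flux $j_t$ is supported where $\rho_t\otimes\mu$ and $\mu\otimes\rho_t$ are, and the explicit flux formula from the definition of weak solution, combined with $\rho_0=\varrho_0\ll\mu$, propagates absolute continuity in time; alternatively one argues directly from the weak form that $\rho_t$ cannot charge $\mu$-null sets. I expect the main obstacle to be the \textbf{bookkeeping of uniform bounds under the change of base measure}: one must verify that every estimate used in the compactness and lower-semicontinuity arguments (the moment bounds~\eqref{eq:2lor4-mom_bound}--\eqref{eq:2lor4-int_bound}, the blow-up control~\eqref{eq:blow-up_control}, and the mass bound~\eqref{eq:n-uniform_mass-bound_mu_app}) holds \emph{uniformly in $n$} for the approximating measures $\mu^n$, and that the narrow-vs-weak-$^\ast$ distinction is handled correctly — this is precisely why the two lower-semicontinuity propositions of this appendix were needed, and assembling them correctly is where care is required rather than any deep new idea.
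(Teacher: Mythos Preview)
Your proposal is correct and follows essentially the same approach as the paper: approximate $\mu$ by the finite counting measures $\mu^n$ from Lemma~\ref{lem:counting-measures-appendix}, approximate $\varrho_0$ by $\rho_0^n\ll\mu^n$, apply the finite-measure existence theory of~\cite{EPSS2021} at level $n$, and pass to the limit using the weak-$^\ast$ lower-semicontinuity results established in this appendix together with the chain rule. The paper's proof is terser than yours---it simply points to~\cite[Theorem~3.15]{EPSS2021} and notes that the one place where finiteness of $\mu$ enters (their Eq.~(3.31)) is handled by combining~\eqref{eq:2lor4-mom_bound} with the uniform bound~\eqref{eq:n-uniform_mass-bound_mu_app}---but the content is the same, and your identification of the uniform-in-$n$ bookkeeping as the only real issue is exactly on point.
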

\begin{proof}
By Lemma~\ref{lem:counting-measures-appendix} we obtain for every $m\in\bbN$ the finite measure $\bar\mu^{m}$ with bounded second moments, as well as finite combinations of Dirac measures $\bar\mu^{m,n}$ with $\bar\mu^{m,n}(\Rd) =\bar\mu^{m}(\Rd)=\mu({B_m})$ that converge to $\bar\mu^{m}$ in $W_2$ as $n\to\infty$. Moreover, they satisfy \eqref{eq:2lor4-int_bound} and \eqref{eq:blow-up_control} uniformly in $m$ and $n$. Let $m_0\in\bbN$ such that $\varrho_0(B_m) > 0$ for all $m\ge m_0$. For $m\ge m_0$ the truncated and renormalized initial data $\bar\varrho_0^m \coloneqq \varrho_0|_{B_m}/\varrho_0(B_m)\in\calP(\Rd)$ satisfying $\bar\varrho^m_0 \ll \bar\mu^m$ for every $m\in\bbN$. They converge to $\varrho_0$ in $(\calP_2(\Rd),W_2)$, since $\varrho_0\in\calP_2(\Rd)$ by assumption.
The above objects allow us for each $m,n\in\bbN$, $m\ge m_0$ to employ the construction in the proof of \cite[Theorem 3.15]{EPSS2021} to obtain initial data $\bar\varrho^{m,n}_0 \ll \bar\mu^{m,n}$ which converge to $\bar\varrho^m_0$ as $n\to\infty$ in $(\calP(\Rd),W_2)$. Therefore, for any $k\in\bbN$ there exist $m,n\in\bbN$ such that
\begin{align*}
W_2(\bar\varrho^{m,n}_0, \bar\varrho^m_0) + W_2(\bar\varrho^m_0, \bar\varrho_0)\le \frac1k.
\end{align*}
Denote $\varrho^k_0 \coloneqq \bar\varrho^{m,n}_0$ for this $m,n$. These initial data satisfy all the conditions required to follow the remainder of the proof of \cite[Theorem 3.15]{EPSS2021} and obtain the result.
\end{proof}

\section{Relation to EDP convergence for gradient structures}

In this part of the appendix, we translate the gradient flow formulation in terms of curves of maximal slope for the quasi-metric provided in Section~\ref{subsec:nlnl_interaction_eq} into the recent notion of gradient flows in continuity equation format~\cite[Definition 1.1]{PeletierSchlichting2022}.
The starting point is an abstract continuity equation including the conservation laws of the systems under consideration; both the nonlocal~\eqref{eq:intro:NL-CE} and local continuity equation~\eqref{eq:intro:CE} represent indeed examples of that formalism.

The flux formulation has the advantage that the kinetic relations~\eqref{eq:intro:NL-flux} and~\eqref{eq:intro:fluxtensor} can be encoded as the subdifferential of a suitable convex functional. 
For~\eqref{eq:intro:NL-flux} those functionals can be formally defined by (see Definition~\ref{def:action} for how undefined cases are handled)
\begin{align}
	\calR(\rho,j) &= \iint_G \frac{1}{4}\abs*{\bra*{\pderiv{j}{\rho\otimes \mu}}_+}^2 \eta  \dx(\rho\otimes \mu) + \iint_G \frac{1}{4}\abs*{\bra*{\pderiv{j}{\mu\otimes \rho}}_-}^2 \eta \dx(\mu\otimes \rho), \label{eq:intro:R}\\
	\calR^*(\rho,v) &= \iint_G\frac{\abs*{ v_+ }^2}{4} \eta  \dx(\rho\otimes \mu) + \iint_G \frac{\abs*{ v_- }^2}{4} \eta  \dx(\mu\otimes \rho) . \label{eq:intro:R*}
\end{align}
Hereby, the formal duality is understood in the $\eta$-weighted dual product $\skp*{v,j}_{\!\eta} = \frac{1}{2} \iint_G v \, \eta \dx j$ and it is easy to verify (see Remark~\ref{rem:duality}) that
\begin{equation*}
	\calR^*(\rho,v) = \sup_{j\in \calM(G)} \bra*{ \skp{v,j}_{\!\!\:\eta} - \calR(\rho,j)} . 
\end{equation*}
Hence, we have always the bounds 
\begin{equation}\label{eq:LegFen:bound}
\skp{v,j}_{\!\!\:\eta} \leq \calR(\rho,j) + \calR^*(\rho,v). 
\end{equation}

 \begin{remark}[Duality]\label{rem:duality}
	     Given $r,s\in [0,\infty]$, define the function $f:\R\to[0,\infty]$ by
	     \begin{align*}
		         f(j) &\coloneqq \frac12\pra*{\alpha(j,r) + \alpha(-j,s)}, 
		 \end{align*}
		  with $\alpha:\R\times[0,\infty]\to[0,\infty]$ defined in~\eqref{eq:def:alpha}.
	     For any $r,s\in[0,\infty)$ the function $f$ is proper, lsc. and convex, so that by the Fenchel-Moreau theorem $f$ is its own convex biconjugate. The convex conjugate of~$f$ is given by
	     \begin{align*}
		         f^\ast(v) = \frac12\pra*{r(v_+)^2+s(v_-)^2}.
		     \end{align*}
	   To see this, at first we assume that $r,s > 0$ and calculate
	     \begin{align*}
		         f^\ast(v) &= \sup_{j\in\R} \pra*{v\cdot j - f(j)}= \sup_{j\in\R} \pra*{v\cdot j - \frac{(j_+)^2}{2r}-\frac{(j_-)^2}{2s}}
		         = \frac{1}{2}\pra*{r(v_+)^2+s(v_-)^2}.
		     \end{align*}
	     By the definition of $\alpha$, the cases $r=0$ and $s=0$ satisfy the same equality, since the supremum is then achieved at $j_+=0$ or $j_-=0$ respectively.
	 \end{remark}
This provides a robust formulation thanks to the characterization of the subdifferential through \emph{Legendre-Fenchel duality}: any pair $(v,j)\in C_0(G)\times \calM(G)$ satisfies the identity~\eqref{eq:intro:NL-flux} if and only if
\begin{equation}\label{eq:LegFen:sub}
	j \in \partial_2 \calR^*(\rho,v) 
	\quad\Leftrightarrow\quad
	v \in \partial_2 \calR(\rho,j)
	\quad\Leftrightarrow
	\skp*{v,j}_{\!\eta} = \calR(\rho,j) + \calR^*(\rho,v),
\end{equation}
where $\partial_2 \calR$ and $\partial_2 \calR$ denote the subdifferential for the second argument, which for~\eqref{eq:intro:R} and~\eqref{eq:intro:R*} are single-valued. 
We note that $\calR(\rho,j) = \frac{1}{2} \calA(\mu,\eta;\rho,j)$ from Definition~\ref{def:action} and we have thanks to Corollary~\ref{cor:A-bound} the improved integrability of the flux whenever $\calR(\rho,j)<\infty$ of the type
\begin{equation*}
	\frac12 \iint \abs{x-y}\eta(x,y) \dx j(x,y) \leq \sqrt{2 \Cint \calR(\rho,j)} <\infty.
\end{equation*}
In particular, this allows to extend the test-function class in~\eqref{eq:LegFen:bound} to nonlocal gradients of the type $\babla \varphi$ with $\varphi \in C^1_b(\Rd)$, since we can estimate
\begin{align*}
\abs*{\skp{ \babla \varphi, j}_\eta} &\leq \frac{1}{2}\iint_G \frac{\abs{\varphi(y)-\varphi(x)}}{\abs{x-y}} \abs{x-y} \eta(x,y)\dx j(x,y) \\
&\leq \norm{\varphi}_{C^1(\Rd)} \frac12 \iint \abs{x-y}\eta(x,y) \dx j(x,y) < \infty.
\end{align*}
With this observation, we find in equation \eqref{eq:intro:NL-CE} another duality structure between the nonlocal divergence $\bdiv$ and the nonlocal gradient $\babla$. Both satisfy, for any test function $\varphi\in C_b^1(\R^d)$, the identity
$\skp[\big]{\varphi, \bdiv j} = -\skp[\big]{\babla \varphi, j}_{\!\eta}$. 

Let us suppose that the variational derivative of the energy $\calE'(\rho) \in C^1_b(\Rd)$, which is usually not the case. Then, we can connect the continuity equation~\eqref{eq:intro:NL-CE} with the specific choice for the velocity in~\eqref{eq:intro:NL-velocity} as gradient of the derivative $v_t = -\babla \calE'(\rho_t)$ via the identity
\begin{align}
	\pderiv{\calE(\rho_t)}{t} &= \skp{\calE'(\rho_t), \partial_t \rho_t} = -\skp{\calE'(\rho_t), \bdiv j_t} = - \skp{-\babla \calE'(\rho_t),j_t}_\eta \notag \\
	&\geq  - \calR(\rho_t,j_t) - \calR^*(\rho_t,-\babla \calE'(\rho_t)), \label{eq:LegFen:Energy}
\end{align}
where equality holds if and only if $j_t$ and $v_t$ are related through~\eqref{eq:LegFen:sub} and hence equivalently by~\eqref{eq:intro:NL-flux}. 
By integrating the estimate~\eqref{eq:LegFen:Energy}, we obtain an energy-dissipation functional defined for solutions $(\bs\rho,\bs j)\in \NCE_T$ to~\eqref{eq:intro:NL-CE} by
\begin{equation*}%
	\bs\calL(\bs\rho,\bs j) = \calE(\rho_T) - \calE(\rho_0) + \int_0^T \bra*{ \calR(\rho_t,j_t) + \calR^*(\rho_t,-\babla \calE'(\rho_t)) } \dx{t} . 
\end{equation*}
This is connected to the De Giorgi functional $\bs\calG$ by the relation
\begin{align*}
    \bs\calG(\bs\rho) = \inf \set[\big]{ \bs\calL(\bs\rho,\bs j) : \bs j\text{ such that }(\bs\rho,\bs j)\in\NCE_T }.
\end{align*}
The previous considerations lead to the following definition of a nonlocal gradient system in continuity equation format in our setting.
\begin{definition}[Nonlocal gradient structure]%
	A \emph{nonlocal gradient structure} in continuity equation format has the building blocks:
	\begin{enumerate}
		\item A graph structure induced by $(\mu,\eta)$ defining the nonlocal divergence $\bdiv$ (implicity depending on $\eta$) and providing a notion of solutions for $(\bs\rho,\bs j)\in\NCE_T$ given in~\eqref{eq:intro:NL-CE} through the duality product $C_b^1(\R^d)\times \calM(\R^d) \ni (\varphi,j)\mapsto \skp{\babla \varphi,j}_\eta$. 
		\item An \emph{energy functional} $\calE:\calP(\R^d)\to [0,\infty)$. 
		\item A \emph{dissipation functional} $\calR: \calP(\R^d)\times \calM(G)\to \R$: For any $\rho\in \calP(\R^d)$, the map $\calR(\rho,\cdot)$ is convex and lower semicontinuous with $\min \calR(\rho,\cdot) = \calR(\rho,0)=0$.
	\end{enumerate} 
	The gradient structure $((\mu,\eta),\babla,\calE,\calR)$ is called \emph{good}, provided that for any $(\bs\rho,\bs j)\in \NCE_T$, one has $\bs\calL(\bs\rho,\bs j)\geq 0$.
	
	A curve $(\bs\rho,\bs j)\in \NCE_T$ is an \emph{EDP solution} of the good gradient structure $((\mu,\eta),\babla,\calE,\calR)$ provided that $\bs\calL(\bs\rho,\bs j)= 0$.
\end{definition}
Since, in general the variational derivative of the driving energy is not in the class of admissible test functions, in our case $\calE'(\rho)\notin C_b^1(\Rd)$, the goodness of the energy-dissipation functional $\bs\calL$ has to be proven. This is typically done with the help of establishing the \emph{chain rule inequality}
\begin{equation*}%
		\calE(\rho_t)- \calE(\rho_0) \ge \int_0^t \skp[\big]{ \babla \calE'(\rho_s),j_s}_{\!\eta} \dx{s}  \qquad \text{a.e. } t\in(0,T).
\end{equation*}
One of the main results of~\cite{EPSS2021} can be restated as follows:
\begin{theorem}[{Variational characterization of NL$^2$IE~\cite{EPSS2021}}]
	Any measure-valued solution of~\eqref{eq:intro:NLNL} is an EDP~solution for the gradient structure $((\mu,\eta),\babla,\calE,\calR)$, with elements given by~\eqref{eq:intro:NL-CE}, \eqref{eq:interaction_energy} and~\eqref{eq:intro:R}.
\end{theorem}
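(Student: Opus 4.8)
The plan is to obtain $\bs\calL(\bs\rho,\bs j)=0$ for a measure-valued solution $(\bs\rho,\bs j)$ of~\eqref{eq:intro:NLNL} by combining the pointwise-in-time equality case of the Legendre--Fenchel inequality~\eqref{eq:LegFen:bound} --- which holds precisely because the flux of a solution is the upwind interpolation of $-\babla\calE'(\rho_t)$ --- with the chain rule for the interaction energy along solutions of the nonlocal continuity equation. Before that, I would check that all terms in $\bs\calL$ are finite: writing $\calE'(\rho)=K*\rho$, assumption~\eqref{K3} applied to $K*\rho$ gives $\abs{\babla\calE'(\rho)(x,y)}=\abs{(K*\rho)(y)-(K*\rho)(x)}\le L_K\bra*{\abs{x-y}\vee\abs{x-y}^2}$ for all $(x,y)\in G$; inserting this into~\eqref{eq:intro:R*} and using the moment bound of Remark~\ref{rem:link_to_EPSS} (equivalently~\eqref{int}) yields $\calR^*(\rho_t,-\babla\calE'(\rho_t))\le C$ uniformly in $t$. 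Since by definition $j_t$ is the upwind flux induced by $v_t\coloneqq-\babla\calE'(\rho_t)$, the identity $\calR(\rho_t,j_t)=\calR^*(\rho_t,v_t)$ valid for upwind fluxes --- a direct computation from Definition~\ref{def:action}, see also~\cite[Lemma 2.6]{EPSS2021} --- gives $\calR(\rho_t,j_t)=\calR^*(\rho_t,-\babla\calE'(\rho_t))$, so the dissipation integral in $\bs\calL$ is finite; finiteness of $\calE(\rho_0),\calE(\rho_T)$ follows from~\eqref{as:K:Quad} and $\bs\rho\subset\calP_2(\Rd)$.

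By the definition of a weak solution to~\eqref{eq:nlnl-interaction-eq}, the flux obeys the upwind relation~\eqref{eq:intro:NL-flux} with $v_t=-\babla\calE'(\rho_t)$ for a.e.\ $t$. By Remark~\ref{rem:duality} and the equivalence~\eqref{eq:LegFen:sub}, this is exactly the statement that
\[
    \skp*{-\babla\calE'(\rho_t),j_t}_{\!\eta}=\calR(\rho_t,j_t)+\calR^*(\rho_t,-\babla\calE'(\rho_t))\qquad\text{for a.e. }t\in(0,T),
\]
i.e.\ the Legendre--Fenchel inequality~\eqref{eq:LegFen:bound} holds with equality. The only subtlety is admissibility of the test field $v_t=-\babla\calE'(\rho_t)$ in the pairing $\skp*{\cdot,j_t}_{\!\eta}$; the pointwise bound from the previous paragraph together with the improved integrability $\tfrac12\iint\abs{x-y}\,\eta\dx\abs{j_t}\le\sqrt{2\Cint\calR(\rho_t,j_t)}$ from Corollary~\ref{cor:A-bound} makes the pairing finite and well-defined, exactly as it does for nonlocal gradients $\babla\varphi$ with $\varphi\in C_b^1(\Rd)$.

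The heart of the argument is then the chain-rule identity
\[
    \calE(\rho_T)-\calE(\rho_0)=\int_0^T\skp*{\babla\calE'(\rho_t),j_t}_{\!\eta}\dx t,
\]
valid for every $(\bs\rho,\bs j)\in\NCE_T$ with $\int_0^T\calR(\rho_t,j_t)\dx t<\infty$. This is the nonlocal analogue of Proposition~\ref{prop:chain_rule_gen} and is precisely~\cite[Proposition 3.10]{EPSS2021}: one mollifies and truncates $K$ to $K_R^\eps\in C_c^\infty(\R^{2d})$ and regularizes $(\bs\rho,\bs j)$ in time to $(\bs\rho^\sigma,\bs j^\sigma)\in\NCE_T$, for which the smooth chain rule $\tfrac{\dx}{\dx t}\calE_R^\eps(\rho_t^\sigma)=\skp*{\babla(K_R^\eps*\rho_t^\sigma),j_t^\sigma}_{\!\eta}$ follows from the weak formulation~\eqref{eq:nce-weak} tested against $K_R^\eps*\rho_t^\sigma$ and the symmetry~\eqref{K2}; one then sends $\sigma\to0$, $\eps\to0$, $R\to\infty$ using~\eqref{K1} (uniform convergence of $K^\eps$ on compacts), the uniform second-moment control of Lemma~\ref{lem:2nd-mom-propagation}, and~\eqref{K3}~--~\eqref{K4} with~\eqref{int} to pass the limits by dominated convergence. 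I expect this step to be the only genuine obstacle; however, as indicated by its attribution to~\cite{EPSS2021}, it requires no new analysis, and the present theorem is essentially its restatement in the energy-dissipation-principle language.

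Finally, integrating the a.e.\ equality of the second paragraph over $(0,T)$ and substituting the chain rule of the third gives
\[
    \int_0^T\bra*{\calR(\rho_t,j_t)+\calR^*(\rho_t,-\babla\calE'(\rho_t))}\dx t=-\int_0^T\skp*{\babla\calE'(\rho_t),j_t}_{\!\eta}\dx t=\calE(\rho_0)-\calE(\rho_T),
\]
that is $\bs\calL(\bs\rho,\bs j)=0$, so $(\bs\rho,\bs j)$ is an EDP solution of the gradient structure $((\mu,\eta),\babla,\calE,\calR)$.
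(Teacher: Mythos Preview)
Your proposal is correct and, in fact, supplies more than the paper does: the paper states this theorem without proof, presenting it explicitly as a restatement of the main result of~\cite{EPSS2021} (``One of the main results of~\cite{EPSS2021} can be restated as follows''). Your argument --- finiteness of the terms via~\eqref{K3} and~\eqref{int}, the Legendre--Fenchel equality~\eqref{eq:LegFen:sub} from the upwind relation, and the nonlocal chain rule~\cite[Proposition~3.10]{EPSS2021} --- is exactly the content being cited, and your own remark that ``the present theorem is essentially its restatement in the energy-dissipation-principle language'' is precisely the paper's intent.
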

Having established the variational formulation for~\eqref{eq:intro:NLNL}, we can ask about the variational convergence for the $\eps$-rescaled version, where now $\eta$ is replaced by $\eta^\eps$ from~\eqref{eq:intro:etaeps} and hence, we study the gradient structure $((\mu,\eta^\eps),\bdiv_{\!\eps},\calE,\calR_\eps)$ and ask about its limit as $\eps \to 0$. From the heuristics in the introduction, we expect a limit of the form $(\R^d,\div, \calE,\calR_{\bbT})$, where the graph structure is replaced by a $\bbT$-weighted Euclidean structure with respect to the limiting tensor in~\eqref{eq_def:Tensor} and the nonlocal divergence by the standard one $\div j = \sum_{i=1}^d \partial_i j_i$. The limiting dissipation potential defines a dynamic dissipation after Otto and co-authors~\cite{JKO1998,Otto2001GeometryPME} giving rise to the Wasserstein distance~\cite{BenamouBrenier2000}, which in its $\bbT$-weighted form was studied in~\cite{Lisini_ESAIM2009} and is given by
\begin{equation*}%
	\calR_{\bbT}(\rho,j) =  \int_\Rd \frac{1}{2} \abs[\Big]{ \pderiv{j}{\rho}(x)}^2_{\bbT(x)} \dx{\rho(x)}, 
\end{equation*}
where the weighted Euclidean norm is given by $\abs*{\xi}_{\bbT}^2 = \xi\cdot \bbT^{-1}\xi$ for $\xi\in \R^d$. The limiting gradient structure $(\R^d,\div, \calE,\calR_{\bbT})$ is good thanks to the chain rule proven in Proposition~\ref{prop:chain_rule_ineq}, implying that for $(\bs\rho,\bs j)\in \CE_T$ the ED functional $	\bs\calL_\bbT(\bs \rho,\bs j)$ given by
\begin{equation*}
	\bs\calL_\bbT(\bs \rho,\bs j) = \calE(\rho_T) - \calE(\rho_0) + \int_0^T \bra*{ \calR_\bbT(\rho_t,j_t) + \calR_\bbT^*(\rho_t, - \nabla \calE'(\rho_t)) }\dx{t} 
\end{equation*}
is non-negative, i.e. $\bs\calL_\bbT(\bs \rho,\bs j)\geq 0$, with the dual dissipation functional $\calR_\bbT^*(\rho,\xi) = \int \frac{1}{2} \skp{\xi(x), \bbT(x)\xi} \dx{\rho(x)}$.
Hence, any curve $(\bs\rho,\bs j)\in \CE_T$ with $\bs\calL_\bbT(\bs \rho,\bs j)=0$ is a measure-valued solution to~\eqref{eq:intro:NLIE:one}. 

For the EDP convergence statement, we need a common notion of \emph{curves} for $\eps>0$ and the limit. This is possible thanks to the reconstruction of the flux in Proposition~\ref{prop:jhat}, where we showed that any $(\bs\rho^\eps,\bs j^\eps)\in \NCE^\eps_T$ can be associated with a curve $(\bs\rho^\eps,\bs\jh^\eps)\in \CE_T$. In this way, we arrive at a notion of convergence, which we call $\tau$-convergence
\begin{equation*}%
	\NCE^\eps_T \ni (\bs\rho^\eps,\bs j^\eps)  \xrightharpoonup{\tau} (\bs\rho,\bs j) \in \CE_T 
\end{equation*}
provided that $\rho^\eps_t \xrightharpoonup{} \rho_t$ narrowly in $\calP(\R^d)$ for a.e. $t\in[0, T]$ and $\int_\cdot\jh^\eps_t\dx{t} \xrightharpoonup{\ast} \int_\cdot\jh\dx{t}$ weakly-$^\ast$ in $\calM((0,T)\times\Rd;\Rd)$.

With this preliminary considerations, the main result of this work contained in Theorem~\ref{thm:main_result} can be recast in terms of \emph{EDP-convergence}, or also called \emph{evolutionary $\Gamma$-convergence}~\cite{SandierSerfaty2004,Serfaty2011,Mielke2016,LieroMielkePeletierRenger2017,MielkeMontefuscoPeletier21,PeletierSchlichting2022}.
The EDP convergence statement is formulated in terms of the total dissipation functional of a curve $(\bs\rho^\eps,\bs j^\eps)\in \NCE^\eps_T$ defined by
\begin{equation*}
	\scrD_\eps(\bs\rho^\eps,\bs j^\eps) = \int_0^T \bra*{ \calR_\eps(\rho^\eps_t,j^\eps_t) + \calR_\eps^*(\rho^\eps_t,-\babla \calE'(\rho^\eps_t)) } \dx{t} .
\end{equation*}
Now, we can restate the Theorem~\ref{thm:main_result} in the language of EDP convergence. Typically, the energy $\calE$ is dependent of $\eps$, and the EDP convergence statement contains a $\Gamma$-limit statement of the type $\calE_\eps \xrightarrow{\Gamma} \calE_0$. In our case this is not necessary, since the driving energy $\calE$ does not depend on $\eps$ and is continuous.

\begin{theorem}[EDP convergence of \eqref{eq:nlnl-interaction-eq-eps} to~\eqref{eq:intro:NLIE:one}]%
    Let $(\mu,\vartheta)$ satisfy \eqref{mu1}, \eqref{mu2} and \eqref{theta1}~--~\eqref{theta4}. Let $\eta^\eps$ be given by \eqref{eq:def:eta^eps} and assume $K$ satisfies \eqref{K1}~--~\eqref{K4}. 
	Then, the sequence of gradient structures $((\mu,\eta^\eps),\bdiv_{\!\eps},\calE,\calR_\eps)$ for~\eqref{eq:nlnl-interaction-eq-eps} \emph{EDP converges} to the limiting gradient structure $(\R^d,\div, \calE,\calR_{\bbT})$ for~\eqref{eq:intro:NLIE:one} as $\eps\to 0$. 
	
	Let $(\bs\rho^\eps,\bs j^\eps)\in \NCE^\eps_T$ with $\sup_{0<\eps\le\eps_0} \scrD_\eps(\bs\rho^\eps,\bs j^\eps) < \infty$, then there exists a subsequence such that $\NCE^\eps_T \ni (\bs\rho^\eps, \bs j^\eps) \xrightharpoonup{\tau} (\bs\rho ,\bs \jh) \in \CE_T$ and
	\begin{equation*}
		\liminf_{\eps\to 0} \scrD_\eps(\bs\rho^\eps,\bs j^\eps) \geq \scrD_\bbT(\bs\rho,\bs \jh),
	\end{equation*}
	where the limiting total dissipation function is defined by
	\begin{equation*}%
		\scrD_\bbT(\bs \rho,\bs\jh) =
			\int_0^T \Bigl[ \calR_\bbT(\rho_t,\jh_t)+\calR^*_\bbT(\rho_t,-\nabla \calE'(\rho_t))\Bigr]\dx t ,	
	\end{equation*}
	provided $(\bs\rho ,\bs \jh) \in \CE_T$.
\end{theorem}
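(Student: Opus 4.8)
The plan is to reduce the claim to the two lower-limit results already proven --- Propositions~\ref{prop:lower_limit_metric_derivatives} and~\ref{prop:D_NL->L_lsc} --- once the ingredients of $\scrD_\eps$ are matched with the action and the metric slope. Indeed $\calR_\eps(\rho,j)=\tfrac12\calA(\mu,\eta^\eps;\rho,j)$ (recorded below~\eqref{eq:LegFen:sub}) and, using antisymmetry of $\babla$ together with symmetry of $\eta^\eps$, $\calR_\eps^*(\rho,-\babla\calE'(\rho))=\tfrac12\calD_{\!\eps}(\rho)$ with $\calE'(\rho)=K\ast\rho$, so that for $(\bs\rho^\eps,\bs j^\eps)\in\NCE_T$
\[
\scrD_\eps(\bs\rho^\eps,\bs j^\eps)=\tfrac12\,\bs\calA(\mu,\eta^\eps;\bs\rho^\eps,\bs j^\eps)+\tfrac12\int_0^T\calD_{\!\eps}(\rho^\eps_t)\dx t,
\]
and analogously $\scrD_\bbT(\bs\rho,\bs\jh)=\int_0^T\calR_\bbT(\rho_t,\jh_t)\dx t+\tfrac12\int_0^T\calD_{\;\!\bbT}(\rho_t)\dx t$ with $\calR_\bbT(\rho,\jh)=\tfrac12\norm{\tfrac{\dx\jh}{\dx\rho}}_{L^2(\rho;\R^d_\bbT)}^2$. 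Since both summands of $\scrD_\eps$ are non-negative, $\sup_\eps\scrD_\eps<\infty$ forces $\sup_\eps\bs\calA(\mu,\eta^\eps;\bs\rho^\eps,\bs j^\eps)<\infty$; this, together with $\sup_\eps M_2(\rho^\eps_0)<\infty$ (required for tightness, as in Theorem~\ref{thm:main_result}), meets the hypotheses of all the tools used below.

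I would first produce the $\tau$-limit. Corollary~\ref{cor:A-bound} upgrades the uniform action bound to the integrability needed in Proposition~\ref{prop:jhat}, which yields local fluxes $\bs\jh^\eps$ with $(\bs\rho^\eps,\bs\jh^\eps)\in\CE_T$ and $\skp{\nabla\varphi,\jh^\eps_t}=\tfrac12\iint_{G^{\!\!\:\eps}}\babla\varphi\,\eta^\eps\dx j^\eps_t$ for $\varphi\in C_c^1(\Rd)$. Lemma~\ref{lem:2nd-mom-propagation} bounds the second moments uniformly along the curves, and Proposition~\ref{prop:convergence_CE-solutions} then gives a (not relabelled) subsequence with $\rho^\eps_t\rightharpoonup\rho_t$ narrowly for a.e.\ $t$, $\int_\cdot\jh^\eps_t\dx t\xrightharpoonup{\ast}\int_\cdot\jh_t\dx t$ in $\calM((0,T)\times\Rd;\Rd)$, and $(\bs\rho,\bs\jh)\in\CE_T$ with $\bs\rho\subset\calP_2(\Rd)$ --- that is, $(\bs\rho^\eps,\bs j^\eps)\xrightharpoonup{\tau}(\bs\rho,\bs\jh)$.

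It then remains to pass to the lower limit in each summand and add. For the flux term, the Fenchel--Young bound~\eqref{eq:LegFen:bound} applied with $v=\babla\varphi_t$ (admissible since $C_c^1(\Rd)\subset C_b^1(\Rd)$) reads, for $\bs\varphi\in C_c^\infty((0,T);C_c^\infty(\Rd))$,
\[
\int_0^T\skp{\nabla\varphi_t,\jh^\eps_t}\dx t\le\tfrac12\,\bs\calA(\mu,\eta^\eps;\bs\rho^\eps,\bs j^\eps)+\tfrac12\int_0^T\tA(\mu,\eta^\eps;\rho^\eps_t,\babla\varphi_t)\dx t .
\]
Letting $\eps\to0$ with Theorem~\ref{thm:limiting_inner_product} on the right, then taking the supremum over $V=\{\bbT^{1/2}\nabla\bs\varphi\}$ and invoking Fenchel--Moreau duality exactly as in the proof of Proposition~\ref{prop:lower_limit_metric_derivatives} --- an argument that never uses minimality of $\bs j^\eps$ --- one obtains $\jh_t\ll\rho_t$ for a.e.\ $t$, hence $(\bs\rho,\bs\jh)\in\AC^2([0,T];(\calP_2(\R^d_\bbT),W_\bbT))$, and $\liminf_{\eps\to0}\int_0^T\calR_\eps(\rho^\eps_t,j^\eps_t)\dx t\ge\int_0^T\calR_\bbT(\rho_t,\jh_t)\dx t$. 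For the slope term, Fatou's lemma (the integrands being non-negative) and the pointwise-in-$t$ inequality of Proposition~\ref{prop:D_NL->L_lsc} along $\rho^\eps_t\rightharpoonup\rho_t$ give $\liminf_{\eps\to0}\int_0^T\calR_\eps^*(\rho^\eps_t,-\babla\calE'(\rho^\eps_t))\dx t=\liminf_{\eps\to0}\tfrac12\int_0^T\calD_{\!\eps}(\rho^\eps_t)\dx t\ge\tfrac12\int_0^T\calD_{\;\!\bbT}(\rho_t)\dx t=\int_0^T\calR_\bbT^*(\rho_t,-\nabla\calE'(\rho_t))\dx t$. Adding the two bounds and using superadditivity of $\liminf$ yields $\liminf_{\eps\to0}\scrD_\eps(\bs\rho^\eps,\bs j^\eps)\ge\scrD_\bbT(\bs\rho,\bs\jh)$; combined with narrow continuity of $\calE$ on $\calP_2(\Rd)$ (\cite[Proposition~3.3]{EPSS2021}, which also handles the energy terms of $\bs\calL_\eps$) and the goodness of $(\R^d,\div,\calE,\calR_\bbT)$ coming from the chain rule in Proposition~\ref{prop:chain_rule_ineq}, this is precisely EDP-convergence in the sense of~\cite{PeletierSchlichting2022}.

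The main obstacle is not a new estimate but verifying that the two cited lower-limit results --- stated for \emph{minimal} fluxes and for a \emph{single} time-slice --- transfer unchanged: on the flux side one must check that the duality chain in Proposition~\ref{prop:lower_limit_metric_derivatives} uses only~\eqref{eq:LegFen:bound} and finiteness of the action (guaranteed here because $\bs\calA<\infty$ gives $\calA(\mu,\eta^\eps;\rho^\eps_t,j^\eps_t)<\infty$ for a.e.\ $t$), not minimality of $\bs j^\eps$; on the slope side one must justify exchanging $\liminf_\eps$ with $\int_0^T$ through Fatou and the a.e.-in-$t$ narrow convergence from Proposition~\ref{prop:convergence_CE-solutions}. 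A minor bookkeeping point is that $\calE'(\rho)=K\ast\rho\notin C_b^1(\Rd)$, so the slope term must be treated via its explicit integral form $\tfrac12\calD_{\!\eps}$ rather than through the abstract pairing $\skp{\,\cdot\,,\,\cdot\,}_{\eta^\eps}$.
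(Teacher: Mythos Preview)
Your proposal is correct and follows essentially the same approach as the paper: compactness via Proposition~\ref{prop:convergence_CE-solutions}, then the lower limit of $\calR_\eps$ from the duality argument in Proposition~\ref{prop:lower_limit_metric_derivatives} and of $\calR_\eps^*$ from Proposition~\ref{prop:D_NL->L_lsc}. Your write-up is in fact more careful than the paper's one-line proof in making explicit that the duality chain in Proposition~\ref{prop:lower_limit_metric_derivatives} does not use minimality of the flux, and that Fatou bridges the pointwise-in-$t$ slope inequality to the time-integrated one.
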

\begin{proof}
	The compactness and convergence of solutions to the continuity equation is contained in Proposition~\ref{prop:convergence_CE-solutions}. The lower semicontinuity of $\scrD_\eps$ is a consequence of the individual lower semicontinuity statements for $\calR_\eps$ and $\calR_\eps^*$, which follow from Proposition~\ref{prop:lower_limit_metric_derivatives} and Proposition~\ref{prop:D_NL->L_lsc}, respectively.
\end{proof}

\noindent
\textbf{Acknowledgements. }
The authors are grateful to Dejan Slep\v{c}ev and Francesco Patacchini for enlightening discussions on the original question, posed in a previous manuscript. Furthermore, the authors would like to thank José Antonio Carrillo for his valuable suggestions on the content of the current manuscript and for hosting GH and AS in Oxford. AE was supported by the Advanced Grant Nonlocal-CPD (Nonlocal PDEs for Complex Particle Dynamics: Phase Transitions, Patterns and Synchronization) of the European Research Council Executive Agency (ERC) under the European Union’s Horizon 2020 research and innovation programme (grant agreement No. 883363). GH acknowledges support of the German National Academic Foundation (Studienstiftung des deutschen Volkes) and the Free State of Saxony in the form of PhD scholarships. Part of this work was carried out while GH was a PhD student at TU Chemnitz and the University of Augsburg under supervision of Jan-Frederik Pietschmann. AS is supported by the Deutsche Forschungsgemeinschaft (DFG, German Research Foundation) under Germany's Excellence Strategy EXC 2044 -- 390685587, \emph{Mathematics M\"unster: Dynamics--Geometry--Structure}.

\bibliography{bibliography}
\bibliographystyle{abbrv}

\end{document}